\documentclass[leqno,11pt]{article}%
\usepackage{amsfonts}
\usepackage{amsmath}
\usepackage{amssymb}
\usepackage{palatino}
\usepackage[colorlinks]{hyperref}
\usepackage{graphicx}%
\providecommand{\U}[1]{\protect\rule{.1in}{.1in}}
\hypersetup{colorlinks=true, urlcolor=red, linkcolor=blue, citecolor=blue}
\newtheorem{theorem}{Theorem}[section]
\newenvironment{acknowledgement}
{\par\medskip\noindent\textbf{Acknowledgement.}\ }
{\par\medskip}

\newtheorem{condition}[theorem]{Assumption}

\newtheorem{corollary}[theorem]{Corollary}

\newtheorem{definition}{Definition}[section]

\newtheorem{lemma}{Lemma}[section]

\newtheorem{problem}[theorem]{Problem}
\newtheorem{proposition}{Proposition}[section]
\newtheorem{remark}{Remark}[section]

\newenvironment{proof}[1][Proof]{\noindent\textbf{#1.} }{\ \rule{0.5em}{0.5em}}
\begin{document}

\title{Planar Obliquely Reflected BSVIs on General Filtered Spaces: Non-Symmetric
Rotation Fields and Associated Control Problems\thanks{E-mails:
grajdeanuandreea19@gmail.com (Andreea Negru\c{t}), aurel.rascanu@uaic.ro
(Aurel R\u{a}\c{s}canu), eduard.rotenstein@uaic.ro (Eduard Rotenstein)\newline%
$\sharp$~corresponding author}}
\author{Andreea Negru\c{t}$^{a}$, Aurel R\u{a}\c{s}canu$^{b,c}$, Eduard
Rotenstein$^{a,b,\sharp}$\medskip\\$^{a}${\small Simion Stoilow Institute of Mathematics of the Romanian Academy,
}\\{\small 21 Calea Grivi\c{t}ei, Bucharest, Rom\^{a}nia}\\$^{b}${\small Faculty of Mathematics, "Alexandru Ioan Cuza" University of
Ia\c{s}i, }\\{\small 9 Carol I Blvd., Ia\c{s}i,} {\small Rom\^{a}nia}\\$^{c}${\small Octav Mayer Institute of Mathematics of the Romanian Academy,
Ia\c{s}i branch, }\\{\small Bd. Carol I no. 8, Rom\^{a}nia}}
\date{}
\maketitle

\begin{abstract}
We prove existence and uniqueness of a c\`{a}dl\`{a}g solution to a planar
backward stochastic variational inequality on a general complete filtered
probability space, driven by a square integrable martingale, which may have
jumps. The multivalued term is the exterior normal cone operator of a bounded
uniformly convex planar domain, and the reflection direction is generated by a
time-dependent non-symmetric rotation field. The non-symmetry creates a
first-order tangential boundary term that destroys the standard monotonicity
and quadratic contraction estimates, used in the symmetric oblique-reflection
theory. We overcome this obstruction by constructing an explicit symmetric
two-point kernel with state dependent coefficients, whose boundary derivative
cancels the leading tangential contribution. A weighted martingale-exponential
estimate then controls the second order defect and the jump terms. Under
suitable geometric and quantitative compatibility constraints, we obtain the
unique strong c\`{a}dl\`{a}g solution. We also formulate associated control
problems for the rotation angle and prove existence of an optimal control in a
compact class of bounded-rate angle paths.

\end{abstract}

\textbf{Keywords and phrases: }multivalued\textbf{ }backward stochastic
dynamics, oblique reflection, sub\-di\-ffe\-ren\-ti\-al operators, filtered
probability spaces, non-symmetry

\textbf{MSC2020 Subject Classification: }60H10, 60H30, 49K45

\section{A motivating obstacle problem}

Backward stochastic differential equations (BSDEs, for short) and their
multivalued counterparts provide a flexible framework for studying nonlinear
stochastic dynamics, variational inequalities (BSVIs), obstacle problems,
partial differential equations, and stochastic control. In the classical
Brownian setting, the martingale component is usually represented through a
stochastic integral with respect to the Brownian motion. On a general filtered
probability space, however, such a representation need not be available. This
leads naturally to a formulation in which the martingale itself, rather than
an integrand $Z$, is regarded as one of the unknowns.

This point of view was developed for backward stochastic dynamics on general
complete filtered probability spaces, where neither It\^{o} integration with
respect to a prescribed Brownian motion nor a martingale representation
theorem is required. In this framework, the third variable of the generator is
replaced by a suitable functional transformation of the martingale component.
The resulting formulation is particularly well suited to c\`{a}dl\`{a}g
solutions and allows the driving noise to be an arbitrary square-integrable
martingale, possibly with jumps.

The purpose of the present paper is to study a class of planar backward
stochastic variational inequalities with oblique reflection. More precisely,
we consider equations of the form:%
\[
\left\{
\begin{array}
[c]{l}%
Y_{t}+%
{\displaystyle\int_{t}^{T}}
\Theta_{r}U_{r}dr=\eta+%
{\displaystyle\int_{t}^{T}}
F\left(  r,Y_{r},\mathcal{R}_{r}(M)\right)  dr-(M_{T}-M_{t}),\quad t\in\left[
0,T\right]  ,\text{ }\mathbb{P}\text{-a.s.,}\medskip\\
U_{r}\in\partial I_{K}(Y_{r}),\text{ \ }d\mathbb{P\otimes}dr\text{-a.e. on
}\Omega\times\left[  0,T\right]  ,
\end{array}
\right.
\]
where $K\subset\mathbb{R}^{2}$ is a bounded uniformly convex domain, $\partial
I_{K}$ denotes the subdifferential operator of its convexity indicator
function, $M$ is a square-integrable c\`{a}dl\`{a}g martingale, and $\Theta$
is a time-dependent rotation matrix. Thus, the process $Y$ is constrained to
remain in $K$, while the finite-variation correction acts along an oblique
direction determined by the rotation field. The c\`{a}dl\`{a}g framework is
essential in this setting. It permits the treatment of general martingale
noises with jumps and requires all estimates to be compatible simultaneously
with the continuous quadratic variation and with the discrete jump
contributions. The jump terms cannot be treated as a lower-order perturbation
since they enter directly into the It\^{o} formula, the quadratic variation,
the stability estimates, and the identification of the limiting variational
inequality. The main novelty, and at the same time the main difficulty, lies
in the fact that the oblique reflection field is not generated by a symmetric
positive definite matrix. Although the analysis is restricted to dimension
two, the restriction is structural, rather than merely technical. It is
possible to construct an explicit, symmetric, non-quadratic (from now on, it
should be understand that the Hessian isn't constant in the pair $\left(
x,y\right)  $, but the function is a quadratic form in $\left\vert
x-y\right\vert $) two-point test function that incorporates both the normal
and tangential components of the oblique direction. In the symmetric theory,
the reflection terms can be controlled by the monotonicity of the
subdifferential, as one can see in the seminal works on this topic provided by
Gassous, R\u{a}\c{s}canu, and Rotenstein \cite{Gassous/Rascanu/Rotenstein:12}
(forward case), or \cite{Gassous/Rascanu/Rotenstein:15} (backward case). The
symmetry of the perturbing term is crucial in their arguments. It provides a
genuine quadratic energy and prevents the appearance of a first-order
tangential contribution on the boundary of the domain. This mechanism breaks
down for a non-symmetric rotation, as we show below.

The two-point kernel used in this paper is motivated and inspired by the
classical boundary-adapted test functions, introduced in the study of problems
featuring some type of oblique reflection. In the viscosity-solution
framework, such functions are constructed from a symmetric positive definite
matrix field satisfying a geometric compatibility assumption (see Lions and
Sznitman \cite{Lions/Sznitman:84}, Barles \cite{Barles:99}, Barles and Da Lio
\cite{Barles/DaLio:06}). The probabilistic part of our argument is
particularly close in spirit to the weighted martingale-exponential method
developed by Chassagneux, Nadtochiy, and Richou \cite{Chassagneux/Richou:20},
\cite{Chassagneux/Nadtochiy/Richou:22}, for reflected BSDEs in non-convex
domains and for obliquely reflected BSDEs. Their approach is relevant here
because the second-order remainder generated by a boundary-adapted test
function is naturally integrated against a random quadratic-variation measure.
This is precisely the obstruction that arises in our c\`{a}dl\`{a}g setting.

Our contribution is an explicit planar realization, adapted to the rotation
field, symmetrized in the two variables and compatible with a c\`{a}dl\`{a}g
martingale noise driving the equation. The c\`{a}dl\`{a}g setup is motivated
by the need to make another improvement in the line of research, by going
beyond the symmetric setting and beyond Brownian filtrations, simultaneously.
The approach for the c\`{a}dl\`{a}g framework follows the one developed by
Liang, Lyons, and Qian \cite{Liang/Lyons/Qian:11} and used recently by
Bensoussan, Li, and Yam \cite{Bensoussan/Li/Yam:2018}. However,
\cite{Bensoussan/Li/Yam:2018} impose a quite strong assumption regarding the
boundedness of the multivalued operator, which was overcome by Negru\c{t},
R\u{a}\c{s}canu, and Rotenstein \cite{Negrut/Rascanu/Rotenstein:26}.

This planar construction has several decisive properties. First, the test
function we introduce is symmetric with respect to both components, which is
indispensable when the two arguments are two penalized solutions (obtained by
the classical Moreau-Yosida technique) and either one may reach the boundary.
Second, under a suitable angle restriction, this test function remains
uniformly equivalent to the square distance between its arguments. Third, its
derivative in the oblique reflection direction has the correct sign, up to a
quadratic error, which is controlled by the curvature of $bd\left(  K\right)
$. Finally, its Hessian matrix satisfies a lower bound, involving the
difference of the martingale increments, together with a defect proportional
to a suitable term, which we can control. This defect is the price we have to
pay for abandoning symmetry, and it remains present even though the test
function is explicitly constructed.

Additional substantial technical difficulties are brought by the
c\`{a}dl\`{a}g character of the driving martingale. Applying It\^{o}'s formula
to the test function $\Phi_{t}\left(  Y_{t}^{\varepsilon},Y_{t}^{\delta
}\right)  $ produces a second-order contribution, involving both the
continuous quadratic variations and the jumps of the two martingales. The
resulting error cannot, in general, be absorbed by a deterministic Gronwall
argument. Instead, it should naturally be integrated against the random
increasing measure $d\left[  M^{\varepsilon}\right]  _{t}+d\left[  M^{\delta
}\right]  _{t}$ generated by the quadratic variations of the approximating
martingales of the penalizing equations. In order to overcome this
obstruction, we use a weighted estimate whose exponential weight is
constructed from the quadratic variations of the approximating martingales,
together with the absolutely continuous terms generated by the Lipschitz
coefficients and the time dependence of the rotation field. This weighted
martingale-exponential argument cancels the random-measure contribution
pathwise before expectations are taken. The construction of the weight
requires uniform conditional estimates for the martingale quadratic
variations. These are obtained through conditional bounds for the generator,
the penalization terms, and the martingale functional.

Our main result establishes existence and uniqueness of a c\`{a}dl\`{a}g
solution to the obliquely reflected planar BSVI, under suitable assumptions on
the structural data: the generator, the martingale functional, the uniformly
convex domain, and the rotation angle. The assumptions on the rotation angle
have some geometric interpretations. One condition ensures the ellipticity of
the two-point kernel $\Phi_{t}$, while another expresses the compatibility
between the tangential displacement generated by the oblique field and the
curvature of the boundary, quantified through the uniform convexity constants
of $I_{K}$. A further smallness condition is needed to absorb the second-order
defect in the weighted c\`{a}dl\`{a}g estimate.

The paper is organized as follows. \textit{Section 2} introduces the general
filtered probability space framework, the c\`{a}dl\`{a}g process spaces, the
martingale functional $\mathcal{R}$, and the assumptions on the generator.
\textit{Section 3} recalls the Moreau-Yosida penalization procedure and
establishes the basic uniform estimates for the approximating equations.
\textit{Section 4} develops the planar geometry of the rotation field,
constructs the non-quadratic, two-point kernel, proves the cancellation
estimate for the tangential boundary term, and establishes the weighted
convergence of the penalized family. The existence and uniqueness of the
c\`{a}dl\`{a}g obliquely reflected solution are then obtained. In
\textit{Section 5} we formulate some associated control problems for the
rotation angle and prove existence of an optimal control in a compact class of
bounded-rate angle paths. The final section, \textit{Annex}, collects the
c\`{a}dl\`{a}g stochastic calculus and convex analytic tools used throughout
the proofs.

\section{Preliminaries\label{Working setup}}

Throughout the paper we work on a complete filtered probability space $\left(
\Omega,\mathcal{F},\mathbb{F}=\{\mathcal{F}_{t}\}_{t\geq0},\mathbb{P}\right)
$ satisfying the usual hypotheses, and on a fixed time interval $\left[
0,T\right]  $, $T>0$. We shall use the following spaces of stochastic processes:

\begin{enumerate}
\item[$\left(  a\right)  $] $\mathbb{L}_{m\times d}^{p},$ $p\geq0,$
$m,d\in\mathbb{N}^{\ast}$, is the (non-separable) complete metric space of
adapted c\`{a}gl\`{a}d processes $G:\Omega\times\left[  0,T\right]
\rightarrow\mathbb{R}^{m\times d}$, and $\mathbb{D}_{d}^{p}$ is the complete
metric space of adapted c\`{a}dl\`{a}g processes $X:\Omega\times\left[
0,T\right]  \rightarrow\mathbb{R}^{d}$. In both cases the metric is defined by%
\[
\rho\left(  X,Y\right)  =\left\{
\begin{array}
[c]{ll}%
\left(  \mathbb{E}\sup\limits_{t\in\left[  0,T\right]  }\left\vert X_{t}%
-Y_{t}\right\vert ^{p}\right)  ^{1\wedge\left(  1/p\right)  }\quad & \text{if
}p>0,\medskip\\
\mathbb{E}\left(  1\wedge\sup\limits_{t\in\left[  0,T\right]  }\left\vert
X_{t}-Y_{t}\right\vert \right)  \quad & \text{if }p=0.
\end{array}
\right.
\]
If $p\geq1$, then the spaces are Banach spaces with the norm $\left\Vert
X\right\Vert =$ $\rho\left(  X,0\right)  .$ In the case $\mathbb{D}_{d}^{2}$
we denote%
\[
\left\vert \left\vert \left\vert X\right\vert \right\vert \right\vert
_{T}=\left\vert \left\vert \left\vert X\right\vert \right\vert \right\vert
_{\left[  0,T\right]  }:=\mathbb{E}\sup_{t\in\left[  0,T\right]  }\left\vert
X_{t}\right\vert ^{2}<\infty.
\]

\item[$\left(  b\right)  $] $\mathcal{M}_{d}^{2}\subset\mathbb{D}_{d}^{2}$ is
the Hilbert space of stochastic processes $M:\Omega\times\left[  0,T\right]
\rightarrow\mathbb{R}^{d},$ $M_{0}=0,$ which are c\`{a}dl\`{a}g square
integrable martingales on $\left[  0,T\right]  $, endowed with the inner
product and the corresponding norm
\[
\left\langle M,N\right\rangle _{\mathcal{M}}=\mathbb{E}\left\langle
M_{T},N_{T}\right\rangle \quad\text{and}\quad\left\Vert M\right\Vert
_{\mathcal{M}}=\sqrt{\mathbb{E}\left\vert M_{T}\right\vert ^{2}}.
\]
Since the filtration $\mathbb{F}=\{\mathcal{F}_{t}\}_{t\geq0}$ satisfies the
usual hypotheses, every martingale admits a unique c\`{a}dl\`{a}g modification
(see Protter \cite[Chapter I, Theorem 9]{Protter:05}). In what follows, each
martingale is identified with its c\`{a}dl\`{a}g version. The space
$\mathcal{M}_{d}^{2}$ is a closed linear subspace of the Banach space
$\mathbb{D}_{d}^{2}$ since, by Doob's maximal $L^{2}$-inequality and the fact
that $M_{0}=0$, the norms$\ \left\Vert \cdot\right\Vert _{2}$ and $\left\vert
\left\vert \left\vert \cdot\right\vert \right\vert \right\vert _{\left[
0,T\right]  }$ are equivalent:%
\begin{equation}
\mathbb{E}\left\vert M_{t}\right\vert ^{2}\leq\mathbb{E}\sup_{r\in\left[
0,t\right]  }\left\vert M_{r}\right\vert ^{2}\leq4\mathbb{E}\left\vert
M_{t}\right\vert ^{2},\quad\forall t\in\left[  0,T\right]  .
\label{bdg adapted}%
\end{equation}
In particular, $\left\Vert M\right\Vert _{2}\leq\left\vert \left\vert
\left\vert M\right\vert \right\vert \right\vert _{\left[  0,T\right]  }%
\leq2\left\Vert M\right\Vert _{2}~.$

For $M\in\mathcal{M}_{d}^{2}$ we denote by $\left\langle M\right\rangle $ the
\textit{predictable quadratic variation} of $M$: the unique predictable,
c\`{a}dl\`{a}g, nondecreasing process with $\left\langle M\right\rangle
_{0}=0$ such that
\begin{equation}
\left\vert M_{t}\right\vert ^{2}-\left\langle M\right\rangle _{t}\quad\text{is
a local martingale;} \label{def-angle}%
\end{equation}
its existence and uniqueness are given by the Doob--Meyer decomposition of the
submartingale $\left\vert M\right\vert ^{2}$. For $M,N\in\mathcal{M}_{d}^{2}$
one sets $\left\langle M,N\right\rangle :=\left(  \left\langle
M+N\right\rangle -\left\langle M-N\right\rangle \right)  /4$.

The quadratic variation of $M$ is the c\`{a}dl\`{a}g, nondecreasing, adapted
process defined by%
\begin{equation}%
\begin{array}
[c]{rl}%
\lbrack M]_{t}:= & \text{(\textit{up)-}}\lim\limits_{n\rightarrow\infty
}\left(
{\displaystyle\sum\limits_{k=0}^{n-1}}
\left\vert M_{t\wedge t_{k+1}}-M_{t\wedge t_{k}}\right\vert ^{2}\right)  ,
\end{array}
\label{qv-def-m}%
\end{equation}
where $t_{k}=kT/n~$and the convergence is in probability, uniform with respect
to $t\in\left[  0,T\right]  $. Taking expectations in (\ref{def-angle}) and
(\ref{qv-def-m}), it follows that
\begin{equation}
\mathbb{E}\left\langle M\right\rangle _{t}=\mathbb{E}[M]_{t}=\mathbb{E}%
\left\vert M_{t}\right\vert ^{2},\qquad t\in\left[  0,T\right]  .
\label{angle-energy}%
\end{equation}

\item[$\left(  c\right)  $] $\Lambda_{d\times k}^{2}$ (and $\Lambda_{d}%
^{2}:=\Lambda_{d\times1}^{2}$) is the Hilbert space of $\mathcal{F}_{t}%
$-progressively measurable, $\mathbb{R}^{d\times k}$-valued stochastic
proce\-sses $X,Y:\Omega\times\left[  0,T\right]  \rightarrow\mathbb{R}%
^{d\times k}$ such that $\mathbb{E}\int_{0}^{T}\left\vert X_{r}\right\vert
^{2}dr<\infty$, equipped with the norm $\left\Vert \cdot\right\Vert
_{\Lambda_{d}^{2}\left[  0,T\right]  }$ induced by the inner product%
\[
\left\langle X,Y\right\rangle _{\Lambda}:=\mathbb{E}%
{\displaystyle\int_{0}^{T}}
\mathrm{Tr}\left(  X_{r}^{\ast}Y_{r}\right)  dr.
\]

\end{enumerate}

We note that $\mathbb{D}_{d}^{2}\subset\Lambda_{d}^{2}$.

\begin{condition}
[H1]\label{H1}The terminal datum $\eta\in L^{2}(\Omega,\mathcal{F}%
_{T},\mathbb{P};\mathbb{R}^{d})$.
\end{condition}

\begin{condition}
[H2]\label{H2}The generator $F\left(  \cdot,\cdot,y,z\right)  :\Omega
\times\left[  0,T\right]  \rightarrow\mathbb{R}^{d}$ is $\mathcal{F}_{t}%
$-progressively measurable for every $\left(  y,z\right)  \in\mathbb{R}%
^{d}\times\mathbb{R}^{d\times k}$, and there exist $L,\ell\in L^{2}\left(
0,T;\mathbb{R}_{+}\right)  $ such that:

\begin{itemize}
\item[$\left(  i\right)  $] \textit{Lipschitz conditions}: for all
$y,y^{\prime}\in\mathbb{R}^{d},\;z,z^{\prime}\in\mathbb{R}^{d\times
k},\;d\mathbb{P}\otimes dt$-$a.e.:$%
\begin{equation}
\left\vert F(t,y^{\prime},z)-F(t,y,z)\right\vert \leq L\left(  t\right)
|y^{\prime}-y|\quad\text{and}\quad|F(t,y,z^{\prime})-F(t,y,z)|\leq\ell\left(
t\right)  |z^{\prime}-z| \label{lip_F}%
\end{equation}

\item[$\left(  ii\right)  $] \textit{Boundedness condition}: $\mathbb{E}%
{\displaystyle\int\nolimits_{0}^{T}}
\left\vert F\left(  t,0,0\right)  \right\vert ^{2}dt<+\infty$.
\end{itemize}
\end{condition}

\begin{condition}
[H3]\label{H3}$\mathcal{R}:\mathcal{M}_{d}^{2}\longrightarrow\Lambda_{d\times
k}^{2}\quad$is a mapping such that:

\begin{itemize}
\item[$\left(  j\right)  $] $\mathcal{R}(M)\equiv0,$ if $M\equiv0,$
$\mathbb{P}$-a.s.;

\item[$\left(  jj\right)  $] there exists $C_{\mathcal{R}}>0$ such that, for
any $0\leq s<t\leq T$ and for any $M,N\in\mathcal{M}_{d}^{2}~,$%
\begin{equation}
\mathbb{E}%
{\displaystyle\int_{s}^{t}}
\left\vert \mathcal{R}_{r}(M)-\mathcal{R}_{r}(N)\right\vert ^{2}dr\leq
C_{\mathcal{R}}^{2}\,\mathbb{E}%
{\displaystyle\int_{s+}^{t}}
d\left[  M-N\right]  _{r}, \label{lip_R}%
\end{equation}
where $\left[  M-N\right]  $ is the quadratic variation (\ref{qv-def-m}) of
the martingale $M-N$, a c\`{a}dl\`{a}g nondecreasing adapted process.
\end{itemize}
\end{condition}

Since the increments of a square integrable martingale are orthogonal in
$L^{2}$, the RHS of (\ref{lip_R}) may equally be written as:%
\begin{equation}
\mathbb{E}\int_{s+}^{t}d\left[  M-N\right]  _{r}=\mathbb{E}\left[  M-N\right]
_{t}-\mathbb{E}\left[  M-N\right]  _{s}=\mathbb{E}\left\vert M_{t}%
-N_{t}\right\vert ^{2}-\mathbb{E}\left\vert M_{s}-N_{s}\right\vert ^{2}.
\label{brk incr}%
\end{equation}
Assumption $\left(  \ref{H3}-\left(  jj\right)  \right)  $ states precisely
that, for all martingales $M,N\in\mathcal{M}_{d}^{2}$, the deterministic
measure $\mathbb{E}\left\vert \mathcal{R}_{r}(M)-\mathcal{R}_{r}(N)\right\vert
^{2}dr$ is absolutely continuous with respect to $\mathbb{E}d\left[
M-N\right]  _{r}$, with density bounded by $C_{\mathcal{R}}^{2}$. Let us
observe that $\left(  jj\right)  $, written for $s=0$ and $t=T$, already gives%
\[
\left\vert \left\vert \mathcal{R}(M)-\mathcal{R}(N)\right\vert \right\vert
_{\Lambda_{d\times k}^{2}}^{2}\leq C_{\mathcal{R}}^{2}\,\mathbb{E}\left\vert
M_{T}-N_{T}\right\vert ^{2}=C_{\mathcal{R}}^{2}\left\Vert M-N\right\Vert
_{\mathcal{M}}^{2}~,
\]
i.e. $\mathcal{R}:\mathcal{M}_{d}^{2}\longrightarrow\Lambda_{d\times k}%
^{2}\quad$is a Lipschitz mapping.$\smallskip$

In order to see that the class of such mappings is rich, and that it is by no
means restricted to the extraction of the integrand of an It\^{o} integral
with respect to a Brownian motion, the interested reader in invited to consult
Negru\c{t}, R\u{a}\c{s}canu and Rotenstein \cite{Negrut/Rascanu/Rotenstein:26}%
, where some motivating examples are provided. Also, one can find there the
proof for the following result.

\begin{lemma}
\label{L1_gR}Assumption \ref{H3}$-\left(  jj\right)  $ is equivalent
to:\newline$\left(  jj^{\prime}\right)  \quad$There exists $C_{\mathcal{R}}>0$
such that, for every bounded Borel measurable function $g:\left[  0,T\right]
\rightarrow\mathbb{R}_{+}~$, all $M,N\in\mathcal{M}_{d}^{2}$ and all $0\leq
s<t\leq T$,%
\begin{equation}
\mathbb{E}\int_{s}^{t}g\left(  r\right)  \left\vert \mathcal{R}_{r}%
(M)-\mathcal{R}_{r}(N)\right\vert ^{2}dr\leq C_{\mathcal{R}}^{2}%
\,\mathbb{E}\int_{(s,t]}g\left(  r\right)  d\left[  M-N\right]  _{r}~.
\label{lip_gR}%
\end{equation}

\end{lemma}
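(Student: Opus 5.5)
The implication $(jj')\Rightarrow(jj)$ is immediate: take $g\equiv1$ on $[0,T]$. In (\ref{lip_gR}) the right-hand side becomes $\mathbb{E}\int_{(s,t]}d[M-N]_r$, which is exactly the right-hand side of (\ref{lip_R}).

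For $(jj)\Rightarrow(jj')$ I will use a monotone-class argument in $g$. Fix $M,N\in\mathcal{M}_d^2$. On $[0,T]$ define two finite Borel measures:
\[
\mu(A):=\mathbb{E}\int_0^T\mathbf{1}_A(r)\left\vert \mathcal{R}_r(M)-\mathcal{R}_r(N)\right\vert^2dr,\qquad \nu(A):=C_{\mathcal{R}}^2\,\mathbb{E}\int_{(0,T]}\mathbf{1}_A(r)\,d[M-N]_r .
\]
Both are finite. For $\mu$ this follows from the Lipschitz bound already derived from $(jj)$ with $s=0,t=T$. For $\nu$ it follows from (\ref{angle-energy}), since $\mathbb{E}[M-N]_T=\mathbb{E}|M_T-N_T|^2<\infty$. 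Countable additivity holds by Fubini/monotone convergence, and for $\nu$ also because $r\mapsto[M-N]_r$ is nondecreasing and càdlàg. Note that $\nu(\{0\})=0$ by construction. Moreover, $\mu$ has no atoms, being absolutely continuous with respect to Lebesgue measure.

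In this language, (\ref{lip_R}) states that $\mu((s,t])\le\nu((s,t])$ for all $0\le s<t\le T$. On the left side of (\ref{lip_R}) the integral over $[s,t]$ equals the integral over $(s,t]$, since $\mu$ has no atoms. The main step is to upgrade this to $\mu(A)\le\nu(A)$ for every Borel $A\subset[0,T]$.

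I will do this as follows. First, by additivity the inequality extends to finite disjoint unions of half-open intervals $(s,t]$. These form an algebra generating the Borel $\sigma$-field of $(0,T]$, once the null point $\{0\}$ is added, which is harmless because $\mu(\{0\})=\nu(\{0\})=0$. Second, for a general Borel $A$, I will use outer regularity of the finite measure $\mu+\nu$ through the Carathéodory construction: for every $\varepsilon>0$ there is a countable disjoint union $B=\bigcup_n (s_n,t_n]\supset A$ with $(\mu+\nu)(B\setminus A)<\varepsilon$. Then
\[
\mu(A)\le\mu(B)=\sum_n\mu((s_n,t_n])\le\sum_n\nu((s_n,t_n])=\nu(B)\le\nu(A)+\varepsilon .
\]
Letting $\varepsilon\to0$ gives $\mu\le\nu$ on all Borel sets. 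Alternatively, $\nu-\mu$ is a finite signed measure that is nonnegative on a generating $\pi$-system; applying Dynkin's $\pi$–$\lambda$ theorem to the class $\{A:\mu(A)\le\nu(A)\}$ is not directly valid because that class is not closed under proper differences, so I prefer the approximation argument above. This regularity step is where some care is needed.

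Finally, I pass from sets to functions. Since $\mu\le\nu$, we get $\int h\,d\mu\le\int h\,d\nu$ for every nonnegative simple $h$. For bounded Borel $g\ge0$, choose simple $h_k\uparrow g$ and apply monotone convergence. Taking $h=g\mathbf{1}_{(s,t]}$ and unwinding the definitions of $\mu$ and $\nu$ via Fubini gives (\ref{lip_gR}). Boundedness of $g$ is not even essential here, but it keeps every quantity finite.
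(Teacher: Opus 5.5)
Your proof is correct. The paper gives no proof of Lemma~\ref{L1_gR} and defers to \cite{Negrut/Rascanu/Rotenstein:26}. The only trace of the idea in the text is the remark after (\ref{brk incr}): $(jj)$ says the deterministic measure $\mathbb{E}|\mathcal{R}_r(M)-\mathcal{R}_r(N)|^2dr$ is dominated by $C_{\mathcal{R}}^2\,\mathbb{E}\,d[M-N]_r$. Your argument is precisely the rigorous version of that remark.

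Its real content is the step you single out: passing from $\mu\le\nu$ on half-open intervals to $\mu\le\nu$ on all Borel sets. This is what gives access to arbitrary bounded Borel weights $g$. Approximating $g$ by step functions alone does not reach every bounded Borel function without a monotone class or extension argument of exactly this kind. Your Carath\'eodory outer-approximation handles this correctly. You are also right that Dynkin's theorem cannot be applied directly to $\{A:\mu(A)\le\nu(A)\}$, because its closure under proper differences cannot be checked a priori.

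An equally short alternative is available. Set $G(t):=(\nu-\mu)((0,t])$. It is finite and right-continuous, and it is nondecreasing because its increments are $(\nu-\mu)((s,t])\ge0$. Hence $G$ is the distribution function of a nonnegative finite measure $\rho$ on $(0,T]$. Then $\mu+\rho$ and $\nu$ agree on the $\pi$-system of intervals $(s,t]$, which contains $(0,T]$. So they coincide, and $\nu=\mu+\rho\ge\mu$.

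The one detail you leave implicit is that $\omega\mapsto\int_{(0,T]}\mathbf{1}_A(r)\,d[M-N]_r(\omega)$ is $\mathcal{F}$-measurable for every Borel $A$; this is needed for $\nu$ to be well defined. It follows from the interval case, where it equals $[M-N]_t-[M-N]_s$, by a standard monotone class argument.
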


Under the above assumptions on $F$ and $\mathcal{R}$ we will deduce that%
\[
\left(  Y,M\right)  \longrightarrow\int_{0}^{\cdot}F(s,Y_{s},\mathcal{R}%
_{s}(M))ds
\]
is a continuous mapping from $\Lambda_{d}^{2}\times\mathcal{M}_{d}^{2}$ into
${\mathcal{{\mathbb{D}}}}_{d}^{2}~$, as follows from the next lemma, whose
proof is a direct consequence of the Cauchy--Schwarz inequality together with
Assumptions \ref{H2} and \ref{H3} and is therefore omitted.

\begin{lemma}
\label{L2_F} For all $(Y,M)$, $\left(  Z,N\right)  \in\Lambda_{d}^{2}%
\times\mathcal{M}_{d}^{2}~$:
\[
\mathbb{E}\sup\limits_{t\in\left[  0,T\right]  }\left\vert
{\displaystyle\int_{0}^{t}}
\left(  F(s,Y_{s},\mathcal{R}_{s}(M))-F(s,Z_{s},\mathcal{R}_{s}(N))\right)
ds\right\vert ^{2}\leq C\left[  T\mathbb{E}\sup\limits_{s\in\left[
0,T\right]  }\left\vert Y_{s}-Z_{s}\right\vert ^{2}+\mathbb{E}\left\vert
M_{T}-N_{T}\right\vert ^{2}\right]  ;
\]
where
\[
C:=2\int_{0}^{T}L^{2}\left(  s\right)  ds+2C_{\mathcal{R}}^{2}\int_{0}^{T}%
\ell^{2}\left(  s\right)  ds;
\]
in particular, for $\left(  Z,N\right)  =\left(  0,0\right)  $,%
\begin{equation}%
\begin{array}
[c]{l}%
\mathbb{E}\sup\limits_{t\in\left[  0,T\right]  }\left\vert
{\displaystyle\int_{0}^{t}}
F(s,Y_{s},\mathcal{R}_{s}(M))ds\right\vert ^{2}\leq\mathbb{E}\left(
{\displaystyle\int_{0}^{T}}
\left\vert F(s,Y_{s},\mathcal{R}_{s}(M))\right\vert ds\right)  ^{2}\medskip\\
\quad\quad\quad\quad\quad\leq2C\times\left(  \mathbb{E}%
{\displaystyle\int_{0}^{T}}
\left\vert Y_{s}\right\vert ^{2}ds+\mathbb{E}\left\vert M_{T}\right\vert
^{2}\right)  +2\mathbb{E}\left(
{\displaystyle\int_{0}^{T}}
\left\vert F(s,0,0)\right\vert ds\right)  ^{2}%
\end{array}
\label{le-F2}%
\end{equation}

\end{lemma}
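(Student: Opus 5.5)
The estimate is a pathwise Cauchy--Schwarz argument followed by an application of Assumption \ref{H2} to split the integrand and Assumption \ref{H3}--$(jj)$ to control the martingale part.

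\textbf{Step 1: pathwise bound.} For every $t\in[0,T]$,
\[
\Bigl|\int_0^t \bigl(F(s,Y_s,\mathcal{R}_s(M))-F(s,Z_s,\mathcal{R}_s(N))\bigr)ds\Bigr|\le \int_0^T \Delta_s\,ds,
\]
where
\[
\Delta_s:=\bigl|F(s,Y_s,\mathcal{R}_s(M))-F(s,Z_s,\mathcal{R}_s(N))\bigr|.
\]
The right-hand side does not depend on $t$, so the supremum over $t$ is controlled by $\bigl(\int_0^T\Delta_s\,ds\bigr)^2$.

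\textbf{Step 2: split $\Delta_s$.} Insert the intermediate point $F(s,Z_s,\mathcal{R}_s(M))$ and use (\ref{lip_F}):
\[
\Delta_s\le L(s)|Y_s-Z_s|+\ell(s)|\mathcal{R}_s(M)-\mathcal{R}_s(N)|.
\]
By $(a+b)^2\le 2a^2+2b^2$ followed by Cauchy--Schwarz in $ds$ on each term,
\[
\Bigl(\int_0^T\Delta_s\,ds\Bigr)^2\le 2\int_0^T L^2\,ds\int_0^T|Y_s-Z_s|^2ds+2\Bigl(\int_0^T\ell(s)|\mathcal{R}_s(M)-\mathcal{R}_s(N)|\,ds\Bigr)^2.
\]

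\textbf{Step 3: the $Y$-term.} Taking expectations, use $\int_0^T|Y_s-Z_s|^2ds\le T\sup_s|Y_s-Z_s|^2$. This gives a contribution bounded by $2\int_0^T L^2\,ds\cdot T\,\mathbb{E}\sup_s|Y_s-Z_s|^2$.

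\textbf{Step 4: the martingale term (the main obstacle).} Cauchy--Schwarz in $ds$ would produce $\int_0^T\ell^2\,ds\cdot\int_0^T|\mathcal{R}_s(M)-\mathcal{R}_s(N)|^2ds$. The difficulty is that Assumption \ref{H3} only bounds the expectation of the second factor; it gives no pathwise bound. The two factors separate cleanly here only because $\ell$ is deterministic. Hence
\[
\mathbb{E}\Bigl(\int_0^T\ell\,|\mathcal{R}(M)-\mathcal{R}(N)|\,ds\Bigr)^2\le\int_0^T\ell^2\,ds\;\mathbb{E}\int_0^T|\mathcal{R}_s(M)-\mathcal{R}_s(N)|^2ds.
\]
Applying (\ref{lip_R}) with $s=0$, $t=T$, and then (\ref{brk incr}), the last expectation is at most $C_{\mathcal{R}}^2\,\mathbb{E}|M_T-N_T|^2$.

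\textbf{Conclusion.} Combining Steps 3 and 4 yields the stated bound with $C=2\int_0^TL^2+2C_{\mathcal{R}}^2\int_0^T\ell^2$.

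\textbf{The particular case (\ref{le-F2}).} The first inequality is Step 1 with $(Z,N)=(0,0)$. For the second, write $F(s,Y_s,\mathcal{R}_s(M))=\bigl[F(s,Y_s,\mathcal{R}_s(M))-F(s,0,0)\bigr]+F(s,0,0)$ and use $(a+b)^2\le 2a^2+2b^2$. Assumption \ref{H3}--$(j)$ gives $\mathcal{R}(0)=0$, so the bracket is handled exactly as above with $(Z,N)=(0,0)$. In Step 3 one now keeps the $Y$-term as $\mathbb{E}\int_0^T|Y_s|^2ds$ instead of bounding it by $T\,\mathbb{E}\sup_s|Y_s|^2$. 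This produces $2C\bigl(\mathbb{E}\int_0^T|Y_s|^2ds+\mathbb{E}|M_T|^2\bigr)+2\,\mathbb{E}\bigl(\int_0^T|F(s,0,0)|ds\bigr)^2$, as claimed.
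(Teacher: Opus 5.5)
Your proof is correct and is essentially the argument the paper has in mind. The paper omits the proof as ``a direct consequence of the Cauchy--Schwarz inequality together with Assumptions \ref{H2} and \ref{H3}'', and you carry out exactly that: the pathwise bound by $\int_0^T\Delta_s\,ds$, the Lipschitz splitting, Cauchy--Schwarz in $ds$ using that $L,\ell$ are deterministic, then (\ref{lip_R}) at $s=0$, $t=T$ with $M_0=N_0=0$, and finally $\mathcal{R}(0)=0$ for the particular case (\ref{le-F2}).
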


We give now a result (similar to Theorem 4.1 from Liang, Lyons, Qian
\cite{Liang/Lyons/Qian:11}) concerning the behavior of the solution for a BSDE
driven by a martingale and featuring a functional representation of the third
variable of the driver $F$. For the complete proof of this result, we refer to
Negru\c{t}, R\u{a}\c{s}canu and Rotenstein \cite{Negrut/Rascanu/Rotenstein:26}.

\begin{proposition}
\label{lip_FR}\textit{Under Assumptions \ref{H1}, \ref{H2}\ and \ref{H3},
there exists a unique pair }$(Y,M)\in\mathbb{D}_{d}^{2}\times\mathcal{M}%
_{d}^{2}~$\textit{, which is the solution of the equation }%
\begin{equation}
Y_{t}=\eta+%
{\displaystyle\int_{t}^{T}}
F\left(  r,Y_{r},\mathcal{R}_{r}(M)\right)  dr-(M_{T}-M_{t}),\text{ }%
t\in\left[  0,T\right]  ,\text{ a.s.,} \label{eq_FR}%
\end{equation}
\textit{and for some constant }$C=C(T,L,\ell,C_{\mathcal{R}})$\textit{, }%
\begin{equation}
\mathbb{E}\sup_{0\leq t\leq T}|Y_{t}|^{2}+\mathbb{E}\left\vert M_{T}%
\right\vert ^{2}\leq C\left(  \mathbb{E}|\eta|^{2}+\mathbb{E}\int_{0}%
^{T}|F(r,0,0)|^{2}dr\right)  . \label{estim FR}%
\end{equation}

\end{proposition}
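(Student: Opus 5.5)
The plan is a Picard fixed point in $\mathbb{D}_{d}^{2}\times\mathcal{M}_{d}^{2}$. For $(U,N)$ in this space, set $\Gamma(U,N):=(Y,M)$ as follows. Let
\[
\xi:=\eta+\int_{0}^{T}F(r,U_{r},\mathcal{R}_{r}(N))\,dr,
\]
which lies in $L^{2}(\Omega,\mathcal{F}_{T},\mathbb{P};\mathbb{R}^{d})$ by (\ref{le-F2}). Define $M_{t}:=\mathbb{E}[\xi|\mathcal{F}_{t}]-\mathbb{E}\xi$, taking its c\`{a}dl\`{a}g version; this belongs to $\mathcal{M}_{d}^{2}$. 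Then put
\[
Y_{t}:=\mathbb{E}[\xi|\mathcal{F}_{t}]-\int_{0}^{t}F(r,U_{r},\mathcal{R}_{r}(N))\,dr .
\]
A direct check shows that $Y_{t}=\eta+\int_{t}^{T}F(r,U_{r},\mathcal{R}_{r}(N))dr-(M_{T}-M_{t})$, and $Y\in\mathbb{D}_{d}^{2}$ by Doob's inequality (\ref{bdg adapted}) together with Lemma \ref{L2_F}. Fixed points of $\Gamma$ are exactly the solutions of (\ref{eq_FR}). No martingale representation is used anywhere; only conditional expectations and c\`{a}dl\`{a}g modifications are needed.

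For the contraction I would use a weighted norm instead of shrinking time. Take two inputs $(U^{i},N^{i})$ with outputs $(Y^{i},M^{i})$, and write $\delta Y,\delta M,\delta F$ for the differences. Apply It\^{o}'s formula for c\`{a}dl\`{a}g semimartingales (Annex) to $e^{\beta A_{t}}|\delta Y_{t}|^{2}$. The weight is $A_{t}=\int_{0}^{t}(1+L(r)+\ell^{2}(r))dr$, which is deterministic since $L,\ell\in L^{2}$. Since $\delta Y$ has jumps only through $\delta M$, this yields
\[
\mathbb{E}e^{\beta A_{t}}|\delta Y_{t}|^{2}+\beta\mathbb{E}\int_{t}^{T}e^{\beta A_{r}}|\delta Y_{r}|^{2}dA_{r}+\mathbb{E}\int_{(t,T]}e^{\beta A_{r}}d[\delta M]_{r}=2\mathbb{E}\int_{t}^{T}e^{\beta A_{r}}\langle\delta Y_{r},\delta F_{r}\rangle dr .
\]
Here the stochastic integral is a true martingale, by the square integrability of $\sup|\delta Y|$ and $[\delta M]_{T}$.

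To close the estimate, bound $|\delta F_{r}|\le L(r)|\delta U_{r}|+\ell(r)|\mathcal{R}_{r}(N^{1})-\mathcal{R}_{r}(N^{2})|$ and use Young's inequality. Then apply Lemma \ref{L1_gR} with the bounded weight $g=e^{\beta A}$, which turns the $\mathcal{R}$-term into $C_{\mathcal{R}}^{2}\mathbb{E}\int e^{\beta A}d[\delta N]$. The result is that the weighted quantity $\mathbb{E}\int e^{\beta A}|\delta Y|^{2}dA+\mathbb{E}\int e^{\beta A}d[\delta M]$ is at most $c/\beta$ times the same quantity for the inputs.

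\textbf{Main obstacle.} The difficulty is the factor $\ell^{2}(r)$ multiplying the $\mathcal{R}$-term. Lemma \ref{L1_gR} only allows deterministic weights $g$, so $\ell^{2}$ must be absorbed into $g$ or into $dA$. I would therefore choose $g=e^{\beta A}$ and split $2\ell|\delta Y||\delta\mathcal{R}|\le\beta\ell^{2}|\delta Y|^{2}/2+2|\delta\mathcal{R}|^{2}/\beta$. The first part is absorbed by the $dA$-term on the left, and the second is handled by Lemma \ref{L1_gR}. Taking $\beta$ large makes $\Gamma$ a strict contraction in the norm $\mathbb{E}\int e^{\beta A}|Y|^{2}dA+\mathbb{E}\int e^{\beta A}d[M]$. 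This norm controls only $\Lambda$-type integrals, not the supremum, so I would upgrade it to $\mathbb{D}_{d}^{2}$ by applying Doob's inequality to the representation of $Y$ and using Lemma \ref{L2_F}.

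For existence, I would iterate the map in the space $\Lambda_{d}^{2}\times\mathcal{M}_{d}^{2}$ and then show that the limit lies in $\mathbb{D}_{d}^{2}$. Uniqueness follows from the same contraction estimate. The bound (\ref{estim FR}) follows by comparing with $(U,N)=(0,0)$: run the same weighted It\^{o} estimate with $\delta F$ replaced by $F(r,Y_{r},\mathcal{R}_{r}(M))$, bounded using $F(r,0,0)$ and Assumption \ref{H3}-$(j)$. This gives the weighted bound; then Doob's inequality together with (\ref{le-F2}) controls $\mathbb{E}\sup|Y|^{2}$, and $e^{\beta A}\ge1$ lets us drop the weights. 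The resulting constant depends only on $T$, $L$, $\ell$ and $C_{\mathcal{R}}$.
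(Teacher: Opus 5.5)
Your argument is correct, but it follows a different route from the one the paper indicates. The paper gives no proof here. It defers to \cite{Negrut/Rascanu/Rotenstein:26} and models the result on Theorem 4.1 of \cite{Liang/Lyons/Qian:11}, where the fixed point for the path-space reformulation is obtained on short terminal intervals and then glued backward in time. Lemma \ref{L2_F} fits that scheme, since its constant $2\int L^{2}+2C_{\mathcal{R}}^{2}\int\ell^{2}$ becomes small when restricted to a short interval. The interval-local form of Assumption \ref{H3}-$(jj)$ is what makes the gluing possible.

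You instead get one global contraction. You insert the deterministic weight $e^{\beta A_t}$, with $dA=(1+L+\ell^{2})\,dr$, into the It\^{o} energy identity and use the weighted form in Lemma \ref{L1_gR}. This has three advantages:
\begin{itemize}
\item it avoids partitioning $[0,T]$ and concatenating local solutions;
\item it absorbs the merely integrable densities $L$ and $\ell^{2}$ directly;
\item it yields (\ref{estim FR}) from the same computation, with a constant explicit in $A_T=T+\|L\|_{L^1}+\|\ell\|_{L^2}^2$ and $C_{\mathcal{R}}$.
\end{itemize}
The price is that you need It\^{o}'s formula for the c\`{a}dl\`{a}g semimartingale $\delta Y$, a true-martingale check, and a contraction norm that does not control suprema. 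The short-interval scheme uses only conditional expectations and Doob's inequality, and works in a supremum-type norm from the start.

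Two small repairs are needed.
\begin{itemize}
\item The usual hypotheses do not make $\mathcal{F}_0$ trivial, so set $M_t:=\mathbb{E}[\xi|\mathcal{F}_t]-\mathbb{E}[\xi|\mathcal{F}_0]$. Otherwise $M_0\neq0$, your map does not land in $\mathcal{M}_d^2$, and a solution with random $Y_0$ is not a fixed point of it.
\item The contraction holds in $L^2(\Omega\times[0,T],d\mathbb{P}\otimes dA)\times\mathcal{M}_d^2$, which is strictly smaller than $\Lambda_d^2\times\mathcal{M}_d^2$ when $L,\ell$ are unbounded. This is harmless: your map is well defined on that complete space and sends it into $\mathbb{D}_d^2\times\mathcal{M}_d^2$, so the Banach fixed point theorem applies there. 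It also makes the upgrade of the fixed point to $\mathbb{D}_d^2$ immediate.
\end{itemize}
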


In what follows we first revisit the results of Theorem 3.5 from Bensoussan,
Li and Yam \cite{Bensoussan/Li/Yam:2018}. Based on the estimates obtained in
Proposition \ref{Lemma with important bounds}, it is proved in
\cite{Negrut/Rascanu/Rotenstein:26} that, within a rigorous c\`{a}dl\`{a}g
framework, this oblique reflected BSDE admits a unique solution, but with the
obliquity produced by a symmetric matrix $H$, satisfying the assumptions
introduced by Gassous, R\u{a}\c{s}canu and Rotenstein
\cite{Gassous/Rascanu/Rotenstein:12}, \cite{Gassous/Rascanu/Rotenstein:15}.
After this, in Section \ref{main section}, we take a bidimensional obstacle
problem, consider $H$ to be a rotation matrix and develop our main result.

\section{BSDEs with oblique subgradients and martingale noise}

We are interested in the study of the following problem.

\begin{problem}
Show that there exists a unique triple $\left(  Y,M,U\right)  $, belonging to
a suitable space, such that%
\begin{equation}
\left\{
\begin{array}
[c]{l}%
Y_{t}+%
{\displaystyle\int_{t}^{T}}
H_{r}U_{r}dr=\eta+%
{\displaystyle\int_{t}^{T}}
F\left(  r,Y_{r},\mathcal{R}_{r}(M)\right)  dr-(M_{T}-M_{t}),\text{ }\forall
t\in\left[  0,T\right]  ,\text{ }\mathbb{P}\text{-}a.s.,\smallskip\\
U_{r}\in\partial\varphi\left(  Y_{r}\right)  ,\text{ }d\mathbb{P\otimes
}dr\text{-a.e.}%
\end{array}
\right.  \label{def of solution}%
\end{equation}
The exact meaning of the solution is given in Definition \ref{def of sol}.
\end{problem}

For now, assume that $\eta$, $F$ and $\mathcal{R}$ satisfy the classical
Assumptions \ref{H1}, \ref{H2}, \ref{H3}, respectively, and we impose the
following further assumptions, in order to highlight the working framework
considered until the present study.

\begin{condition}
[H4]\label{H4}The transforming term $H(\cdot,\cdot):\Omega\times\left[
0,T\right]  \rightarrow\mathbb{R}^{d\times d}$ is an $\mathcal{F}_{t}%
$-progressively measurable, symmetric matrix-valued process and there exist
constants $a_{H},b_{H},c_{H}>0$ such that $H_{\cdot}\left(  \omega\right)
=\left(  h_{i,j}\left(  \omega,\cdot\right)  \right)  _{d\times d}\in
C^{1}\left(  \left[  0,T\right]  ;\mathbb{R}^{d\times d}\right)  $ with the
operatorial norm satisfying $\left\vert \left\vert dH_{r}\left(
\omega\right)  /dr\right\vert \right\vert _{op}\leq c_{H}$, for every$\,r\in
\left[  0,T\right]  ,$ $\mathbb{P}$-a.s.. Moreover, for all $t\in\left[
0,T\right]  $ and $u\in\mathbb{R}^{d}$, $\mathbb{P}$-$a.s.$,%
\begin{equation}
a_{H}\left\vert u\right\vert ^{2}\leq\left\langle H_{t}u,u\right\rangle \leq
b_{H}\left\vert u\right\vert ^{2}. \label{h2}%
\end{equation}

\end{condition}

\begin{condition}
[H5]\label{H5}The function $\varphi:\mathbb{R}^{d}\rightarrow(-\infty
,+\infty]$ is a proper lower semicontinuous convex function and the terminal
datum satisfies $\mathbb{E}\,\varphi\left(  \eta\right)  <+\infty.$
\end{condition}

\noindent Denote by $\partial\varphi$ the subdifferential operator of
$\varphi$, i.e.%
\[
\partial\varphi\left(  x\right)  :=\{\hat{x}\in\mathbb{R}^{d}:\left\langle
\hat{x},y-x\right\rangle +\varphi\left(  x\right)  \leq\varphi\left(
y\right)  ,\text{ for all }y\in\mathbb{R}^{d}\}
\]
and define $Dom\left(  \partial\varphi\right)  :=\{x\in\mathbb{R}^{d}%
:\partial\varphi\left(  x\right)  \neq\emptyset\}$. Let us use the notation
$(x,\hat{x})\in\partial\varphi$ in order to express that $x\in Dom\left(
\partial\varphi\right)  $ and $\hat{x}\in\partial\varphi\left(  x\right)  $.

\begin{definition}
The vector given by the quantity $H_{t}\hat{x}$, with $\hat{x}\in
\partial\varphi\left(  x\right)  $, is called an \textit{oblique subgradient}.
\end{definition}

\begin{definition}
\label{def of sol}Given a stochastic basis $\left(  \Omega,\mathcal{F}%
,\mathbb{P},\mathbb{F}=\{\mathcal{F}_{t}\}_{t\geq0}\right)  $, we say that a
triplet $\left(  Y,M,U\right)  $ is a strong solution for the oblique
reflected BSVI (\ref{def of solution}) if $\left(  Y,M,U\right)
\in{\mathcal{{\mathbb{D}}}}_{d}^{0}\times\mathcal{M}_{d}^{0}\times\Lambda
_{d}^{0}$ and, $\mathbb{P}$-a.s.,%
\[
Y_{t}+%
{\displaystyle\int_{t}^{T}}
H_{r}U_{r}dr=\eta+%
{\displaystyle\int_{t}^{T}}
F\left(  r,Y_{r},\mathcal{R}_{r}(M)\right)  dr-(M_{T}-M_{t}),\text{ }\forall
t\in\left[  0,T\right]  .
\]
Moreover, for every progressively measurable stochastic process $v$, and any
$0\leq s\leq t\leq T$,%
\[
\mathbb{E}\int_{s}^{t}\left\langle v_{r}-Y_{r},U_{r}\right\rangle
dr+\mathbb{E}\int_{s}^{t}\varphi\left(  Y_{r}\right)  dr\leq\mathbb{E}\int
_{s}^{t}\varphi\left(  v_{r}\right)  dr,
\]
i.e. $U_{r}\in\partial\varphi\left(  Y_{r}\right)  ,$ $d\mathbb{P\otimes}dr$-a.e.
\end{definition}

\begin{theorem}
[Refined version of Theorem 3.5 from Bensoussan, Li, Yam
\cite{Bensoussan/Li/Yam:2018}]\label{Main result1} Assume that we situate in
the working framework described above, and let the Assumptions \ref{H1},
\ref{H2}, \ref{H3}, \ref{H4} and \ref{H5} be satisfied. Assume,\ also, that
$\ell\left(  t\right)  \equiv\ell$ is a positive constant satisfying
$a_{H}>\ell^{2}C_{\mathcal{R}}^{2}/2$. Then Problem \ref{def of solution}
admits a unique solution in the sense of Definition \ref{def of sol}.
Moreover, $\left(  Y,M,U\right)  \in{\mathcal{{\mathbb{D}}}}_{d}^{2}%
\times\mathcal{M}_{d}^{2}\times\Lambda_{d}^{2}$~.
\end{theorem}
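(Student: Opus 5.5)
The proof will go through Moreau--Yosida penalization, following Gassous--R\u{a}\c{s}canu--Rotenstein and adapted to the c\`{a}dl\`{a}g martingale setting. For $\varepsilon>0$ let $\varphi_\varepsilon$ be the Moreau--Yosida regularization of $\varphi$, with $\nabla\varphi_\varepsilon$ Lipschitz with constant $1/\varepsilon$ and $\nabla\varphi_\varepsilon(x)\in\partial\varphi(J_\varepsilon x)$, where $J_\varepsilon x=x-\varepsilon\nabla\varphi_\varepsilon(x)$. Since $H$ is bounded, the generator $F(r,y,z)-H_r\nabla\varphi_\varepsilon(y)$ still satisfies Assumptions \ref{H2}--\ref{H3}. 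Proposition \ref{lip_FR} therefore gives a unique pair $(Y^\varepsilon,M^\varepsilon)\in\mathbb{D}^2_d\times\mathcal{M}^2_d$ solving
\[
Y^\varepsilon_t+\int_t^T H_r\nabla\varphi_\varepsilon(Y^\varepsilon_r)dr=\eta+\int_t^T F(r,Y^\varepsilon_r,\mathcal{R}_r(M^\varepsilon))dr-(M^\varepsilon_T-M^\varepsilon_t).
\]

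The first step is to prove bounds uniform in $\varepsilon$. I will apply the c\`{a}dl\`{a}g It\^{o} formula to $|Y^\varepsilon_t|^2$, so that the jumps enter through $[M^\varepsilon]$. To bound the penalty term I will use the oblique structure in the form $\langle H_r^{-1}y,\nabla\varphi_\varepsilon(y)\rangle$. Concretely, I will apply It\^{o}'s formula to $\langle H_t^{-1}Y^\varepsilon_t,Y^\varepsilon_t\rangle$, or alternatively to $\varphi_\varepsilon(Y^\varepsilon_t)$ with the weight supplied by $H$. The time derivative of $H^{-1}$ is controlled by $c_H/a_H^2$, and ellipticity gives $\langle H\nabla\varphi_\varepsilon,\nabla\varphi_\varepsilon\rangle\ge a_H|\nabla\varphi_\varepsilon|^2$. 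Under Assumption \ref{H5} this should give
\[
\mathbb{E}\sup_t|Y^\varepsilon_t|^2+\mathbb{E}|M^\varepsilon_T|^2+\mathbb{E}\int_0^T|\nabla\varphi_\varepsilon(Y^\varepsilon_r)|^2dr+\mathbb{E}\sup_t\varphi_\varepsilon(Y^\varepsilon_t)\le C .
\]
The generator's dependence on $M$ is absorbed using Lemma \ref{L1_gR} with an exponential weight $g(r)=e^{\beta r}$. The bound $\mathbb{E}\int g|\mathcal{R}(M)|^2dr\le C_{\mathcal{R}}^2\mathbb{E}\int g\,d[M]$ is then compared with the $d[M^\varepsilon]$ term coming from It\^{o}'s formula. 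The condition $a_H>\ell^2C_{\mathcal{R}}^2/2$ is exactly what allows Young's inequality to absorb the term $\ell|\mathcal{R}|\,|\cdot|$ against the positive quadratic-variation and penalty terms, because the $H$-weighted energy scales the $[M]$ term by a factor comparable to $a_H$.

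The second and main step is the Cauchy property of $(Y^\varepsilon,M^\varepsilon)$. Differences of oblique subgradients are not monotone, so $|Y^\varepsilon-Y^\delta|^2$ does not work directly. I will instead apply It\^{o}'s formula to the weighted quantity $\langle H_t^{-1}(Y^\varepsilon_t-Y^\delta_t),Y^\varepsilon_t-Y^\delta_t\rangle$, which converts the reflection term into $\langle Y^\varepsilon-Y^\delta,\nabla\varphi_\varepsilon(Y^\varepsilon)-\nabla\varphi_\delta(Y^\delta)\rangle$. This term is bounded from below by the standard estimate $-(\varepsilon+\delta)\langle\nabla\varphi_\varepsilon(Y^\varepsilon),\nabla\varphi_\delta(Y^\delta)\rangle$. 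The sign-indefinite remainders come from $dH^{-1}/dt$ (bounded by $c_H$) and from the Lipschitz terms in $F$. Because $H^{-1}$ is time dependent and multiplies $d[M^\varepsilon-M^\delta]$, the quadratic variation appears with coefficient at least $1/b_H$. I therefore expect the main obstacle to be matching this coefficient against the $\ell C_{\mathcal{R}}$ contribution, and the constant bookkeeping is delicate. My first attempt will be to keep $H$ on the Young side rather than inverting it, reproducing the constant $a_H>\ell^2C_{\mathcal{R}}^2/2$, and to use a Gronwall argument with the weight $e^{\beta t}$. The uniform $L^2$ bound on the penalty terms then gives $\mathbb{E}\sup|Y^\varepsilon-Y^\delta|^2+\mathbb{E}|M^\varepsilon_T-M^\delta_T|^2\le C(\varepsilon+\delta)$, and BDG-type inequalities for c\`{a}dl\`{a}g martingales upgrade this to the sup norm.

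To conclude, I pass to the limit $(Y^\varepsilon,M^\varepsilon)\to(Y,M)$ in $\mathbb{D}^2_d\times\mathcal{M}^2_d$. Along a subsequence $U^\varepsilon:=\nabla\varphi_\varepsilon(Y^\varepsilon)\rightharpoonup U$ weakly in $\Lambda^2_d$, and Lemma \ref{L2_F} handles the generator. Since $|Y^\varepsilon-J_\varepsilon Y^\varepsilon|=\varepsilon|U^\varepsilon|\to0$ in $L^2$ and $U^\varepsilon\in\partial\varphi(J_\varepsilon Y^\varepsilon)$, the subgradient inequality passes to the weak limit by lower semicontinuity of $\varphi$, which gives the variational inequality of Definition \ref{def of sol}. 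Uniqueness follows from the same weighted It\^{o} computation applied to two solutions, where monotonicity of $\partial\varphi$ now holds exactly with no penalty error, followed by Gronwall.
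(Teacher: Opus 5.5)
Your proposal is correct and follows the route the paper indicates for this result, whose detailed proof is deferred to \cite{Negrut/Rascanu/Rotenstein:26}: Moreau--Yosida penalization with the $\varepsilon$-uniform bounds of Proposition \ref{Lemma with important bounds}; a Cauchy estimate via the test function $\langle H_r^{-1}(x-y),x-y\rangle=|H_r^{-1/2}(x-y)|^2$, whose symmetry turns the oblique terms into $\langle Y^\varepsilon-Y^\delta,\nabla\varphi_\varepsilon(Y^\varepsilon)-\nabla\varphi_\delta(Y^\delta)\rangle$, handled by the monotonicity estimate; and identification of the limit by weak compactness. Two minor remarks: the bound on $\mathbb{E}\sup_t\varphi_\varepsilon(Y^\varepsilon_t)$ is not needed and is not straightforward to obtain; and the bookkeeping you flag as delicate is harmless, because with a deterministic exponential weight (admissible in Lemma \ref{L1_gR}) the term $\ell|Y^\varepsilon-Y^\delta||\mathcal{R}(M^\varepsilon)-\mathcal{R}(M^\delta)|$ can be split by Young with any parameter against the coercive $b_H^{-1}\,d[M^\varepsilon-M^\delta]$, and the leftover $|Y^\varepsilon-Y^\delta|^2$ is absorbed by the weight.
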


As a first step in proving this theorem, we employ the classical Moreau-Yosida
technique for penalizing the equation. The a priori estimates which can be
obtained for the approximating sequence of solutions do not require the
symmetry of $H$, remaining valid under weaker assumptions for this matrix.
More precisely, one can replace here Assumption \ref{H4} by the following
weaker one.

\begin{condition}
[H4W]\label{H4W}The transforming term $H(\cdot,\cdot):\Omega\times\left[
0,T\right]  \rightarrow\mathbb{R}^{d\times d}$ is a progressively measurable,
matrix-valued stochastic process and there exist some constants $a_{H}%
,b_{H}>0$ such that, for all $u\in\mathbb{R}^{d}$,%
\[
a_{H}\left\vert u\right\vert ^{2}\leq\left\langle H_{t}u,u\right\rangle
\leq\left\Vert H_{t}\right\Vert _{op}\left\vert u\right\vert ^{2}\leq
b_{H}\left\vert u\right\vert ^{2},\quad d\mathbb{P}\otimes dt\text{-a.e.}%
\]

\end{condition}

Let $0<\varepsilon\leq1$ and consider the approximating BSDE, driven by a
martingale term:%
\begin{equation}
Y_{t}^{\varepsilon}+%
{\displaystyle\int_{t}^{T}}
H_{r}\nabla\varphi_{\varepsilon}\left(  Y_{r}^{\varepsilon}\right)  dr=\eta+%
{\displaystyle\int_{t}^{T}}
F\left(  r,Y_{r}^{\varepsilon},\mathcal{R}_{r}(M^{\varepsilon})\right)
dr-(M_{T}^{\varepsilon}-M_{t}^{\varepsilon}),\quad\forall t\in\left[
0,T\right]  , \label{approximating eq for general case}%
\end{equation}
with $\varphi_{\varepsilon}$ being the Moreau-Yosida regularization of the
proper convex lower-semicontinuous function $\varphi$ (see Lemma \ref{conv}).
We have the following result, which is, in fact, \textit{Milestone 1} of the
proof of Theorem \ref{Main result1}.

\begin{proposition}
\label{Lemma with important bounds} Let Assumptions \ref{H1}, \ref{H2},
\ref{H3}, \ref{H5}, and \ref{H4W} be satisfied, and assume, as announced in
Theorem \ref{Main result1}, that $\ell\left(  t\right)  \equiv\ell$ is a
positive constant such that the following compatibility condition holds:
$a_{H}>\ell^{2}C_{\mathcal{R}}^{2}/2$. Then, the penalized equation
(\ref{approximating eq for general case}) admits a unique solution $\left(
Y^{\varepsilon},M^{\varepsilon}\right)  \in{\mathcal{{\mathbb{D}}}}_{d}%
^{2}\times\mathcal{M}_{d}^{2}$ and the following estimates hold: there exists
a positive constant $C$, independent of $\varepsilon$, such that,%
\begin{equation}
\left\{
\begin{array}
[c]{ll}%
\left(  j\right)  \quad & \mathbb{E}\sup\limits_{t\in\left[  0,T\right]
}|Y_{t}^{\varepsilon}|^{2}+\mathbb{E}\sup\limits_{r\in\left[  0,T\right]
}\left\vert M_{r}^{\varepsilon}\right\vert ^{2}\leq C,\medskip\\
\left(  jj\right)  \quad & \mathbb{E}%
{\displaystyle\int_{0}^{T}}
\left\vert \nabla\varphi_{\varepsilon}(Y_{r}^{\varepsilon})\right\vert
^{2}dr+\mathbb{E}%
{\displaystyle\int_{0}^{T}}
\left\vert \mathcal{R}_{r}(M^{\varepsilon})\right\vert ^{2}dr\leq C,\medskip\\
\left(  jjj\right)  \quad & \mathbb{E}%
{\displaystyle\int_{0}^{T}}
\left\vert Y_{r}^{\varepsilon}-J_{\varepsilon}\left(  Y_{r}^{\varepsilon
}\right)  \right\vert ^{2}dr\leq C\varepsilon^{2},\medskip\\
\left(  jv\right)  \quad & \mathbb{E}%
{\displaystyle\int_{0}^{T}}
\left\vert \varphi\left(  J_{\varepsilon}\left(  Y_{r}^{\varepsilon}\right)
\right)  \right\vert dr\leq C\text{.}%
\end{array}
\right.  \label{estimates for the approx eq.}%
\end{equation}

\end{proposition}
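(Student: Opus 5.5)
Existence and uniqueness for (\ref{approximating eq for general case}) come directly from Proposition \ref{lip_FR}. Put $F^{\varepsilon}(r,y,z):=F(r,y,z)-H_{r}\nabla\varphi_{\varepsilon}(y)$. The map $\nabla\varphi_{\varepsilon}$ is $1/\varepsilon$-Lipschitz and $\|H_r\|_{op}\le b_H$, so $F^{\varepsilon}$ satisfies Assumption \ref{H2} with $L$ replaced by $L+b_H/\varepsilon$. The boundedness condition also holds, since $|\nabla\varphi_\varepsilon(0)|$ is a finite constant. Proposition \ref{lip_FR} therefore gives a unique pair $(Y^{\varepsilon},M^{\varepsilon})\in\mathbb{D}^{2}_{d}\times\mathcal{M}^{2}_{d}$. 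The work is in the bounds uniform in $\varepsilon$.

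For the estimates I would apply It\^{o}'s formula for c\`{a}dl\`{a}g semimartingales to $\varphi_{\varepsilon}(Y^{\varepsilon}_t)$. This is the natural choice because it produces the term $\langle\nabla\varphi_\varepsilon,H\nabla\varphi_\varepsilon\rangle\ge a_H|\nabla\varphi_\varepsilon|^2$, which needs only Assumption \ref{H4W} and no symmetry. To handle the second-order and jump terms, I would use the convexity inequality $\varphi_\varepsilon(y+h)\ge\varphi_\varepsilon(y)+\langle\nabla\varphi_\varepsilon(y),h\rangle$ on each jump together with the positive semidefiniteness of $D^2\varphi_\varepsilon$. This gives
\[
\varphi_\varepsilon(Y^\varepsilon_t)+a_H\int_t^T|\nabla\varphi_\varepsilon(Y^\varepsilon_r)|^2dr\le\varphi_\varepsilon(\eta)+\int_t^T\langle\nabla\varphi_\varepsilon(Y^\varepsilon_r),F(r,Y^\varepsilon_r,\mathcal{R}_r(M^\varepsilon))\rangle dr-\int_t^T\langle\nabla\varphi_\varepsilon(Y^\varepsilon_{r-}),dM^\varepsilon_r\rangle .
\]
Note that $\varphi_\varepsilon(\eta)\le\varphi(\eta)$, and that we may assume $\min\varphi=\varphi(0)=0$ after a shift, so $\varphi_\varepsilon\ge0$. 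Combined with $|Y|^2$-It\^{o}, this yields a coupled system. In that system I bound $\langle\nabla\varphi_\varepsilon,F\rangle$ by $\frac{a_H}{2}|\nabla\varphi_\varepsilon|^2+\frac{1}{2a_H}|F|^2$, where $|F|\le L|Y|+\ell|\mathcal{R}(M^\varepsilon)|+|F(r,0,0)|$.

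The main obstacle is closing the loop on $\mathbb{E}\int|\mathcal{R}(M^\varepsilon)|^2$, and this is where the hypothesis $a_H>\ell^2C_{\mathcal{R}}^2/2$ is needed. I would take expectations on $[t,T]$ and use Assumption \ref{H3}-$(jj)$ in the form (\ref{brk incr}) with $N=0$, which gives $\mathbb{E}\int_t^T|\mathcal{R}_r(M^\varepsilon)|^2dr\le C_{\mathcal{R}}^2\mathbb{E}([M^\varepsilon]_T-[M^\varepsilon]_t)$. From $|Y|^2$-It\^{o} we have
\[
\mathbb{E}|Y_t|^2+\mathbb{E}([M]_T-[M]_t)+2\mathbb{E}\int_t^T\langle Y,H\nabla\varphi_\varepsilon\rangle=\mathbb{E}|\eta|^2+2\mathbb{E}\int_t^T\langle Y,F\rangle .
\]
The cross term $\langle Y,H\nabla\varphi_\varepsilon\rangle$ is not signed when $H$ is non-symmetric, so I would control it by Young's inequality, $\le\gamma|\nabla\varphi_\varepsilon|^2+b_H^2|Y|^2/\gamma$, and absorb it using the $a_H|\nabla\varphi_\varepsilon|^2$ from the first inequality. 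The $z$-Lipschitz term $2\ell|Y||\mathcal{R}|$ is estimated as $\le\frac{\ell^2 C_{\mathcal{R}}^2}{\lambda}|Y|^2+\frac{\lambda}{C_{\mathcal{R}}^2}|\mathcal{R}|^2$ with $\lambda<1$. It is then absorbed into $\mathbb{E}([M]_T-[M]_t)$. If this weighting cannot be balanced against the $\varphi_\varepsilon$ inequality, I would instead weight the two inequalities so that the $\ell^2|\mathcal{R}|^2/(2a_H)$ coming from $\langle\nabla\varphi_\varepsilon,F\rangle$ is dominated; the strict inequality $a_H>\ell^2C_{\mathcal{R}}^2/2$ gives exactly the room needed for that absorption. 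A backward Gronwall argument in $t$ then gives $\sup_t\mathbb{E}|Y_t|^2$ and the bounds $(jj)$. Passing to $\mathbb{E}\sup|Y|^2$ in $(j)$ uses Burkholder--Davis--Gundy on the stochastic integral, together with Doob's inequality (\ref{bdg adapted}) for $\mathbb{E}\sup|M^\varepsilon|^2\le 4\mathbb{E}|M^\varepsilon_T|^2$.

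Finally, $(jjj)$ follows from the identity $Y-J_\varepsilon(Y)=\varepsilon\nabla\varphi_\varepsilon(Y)$ together with $(jj)$. For $(jv)$ I would use $\varphi(J_\varepsilon y)\le\varphi_\varepsilon(y)$, with $\mathbb{E}\int_0^T\varphi_\varepsilon(Y^\varepsilon_r)dr$ bounded by the first inequality. The lower bound comes from the affine minorant $\varphi(x)\ge -c-c|x|$ and $|J_\varepsilon y|\le|y|+\varepsilon|\nabla\varphi_\varepsilon(y)|$ (using $J_\varepsilon(0)=0$ after the shift), both controlled by $(j)$ and $(jj)$.
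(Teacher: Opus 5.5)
Your argument is correct and follows the scheme the paper relies on: the proof itself is deferred to \cite{Negrut/Rascanu/Rotenstein:26}, but Step~1 of Proposition~\ref{rot-apriori} uses exactly this pairing of the energy identity for $|Y^{\varepsilon}|^{2}$ with the c\`{a}dl\`{a}g subdifferential inequality (\ref{cadlag subdiff ineq}) for the convex function $\varphi_{\varepsilon}$, relying only on $\langle U,HU\rangle\geq a_{H}|U|^{2}$, and in this scheme adding the two inequalities with equal weights is exactly where $\ell^{2}C_{\mathcal{R}}^{2}/(2a_{H})<1$ lets $\mathbb{E}([M^{\varepsilon}]_{T}-[M^{\varepsilon}]_{t})$ absorb the $\mathcal{R}$-term (in fact, multiplying the $\varphi_{\varepsilon}$-inequality by a small weight $w$ and taking the Young parameter for $\langle Y,HU\rangle$ below $wa_{H}/2$ closes the loop without that hypothesis, so it is sufficient rather than needed). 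Two small repairs are required: since $\varphi_{\varepsilon}$ is only $C^{1,1}$, use (\ref{cadlag subdiff ineq}) rather than It\^{o}'s formula with $D^{2}\varphi_{\varepsilon}$, and since a proper l.s.c.\ convex $\varphi$ need not attain its minimum, replace the normalization $\min\varphi=\varphi(0)=0$ by the bound $\varphi_{\varepsilon}(y)\geq\varphi(u_{0})-|\hat{u}_{0}||y-u_{0}|-\varepsilon|\hat{u}_{0}|^{2}$ of Lemma~\ref{conv}-$(f_{3})$, for a fixed $(u_{0},\hat{u}_{0})\in\partial\varphi$, whose linear growth in $y$ is absorbed by $\mathbb{E}|Y_{t}^{\varepsilon}|^{2}$.
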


For the proof of this result and of Theorem \ref{Main result1}, we refer to
Negru\c{t}, R\u{a}\c{s}canu and Rotenstein \cite{Negrut/Rascanu/Rotenstein:26}.

\section{Oblique reflection driven by a rotation matrix\label{main section}}

We consider the bidimensional multivalued oblique reflected BSDE: for all
$t\in\left[  0,T\right]  ,$%
\begin{equation}
\left\{
\begin{array}
[c]{l}%
Y_{t}+%
{\displaystyle\int_{t}^{T}}
\Theta_{r}U_{r}dr=\eta+%
{\displaystyle\int_{t}^{T}}
F\left(  r,Y_{r},\mathcal{R}_{r}(M)\right)  dr-(M_{T}-M_{t}),\quad
\mathbb{P}\text{-a.s.,}\medskip\\
U_{r}\in\partial I_{K}(Y_{r}),\text{ \ }d\mathbb{P\otimes}dr\text{-a.e. on
}\Omega\times\left[  0,T\right]  .
\end{array}
\right.  \label{r1}%
\end{equation}
Consider $d=2$ and Assumptions \ref{H1}, \ref{H2}, \ref{H3} are fulfilled.
Usually, until the present study, the symmetry of the matrix $H$ is essential
for proving the convergence in $\mathbb{D}_{d}^{2}$ of the approximate
solution, given by the Moreau-Yosida penalizations, $Y^{\varepsilon}$. By
abandoning the symmetry assumption of $H$, we limit the analysis to the case
$d=2$ and $H$ being a rotation matrix.

Remark that the symmetry of $H$ is assumed throughout the oblique subgradient
literature: in the seminal papers of Gassous, R\u{a}\c{s}canu and Rotenstein
\cite{Gassous/Rascanu/Rotenstein:12} on forward variational inequalities,
where the Lyapunov function is given by $\left\langle \left(  [H(x)]^{-1}%
+[H(\hat{x})]^{-1}\left(  x-\hat{x}\right)  ,x-\hat{x}\right)  \right\rangle
$, and also in \cite{Gassous/Rascanu/Rotenstein:15}, which treated the first
approach to BSVIs, where it is given by $||H_{s}^{-1/2}(Y_{s}-\tilde{Y}%
_{s})||$, and so on; neither works without $H=H^{\ast}$, and neither produces
a tangential term. The alternative used in the Skorokhod problem literature is
not the symmetry, but a smallness condition on the obliqueness - the spectral
condition $\sigma\left(  V\right)  <1$ of Ramasubramanian
\cite{Ramasubramanian:02}, or the \textquotedblleft set $B$\textquotedblright%
\ condition of Dupuis and Ishii \cite{Dupuis/Ishii:93} - and it is a
requirement of this second kind, here bearing on the angle $\theta$, that will
appear below. For obliquely reflected BSDEs in a convex domain, with a
direction of reflection depending on time and on the solution, see Chassagneux
and Richou \cite{Chassagneux/Richou:20}. However, even here, the existing
arguments still rely essentially on the symmetry of the obliqueness
matrix.\medskip

\textit{The proof of the main result requires three distinct types of
compatibility. The first concerns the ellipticity of the two-point kernel
}$\left(  C_{2}:\text{(\ref{r15})}\right)  $\textit{, the second the sign of
the boundary reflection term }$\left(  C_{0}:\text{(\ref{C0})}\right)
$\textit{, and the third the absorption of the second-order defect generated
by the c\`{a}dl\`{a}g It\^{o} formula }$\left(  C_{3}:\text{(\ref{C3}%
)}\right)  $\textit{.}\medskip

Obviously, the classical Assumption \ref{H4} is replaced by

\begin{condition}
[H4']\label{H4'} $H:\left[  0,T\right]  \rightarrow\mathbb{R}^{2\times2}$ is a
rotation matrix%
\[
H_{r}=\Theta_{r}:=\left(
\begin{array}
[c]{cc}%
\cos\theta_{r} & -\sin\theta_{r}\medskip\\
\sin\theta_{r} & \cos\theta_{r}%
\end{array}
\right)  =\cos\theta_{r}\,I+\sin\theta_{r}\,J,
\]
where
\[
I=\left(
\begin{array}
[c]{cc}%
1 & 0\\
0 & 1
\end{array}
\right)  ,\quad\quad J=\left(
\begin{array}
[c]{cc}%
0 & -1\\
1 & 0
\end{array}
\right)  =-J^{T},
\]
and $\theta\in C^{1}([0,T];(0,\pi/2)),$ with the assumption that%
\[
a_{\theta}:=\min\limits_{r\in\left[  0,T\right]  }\left(  \cos\theta
_{r}\right)  >0.
\]

\end{condition}

We remark that, for every $u$,%
\begin{equation}
a_{\theta}\left\vert u\right\vert ^{2}\leq\cos\theta_{r}\left\vert
u\right\vert ^{2}=\left\langle H_{r}u,u\right\rangle \leq\left\vert
u\right\vert ^{2} \label{r2}%
\end{equation}
Indeed, since $\left\langle Ju,u\right\rangle =0,$ for every $u$, and
$0<\cos\theta_{r}<1$,%
\[
\left\langle H_{r}u,u\right\rangle =\cos\theta_{r}\left\langle
Iu,u\right\rangle =\cos\theta_{r}\left\vert u\right\vert ^{2}\leq\left\vert
u\right\vert ^{2}.
\]
The Assumption \ref{H5} on the function $\varphi$ is replaced by the
corresponding key assumption for a particular $\varphi,$ which is give by our
obstacle problem on $K$.

\begin{condition}
[H5']\label{H5'}$\varphi:=I_{K}:\mathbb{R}^{2}$ $\rightarrow\left[
0,\infty\right]  $ is the convexity indicator of a bounded closed convex set
$K\subset\mathbb{R}^{2}$ with $\operatorname{int}K\neq\emptyset$, and
characterized by%
\begin{equation}
\operatorname{int}K=\{x\in\mathbb{R}^{2}:\psi\left(  x\right)  <0\},\qquad
bd\left(  K\right)  =\partial K=\{x\in\mathbb{R}^{2}:\psi\left(  x\right)
=0\},\qquad|\nabla\psi(x)|=1\ \text{on }\partial K, \label{r3}%
\end{equation}
where $\psi:\mathbb{R}^{2}\rightarrow\mathbb{R}$ is a convex and smooth enough
function. More precisely, we complete \ref{H5'} by fixing the constants
\begin{equation}
a_{\psi}>0,\qquad b_{\psi}\geq1,\qquad D_{K}:=\operatorname{diam}K,\qquad
b_{\ast}:=b_{\psi}\left(  1+b_{\psi}D_{K}\right)  , \label{r7}%
\end{equation}
such that, on a bounded open neighborhood $\mathcal{V}$ of $K$,
\begin{equation}
a_{\psi}I\leq D^{2}\psi\left(  x\right)  \leq b_{\psi}I,\qquad\left\vert
\left\vert D^{3}\psi\left(  x\right)  \right\vert \right\vert \leq b_{\psi}.
\label{r8}%
\end{equation}
Thus $\psi$ is uniformly convex on $\mathcal{V}$ and of class $C^{3}$ with
bounded derivatives there.

The terminal datum satisfies $\eta\in K$, $a.s..$
\end{condition}

For our obstacle problem%
\[
\varphi\left(  y\right)  :=\left\{
\begin{array}
[c]{ll}%
0, & \text{if }y\in K,\medskip\\
+\infty & \text{if }y\in\mathbb{R}^{2}\backslash K
\end{array}
\right.
\]
and the instruments appearing in the Moreau-Yosida penalization approach are:%
\[
\varphi_{\varepsilon}\left(  y\right)  =\dfrac{1}{2\varepsilon}d_{K}%
^{2}\left(  y\right)  \text{,}\quad\quad\text{and}\quad\quad\nabla
\varphi_{\varepsilon}\left(  y\right)  =\dfrac{y-\pi_{K}\left(  y\right)
}{\varepsilon}.
\]
where $\pi_{K}:\mathbb{R}^{2}\rightarrow K$ is the orthogonal projection on
$K$ and $d_{K}\left(  y\right)  =\left\vert y-\pi_{K}\left(  y\right)
\right\vert $ is the distance from $y$ to $K$.

\begin{remark}
The condition $U_{r}\in\partial I_{K}(Y_{r})$ on $\left[  0,T\right]  $ means
that $Y_{r}\in K$, $d\mathbb{P}\otimes dr$-a.e., and%
\[%
{\displaystyle\int_{s}^{t}}
\left\langle v_{r}-Y_{r},U_{r}\right\rangle dr\leq0,\quad\text{for all }0\leq
s\leq t\leq T\text{ and all }v\in C(\left[  0,T\right]  ;K).
\]

\end{remark}

Let $0<\varepsilon\leq1$. By Proposition \ref{Lemma with important bounds}
there exists a unique pair $\left(  Y^{\varepsilon},M^{\varepsilon}\right)
\in{\mathcal{{\mathbb{D}}}}_{d}^{2}\times\mathcal{M}_{d}^{2}$ which is the
solution of the approximating equation, for all $t\in\left[  0,T\right]  $,%
\begin{equation}
\left\{
\begin{array}
[c]{l}%
Y_{t}^{\varepsilon}+%
{\displaystyle\int_{t}^{T}}
\Theta_{r}U_{r}^{\varepsilon}dr=\eta+%
{\displaystyle\int_{t}^{T}}
F\left(  r,Y_{r}^{\varepsilon},\mathcal{R}_{r}(M^{\varepsilon})\right)
dr-(M_{T}^{\varepsilon}-M_{t}^{\varepsilon}),\quad\mathbb{P}\text{-a.s.\ ,}%
\medskip\\
U_{r}^{\varepsilon}=\dfrac{1}{\varepsilon}\left(  Y_{r}^{\varepsilon}-\pi
_{K}\left(  Y_{r}^{\varepsilon}\right)  \right)
\end{array}
\right.  \label{r4}%
\end{equation}
and the following estimates hold with a positive constant $C$ independent of
$\varepsilon$:%
\begin{equation}
\left\{
\begin{array}
[c]{ll}%
\left(  a\right)  \quad & \mathbb{E}\sup\limits_{t\in\left[  0,T\right]
}|Y_{t}^{\varepsilon}|^{2}+\mathbb{E}\sup\limits_{r\in\left[  0,T\right]
}\left\vert M_{r}^{\varepsilon}\right\vert ^{2}\leq C\medskip\\
\left(  b\right)  \quad & \mathbb{E}%
{\displaystyle\int_{0}^{T}}
\left\vert U_{r}^{\varepsilon}\right\vert ^{2}dr+\mathbb{E}%
{\displaystyle\int_{0}^{T}}
\left\vert \mathcal{R}_{r}(M^{\varepsilon})\right\vert ^{2}dr\leq C,\medskip\\
\left(  c\right)  \quad & \mathbb{E}%
{\displaystyle\int_{0}^{T}}
d_{K}^{2}\left(  Y_{r}^{\varepsilon}\right)  dr\leq C\varepsilon^{2}.
\end{array}
\right.  \label{r5}%
\end{equation}
Since the rotation matrix $\Theta$ is not symmetric, we cannot follow the
classical steps to prove the convergence of the sequence $\left(
Y^{\varepsilon},M^{\varepsilon}\right)  $. For a symmetric matrix $H$ the
employed test function was $|H_{r}^{-1/2}\left(  x-y\right)  |^{2}$, a
quadratic form in $x-y$ whose Hessian does not depend on $\left(  x,y\right)
$, and the reflection terms were disposed of by the monotonicity of
$\partial\varphi$. For a rotation this fails at the very first step: if
$x\in\partial K$ and $\mathbf{n}\left(  x\right)  $ is the outward unit normal
at $x$, then
\begin{equation}
\left\langle \nabla_{x}\left\vert x-y\right\vert ^{2},\Theta_{r}%
\mathbf{n}\left(  x\right)  \right\rangle =2\cos\theta_{r}\left\langle
x-y,\mathbf{n}\left(  x\right)  \right\rangle +2\sin\theta_{r}\left\langle
x-y,J\mathbf{n}\left(  x\right)  \right\rangle . \label{r6}%
\end{equation}
The first term is nonnegative, by the convexity of $K$. The second one is the
\textit{tangential} component of the oblique direction; it is of first order
in $\left\vert x-y\right\vert $ and of arbitrary sign, and it is precisely
what destroys every estimate of the approach when $H$ is of symmetric type.
The remedy is to replace $\left\vert x-y\right\vert ^{2}$ by a non-quadratic
test function designed so that this term cancels identically.

\subsection{The structure of the geometric framework\label{rot-geometry}}

We work under the hypotheses that Assumptions \ref{H1}, \ref{H2}, \ref{H3},
\ref{H4'} and \ref{H5'} hold.

\begin{lemma}
[Geometry of the set $K$]\label{K-geometry}Let $K$ and $\psi$ be as in
Assumption \ref{H5'}, and let $a_{\psi},b_{\psi},b_{\ast}$ be as in
(\ref{r7})--(\ref{r8}). Let $x\in bd\left(  K\right)  $ and $y\in K$. Then

\begin{itemize}
\item[$\left(  a\right)  $] $\left\vert \nabla\psi\left(  z\right)
\right\vert \leq1$ for every $z\in K$;

\item[$\left(  b\right)  $] $0\leq1-\left\langle \nabla\psi\left(  x\right)
,\nabla\psi\left(  y\right)  \right\rangle \leq b_{\ast}\left\vert
x-y\right\vert $;

\item[$\left(  c\right)  $] $\left\vert \left\langle J\nabla\psi\left(
y\right)  ,\nabla\psi\left(  x\right)  \right\rangle \right\vert \leq b_{\psi
}\left\vert x-y\right\vert $;

\item[$\left(  d\right)  $] $\left\langle x-y,\nabla\psi\left(  x\right)
\right\rangle \geq-\psi\left(  y\right)  +\dfrac{a_{\psi}}{2}\left\vert
x-y\right\vert ^{2}\geq\dfrac{a_{\psi}}{2}\left\vert x-y\right\vert ^{2}%
\geq0.$
\end{itemize}
\end{lemma}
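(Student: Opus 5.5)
The plan is to prove the four items directly from the uniform convexity bounds (\ref{r8}) and the normalization $|\nabla\psi|=1$ on $bd(K)$ in (\ref{r3}). Every segment joining two points of $K$ lies in $K\subset\mathcal{V}$, because $K$ is convex. So the Hessian bounds $a_{\psi}I\leq D^{2}\psi\leq b_{\psi}I$ may be integrated along such segments. Consequently, for $x,y\in K$, the map $\nabla\psi$ is $b_{\psi}$-Lipschitz, $|\nabla\psi(x)-\nabla\psi(y)|\leq b_{\psi}|x-y|$, and it is monotone, $\langle\nabla\psi(x)-\nabla\psi(y),x-y\rangle\geq0$.

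For $(a)$, which I expect to be the only step needing an idea, I would argue along a ray. Fix $z\in K$ with $\nabla\psi(z)\neq0$; the case $\nabla\psi(z)=0$ is trivial. Since $K$ is bounded and closed, the ray $z+t\nabla\psi(z)$, $t\geq0$, leaves $K$. Hence there is $t_{0}\geq0$ such that $x:=z+t_{0}\nabla\psi(z)\in bd(K)$ and the segment $[z,x]$ lies in $K$. Monotonicity of $\nabla\psi$ on this segment gives $t_{0}\langle\nabla\psi(x)-\nabla\psi(z),\nabla\psi(z)\rangle\geq0$. If $t_{0}>0$, this yields $|\nabla\psi(z)|^{2}\leq\langle\nabla\psi(x),\nabla\psi(z)\rangle\leq|\nabla\psi(z)|$, using $|\nabla\psi(x)|=1$, and hence $|\nabla\psi(z)|\leq1$. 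If $t_{0}=0$, then $z\in bd(K)$ and $|\nabla\psi(z)|=1$ holds directly.

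For $(b)$, the lower bound follows from Cauchy--Schwarz combined with $(a)$ and $|\nabla\psi(x)|=1$. For the upper bound I would write
\[
1-\langle\nabla\psi(x),\nabla\psi(y)\rangle=\langle\nabla\psi(x),\nabla\psi(x)-\nabla\psi(y)\rangle\leq|\nabla\psi(x)-\nabla\psi(y)|\leq b_{\psi}|x-y|.
\]
The result then follows because $b_{\psi}\leq b_{\ast}=b_{\psi}(1+b_{\psi}D_{K})$. For $(c)$, I would use the skew-symmetry $J=-J^{T}$, which gives $\langle Ju,u\rangle=0$. Thus
\[
\langle J\nabla\psi(y),\nabla\psi(x)\rangle=\langle J(\nabla\psi(y)-\nabla\psi(x)),\nabla\psi(x)\rangle.
\]
Since $|J|_{op}=1$ and $|\nabla\psi(x)|=1$, this is bounded in absolute value by $b_{\psi}|x-y|$.

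For $(d)$, I would apply Taylor's formula with integral remainder along $[x,y]\subset K$ together with the lower Hessian bound:
\[
\psi(y)\geq\psi(x)+\langle\nabla\psi(x),y-x\rangle+\tfrac{a_{\psi}}{2}|x-y|^{2}.
\]
Since $\psi(x)=0$ on $bd(K)$, rearranging gives the first inequality. The remaining two inequalities follow because $\psi(y)\leq0$ on $K$: it is $<0$ in $\operatorname{int}K$ and $=0$ on $bd(K)$.
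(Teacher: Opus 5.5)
Your proposal is correct. For items $(a)$, $(c)$ and $(d)$ it follows the paper's argument essentially verbatim: the ray to the first exit point together with monotonicity of $\nabla\psi$, the skew-symmetry of $J$ together with the Lipschitz bound on $\nabla\psi$, and Taylor's formula with the lower Hessian bound along $[x,y]\subset K$. Your separate treatment of the case $t_{0}=0$ in $(a)$ is a small gain in care over the paper.

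The one genuine difference is the upper bound in $(b)$. The paper writes $1-\langle\mathrm{n},\mathrm{m}\rangle=\frac{1}{2}|\mathrm{n}-\mathrm{m}|^{2}+\frac{1}{2}(1-|\mathrm{m}|^{2})$ and bounds the two pieces separately. It uses $1-|\mathrm{m}|^{2}\leq 2(|\mathrm{n}|-|\mathrm{m}|)\leq 2b_{\psi}|x-y|$ and $|\mathrm{n}-\mathrm{m}|^{2}\leq b_{\psi}^{2}D_{K}|x-y|$; this last step is where the diameter enters and produces $b_{\ast}=b_{\psi}(1+b_{\psi}D_{K})$. Your identity $1-\langle\mathrm{n},\mathrm{m}\rangle=\langle\mathrm{n},\mathrm{n}-\mathrm{m}\rangle$, valid because $|\mathrm{n}|=1$, gives the estimate in one line and with the sharper constant $b_{\psi}$.

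The paper's splitting makes visible the two geometric sources of the defect: the tilt of the normal, and the shortfall of $|\nabla\psi(y)|$ below $1$. Quantitatively, though, it gains nothing. Your version shows that wherever $(b)$ is invoked later, the smaller constant $b_{\psi}$ is available in place of $b_{\ast}$.
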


\begin{proof}
$\left(  a\right)  $ Let $z\in K$ with $\nabla\psi\left(  z\right)  \neq0$,
and let $z^{\prime}\in bd\left(  K\right)  $ be the first exit point of the
half-line $z+t\nabla\psi\left(  z\right)  $, $t\geq0$. The monotonicity of
$\nabla\psi$ gives, for $z^{\prime}=z+t\nabla\psi\left(  z\right)  $,
$\left\langle \nabla\psi\left(  z^{\prime}\right)  -\nabla\psi\left(
z\right)  ,\nabla\psi\left(  z\right)  \right\rangle \geq0$, hence $\left\vert
\nabla\psi\left(  z^{\prime}\right)  \right\vert \left\vert \nabla\psi\left(
z\right)  \right\vert \geq\left\vert \nabla\psi\left(  z\right)  \right\vert
^{2}$ and, by (\ref{r3}), $\left\vert \nabla\psi\left(  z\right)  \right\vert
\leq\left\vert \nabla\psi\left(  z^{\prime}\right)  \right\vert =1$.

$\left(  b\right)  $ With $\mathrm{m}:=\nabla\psi\left(  y\right)  $ and
$\mathrm{n}:=\nabla\psi\left(  x\right)  $, so that $\left\vert \mathrm{n}%
\right\vert =1$ and $\left\vert \mathrm{m}\right\vert \leq1$ by $\left(
a\right)  $,
\[
1-\left\langle \mathrm{n},\mathrm{m}\right\rangle =\dfrac{1}{2}\left\vert
\mathrm{n}-\mathrm{m}\right\vert ^{2}+\dfrac{1}{2}\left(  1-\left\vert
\mathrm{m}\right\vert ^{2}\right)  \geq0.
\]
Moreover $\left\vert \mathrm{n}-\mathrm{m}\right\vert \leq\left\vert
\left\vert D^{2}\psi\right\vert \right\vert \left\vert x-y\right\vert \leq
b_{\psi}\left\vert x-y\right\vert $ by (\ref{r8}), and
\[
1-\left\vert \mathrm{m}\right\vert ^{2}\leq2\left(  1-\left\vert
\mathrm{m}\right\vert \right)  =2\left(  \left\vert \mathrm{n}\right\vert
-\left\vert \mathrm{m}\right\vert \right)  \leq2\left\vert \mathrm{n}%
-\mathrm{m}\right\vert \leq2b_{\psi}\left\vert x-y\right\vert .
\]
If we also use $\left\vert \mathrm{n}-\mathrm{m}\right\vert ^{2}\leq b_{\psi
}\left\vert x-y\right\vert \left\vert n-m\right\vert \leq b_{\psi}^{2}%
D_{K}\left\vert x-y\right\vert $, by adding the terms, we obtain%
\[
1-\left\langle \mathrm{n},\mathrm{m}\right\rangle \leq\dfrac{1}{2}b_{\psi}%
^{2}D_{K}\left\vert x-y\right\vert +b_{\psi}\left\vert x-y\right\vert \leq
b_{\ast}\left\vert x-y\right\vert .
\]

$\left(  c\right)  $ Since $\left\langle J\nabla\psi\left(  x\right)
,\nabla\psi\left(  x\right)  \right\rangle =0$, one has $\left\langle
J\nabla\psi\left(  y\right)  ,\nabla\psi\left(  x\right)  \right\rangle
=\left\langle J\left(  \nabla\psi\left(  y\right)  -\nabla\psi\left(
x\right)  \right)  ,\nabla\psi\left(  x\right)  \right\rangle $, and it
suffices to use $\left\vert \left\vert J\right\vert \right\vert _{op}=1$ and
(\ref{r8}) and $\left(  c\right)  $ follows.

$\left(  d\right)  $ The $a_{\psi}$-uniform convexity (\ref{r8}) of $\psi$
gives, by the Taylor formula, $\psi\left(  y\right)  \geq\psi\left(  x\right)
+\left\langle \nabla\psi\left(  x\right)  ,y-x\right\rangle +\frac{a_{\psi}%
}{2}\left\vert x-y\right\vert ^{2}$. Since $x\in bd\left(  K\right)  $ and
$y\in K$ it follows $\psi\left(  x\right)  =0$ and $\psi\left(  y\right)
\leq0$, by (\ref{r3}). Consequently,%
\[
\left\langle \nabla\psi\left(  x\right)  ,x-y\right\rangle \geq-\psi\left(
y\right)  +\frac{a_{\psi}}{2}\left\vert x-y\right\vert ^{2},
\]
that is $\left(  d\right)  $ holds and the proof is now complete. \hfill
\end{proof}

\begin{remark}
Only the behavior of $\nabla\psi$ near $K$ will matter, while the penalized
solutions $Y^{\varepsilon}$ constructed below need not take their values in
$K$. We therefore work with a globally bounded \textit{normal field}: let
$\mathbf{n}:\mathbb{R}^{2}\rightarrow\mathbb{R}^{2}$ be of class $C^{2}$ with
\begin{equation}
\mathbf{n}=\nabla\psi\ \text{ on a neighborhood of }K,\quad\quad\left\vert
\mathbf{n}\right\vert \leq1,\quad\quad\left\vert \left\vert D\mathbf{n}%
\right\vert \right\vert +\left\vert \left\vert D^{2}\mathbf{n}\right\vert
\right\vert \leq Cb_{\psi}=:C_{\psi}\quad\text{on }\mathbb{R}^{2}. \label{r9}%
\end{equation}
Such a field exists since, by Lemma \ref{K-geometry}-$\left(  a\right)  $, one
has $\left\vert \nabla\psi\right\vert \leq1$ on $K$, so it suffices to leave
$\nabla\psi$ unchanged on $\{\left\vert \nabla\psi\right\vert \leq1\}\supset
K$ and to compose it with a smooth radial truncation at level $1$ outside; on
the truncation region $\left\vert \nabla\psi\right\vert \geq1$, so the
composition is $C^{2}$ and (\ref{r8}) gives the stated bounds. On $K$,
$\mathbf{n}$ is the gradient of $\psi$, and on $bd\left(  K\right)  $ it is
the outward unit normal. We write
\begin{equation}
\mathbf{n}_{x,y}:=\mathbf{n}\left(  x\right)  +\mathbf{n}\left(  y\right)
,\qquad\left\vert \mathbf{n}_{x,y}\right\vert \leq2. \label{def on nxy}%
\end{equation}

\end{remark}

\subsection{The key geometrical kernel: the test function\label{rot-test}}

For $x,y\in\mathbb{R}^{2}$ let us define the following bilinear forms, with
$\mathbf{n}_{x,y}$ given by (\ref{def on nxy}):%
\[
P\left(  x,y\right)  :=\left\langle x-y,\mathbf{n}_{x,y}\right\rangle
\quad\text{and}\quad Q\left(  x,y\right)  :=\left\langle x-y,J\mathbf{n}%
_{x,y}\right\rangle ,
\]
and, for every $r\in\left[  0,T\right]  $, define the key ingredient for the
entire study:%
\begin{equation}
\Phi_{r}\left(  x,y\right)  :=\left\vert x-y\right\vert ^{2}-\dfrac{1}{2}%
\tan\theta_{r}\,P\left(  x,y\right)  Q\left(  x,y\right)  . \label{r10}%
\end{equation}
It is easy to observe that $\Phi_{r}$ is \textit{symmetric}, $\Phi_{r}\left(
y,x\right)  =\Phi_{r}\left(  x,y\right)  $, since both $P$ and $Q$ change
their sign when $x$ and $y$ are interchanged. According to (\ref{def on nxy}),
$\left\vert \mathbf{n}_{x,y}\right\vert \leq2$ and, consequently, we have
$\max\{\left\vert P\left(  x,y\right)  \right\vert ,\left\vert Q\left(
x,y\right)  \right\vert \}\leq2\left\vert x-y\right\vert $. Therefore, from
(\ref{r9}), $\Phi_{r}$ is of class $C^{2}$ in $\left(  x,y\right)  $ and%
\begin{equation}
\left\vert \nabla_{x}\Phi_{r}\right\vert +\left\vert \nabla_{y}\Phi
_{r}\right\vert \leq c_{\Phi}\left\vert x-y\right\vert ,\quad\quad\left\vert
\left\vert D_{\left(  x,y\right)  }^{2}\Phi_{r}\right\vert \right\vert \leq
c_{\Phi},\quad\quad\left\vert \partial_{r}\Phi_{r}\right\vert \leq c_{\Phi
}\left\vert \left\vert \theta^{\prime}\right\vert \right\vert _{\infty
}\left\vert x-y\right\vert ^{2}, \label{r11}%
\end{equation}
where $c_{\Phi}=c_{\Phi}\left(  b_{\psi}\right)  >0$ is a constant that may
change from line to line. The first two inequalities from (\ref{r11}) follow
from (\ref{r9}), the boundedness of $\left\vert \left\vert Dn\right\vert
\right\vert $ and $\left\vert \left\vert D^{2}n\right\vert \right\vert $. The
last bound follows from
\[
\partial_{r}\Phi_{r}=\frac{\partial}{\partial_{r}}\Phi_{r}=-\frac
{\theta^{\prime}\left(  r\right)  }{2\cos^{2}\theta\left(  r\right)  }P\left(
x,y\right)  Q\left(  x,y\right)  .
\]
In particular $\nabla_{x}\Phi_{r}$ and $\nabla_{y}\Phi_{r}$ are globally
Lipschitz in $\left(  x,y\right)  $, uniformly with respect to $r$, with a
constant still denoted $c_{\Phi}$. Finally, since%
\[
\left\{
\begin{array}
[c]{l}%
\nabla_{x}P\left(  x,y\right)  +\nabla_{y}P\left(  x,y\right)  =\left(
D\mathbf{n}\left(  x\right)  +D\mathbf{n}\left(  y\right)  \right)  \left(
x-y\right)  ,\medskip\\
\nabla_{x}Q\left(  x,y\right)  +\nabla_{y}Q\left(  x,y\right)  =J\left(
D\mathbf{n}\left(  x\right)  +D\mathbf{n}\left(  y\right)  \right)  \left(
x-y\right)
\end{array}
\right.
\]
and $P\left(  x,y\right)  ,Q\left(  x,y\right)  =\mathcal{O}\left(  \left\vert
x-y\right\vert \right)  $, we obtain the important estimate%
\begin{equation}
\left\vert \nabla_{x}\Phi_{r}+\nabla_{y}\Phi_{r}\right\vert \leq c_{\Phi
}\left\vert x-y\right\vert ^{2}, \label{r12}%
\end{equation}
which proves to be crucial at Step 3 from Theorem \ref{rot-thm-conv}.

\begin{lemma}
[Two-sided bound; the case $d=2$ is mandatary]\label{rot-two}For all
$x,y\in\mathbb{R}^{2}$ and all $r\in\left[  0,T\right]  $, we have
\[
\left(  1-\tan\theta_{r}\right)  \left\vert x-y\right\vert ^{2}\leq\Phi
_{r}\left(  x,y\right)  \leq\left(  1+\tan\theta_{r}\right)  \left\vert
x-y\right\vert ^{2}.
\]

\end{lemma}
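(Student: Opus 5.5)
We need $|PQ| \le 2|x-y|^2$. Set $v = x-y$ and $m = \mathbf{n}_{x,y}$. Then $P = \langle v, m\rangle$ and $Q = \langle v, Jm\rangle$.

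The plan uses the fact that in $\mathbb{R}^2$ the pair $\{m/|m|,\, Jm/|m|\}$ is an orthonormal basis whenever $m\neq 0$. This is exactly where $d=2$ enters: $J$ rotates by $\pi/2$, so $m\perp Jm$ and $|Jm|=|m|$, and these two vectors span the plane.

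First I treat the degenerate case. If $m=0$, then $P=Q=0$, so $\Phi_r=|x-y|^2$ and both inequalities hold trivially because $\tan\theta_r>0$.

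Now suppose $m\neq 0$. By Parseval in this basis,
\[
P^2+Q^2=|m|^2\,|x-y|^2 .
\]
The elementary inequality $|PQ|\le \tfrac12 (P^2+Q^2)$ then gives
\[
|P(x,y)Q(x,y)|\le \tfrac12 |\mathbf{n}_{x,y}|^2 |x-y|^2 \le 2|x-y|^2 ,
\]
where the last step uses $|\mathbf{n}_{x,y}|\le 2$ from (\ref{def on nxy}), which in turn rests on $|\mathbf{n}|\le 1$ in (\ref{r9}).

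Multiplying by $\tfrac12\tan\theta_r$, which is positive since $\theta_r\in(0,\pi/2)$ by Assumption \ref{H4'}, yields
\[
\left|\tfrac12\tan\theta_r\,PQ\right|\le \tan\theta_r\,|x-y|^2 .
\]
Inserting this into the definition (\ref{r10}) of $\Phi_r$ gives both the lower and the upper bound at once.

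The only point requiring care is the Parseval step. The naive Cauchy–Schwarz bound $|P|,|Q|\le 2|x-y|$ only gives $|PQ|\le 4|x-y|^2$, which would lose a factor of 2. The orthogonality of $m$ and $Jm$ is what produces the sharp constant $1\pm\tan\theta_r$. This is also why the lemma is flagged as requiring $d=2$: in higher dimensions there is no canonical orthogonal companion $Jm$ for which the two projections together recover the full norm of $x-y$.
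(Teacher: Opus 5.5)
Your proof is correct and follows the paper's argument essentially step for step. Like the paper, you handle the degenerate case $\mathbf{n}_{x,y}=0$ separately, apply Parseval's identity in the orthonormal basis $\{\mathbf{n}_{x,y}/|\mathbf{n}_{x,y}|,\,J\mathbf{n}_{x,y}/|\mathbf{n}_{x,y}|\}$ to get (\ref{r13}), then use $|PQ|\le\frac12(P^2+Q^2)\le 2|x-y|^2$ together with $|\mathbf{n}_{x,y}|\le 2$.
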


\begin{proof}
If $\mathbf{n}_{x,y}=0$ then $P\left(  x,y\right)  =Q\left(  x,y\right)  =0$
and there is nothing to prove. If $\mathbf{n}_{x,y}\neq0$, let us consider the
unity-norm vectors $e_{1}:=\mathbf{n}_{x,y}/\left\vert \mathbf{n}%
_{x,y}\right\vert $ and $e_{2}:=J\mathbf{n}_{x,y}/\left\vert \mathbf{n}%
_{x,y}\right\vert $. It is easy to observe that the set $\left\{  e_{1}%
,e_{2}\right\}  $ is an orthonormal basis of $\mathbb{R}^{2}$, because
$\left\vert J\mathbf{n}_{x,y}\right\vert =\left\vert \mathbf{n}_{x,y}%
\right\vert $ and $\left\langle J\mathbf{n}_{x,y},\mathbf{n}_{x,y}%
\right\rangle =0$. The coordinates of $x-y$ with respect to this basis, are%
\[
\left\langle x-y,e_{1}\right\rangle =\frac{P\left(  x,y\right)  }{\left\vert
\mathbf{n}_{x,y}\right\vert }\quad\text{and}\quad\left\langle x-y,e_{2}%
\right\rangle =\frac{Q\left(  x,y\right)  }{\left\vert \mathbf{n}%
_{x,y}\right\vert }.
\]
The Parseval identity $\left\vert x-y\right\vert ^{2}=\left\langle
x-y,e_{1}\right\rangle ^{2}+\left\langle x-y,e_{2}\right\rangle ^{2}$ leads to%
\begin{equation}
P^{2}\left(  x,y\right)  +Q^{2}\left(  x,y\right)  =\left\vert \mathbf{n}%
_{x,y}\right\vert ^{2}\left\vert x-y\right\vert ^{2}. \label{r13}%
\end{equation}
Hence, since $\left\vert \mathbf{n}_{x,y}\right\vert \leq2$,%
\[
\left\vert P\left(  x,y\right)  Q\left(  x,y\right)  \right\vert \leq\dfrac
{1}{2}\left(  P^{2}\left(  x,y\right)  +Q^{2}\left(  x,y\right)  \right)
=\dfrac{1}{2}\left\vert \mathbf{n}_{x,y}\right\vert ^{2}\left\vert
x-y\right\vert ^{2}\leq2\left\vert x-y\right\vert ^{2},
\]
which gives%
\[
\left\vert \frac{1}{2}\tan\theta_{r}P\left(  x,y\right)  Q\left(  x,y\right)
\right\vert \leq\tan\theta_{r}\left\vert x-y\right\vert ^{2}.
\]
Adding $\left\vert x-y\right\vert ^{2}$, from (\ref{r10}), the conclusion
follows.\hfill
\end{proof}

\begin{remark}
The identity (\ref{r13}) is the only place where $d=2$ is used, and it is
decisive. In $\mathbb{R}^{d}$ with $d\geq3$ the pair $\left(  J\mathbf{n}%
_{x,y},\mathbf{n}_{x,y}\right)  $ no longer spans the space, and the product
$PQ$ is no longer controlled by $\left\vert x-y\right\vert ^{2}$ with the
constant $\frac{1}{2}\left\vert \mathbf{n}_{x,y}\right\vert ^{2}$. Therefore,
the construction collapses. This is the precise sense in which the oblique
reflection driven by a rotation find it's place into a two-dimensional
obstacle problem.
\end{remark}

\begin{lemma}
[Cancellation of the tangential term]\label{rot-cancel}There is an absolute
constant $c_{0}\leq16$ such that, for all $r\in\left[  0,T\right]  $, $x\in
bd\left(  K\right)  $ and $y\in K$,
\begin{equation}
\left\langle \nabla_{x}\Phi_{r}\left(  x,y\right)  ,\Theta_{r}\mathbf{n}%
\left(  x\right)  \right\rangle \geq\dfrac{2\cos2\theta_{r}}{\cos\theta_{r}%
}\left\langle x-y,\mathbf{n}\left(  x\right)  \right\rangle -c_{0}\,b_{\ast
}\tan\theta_{r}\left\vert x-y\right\vert ^{2}. \label{r14}%
\end{equation}
By the symmetry of $\Phi_{r}$ the same estimate holds for $\left\langle
\nabla_{y}\Phi_{r}\left(  x,y\right)  ,\Theta_{r}\mathbf{n}\left(  y\right)
\right\rangle $ when $y\in bd\left(  K\right)  $ and $x\in K$, with
$\left\langle y-x,\mathbf{n}\left(  y\right)  \right\rangle $ taking the role
of $\left\langle x-y,\mathbf{n}\left(  x\right)  \right\rangle $.
\end{lemma}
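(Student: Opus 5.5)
The plan is a direct first-order computation: differentiate $\Phi_r$ in $x$, pair the result with $\Theta_r\mathbf{n}(x)=\cos\theta_r\,\mathbf{n}(x)+\sin\theta_r\,J\mathbf{n}(x)$, and use Lemma \ref{K-geometry} to replace every quantity involving $\mathbf{n}(y)$ by its value at $x$, at a cost of $\mathcal{O}(|x-y|)$. Write $\mathrm{n}:=\mathbf{n}(x)$, with $|\mathrm{n}|=1$ since $x\in bd(K)$, and $\mathrm{m}:=\mathbf{n}(y)$. By (\ref{r10}),
\[
\nabla_x\Phi_r=2(x-y)-\tfrac12\tan\theta_r\big(Q\,\nabla_xP+P\,\nabla_xQ\big),
\]
with $\nabla_xP=\mathbf{n}_{x,y}+D\mathbf{n}(x)^{T}(x-y)$ and $\nabla_xQ=J\mathbf{n}_{x,y}+(JD\mathbf{n}(x))^{T}(x-y)$. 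The terms containing $D\mathbf{n}(x)$ are bounded by $b_\psi|x-y|$ via (\ref{r9}) and (\ref{r8}). They are multiplied by $|P|$ or $|Q|\le 2|x-y|$, so their total contribution is at most $2b_\psi\tan\theta_r|x-y|^2\le 2b_\ast\tan\theta_r|x-y|^2$.

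Next I compute the leading pairings exactly. For the quadratic part,
\[
\langle 2(x-y),\Theta_r\mathrm{n}\rangle=2\cos\theta_r\langle x-y,\mathrm{n}\rangle+2\sin\theta_r\langle x-y,J\mathrm{n}\rangle,
\]
which is (\ref{r6}). For the correction, use $\langle J\mathrm{m},J\mathrm{n}\rangle=\langle \mathrm{m},\mathrm{n}\rangle$ and $J^{T}=-J$ to get
\[
\langle\mathbf{n}_{x,y},\Theta_r\mathrm{n}\rangle=\cos\theta_r(1+\langle\mathrm{m},\mathrm{n}\rangle)+\sin\theta_r\langle\mathrm{m},J\mathrm{n}\rangle,
\qquad
\langle J\mathbf{n}_{x,y},\Theta_r\mathrm{n}\rangle=\cos\theta_r\langle J\mathrm{m},\mathrm{n}\rangle+\sin\theta_r(1+\langle\mathrm{m},\mathrm{n}\rangle).
\]
By Lemma \ref{K-geometry}-(b),(c), these equal $2\cos\theta_r+\mathcal{O}(b_\ast|x-y|)$ and $2\sin\theta_r+\mathcal{O}(b_\ast|x-y|)$ respectively. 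Hence the correction contributes
\[
-\tfrac12\tan\theta_r\big(2\cos\theta_r\,Q+2\sin\theta_r\,P\big)=-\sin\theta_r\,Q-\tfrac{\sin^2\theta_r}{\cos\theta_r}P,
\]
plus errors bounded by $\tfrac12\tan\theta_r\cdot 2|x-y|\cdot 2b_\ast|x-y|$ for each of the two products.

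The cancellation step comes next. Write $Q=2\langle x-y,J\mathrm{n}\rangle+\langle x-y,J(\mathrm{m}-\mathrm{n})\rangle$ and $P=2\langle x-y,\mathrm{n}\rangle+\langle x-y,\mathrm{m}-\mathrm{n}\rangle$. The remainders are at most $b_\psi|x-y|^2$ because $|\mathrm{m}-\mathrm{n}|\le b_\psi|x-y|$. Then $-\sin\theta_r Q$ cancels the tangential term $2\sin\theta_r\langle x-y,J\mathrm{n}\rangle$ exactly, up to $\sin\theta_r b_\psi|x-y|^2\le\tan\theta_r b_\ast|x-y|^2$. The normal terms combine to
\[
\Big(2\cos\theta_r-\tfrac{2\sin^2\theta_r}{\cos\theta_r}\Big)\langle x-y,\mathrm{n}\rangle=\tfrac{2\cos2\theta_r}{\cos\theta_r}\langle x-y,\mathrm{n}\rangle.
\]
The $P$-remainder carries the factor $\sin^2\theta_r/\cos\theta_r\le\tan\theta_r$. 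I will keep the main normal term exactly, without using its sign, so no positivity of $\cos 2\theta_r$ is needed here.

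Summing all error terms gives an absolute multiple of $b_\ast\tan\theta_r|x-y|^2$, using $b_\psi\le b_\ast$ and $\sin\theta_r\le\tan\theta_r$. I expect the total to stay below $16$, which yields (\ref{r14}). The main obstacle is purely bookkeeping: tracking each $\mathcal{O}(|x-y|)$ replacement of $\mathrm{m}$ by $\mathrm{n}$ with an explicit constant, so that none of them is first order in $|x-y|$ and the absolute bound $c_0\le16$ is respected. The one-sided estimate (b) must be used only in the $\cos\theta_r$-pairing, where $1+\langle\mathrm{m},\mathrm{n}\rangle\le 2$ holds automatically. The statement for $\nabla_y$ follows from the symmetry $\Phi_r(x,y)=\Phi_r(y,x)$ by exchanging the roles of $x$ and $y$.
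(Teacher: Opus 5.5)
Your proposal is correct and is essentially the paper's proof. It uses the same three-part split of $\nabla_x\Phi_r$ (quadratic part, the $\mathbf{n}_{x,y}$/$J\mathbf{n}_{x,y}$ part, and the $D\mathbf{n}(x)$ part), the same use of Lemma \ref{K-geometry}(b),(c) together with $|\mathbf{n}(y)-\mathbf{n}(x)|\le b_\psi|x-y|$ to trade $\mathbf{n}(y)$ for $\mathbf{n}(x)$, and the same factor $-\frac12\tan\theta_r$ that kills the tangential coefficient and leaves $2\cos 2\theta_r/\cos\theta_r$ on the normal one. Finishing your tally gives at most $(2+4+1+1)\,b_\ast\tan\theta_r|x-y|^2=8\,b_\ast\tan\theta_r|x-y|^2$, which matches the paper's count $16|\lambda|b_\ast$, so $c_0\le 16$ holds with room to spare; and since Lemma \ref{K-geometry}(b) is two-sided, your caveat about using it only in the $\cos\theta_r$-pairing is unnecessary.
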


\begin{proof}
\footnote{From now on, when there is no risk of confusion, we will use the
terms without writing their arguments. For example, we use $P=P\left(
x,y\right)  ,$ $Q=Q\left(  x,y\right)  ,$ $\theta=\theta_{r}~$, and so on.}Let
us denote $\lambda:=-\frac{1}{2}\tan\theta$, $\mathrm{n}:=\mathbf{n}\left(
x\right)  $, $\mathrm{m}:=\mathbf{n}\left(  y\right)  $, $g:=\Theta
_{r}\mathrm{n}=\cos\theta\mathrm{n}+\sin\theta J\mathrm{n}$, and
\[
a:=\left\langle x-y,\mathrm{n}\right\rangle ,\quad b:=\left\langle
x-y,J\mathrm{n}\right\rangle ,\quad a^{\prime}:=\left\langle x-y,\mathrm{m}%
\right\rangle ,\quad b^{\prime}:=\left\langle x-y,J\mathrm{m}\right\rangle .
\]
We have $P=a+a^{\prime}$, $Q=b+b^{\prime}~$and $a^{2}+b^{2}=\left\vert
x-y\right\vert ^{2}$, since $\{\mathrm{n,}J\mathrm{n}\}$ is an orthonormal
base on $bd\left(  K\right)  ,$ $\left\vert \mathrm{n}\right\vert =1$. By
(\ref{r8}), $\left\vert a^{\prime}-a\right\vert +\left\vert b^{\prime
}-b\right\vert \leq2b_{\psi}\left\vert x-y\right\vert ^{2}$.

Denote $c:=\left\langle \mathrm{m},\mathrm{n}\right\rangle $ and $s:=\langle
J\mathrm{m},\mathrm{n}\rangle$. According to Lemma \ref{K-geometry}$~\left(
b\right)  ,\left(  c\right)  $,
\[
0\leq1-c\leq b_{\ast}\left\vert x-y\right\vert \quad\quad\text{and}\quad
\quad\left\vert s\right\vert \leq b_{\psi}\left\vert x-y\right\vert .
\]
Our aim is to obtain now the gradient $\nabla_{x}\Phi_{r}$. Differentiating,
$\nabla_{x}P=\mathbf{n}_{x,y}+D\mathbf{n}\left(  x\right)  \left(  x-y\right)
$ and $\nabla_{x}Q=J\mathbf{n}_{x,y}-D\mathbf{n}\left(  x\right)  J\left(
x-y\right)  $ and one obtain:%
\begin{equation}
\nabla_{x}\Phi_{r}=\underbrace{2\left(  x-y\right)  }_{\mathrm{I}}%
+\underbrace{\lambda\left(  Q\mathbf{n}_{x,y}+PJ\mathbf{n}_{x,y}\right)
}_{\mathrm{II}}+\underbrace{\lambda\left(  QD\mathbf{n}\left(  x\right)
\left(  x-y\right)  -PD\mathbf{n}\left(  x\right)  J\left(  x-y\right)
\right)  }_{\mathrm{III}}. \label{grad of FI}%
\end{equation}
Consider each term separately and we multiply them with $g=\Theta
_{r}\mathrm{n}=\Theta_{r}\mathbf{n}\left(  x\right)  $ in order to get the
left hand side (LHS) from the inequality (\ref{r14}). We have:

\begin{itemize}
\item $\left\langle \mathrm{I},g\right\rangle =2\left\langle x-y,\cos
\theta\mathrm{n}+\sin\theta J\mathrm{n}\right\rangle =2\cos\theta\left\langle
x-y,\mathrm{n}\right\rangle +2\sin\theta\left\langle x-y,J\mathrm{n}%
\right\rangle =2\cos\theta\cdot a+2\sin\theta\cdot b$.

\item We know that $J^{T}=-J$, $J^{2}=-I$ and $\left\vert J\mathrm{n}%
\right\vert =1$. Also, $\langle J\mathrm{n},\mathrm{n}\rangle=0$, $\langle
J\mathrm{m},J\mathrm{n}\rangle=\langle\mathrm{m},\mathrm{n}\rangle=:c$ and
$\langle\mathrm{m},J\mathrm{n}\rangle=:-s$. As consequence,$\smallskip$

$\left\langle \mathbf{n}_{x,y},g\right\rangle =\left\langle n+m,\cos
\theta\mathrm{n}+\sin\theta J\mathrm{n}\right\rangle =\cos\theta
\cdot(1+c)-\sin\theta\cdot s~$;$\smallskip$

$\left\langle J\mathbf{n}_{x,y},g\right\rangle =\left\langle Jn+Jm,\cos
\theta\mathrm{n}+\sin\theta J\mathrm{n}\right\rangle =\cos\theta\cdot
s+\sin\theta\cdot(1+c)~$;

Multiply the above relations with $Q=b+b^{\prime}$ and, respectively, with
$P=a+a^{\prime}$, we obtain$\smallskip$

$\left\langle \mathrm{II},g\right\rangle =\lambda\left(  b+b^{\prime}\right)
\left(  2\cos\theta-\cos\theta\cdot(1-c)-\sin\theta\cdot s\right)
+\lambda\left(  a+a^{\prime}\right)  \left(  2\sin\theta+\cos\theta\cdot
s-\sin\theta\cdot(1-c)\right)  $

From Lemma \ref{K-geometry}~$\left(  b\right)  ,\left(  c\right)  $, we have%
\[
\left\{
\begin{array}
[c]{l}%
0\leq1-c\leq b_{\ast}\left\vert x-y\right\vert ,\quad\quad\left\vert
s\right\vert \leq b_{\psi}\left\vert x-y\right\vert \leq b_{\ast}\left\vert
x-y\right\vert \medskip\\
\left\vert a^{\prime}-a\right\vert +\left\vert b^{\prime}-b\right\vert
\leq2b_{\psi}\left\vert x-y\right\vert ^{2}.
\end{array}
\right.
\]
Therefore, replacing $b+b^{\prime}$ by $2b$ and $a+a^{\prime}$ by $2a$ in
$\left\langle \mathrm{II},g\right\rangle $ produces an error bounded by:%
\[
2\left\vert \lambda\right\vert \left(  2\cos\theta+2\sin\theta\right)
\cdot2b_{\psi}\left\vert x-y\right\vert ^{2}\leq8\left\vert \lambda\right\vert
b_{\psi}\left\vert x-y\right\vert ^{2}.
\]
The terms which contain the factors $s$ and $1-c$ are also bounded, due to the
fact that we have $\left\vert a\right\vert ,\left\vert b\right\vert
\leq\left\vert x-y\right\vert $ and $\left\vert a+a^{\prime}\right\vert
,\left\vert b+b^{\prime}\right\vert \leq3\left\vert x-y\right\vert $ by%
\[
2\cdot3\left\vert x-y\right\vert \cdot\left\vert \lambda\right\vert \cdot
b_{\ast}\left\vert x-y\right\vert =6\left\vert \lambda\right\vert b_{\ast
}\left\vert x-y\right\vert ^{2}.
\]
As consequence,
\[
\left\langle \mathrm{II},g\right\rangle \geq4\lambda\cos\theta\cdot
b+4\lambda\sin\theta\cdot a-14\left\vert \lambda\right\vert b_{\ast}\left\vert
x-y\right\vert ^{2}.
\]

\item Finally, for the third term from (\ref{grad of FI}), we have

$\left\vert \left\langle \mathrm{III},g\right\rangle \right\vert
\leq\left\vert \lambda\right\vert \left(  \left\vert P\right\vert +\left\vert
Q\right\vert \right)  b_{\psi}\left\vert x-y\right\vert \leq\left\vert
\lambda\right\vert \left(  2\left\vert x-y\right\vert +2\left\vert
x-y\right\vert \right)  b_{\psi}\left\vert x-y\right\vert \leq2\left\vert
\lambda\right\vert b_{\ast}\left\vert x-y\right\vert ^{2}.$
\end{itemize}

In order to conclude the proof, from the formulas of $\left\langle
\mathrm{I},g\right\rangle $, $\left\langle \mathrm{II},g\right\rangle $, the
coefficient of $b$ is:%
\[
2\sin\theta+4\lambda\cos\theta=0,\quad\text{due to the choice of }%
\lambda=-\frac{1}{2}\tan\theta.
\]
The coefficient of $a$ becomes, also due to the choice of $\lambda$,%
\[
2\cos\theta+4\lambda\sin\theta=2\cos\theta-2\dfrac{\sin^{2}\theta}{\cos\theta
}=\dfrac{2\cos2\theta}{\cos\theta}%
\]
Since $16\left\vert \lambda\right\vert =8\tan\theta$, we clearly observe that
(\ref{r14}) follows, for $c_{0}=16$. Indeed, if we count the errors order
brought by second and third group, we have:%
\[
14\left\vert \lambda\right\vert b_{\ast}\left\vert x-y\right\vert
^{2}+2\left\vert \lambda\right\vert b_{\ast}\left\vert x-y\right\vert
^{2}=16b_{\ast}\left\vert x-y\right\vert ^{2}.
\]
The proof of Lemma \ref{rot-cancel} is now complete.\hfill
\end{proof}

\begin{remark}
[Provenance of $\Phi$: a classical device, in two-component form]%
\label{rot-provenance}The function $\Phi_{r}$ is a quadratic form in $x-y$,
with an $\left(  x,y\right)  $-dependent coefficient. Indeed, $\left\langle
x-y,a\right\rangle \left\langle x-y,b\right\rangle =\left\langle
\mathrm{sym}\left(  a\otimes b\right)  \left(  x-y\right)  ,x-y\right\rangle $
with $\mathrm{sym}\left(  a\otimes b\right)  :=\frac{1}{2}\left(  ab^{\ast
}+ba^{\ast}\right)  $, so
\begin{equation}
\Phi_{r}\left(  x,y\right)  =\left\langle A_{r}\left(  x,y\right)  \left(
x-y\right)  ,x-y\right\rangle \quad\text{and}\quad A_{r}\left(  x,y\right)
=I-\dfrac{1}{2}\tan\theta_{r}\mathrm{sym}\left(  \mathbf{n}_{x,y}\otimes
J\mathbf{n}_{x,y}\right)  , \label{r14a}%
\end{equation}
a symmetric matrix. On the diagonal $\mathbf{n}_{x,x}=2\mathbf{n}\left(
x\right)  $ and
\begin{equation}
A_{r}\left(  x,x\right)  =I-\tan\theta(r)\left(  \mathbf{n}\otimes
J\mathbf{n}+J\mathbf{n}\otimes\mathbf{n}\right)  \quad\text{and}\quad
A_{r}\left(  x,x\right)  \Theta_{r}\mathbf{n}\left(  x\right)  =\dfrac
{\cos2\theta_{r}}{\cos\theta_{r}}\mathbf{n}\left(  x\right)  , \label{r14b}%
\end{equation}
the second identity following from $\left\langle J\mathbf{n},\mathbf{n}%
\right\rangle =0$ and $\left\vert \mathbf{n}\right\vert =\left\vert
J\mathbf{n}\right\vert =1$. Thus (\ref{r14b}) \textit{is} exactly Lemma
\ref{rot-cancel} in algebraic form, and it produces the constant $2\cos
2\theta/\cos\theta$. In the bidimensional framework, $\{\mathbf{n}%
,J\mathbf{n}\}$, the eigenvalues of $A_{r}\left(  x,x\right)  $ are $1\pm
\tan\theta$, so that the inequality $A_{r}\geq\alpha_{\theta}I$ is precisely
$\left(  C_{1}\right)  $, and (\ref{r13}) is Parseval's identity in the
orthonormal basis $\left\{  \left\vert \mathbf{n}_{x,y}\right\vert
^{-1}\mathbf{n}_{x,y},\left\vert \mathbf{n}_{x,y}\right\vert ^{-1}%
J\mathbf{n}_{x,y}\right\}  $.

Written in this way, $\Phi$ is recognizable as the two-point form of a
classical device. We have a symmetric, uniformly elliptic matrix field $A$
with
\[
A\left(  x\right)  \gamma\left(  x\right)  =c\left(  x\right)  \mathbf{n}%
\left(  x\right)  ,\qquad c\left(  x\right)  >0,\qquad\text{on }bd\left(
K\right)  ,
\]
$\gamma$ denoting the direction of reflection. Only the sign of the factor $c$
is used; here, by (\ref{r14b}), $c=\cos2\theta/\cos\theta$, and $c>0$ is
exactly $\left(  C_{1}\right)  $, whereas Barles and DaLio
\cite{Barles/DaLio:06} normalizes $c\equiv1$ - which, as Remark
\ref{rot-C1-artifact} will soon show, is possible for every $\theta\in\left(
0,\pi/4\right)  $. Such an $A$ is used inside the function $\left\langle
A\left(  x-y\right)  ,x-y\right\rangle $ in order to obtain the disappearance
from $\left\langle \nabla_{x}\Phi,\gamma\left(  x\right)  \right\rangle $ of
the first order tangential term. In the viscosity solution literature for
oblique derivative and nonlinear Neumann problems this is standard: it is
assumption $(H3a)$ from Barles and Da Lio \cite{Barles/DaLio:06}
(\textquotedblleft there exists a Lipschitz continuous function
$A:O\rightarrow S^{n}$ with $A\geq c_{0}Id$, for some $c_{0}>0$ such that
$A(x)\gamma(x)=n(x)$ for every $x\in\partial O$\textquotedblright), where it
is attributed to P.-L.\ Lions and to Lions and Sznitman
\cite{Lions/Sznitman:84} and where the resulting boundary estimate is exactly
(\ref{r14}). The test function from Barles \cite{Barles:99} contains the same
product between a normal increment and an oblique increment. Also, when
dealing with the Skorokhod problem, the auxiliary function $g$ of Dupuis and
Ishii \cite{Dupuis/Ishii:93} obeys the same specification, although no closed
form is given there. The planar parametrization $\Theta_{r}\mathbf{n}%
=\cos\theta\mathbf{n}+\sin\theta J\mathbf{n}$ is classical as well; it is
written under the form $v_{\theta}=\mathbf{n}+\tan\theta\mathbf{t}$,
$\mathbf{t}=J\mathbf{n}$, in Burdzy, Chen, Marshall and Ramanan
\cite{Burdzy/Chen/Marshall/Ramanan:17}.

We therefore claim no novelty for $\Phi$ itself. What is specific here is its
realization and its use: the explicit two-dimensional closed form, the
symmetrization through $\mathbf{n}\left(  x\right)  +\mathbf{n}\left(
y\right)  $, rather than through a midpoint or a mollification, which makes
$\Phi_{r}\left(  x,y\right)  =\Phi_{r}\left(  y,x\right)  $ and is what allows
both arguments to reach $bd\left(  K\right)  $, as they must when a backward
equation is compared with itself. We have the transfer of the device to a
multivalued backward equation driven by a martingale on a general filtration,
where the second order term of Lemma \ref{rot-hess} and the weight of
Subsection \ref{rot-conv} are the real difficulty.
\end{remark}

\subsection{Rotation angle compatibility constraints\label{rot-constraints}}

We introduce some constraints on the rotation angle, in order to make the
upcoming results valid. The proofs below use exactly the following two
properties of $\theta$:%
\begin{equation}
\left\{
\begin{array}
[c]{ll}%
\left(  C_{1}\right)  : & \alpha_{\theta}:=1-\max\limits_{r\in\left[
0,T\right]  }\tan\theta_{r}>0;\medskip\\
\left(  C_{2}\right)  : & \left\langle \nabla_{x}\Phi_{r}\left(  x,y\right)
,\Theta_{r}\mathbf{n}\left(  x\right)  \right\rangle \geq0,\quad\text{for all
}r\in\left[  0,T\right]  ,\ x\in bd\left(  K\right)  ,\ y\in K.
\end{array}
\right.  \label{r15}%
\end{equation}
Condition $\left(  C_{1}\right)  $ states that $\operatorname{Im}\left(
\theta\right)  =\left(  0,\pi/4\right)  $. If we revisit Lemma \ref{rot-two},
the condition makes, in fact, $\Phi_{r}$ equivalent to $\left\vert
x-y\right\vert ^{2}$. More precisely,%
\begin{equation}
\alpha_{\theta}\left\vert x-y\right\vert ^{2}\leq\Phi_{r}\left(  x,y\right)
\leq2\left\vert x-y\right\vert ^{2}, \label{r16}%
\end{equation}
and it is exactly the condition $\cos2\theta>0$, under which the surviving
normal term from Lemma \ref{rot-cancel} has the right sign. Condition $\left(
C_{2}\right)  $ is the quantitative form of the following compatibility
assumption: the curvature of $bd\left(  K\right)  $ is dominating the
tangential slide. We now present a result which gives sufficient conditions
for (\ref{r15}) to hold.

\begin{lemma}
[Sufficient criteria for $\left(  C_{1}\right)  $ and $\left(  C_{2}\right)
$]\label{rot-suff}The following assertions are valid.

\begin{itemize}
\item[$\left(  a\right)  $] If $a_{\psi}\cos2\theta_{r}\geq16b_{\ast}%
\sin\theta_{r}~$, for all $r\in\left[  0,T\right]  $, then $\left(
C_{2}\right)  $ holds. Moreover, the inequality $a_{\psi}\leq b_{\psi}\leq
b_{\ast}~$, implies $\tan\theta_{r}\leq\frac{1}{8}$, and, therefore, condition
$\left(  C_{1}\right)  $ also holds.

\item[$\left(  b\right)  $] If we consider $K$ to be the closed Euclidean ball
$K=\bar{B}\left(  z_{0},R\right)  $ and $\psi\left(  x\right)  =\frac{1}%
{2R}\left(  \left\vert x-z_{0}\right\vert ^{2}-R^{2}\right)  $, then $\left(
C_{2}\right)  $ holds under the constraint $\tan\theta_{r}+\tan^{2}\theta
_{r}\leq1$, that is, $0<\tan\theta_{r}\leq\tfrac{1}{2}\left(  \sqrt
{5}-1\right)  $, i.e. $\theta\leq31.7^{\circ}$.
\end{itemize}
\end{lemma}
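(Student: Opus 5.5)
For part $(a)$ the plan is to feed Lemma \ref{rot-cancel} into Lemma \ref{K-geometry}-$(d)$. Fix $r$, $x\in bd(K)$, $y\in K$. Since the assumption $a_{\psi}\cos2\theta_r\ge16b_\ast\sin\theta_r>0$ forces $\cos2\theta_r>0$, the coefficient $2\cos2\theta_r/\cos\theta_r$ in (\ref{r14}) is positive. Lemma \ref{K-geometry}-$(d)$ gives $\langle x-y,\mathbf{n}(x)\rangle\ge\frac{a_\psi}{2}|x-y|^2$, and $\mathbf{n}=\nabla\psi$ on $K$. Combining with (\ref{r14}) and $c_0\le16$ yields
\[
\left\langle \nabla_x\Phi_r(x,y),\Theta_r\mathbf{n}(x)\right\rangle\ge\frac{|x-y|^2}{\cos\theta_r}\left(a_\psi\cos2\theta_r-16b_\ast\sin\theta_r\right)\ge0,
\]
using $\tan\theta_r=\sin\theta_r/\cos\theta_r$. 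This is exactly $(C_2)$.

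For the second claim of $(a)$, I use $a_\psi\le b_\psi\le b_\ast$. The first inequality holds because $D^2\psi$ is squeezed between $a_\psi I$ and $b_\psi I$; the second because $b_\ast=b_\psi(1+b_\psi D_K)\ge b_\psi$. Then the hypothesis gives $16\sin\theta_r\le\cos2\theta_r\le1$, so $\sin\theta_r\le1/16$, hence $\tan\theta_r\le1/8$ with room to spare. Continuity of $\theta$ on the compact interval $[0,T]$ then gives $\max_r\tan\theta_r\le1/8<1$, which is $(C_1)$.

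For part $(b)$ the generic constant $16b_\ast$ is far too crude, so I will compute $\langle\nabla_x\Phi_r,\Theta_r\mathbf{n}(x)\rangle$ exactly for the ball. Take $z_0=0$. Here $\nabla\psi(x)=x/R$, $D^2\psi=I/R$, and on $K$ we have $\mathbf{n}(x)=x/R$ and $D\mathbf{n}=I/R$. Set $u=x-y$, and let $a=\langle u,\mathrm{n}\rangle$, $b=\langle u,J\mathrm{n}\rangle$ with $\mathrm{n}=x/R$, so that $a^2+b^2=|u|^2$. Since $\mathrm{m}=y/R=\mathrm{n}-u/R$, everything is polynomial:
\[
\mathbf{n}_{x,y}=2\mathrm{n}-u/R,\qquad P=2a-|u|^2/R,\qquad Q=2b .
\]
Substituting into (\ref{grad of FI}), term $\mathrm{III}$ simplifies since $D\mathbf{n}=I/R$, giving $\lambda(Qu-PJu)/R$. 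I then pair each term with $g=\cos\theta\,\mathrm{n}+\sin\theta\,J\mathrm{n}$. The linear $b$-terms cancel by the choice of $\lambda$, as in Lemma \ref{rot-cancel}. What remains is
\[
\frac{2\cos2\theta}{\cos\theta}\,a+\frac{1}{R}\,\mathcal{Q}_\theta(a,b),
\]
where $\mathcal{Q}_\theta$ is an explicit quadratic (possibly with cubic-in-$u$ pieces over $R^2$) form in $(a,b)$.

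Next I use Lemma \ref{K-geometry}-$(d)$ in its sharp form for the ball, $a\ge|u|^2/(2R)$. This holds with $-\psi(y)\ge0$ and is in fact $2Ra=|u|^2+R^2-|y|^2$. The goal is to show that the sum is nonnegative whenever $\tan\theta+\tan^2\theta\le1$. Dividing by $\cos\theta$ and writing $t=\tan\theta$, I expect the worst case to be the tangential regime $a=|u|^2/(2R)$ (i.e. $y\in bd(K)$). There the inequality should reduce to a one-variable condition of the form $1-t-t^2\ge0$ on the normalized direction, possibly after minimizing over the ratio $b/|u|$. The cubic terms in $|u|/R$ should be absorbed using $|u|\le2R$.

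The main obstacle is precisely this step: organizing the exact expansion so that the minimization over the admissible pairs $(a,b)$ (constrained by $a\ge|u|^2/(2R)$ and $|u|\le 2R$) lands on $t+t^2\le1$ and not on a weaker or messier condition. Equivalently, checking that the $\lambda\,Q\cdot$ and $\lambda\,P\cdot$ cross-terms combine correctly. If a direct minimization is awkward, I would first try parametrizing $x=R\mathrm{n}$ and $y=R(\cos\phi\,\mathrm{n}+\sin\phi\,J\mathrm{n})$ with $y\in bd(K)$, which is the extremal case since the expression is affine in $R^2-|y|^2$. This gives a trigonometric inequality in $\phi$ and $t$ to verify.
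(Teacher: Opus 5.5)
Your part $(a)$ is correct and is the paper's argument: you insert Lemma \ref{K-geometry}$(d)$ into (\ref{r14}), after noting (as you rightly do) that the hypothesis forces $\cos2\theta_r>0$, and you get $(C_1)$ from $16\sin\theta_r\le\cos2\theta_r\le1$.

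Part $(b)$ has a genuine gap. The decisive inequality is only announced (``should reduce'', ``should be absorbed''), and the reduction you propose to reach it is wrong. Your own identity $2Ra=|u|^2+R^2-|y|^2$ gives $P=2a-|u|^2/R=(R^2-|y|^2)/R$. So in the regime $a=|u|^2/(2R)$, i.e. $y\in bd(K)$, you have $P=0$ and
\[
\nabla_x\Phi_r=2u+2\lambda Q\,\mathrm{n}=2u-2b\tan\theta_r\,\mathrm{n},
\]
hence $\langle\nabla_x\Phi_r,\Theta_r\mathbf{n}(x)\rangle=2a\cos\theta_r\ge0$ for every angle. That configuration is the trivial case, not the worst one. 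No condition like $1-t-t^2\ge0$ can come out of it, and checking it (e.g. through $y=R(\cos\phi\,\mathrm{n}+\sin\phi\,J\mathrm{n})$) says nothing about $y\in\operatorname{int}K$. The angle constraint actually bites in the opposite regime. For $y=x-a\mathrm{n}$ (so $b=0$ and $a=|u|\gg|u|^2/R$), the quantity equals $2a\cos2\theta_r/\cos\theta_r+O(a^2/R)$.

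What is missing is the exact closed form, which removes any need to absorb cubic remainders. Take $z_0=0$, $R=1$, $c:=\langle x,y\rangle$, $w:=\langle x,Jy\rangle$. Then $P=1-|y|^2$, $Q=2w$, and $\nabla_x\Phi_r=2(x-y)-\tan\theta_r(Qx+PJy)$. Pairing with $g=\cos\theta_r x+\sin\theta_r Jx$, the terms $\pm2w\sin\theta_r$ cancel and
\[
\langle\nabla_x\Phi_r,g\rangle=2\cos\theta_r(1-c)-\tan\theta_r\,(1-|y|^2)\,(w\cos\theta_r+c\sin\theta_r).
\]
Your ``affine in $R^2-|y|^2$'' idea does work here, but at the other endpoint. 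Since $y=cx-wJx$, you have $0\le1-|y|^2\le1-c^2=(1-c)(1+c)$. Write $t=\tan\theta_r$. If $tw+t^2c\le0$ there is nothing to prove. Otherwise, after dividing by $\cos\theta_r$ and factoring out $1-c\ge0$, it suffices that $(1+c)(tw+t^2c)\le2$. This follows from $0\le1+c\le2$, $|c|,|w|\le1$ and $t+t^2\le1$, which is exactly the paper's proof.
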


\begin{proof}
$\left(  a\right)  $ Combining Lemma \ref{rot-cancel} with Lemma
\ref{K-geometry}$\left(  d\right)  $, we obtain:%
\[
\left\langle \nabla_{x}\Phi_{r},\Theta_{r}\mathbf{n}\left(  x\right)
\right\rangle \geq\dfrac{2\cos2\theta_{r}}{\cos\theta_{r}}\left\langle
x-y,,\mathbf{n}\left(  x\right)  \right\rangle -16b_{\ast}\tan\theta
_{r}\left\vert x-y\right\vert ^{2},
\]
but $\left\langle x-y,\mathbf{n}\left(  x\right)  \right\rangle \geq
\tfrac{a_{\psi}}{2}\left\vert x-y\right\vert ^{2}\geq0.$ Substituting,%
\[
\left\langle \nabla_{x}\Phi_{r},\Theta_{r}\mathbf{n}\left(  x\right)
\right\rangle \geq\left(  \dfrac{a_{\psi}\cos2\theta}{\cos\theta}-16b_{\ast
}\tan\theta\right)  \left\vert x-y\right\vert ^{2},
\]
and it suffices to multiply by $\cos\theta$ to obtain $\left(  a\right)  $,
under the assumption $a_{\psi}\cos2\theta_{r}\geq16b_{\ast}\sin\theta_{r}~$,
for all $r\in\left[  0,T\right]  $.

$\left(  b\right)  $ Take $z_{0}=0$ and $R=1$, the inequality being invariant
under the linear transformation $x\mapsto z_{0}+Rx$. Then $\mathbf{n}\left(
x\right)  =x$ on $bd\left(  K\right)  =bd(\bar{B}\left(  0,1\right)  )$,
$\mathbf{n}_{x,y}=x+y$, and for $\left\vert x\right\vert =1$,%
\begin{equation}
P=\left\vert x\right\vert ^{2}-\left\vert y\right\vert ^{2}=1-\left\vert
y\right\vert ^{2},\qquad Q=\left\langle x-y,J\left(  x+y\right)  \right\rangle
=2\left\langle x,Jy\right\rangle =:2w. \label{forms of P and Q}%
\end{equation}
Observe that $\nabla_{x}P=2x$ and $\nabla_{x}Q=2Jy$, which implies
\[
\nabla_{x}\Phi_{r}=2\left(  x-y\right)  -\tan\theta\left(  Qx+PJy\right)  .
\]
The intention is to make the scalar product with $g=\Phi_{r}=\cos\theta\cdot
x+\sin\theta\cdot Jx$. Denote $c:=\left\langle x,y\right\rangle $ and, using
$\left\langle x,g\right\rangle =\cos\theta$, we get, due to the skew-symmetry
of $J$ and (\ref{forms of P and Q}),%
\[
\left\{
\begin{array}
[c]{l}%
\langle Jy,g\rangle=\left\langle Jy,\cos\theta\cdot x+\sin\theta\cdot
Jx\right\rangle =w\cos\theta+c\sin\theta\quad\text{and}\medskip\\
\langle x-y,g\rangle=\left\langle x-y,\cos\theta\cdot x+\sin\theta\cdot
Jx\right\rangle =(1-c)\cos\theta+w\sin\theta.
\end{array}
\right.
\]
Observe that the two terms $\pm2w\sin\theta$ cancel and it yields%
\[
\left\langle \nabla_{x}\Phi_{r},g\right\rangle =2\cos\theta\cdot
(1-c)-\tan\theta\cdot\left(  1-\left\vert y\right\vert ^{2}\right)  \left(
w\cos\theta+c\sin\theta\right)  .
\]
The equality $y=cx-wJx$ implies $c^{2}+w^{2}=\left\vert y\right\vert ^{2}%
\leq1$, so $1-\left\vert y\right\vert ^{2}\leq1-c^{2}$. If we divide by
$\cos\theta$, it is enough to have%
\[
2\left(  1-c\right)  \geq\left(  1-c^{2}\right)  \left(  \tan\theta\cdot
w+\tan^{2}\theta\cdot c\right)  ,
\]
which, when the RHS member is positive ($1>c$), implies that is is enough to
ask, as a sufficient condition,%
\[
\left(  1+c\right)  \left(  \tan\theta\cdot w+\tan^{2}\theta\cdot c\right)
\leq2\left(  \tan\theta+\tan^{2}\theta\right)  \leq2.
\]
The proof is now complete.\hfill
\end{proof}

\begin{remark}
\label{rot-sharp}Conditions $\left(  C_{1}\right)  $--$\left(  C_{2}\right)  $
are sufficient, not necessary, and the constant $16\,b_{\ast}$ of Lemma
\ref{rot-suff}$\left(  a\right)  $, obtained by a rough counting of the error
terms, is far from being an optimal one. Note also that $\left(  C_{2}\right)
$ forces $bd\left(  K\right)  $ to be uniformly convex, through Lemma
\ref{K-geometry}$\left(  d\right)  $, whereas oblique reflection on a
half-space is harmless: the method pays for its generality.
\end{remark}

\begin{remark}
[$\left(  C_{1}\right)  $ is not intrinsic]\label{rot-C1-artifact}The
restriction $\tan\theta<1$ (i.e., the $\left(  C_{1}\right)  $ condition) is
an artifact which assures that the leading term of our explicit given function
$\Phi$ is exactly $\left\vert x-y\right\vert ^{2}$. In our bidimensional
framework, a symmetric matrix $A$ satisfying $A\geq c_{0}I$ and $A\Theta
_{r}\mathbf{n}=\mathbf{n}$ exists for every angle $\theta\in\left(
0,\pi/2\right)  $. More precisely, with respect tot the orthonormal base
$\{\mathbf{n},J\mathbf{n}\}$, one can construct the nonsingular matrix%
\[
A:=\left(
\begin{array}
[c]{cc}%
\dfrac{1+\tan\theta\sin\theta}{\cos\theta}\medskip & -\tan\theta\\
-\tan\theta & 1
\end{array}
\right)  ,\quad\quad\text{with}\quad\quad\det A=\dfrac{1}{\cos\theta}>0.
\]
For $A$, the smallest eigenvalue behaves like $\cos\theta$, as $\theta
\nearrow\pi/2$: it degenerates, but it does not vanish at $\pi/4$. Replacing
$I-\tan\theta\left(  \mathbf{n}\otimes J\mathbf{n}+J\mathbf{n}\otimes
\mathbf{n}\right)  $ from relation (\ref{r14b}) with this $A$, symmetrized in
$\left(  x,y\right)  $ and truncated as in (\ref{r9}), should remove the
constraint $\left(  C_{1}\right)  $ altogether, at the heavier price of an
ellipticity constant degenerating with $\theta$ and of heavier second order
estimates. We have not carried this out.
\end{remark}

Under the assumption (\ref{r15}-$\left(  C_{1}\right)  $), we provide some
estimates which permit the control of the second order terms from the
approximating equations associated to our obstacle problem. This is the most
difficult to control, since we need a small fraction as $\alpha_{\theta}/2$,
from (\ref{r17}), for the absorption of the coupling terms. It remains a
second order defect, proportional with $\left\vert x-y\right\vert ^{2}\left(
\left\vert h\right\vert ^{2}+|h^{\prime}|^{2}\right)  $, multiplied by an
explicit given positive constant, $\kappa_{\theta}$~.

\begin{lemma}
[Second order terms lower bound]\label{rot-hess}Assume $\left(  C_{1}\right)
$ takes places and set%
\begin{equation}
\theta_{\max}:=\max\limits_{r\in\left[  0,T\right]  }\theta\left(  r\right)
,\qquad\kappa_{\theta}:=2b_{\psi}^{2}\tan\theta_{\max}\left(  1+\dfrac
{8\tan\theta_{\max}}{\alpha_{\theta}}\right)  . \label{r16a}%
\end{equation}
Then, for all $x,y\in\mathbb{R}^{2}$, all $r\in\left[  0,T\right]  $ and all
direction of variation $h,h^{\prime}\in\mathbb{R}^{2}$,
\begin{equation}
\dfrac{1}{2}D_{\left(  x,y\right)  }^{2}\Phi_{r}\left(  x,y\right)  \left[
\left(  h,h^{\prime}\right)  ,\left(  h,h^{\prime}\right)  \right]  \geq
\dfrac{\alpha_{\theta}}{2}\left\vert h-h^{\prime}\right\vert ^{2}%
-\kappa_{\theta}\left\vert x-y\right\vert ^{2}\left(  \left\vert h\right\vert
^{2}+|h^{\prime}|^{2}\right)  . \label{r17}%
\end{equation}

\end{lemma}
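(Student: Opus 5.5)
We expand $\Phi_r=|x-y|^2+\lambda PQ$ with $\lambda=-\tfrac12\tan\theta_r$, and compute the second directional derivative along $(h,h')$ term by term. We then isolate one leading quadratic term in $k:=h-h'$ and use Young's inequality to push every remaining term either into half of the ellipticity constant or into the defect $\kappa_\theta|x-y|^2(|h|^2+|h'|^2)$.

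\emph{Step 1 (derivatives).} Write $\delta:=Dn(x)h+Dn(y)h'$ and $\delta_2:=D^2n(x)[h,h]+D^2n(y)[h',h']$, with $\|Dn\|,\|D^2n\|\le b_\psi$ as in (\ref{r9}). Along the line $(x+th,y+th')$ we have
\[
P'=\langle k,\mathbf n_{x,y}\rangle+\langle x-y,\delta\rangle,\qquad Q'=\langle k,J\mathbf n_{x,y}\rangle+\langle x-y,J\delta\rangle,
\]
\[
P''=2\langle k,\delta\rangle+\langle x-y,\delta_2\rangle,\qquad Q''=2\langle k,J\delta\rangle+\langle x-y,J\delta_2\rangle .
\]
Hence
\[
\tfrac12 D^2\Phi_r[(h,h'),(h,h')]=|k|^2+\tfrac{\lambda}{2}\bigl(P''Q+2P'Q'+PQ''\bigr).
\]

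\emph{Step 2 (leading term, where $d=2$ enters).} Inside $2P'Q'$, the product $2\langle k,\mathbf n_{x,y}\rangle\langle k,J\mathbf n_{x,y}\rangle$ is handled exactly as in Lemma \ref{rot-two}. Parseval (\ref{r13}) applied with $k$ in place of $x-y$ gives $|2\langle k,\mathbf n_{x,y}\rangle\langle k,J\mathbf n_{x,y}\rangle|\le|\mathbf n_{x,y}|^2|k|^2\le4|k|^2$. Its contribution is therefore at least $-\tan\theta_r|k|^2\ge-\tan\theta_{\max}|k|^2$. Together with $|k|^2$ this yields $\alpha_\theta|k|^2$.

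\emph{Step 3 (remainders).} The remaining terms are of two kinds.

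(i) Mixed terms are first order in $|x-y|$ and bilinear in $k$ and $(h,h')$. They come from the cross products in $2P'Q'$, namely $\langle k,\mathbf n_{x,y}\rangle\langle x-y,J\delta\rangle$ and its partner, and from $2\langle k,\delta\rangle Q$ and $2\langle k,J\delta\rangle P$, using $|P|,|Q|\le2|x-y|$. Each is bounded by a multiple of $b_\psi|x-y|\,|k|(|h|+|h'|)$, so together they contribute at most $C\tan\theta_{\max}b_\psi|x-y||k|(|h|+|h'|)$ with $C$ absolute. Young's inequality bounds this by
\[
\tfrac{\alpha_\theta}{2}|k|^2+\tfrac{C'\tan^2\theta_{\max}b_\psi^2}{\alpha_\theta}|x-y|^2(|h|^2+|h'|^2).
\]
This is the origin of the factor $8\tan\theta_{\max}/\alpha_\theta$ in $\kappa_\theta$.

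(ii) Purely second order terms are $\langle x-y,\delta\rangle\langle x-y,J\delta\rangle$, $\langle x-y,\delta_2\rangle Q$ and $\langle x-y,J\delta_2\rangle P$. Each is directly bounded by $Cb_\psi^2|x-y|^2(|h|^2+|h'|^2)$, since $b_\psi\ge1$ allows $b_\psi\le b_\psi^2$. Multiplied by $|\lambda|$, they give the part $2b_\psi^2\tan\theta_{\max}$ of $\kappa_\theta$.

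Collecting the three steps yields (\ref{r17}).

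\emph{Main obstacle.} The hardest part is the bookkeeping in Step 3(i). One must confirm that every first-order term carries a factor $k=h-h'$ rather than merely $h$ or $h'$; otherwise it could not be absorbed into $\tfrac{\alpha_\theta}{2}|k|^2$. This holds because $P$ and $Q$ are built on $x-y$, so each derivative falling on $x-y$ produces $k$. One must also tune the constants so that they match exactly the stated $\kappa_\theta$. If the rough counting overshoots, I would adjust the Young weight, using $\tfrac{\alpha_\theta}{2}$ exactly, and bound $|\mathbf n_{x,y}|\le2$ sharply to recover the factor $8$.
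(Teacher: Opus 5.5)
Your proposal is correct and follows the paper's proof essentially step for step: the same splitting $\mathbb{D}P=p_1+p_2$, $\mathbb{D}Q=q_1+q_2$, the Parseval bound on the leading product $p_1q_1$ giving $(1-\tan\theta_r)|k|^2$, and Young's inequality with weight $\alpha_\theta/2$ on the mixed terms. The constant bookkeeping you were unsure about closes exactly as stated: after the factor $\tan\theta_r/4$, the mixed terms total at most $4b_\psi\tan\theta_r|x-y|\,|k|(|h|+|h'|)$ and the purely second-order terms at most $2b_\psi^2\tan\theta_r|x-y|^2(|h|^2+|h'|^2)$; Young's inequality together with $(|h|+|h'|)^2\le 2(|h|^2+|h'|^2)$ then reproduces precisely the stated $\kappa_\theta$.
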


\begin{proof}
Let us denote $u:=x-y$, $k:=h-h^{\prime}$ and define the directional
derivative of $P\left(  x,y\right)  =\left\langle x-y,\mathbf{n}\left(
x\right)  +\mathbf{n}\left(  y\right)  \right\rangle $ on the direction
$\left(  h,h^{\prime}\right)  :$%
\[
\mathbb{D}P:=DP\cdot\left(  h,h^{\prime}\right)  =\underbrace{\left\langle
k,\mathbf{n}_{x,y}\right\rangle }_{p_{1}}+\underbrace{\left\langle
u,D\mathbf{n}\left(  x\right)  h+D\mathbf{n}\left(  y\right)  h^{\prime
}\right\rangle }_{p_{2}}.
\]
In a similar manner, let us construct $\mathbb{D}Q:=q_{1}+q_{2}~,$ with
$q_{1}:=\left\langle k,J\mathbf{n}_{x,y}\right\rangle $:%
\[
\mathbb{D}Q:=DQ\cdot\left(  h,h^{\prime}\right)  =\underbrace{\left\langle
k,J\mathbf{n}_{x,y}\right\rangle }_{q_{1}}+\underbrace{\left\langle u,J\left(
D\mathbf{n}\left(  x\right)  h+D\mathbf{n}\left(  y\right)  h^{\prime}\right)
\right\rangle }_{q_{2}}.
\]
The second derivative of the product $PQ$ is given by%
\[
D^{2}\left(  PQ\right)  \left[  \left(  h,h^{\prime}\right)  \right]
^{2}=2\left(  \mathbb{D}P\right)  \left(  \mathbb{D}Q\right)  +QD^{2}P\left[
\left(  h,h^{\prime}\right)  ^{2}\right]  +PD^{2}Q\left[  \left(  h,h^{\prime
}\right)  ^{2}\right]  .
\]
By (\ref{r9}), $\max\{\left\vert p_{2}\right\vert ,\left\vert q_{2}\right\vert
\}\leq b_{\psi}\left\vert u\right\vert \left(  \left\vert h\right\vert
+|h^{\prime}|\right)  $, and%
\[
D^{2}P\left[  \left(  h,h^{\prime}\right)  ^{2}\right]  =2\left\langle
k,D\mathbf{n}\left(  x\right)  h+D\mathbf{n}\left(  y\right)  h^{\prime
}\right\rangle +\left\langle u,D^{2}\mathbf{n}\left(  x\right)  \left[
h,h\right]  +D^{2}\mathbf{n}\left(  y\right)  \left[  h^{\prime},h^{\prime
}\right]  \right\rangle .
\]
By estimating it, we obtain%
\[
\left\vert D^{2}P\left[  \left(  h,h^{\prime}\right)  ^{2}\right]  \right\vert
\leq2b_{\psi}\left\vert k\right\vert \left(  \left\vert h\right\vert
+|h^{\prime}|\right)  +b_{\psi}\left\vert u\right\vert \left(  \left\vert
h\right\vert ^{2}+|h^{\prime}|^{2}\right)  ,
\]
and, similarly, for $Q$. Since
\begin{equation}
\dfrac{1}{2}D^{2}\Phi_{r}\left[  \left(  h,h^{\prime}\right)  ^{2}\right]
=\left\vert k\right\vert ^{2}-\dfrac{\tan\theta}{4}\left(  2\mathbb{D}%
P\mathbb{D}Q+QD^{2}P\left[  \left(  h,h^{\prime}\right)  ^{2}\right]
+PD^{2}Q\left[  \left(  h,h^{\prime}\right)  ^{2}\right]  \right)  ,
\label{sec order term to est}%
\end{equation}
we estimate each of the three terms separately. First, exactly as in the proof
of Lemma \ref{rot-two},
\[
\left\vert p_{1}q_{1}\right\vert \leq\frac{1}{2}\left(  p_{1}^{2}+q_{1}%
^{2}\right)  =\frac{1}{2}\left\vert \mathbf{n}_{x,y}\right\vert ^{2}\left\vert
k\right\vert ^{2}\leq2\left\vert k\right\vert ^{2}\quad\Longrightarrow
\quad\left\vert \frac{\tan\theta}{2}p_{1}q_{1}\right\vert \leq\tan
\theta\left\vert k\right\vert ^{2}.
\]
Substract it from $\left\vert k\right\vert ^{2}$ and we obtain a positive
coefficient $\left(  1-\tan\theta\right)  \left\vert k\right\vert ^{2}%
\geq\alpha_{\theta}\left\vert k\right\vert ^{2}$, inequality which will be
used to close the proof.

\noindent Secondly, using $\max\{\left\vert p_{1}\right\vert ,\left\vert
q_{1}\right\vert \}\leq2\left\vert k\right\vert $, we obtain%
\[
\dfrac{\tan\theta}{2}\left\vert p_{1}q_{2}+p_{2}q_{1}+p_{2}q_{2}\right\vert
\leq\tan\theta\left(  2b_{\psi}\left\vert u\right\vert \left\vert k\right\vert
\left(  \left\vert h\right\vert +|h^{\prime}|\right)  +\dfrac{b_{\psi}^{2}}%
{2}\left\vert u\right\vert ^{2}\left(  \left\vert h\right\vert +|h^{\prime
}|\right)  ^{2}\right)  .
\]
Thirdly, by $\max\{\left\vert P\right\vert ,\left\vert Q\right\vert
\}\leq2\left\vert u\right\vert $, it follows%
\[
\dfrac{\tan\theta}{4}\left\vert QD^{2}P+PD^{2}Q\right\vert \leq\tan
\theta\left(  2b_{\psi}\left\vert u\right\vert \left\vert k\right\vert \left(
\left\vert h\right\vert +|h^{\prime}|\right)  +b_{\psi}\left\vert u\right\vert
^{2}\left(  \left\vert h\right\vert ^{2}+|h^{\prime}|^{2}\right)  \right)  .
\]
We add the above estimates, use $\left(  \left\vert h\right\vert +|h^{\prime
}|\right)  ^{2}\leq2\left\vert h\right\vert ^{2}+2|h^{\prime}|^{2}$ ,
$b_{\psi}\geq1$ and, finally, we invoke the Young's inequality, with
$\varepsilon=\alpha_{\theta}/2$, applied under the form%
\[
4b_{\psi}\tan\theta\left\vert u\right\vert \left\vert k\right\vert \left(
\left\vert h\right\vert +|h^{\prime}|\right)  \leq\varepsilon\left\vert
k\right\vert ^{2}+\frac{4b_{\psi}^{2}\tan^{2}\theta}{\varepsilon}\left\vert
u\right\vert ^{2}\left(  \left\vert h\right\vert +|h^{\prime}|\right)  ^{2}%
\]
From (\ref{sec order term to est}) we obtain%
\[%
\begin{array}
[c]{l}%
\dfrac{1}{2}D^{2}\Phi_{r}\left[  \left(  h,h^{\prime}\right)  ^{2}\right]
\geq\left(  1-\tan\theta\right)  \left\vert k\right\vert ^{2}-4b_{\psi}%
\tan\theta\left\vert u\right\vert \left\vert k\right\vert \left(  \left\vert
h\right\vert +|h^{\prime}|\right)  \medskip\\
\quad\quad\quad\quad\quad-2b_{\psi}^{2}\tan\theta\left\vert u\right\vert
^{2}\left(  \left\vert h\right\vert ^{2}+|h^{\prime}|^{2}\right)  \medskip\\
\quad\quad\quad\quad\quad\geq\left(  1-\tan\theta\right)  \left\vert
k\right\vert ^{2}-\varepsilon\left\vert k\right\vert ^{2}-\dfrac{8b_{\psi}%
^{2}\tan^{2}\theta}{\varepsilon}\left\vert x-y\right\vert ^{2}\left(
\left\vert h\right\vert ^{2}+|h^{\prime}|^{2}\right)  \medskip\\
\quad\quad\quad\quad\quad-2b_{\psi}^{2}\tan\theta\left\vert x-y\right\vert
^{2}\left(  \left\vert h\right\vert ^{2}+|h^{\prime}|^{2}\right)  \medskip\\
\quad\quad\quad\quad\quad=\left(  1-\tan\theta-\dfrac{\alpha_{\theta}}%
{2}\right)  \left\vert h-h^{\prime}\right\vert ^{2}-2b_{\psi}^{2}\tan
\theta\left(  \dfrac{8\tan\theta}{\alpha_{\theta}}+1\right)  \left\vert
x-y\right\vert ^{2}\left(  \left\vert h\right\vert ^{2}+|h^{\prime}%
|^{2}\right)
\end{array}
\]
Since $1-\tan\theta\geq\alpha_{\theta}~$, we deduce (\ref{r17}), with
$\kappa_{\theta}$ having exactly the form specified by (\ref{r16a}).\hfill
\end{proof}

\begin{remark}
\label{rot-kappa}Two features of (\ref{r17}) will prove to be crucial for
obtaining a priori estimates of the solution for the obstacle problem. First,
the whole second order defect is carried by the constant $\kappa_{\theta}$,
which is proportional to $\tan\theta_{\max}$ and vanishes with it. For
$\theta\equiv0$ the test function is $\left\vert x-y\right\vert ^{2}$, its
Hessian is constant, and there is no defect at all. Everything that will be
required below beyond $\left(  C_{1}\right)  $ and $\left(  C_{2}\right)  $ is
therefore a smallness, but acceptable, requirement on the obliqueness angle,
given the geometry of $K$ and the data. Secondly, (\ref{r17}) holds for every
pair $\left(  x,y\right)  \in\mathbb{R}^{2}\times\mathbb{R}^{2}$, due to the
role played by the global truncation on $\mathbb{R}^{2}$, formula (\ref{r9}).
\end{remark}

\subsection{Convergence of the Moreau-Yosida approximating
solutions\label{rot-conv}}

Two features brought in the classical approach by the presence of a symmetric
matrix are lost here, and both have to be compensated. The first is that
$\left(  Y^{\varepsilon},M^{\varepsilon}\right)  $ need not remain in a fixed
bounded set, so the estimate of Lemma \ref{rot-hess} must hold globally and
this is the reason why the normal field $\mathbf{n}$ of (\ref{r9}) was
truncated. The second, and the serious one, is that the It\^{o} correction
produced by $\Phi$ is not a fixed quadratic form. Indeed, according to Lemma
\ref{rot-hess} it leaves behind the term%
\begin{equation}
2\kappa_{\theta}{%
{\displaystyle\int_{t}^{T}}
}|Y_{r-}^{\varepsilon,\delta}|^{2}d\left(  \left[  M^{\varepsilon}\right]
+[M^{\delta}]\right)  _{r}~,\quad\text{with }Y_{r-}^{\varepsilon,\delta
}:=Y_{r-}^{\varepsilon}-Y_{r-}^{\delta}~,\label{r18a}%
\end{equation}
that is, a Gronwall inequality with respect to a random measure, whereas
Assumption \ref{H3} is an assumption in mean and admits only deterministic
weights (Lemma \ref{L1_gR}). The term (\ref{r18a}) is irreducible, since it
comes from $QD^{2}P+PD^{2}Q$ and from the cross products $p_{1}q_{2}%
+p_{2}q_{1},$ from the proof of Lemma \ref{rot-hess}. It survives even when
$\psi$ is quadratic.

A possible solution for a remedy is the one introduced by Chassagneux,
Nadtochiy and Richou \cite{Chassagneux/Nadtochiy/Richou:22}, and it consists
of two complementary steps. They introduced a technique based on a weighted
martingale-exponential approach. Since $\left[  M^{\varepsilon}\right]
+\left[  M^{\delta}\right]  $ is a random stochastic measure, one can not
apply the classical Gronwall inequality after we took the expectation, as we
make in the symmetric scenario. One should apply It\^{o}'s formula to the
weighted process $\Gamma_{t}\Phi_{t}\left(  Y_{t}^{\varepsilon},Y_{t}^{\delta
}\right)  $, where the weight
\[
\Gamma_{t}=\exp\left(  \nu_{\theta}\left(  \left[  M^{\varepsilon}\right]
_{t}+[M^{\delta}]_{t}\right)  +others\right)
\]
contains the random measure itself. The term (\ref{r18a}) is then cancelled
\textit{pathwise}, by construction. The price of the weight is the
integrability of $\Gamma_{T}$. It is paid by an exponential-moment inequality
for increasing processes, in Lemma \ref{garsia}.

\noindent In order to continue with the existence result, three new
assumptions are added, all of them on the given data.

\begin{condition}
[H2']\label{H2'}In addition to Assumption \ref{H2}, suppose $L,\ell\in
L^{\infty}\left(  0,T;\mathbb{R}_{+}\right)  $ and
\[
f_{\infty}^{2}:=\left\vert \left\vert {\int_{0}^{T}}\left\vert F\left(
r,0,0\right)  \right\vert ^{2}dr\right\vert \right\vert _{L^{\infty}\left(
\Omega\right)  }<+\infty.
\]

\end{condition}

\begin{remark}
No boundedness is required of the terminal datum: $\eta\in K,$ $\mathbb{P}%
$-a.s.\ and $K$ is bounded, so $\eta\in L^{\infty}$ automatically. Assumption
\ref{H2'} is the analogue of the boundedness hypotheses of Chassagneux,
Nadtochiy, and Richou \cite[Assumption 2.1]{Chassagneux/Nadtochiy/Richou:22}
and it is used only in Proposition \ref{rot-apriori}.
\end{remark}

\begin{condition}
[H3']\label{H3'}In addition to Assumption \ref{H3}, there exists
$C_{\mathcal{R}}>0$ such that, for every bounded nonnegative predictable
weight process $g$ and all $M,N\in\mathcal{M}_{d}^{2}~$,
\begin{equation}
\mathbb{E}{%
{\displaystyle\int_{0}^{T}}
}g_{r}\left\vert \mathcal{R}_{r}(M)-\mathcal{R}_{r}(N)\right\vert ^{2}dr\leq
C_{\mathcal{R}}^{2}\mathbb{E}{%
{\displaystyle\int_{0}^{T}}
}g_{r}\,d\left[  M-N\right]  _{r}. \label{lip_pR}%
\end{equation}

\end{condition}

Assumption \ref{H3'} strengthens Lemma \ref{L1_gR} by allowing bounded
nonnegative predictable weights instead of deterministic Borel weights. This
is necessary because the weighted estimate of Theorem \ref{rot-thm-conv}
involves the predictable process $\exp\left(  V_{r-}\right)  $, constructed
from the quadratic variations of the approximating martingales. The assumption
will therefore allow us to estimate%
\[
\mathbb{E}\int_{t}^{T}e^{V_{r-}}|\mathcal{R}_{r}^{\varepsilon,\delta}|^{2}dr,
\]
by the corresponding weighted quadratic variation of $M^{\varepsilon,\delta}$.

\begin{lemma}
\label{H3'-facts}$\left(  a\right)  $ Assumption \ref{H3'} implies Assumption
\ref{H3}-$\left(  jj\right)  $, with the same constant.\newline$\left(
b\right)  $ Under Assumption \ref{H3'}, for every stopping time $\tau\leq T$
and all $M,N\in\mathcal{M}_{d}^{2}~$,
\begin{equation}
\mathbb{E}^{\mathcal{F}_{\tau}}{%
{\displaystyle\int_{\tau}^{T}}
}\left\vert \mathcal{R}_{r}(M)-\mathcal{R}_{r}(N)\right\vert ^{2}dr\leq
C_{\mathcal{R}}^{2}\mathbb{E}^{\mathcal{F}_{\tau}}\left(  \left[  M-N\right]
_{T}-\left[  M-N\right]  _{\tau}\right)  ,\quad\mathbb{P}\text{-a.s.}
\label{lip_cR}%
\end{equation}
$\left(  c\right)  $ The martingale representation for the noise satisfy
Assumption \ref{H3'}, with the same constant $C_{\mathcal{R}}=1$ as in the
Assumption \ref{H3}.
\end{lemma}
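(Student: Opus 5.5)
For $(a)$ the plan is to specialize the predictable weight. Given $0\le s<t\le T$, we take $g_r:=\mathbf{1}_{(s,t]}(r)$. This is deterministic, bounded, nonnegative and left-continuous, hence predictable. Inserted in (\ref{lip_pR}) it gives $\mathbb{E}\int_s^t|\mathcal{R}_r(M)-\mathcal{R}_r(N)|^2dr\le C_{\mathcal{R}}^2\mathbb{E}\int_{(s,t]}d[M-N]_r$, which is exactly (\ref{lip_R}) with the same constant. Equivalently, any bounded Borel $g$ on $[0,T]$ is a deterministic, hence predictable, weight, so (\ref{lip_gR}) also follows directly, and Lemma \ref{L1_gR} gives the equivalence.

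For $(b)$ we use a random indicator weight. Fix a stopping time $\tau\le T$ and an arbitrary $A\in\mathcal{F}_\tau$, and set $g_r:=\mathbf{1}_A\mathbf{1}_{(\tau,T]}(r)=\mathbf{1}_{(\tau_A,T]}(r)$, where $\tau_A:=\tau$ on $A$ and $\tau_A:=T$ on $A^c$. Since $\tau_A$ is a stopping time, $g$ is the indicator of a stochastic interval $(\tau_A,T]$, so it is left-continuous, adapted and therefore predictable, and it is bounded by $1$. Assumption \ref{H3'} then yields
\[
\mathbb{E}\Big[\mathbf{1}_A\int_\tau^T|\mathcal{R}_r(M)-\mathcal{R}_r(N)|^2dr\Big]\le C_{\mathcal{R}}^2\,\mathbb{E}\big[\mathbf{1}_A([M-N]_T-[M-N]_\tau)\big].
\]
The Lebesgue integral over $\{\tau\}$ is null, so the choice of $(\tau,T]$ versus $[\tau,T]$ on the left side is irrelevant. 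Both sides are integrable: the left one by the Lipschitz property of $\mathcal{R}$, the right one by (\ref{angle-energy}). Since $A\in\mathcal{F}_\tau$ is arbitrary, the two conditional expectations compare $\mathbb{P}$-a.s., which is (\ref{lip_cR}).

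For $(c)$ the setting is the Brownian one. There $M_t=\int_0^tZ_rdB_r$ and $\mathcal{R}(M)=Z$, so $[M-N]_t=\int_0^t|Z_r-Z'_r|^2dr$ pathwise. Hence for any nonnegative weight, predictable or not, we get $\int g_r\,d[M-N]_r=\int g_r|\mathcal{R}_r(M)-\mathcal{R}_r(N)|^2dr$, and (\ref{lip_pR}) holds with equality and $C_{\mathcal{R}}=1$. The only step that needs a careful check is the predictability of $\mathbf{1}_A\mathbf{1}_{(\tau,T]}$ in $(b)$. This is standard, since stochastic intervals $(\sigma,T]$ with $\sigma$ a stopping time generate the predictable $\sigma$-field.
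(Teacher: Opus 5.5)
Your parts (a) and (b) are exactly the paper's argument: the weight $\mathbf{1}_{(s,t]}$ for (a), and $\mathbf{1}_A\mathbf{1}_{(\tau,T]}$ with $A\in\mathcal{F}_\tau$ arbitrary for (b). Writing this weight as $\mathbf{1}_{(\tau_A,T]}$ usefully justifies the predictability that the paper only asserts.

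Part (c) has a gap. You prove it only when the noise is a Brownian motion, which is not the case the paper treats. The paper takes $M=\int_0^\cdot Z\,dW$ and $N=\int_0^\cdot Z'\,dW$ with $W$ a normal martingale, i.e.\ $\langle W\rangle_t=t$. In this c\`{a}dl\`{a}g framework such a $W$ may jump (compensated Poisson process, Az\'{e}ma martingale). For such $W$ your pathwise identity $[M-N]_t=\int_0^t|Z_r-Z'_r|^2dr$ is false. What holds is $\langle M-N\rangle_t=\int_0^t|Z_r-Z'_r|^2dr$, while $[M-N]_t=\int_0^t|Z_r-Z'_r|^2d[W]_r$.

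Consequently your remark ``for any nonnegative weight, predictable or not'' fails outside the continuous case. Let $\Pi$ be a Poisson process and $W_t=\Pi_t-t$, and take $M=W$, $N=0$ (so $Z\equiv1$, $Z'\equiv0$). Use the bounded, optional but non-predictable weight $g_r=\mathbf{1}_{\{\Delta\Pi_r=0\}}$. Then $\int_0^Tg_r\,dr=T$, whereas $\int_0^Tg_r\,d[W]_r=\int_0^Tg_r\,d\Pi_r=0$, so (\ref{lip_pR}) fails for every constant.

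The missing idea is the compensator argument. Since $M-N$ is square integrable, $\langle M-N\rangle$ is the dual predictable projection of $[M-N]$; equivalently, $[M-N]-\langle M-N\rangle$ is a uniformly integrable martingale. Hence, for every bounded nonnegative predictable $g$,
$\mathbb{E}\int_0^Tg_r\,d[M-N]_r=\mathbb{E}\int_0^Tg_r\,d\langle M-N\rangle_r=\mathbb{E}\int_0^Tg_r|Z_r-Z'_r|^2dr$.
This is precisely where the predictability required in Assumption \ref{H3'} enters. It gives equality with $C_{\mathcal{R}}=1$ for a general normal martingale, and your Brownian computation is the special case $[W]_t=t$.
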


\begin{proof}
$\left(  a\right)  $ Take $g=\mathbf{1}_{(s,t]}$ in (\ref{lip_pR}). The point
$\left(  a\right)  $ follows directly.

$\left(  b\right)  $ Let $A\in\mathcal{F}_{\tau}$ and $g_{r}:=\mathbf{1}%
_{A}\mathbf{1}_{(\tau,T]}\left(  r\right)  $, which is predictable and
bounded. Then (\ref{lip_pR}) can be written as%
\[
\mathbb{E}\left(  \mathbf{1}_{A}\int_{\tau}^{T}\left\vert \mathcal{R}%
_{r}(M)-\mathcal{R}_{r}(N)\right\vert ^{2}dr\right)  \leq C_{\mathcal{R}}%
^{2}\mathbb{E}\left(  \mathbf{1}_{A}\left(  \left[  M-N\right]  _{T}-\left[
M-N\right]  _{\tau}\right)  \right)  ,
\]
and $A$ is arbitrary in $\mathcal{F}_{\tau}$. Since $A$ is arbitrarily chosen,
$\left(  b\right)  $ follows.

$\left(  c\right)  $ Let $M=\int_{0}^{\cdot}Z\,dW$ and $N=\int_{0}^{\cdot
}Z^{\prime}dW$ with $W$ being a normal martingale. This implies $\mathcal{R}%
(M)-\mathcal{R}(N)=Z-Z^{\prime}$. Since we have the essential condition that
$g$ is predictable and bounded (from Assumption \ref{H3'}) and $\left[
M-N\right]  -\left\langle M-N\right\rangle $ is a uniform integrable
martingale,%
\[
\mathbb{E}\int_{0}^{T}g_{r}d\left[  M-N\right]  _{r}=\mathbb{E}\int_{0}%
^{T}g_{r}d\left\langle M-N\right\rangle _{r}=\mathbb{E}\int_{0}^{T}%
g_{r}\left\vert Z_{r}-Z_{r}^{\prime}\right\vert ^{2}dr.
\]
Hence (\ref{lip_pR}) holds with equality. The proof is now complete.\hfill
\bigskip
\end{proof}

Finally we impose the additional geometrical structure that $K$ admits a
uniform interior ball condition, whose radius $\rho$ dominates the tangential
slide. Fix $y_{0}\in\operatorname{int}K$ and $\rho>0$ such that $\bar
{B}\left(  y_{0},\rho\right)  \subset K$, and denote%
\[
R_{K}:=\max\limits_{z\in K}\left\vert z-y_{0}\right\vert \leq D_{K},\quad\quad
c_{K}:=\left\vert y_{0}\right\vert +R_{K},\quad\quad j:=D_{K}+2.
\]
We ask the following geometrical constraint%
\begin{equation}
\left(  C_{0}\right)  :\quad\quad\delta_{0}:=\min\limits_{r\in\left[
0,T\right]  }\left(  \rho\cos\theta_{r}-R_{K}\sin\theta_{r}\right)
>0,\quad\text{i.e.}\quad\tan\theta_{\max}<\dfrac{\rho}{R_{K}}. \label{C0}%
\end{equation}

\begin{remark}
For $K=\bar{B}\left(  z_{0},R\right)  $ and $y_{0}=z_{0}$ one has $\rho
=R_{K}=R$, so that $\left(  C_{0}\right)  $ asks $\tan\theta_{\max}<1$ and is
implied by $\left(  C_{1}\right)  $. In general, $\left(  C_{0}\right)  $
refers to the fact that the obliqueness angle is smaller than the aperture
under which the interior ball is seen from the boundary. Similarly to the
Assumption $\left(  C_{2}\right)  $, it is a requirement on $\theta$ relative
to the shape of $K$, it is a geometrical compatibility wit the domain.
\end{remark}

\subsubsection*{Uniform a priori estimates for the penalizing solutions}

Everything rests on the following Proposition, which is the substitute for the
discarded assumption ($C_{3}$). All the constants it produces are depending
from $\eta,F,\mathcal{R},K,\theta$ alone, they are $\varepsilon-$independent.

\begin{proposition}
[Uniform bounds]\label{rot-apriori}Let all the Assumptions \ref{H1}, \ref{H2},
\ref{H2'}, \ref{H3}, \ref{H3'}, \ref{H4'}, \ref{H5'} and $\left(
\mathrm{C}_{0}\right)  $ hold, and define the positive constants%
\begin{equation}
\left\{
\begin{array}
[c]{l}%
c_{6}:=3f_{\infty}^{2}+6\left\Vert L\right\Vert _{\infty}^{2}c_{K}%
^{2}T+3\left\Vert \ell\right\Vert _{\infty}^{2}C_{\mathcal{R}}^{2}R_{K}%
^{2}~,\quad\quad c_{7}:=6\left\Vert \ell\right\Vert _{\infty}^{2}%
C_{\mathcal{R}}^{2}R_{K}\sqrt{T}~,\medskip\\
\bar{\Psi}:=\left(  c_{7}+\sqrt{c_{7}^{2}+2c_{6}}\right)  ^{2},\quad
\Lambda^{2}:=R_{K}^{2}+2R_{K}\sqrt{T\bar{\Psi}}+\dfrac{\bar{\Psi}}{2a_{\theta
}},\quad j:=D_{K}+2.
\end{array}
\right.  \label{def of constants1}%
\end{equation}
Let, finally, $\varepsilon_{1}\in(0,1]$ be defined by
\begin{equation}
\varepsilon_{1}:=\min\left\{  1,\dfrac{a_{\theta}}{\bar{\Psi}},\varepsilon
_{1}^{\prime}\right\}  ,\quad\text{where}\quad\varepsilon_{1}^{\prime}%
:=\sup\left\{  \varepsilon\in(0,1]:\dfrac{6\left\Vert L\right\Vert _{\infty
}^{2}\varepsilon^{2}}{a_{\theta}^{2}}+\dfrac{3\left\Vert \ell\right\Vert
_{\infty}^{2}C_{\mathcal{R}}^{2}\varepsilon}{2a_{\theta}}\leq\dfrac{1}%
{2}\right\}  . \label{def of constants2}%
\end{equation}
Then, for every $\varepsilon\in(0,\varepsilon_{1}]$ and every stopping time
$\tau\leq T$, $\mathbb{P}$-a.s., we have:%
\begin{equation}
\left\{
\begin{array}
[c]{ll}%
\left(  a\right)  \quad & \mathbb{E}^{\mathcal{F}_{\tau}}{%
{\displaystyle\int_{\tau}^{T}}
}\left\vert F\left(  r,Y_{r}^{\varepsilon},\mathcal{R}_{r}(M^{\varepsilon
})\right)  \right\vert ^{2}dr\leq\bar{\Psi}~;\medskip\\
\left(  b\right)  \quad & \sup\limits_{t\in\left[  0,T\right]  }d_{K}\left(
Y_{t}^{\varepsilon}\right)  \leq\sqrt{\dfrac{\varepsilon\bar{\Psi}}{a_{\theta
}}}\leq\sqrt{\varepsilon}\leq1~;\quad\text{hence}\quad\left\{
\begin{array}
[c]{l}%
\sup\limits_{t\in\left[  0,T\right]  }\left\vert Y_{t}^{\varepsilon
}\right\vert \leq c_{K}+1~;\smallskip\\
\sup\limits_{r\in\left[  0,T\right]  }\left\vert \Delta M_{r}^{\varepsilon
}\right\vert \leq j~;
\end{array}
\right. \\
\left(  c\right)  \quad & \mathbb{E}^{\mathcal{F}_{\tau}}{%
{\displaystyle\int_{\tau}^{T}}
}\left\vert U_{r}^{\varepsilon}\right\vert ^{2}dr\leq\dfrac{\bar{\Psi}%
}{a_{\theta}^{2}}\quad\text{and}\quad\mathbb{E}^{\mathcal{F}_{\tau}}{%
{\displaystyle\int_{\tau}^{T}}
}\left\vert \mathcal{R}_{r}(M^{\varepsilon})\right\vert ^{2}dr\leq
C_{\mathcal{R}}^{2}\Lambda^{2}~;\medskip\\
\left(  d\right)  \quad & \mathbb{E}^{\mathcal{F}_{\tau}}\left(  \left[
M^{\varepsilon}\right]  _{T}-\left[  M^{\varepsilon}\right]  _{\tau}\right)
\leq\Lambda^{2},\quad\text{that is}\quad\left\vert \left\vert M^{\varepsilon
}\right\vert \right\vert _{BMO}^{2}\leq\Lambda^{2}.
\end{array}
\right.  \label{r18}%
\end{equation}
The same bounds, with $d_{K}\left(  Y\right)  \equiv0$ in $\left(  b\right)  $
and $j$ replaced by $D_{K}$, hold for every solution $\left(  Y,M,U\right)
\in{\mathcal{{\mathbb{D}}}}_{2}^{2}\times\mathcal{M}_{2}^{2}\times\Lambda
_{2}^{2}$ of (\ref{r1}).
\end{proposition}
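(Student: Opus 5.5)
The plan is to close a \emph{self-consistent} conditional bound on the quantity
\[
\Psi_{\varepsilon}:=\operatorname*{ess\,sup}_{\tau}\Big\Vert \mathbb{E}^{\mathcal{F}_{\tau}}\int_{\tau}^{T}\big|F(r,Y_{r}^{\varepsilon},\mathcal{R}_{r}(M^{\varepsilon}))\big|^{2}dr\Big\Vert_{L^{\infty}(\Omega)} ,
\]
with the supremum over stopping times $\tau\leq T$. The idea is to derive (b)--(d) from a bound on $\Psi_{\varepsilon}$ and then feed (b)--(d) back into the growth of $F$, getting a quadratic inequality whose root is $\bar\Psi$. I would proceed in four steps.

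\emph{Step 1: distance and penalization, i.e.\ (b) and the first half of (c).} Apply the c\`adl\`ag It\^o formula (Annex) to $d_{K}^{2}(Y^{\varepsilon}_{t})=2\varepsilon\varphi_{\varepsilon}(Y^{\varepsilon}_t)$ between $t$ and $T$, using $d_{K}^{2}(\eta)=0$. Since $d_K^2$ is convex with Lipschitz gradient, both the continuous second-order term and the jump remainders have a favourable sign, so they can be dropped. Using $\langle \nabla\varphi_\varepsilon(Y),\Theta_r U^\varepsilon\rangle=\cos\theta_r|U^\varepsilon|^2\geq a_\theta|U^\varepsilon|^2$ from (\ref{r2}), this gives
\[
d_K^2(Y_t^\varepsilon)+2\varepsilon a_\theta\int_t^T|U_r^\varepsilon|^2dr\leq 2\varepsilon\int_t^T\langle U^\varepsilon_r,F_r\rangle dr-\text{(martingale increment)} .
\]
Young's inequality $2\langle U,F\rangle\leq a_\theta|U|^2+|F|^2/a_\theta$ and conditioning on $\mathcal F_t$ (or $\mathcal F_\tau$) then give $d_K^2(Y_t^\varepsilon)\leq \varepsilon\Psi_\varepsilon/a_\theta$ and $\mathbb E^{\mathcal F_\tau}\int_\tau^T|U^\varepsilon|^2\leq\Psi_\varepsilon/a_\theta^2$. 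Once $\Psi_\varepsilon\leq\bar\Psi$ and $\varepsilon\leq a_\theta/\bar\Psi$, this is (b) and the first part of (c). The bound $|Y^\varepsilon|\leq c_K+1$ follows immediately. Moreover, the $dr$-integrals are continuous, so $\Delta M^\varepsilon=\Delta Y^\varepsilon$, and both endpoints lie in the $1$-neighbourhood of $K$; hence $|\Delta M^\varepsilon|\leq D_K+2=j$.

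\emph{Step 2: BMO bound (d), where $(C_0)$ enters.} Apply It\^o to $|Y^\varepsilon_t-y_0|^2$ from $\tau$ to $T$, so that $\mathbb E^{\mathcal F_\tau}([M^\varepsilon]_T-[M^\varepsilon]_\tau)$ appears on the left and $|\eta-y_0|^2\leq R_K^2$ on the right. The reflection term $-2\int\langle Y-y_0,\Theta U^\varepsilon\rangle$ has to be shown nonpositive. Write $Y-y_0=(Y-\pi_K Y)+(\pi_K Y-y_0)$. The first piece gives $\varepsilon\cos\theta|U|^2\geq0$. For the second, when $U\neq0$ we have $U=|U|\mathbf n(\pi_KY)$, and the interior ball gives $\langle \pi_KY-y_0,\mathbf n\rangle\geq\rho$. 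Together with $|\langle\pi_KY-y_0,J\mathbf n\rangle|\leq R_K$, this yields $\langle\pi_KY-y_0,\Theta\mathbf n\rangle\geq\rho\cos\theta-R_K\sin\theta\geq\delta_0>0$. The generator term is handled as follows: bound $2\langle\pi_KY-y_0,F\rangle$ by $2R_K|F|$ and apply Cauchy--Schwarz in time, giving $2R_K\sqrt{T\Psi_\varepsilon}$. Bound $2\langle Y-\pi_KY,F\rangle$ via $2\varepsilon|U||F|$ and Step 1, giving $\Psi_\varepsilon/(2a_\theta)$ after Young's inequality and $\varepsilon\leq1$. This yields (d) with $\Lambda^2$ as in (\ref{def of constants1}). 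The second part of (c) is then Lemma \ref{H3'-facts}(b) with $N=0$ and $\mathcal R(0)=0$.

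\emph{Step 3: closing (a).} From (\ref{lip_F}), $|F|^2\leq3|F(r,0,0)|^2+3L^2|Y|^2+3\ell^2|\mathcal R(M^\varepsilon)|^2$. Insert $|Y|\leq c_K+d_K(Y)$, the $d_K$-bound of Step 1, and the $\mathcal R$-bound of Step 2, which is expressed through $\Psi_\varepsilon$ itself. The terms carrying $\varepsilon\Psi_\varepsilon$ have coefficient at most $\tfrac12$ by the definition of $\varepsilon_1'$, so they are absorbed. What remains is $\Psi_\varepsilon\leq c_6+2c_7\sqrt{\Psi_\varepsilon}$ in the normalization of (\ref{def of constants1}), and hence $\sqrt{\Psi_\varepsilon}\leq c_7+\sqrt{c_7^2+2c_6}$, i.e.\ $\Psi_\varepsilon\leq\bar\Psi$.

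\emph{Main obstacle: a priori finiteness of $\Psi_\varepsilon$.} The absorption in Step 3 is only legitimate if $\Psi_\varepsilon<\infty$, which is not given by Proposition \ref{Lemma with important bounds} (only $L^2$ bounds are given there). My first attempt would be localization. Replace $T$ by a short horizon and use the fact that, for fixed $\varepsilon$, the penalized generator $F(r,y,z)-\Theta_r\nabla\varphi_\varepsilon(y)$ is Lipschitz with bounded $F(r,0,0)$ in the sense of (H2'). Then a Picard/BSDE conditional estimate, with $\eta$ bounded, gives an ($\varepsilon$-dependent) finite conditional bound on $Y^\varepsilon$ and $[M^\varepsilon]$, and hence on $\Psi_\varepsilon$, on each of finitely many subintervals. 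After that, the $\varepsilon$-independent closure above applies. As a fallback, I would run the argument with the truncated quantity $\Psi_\varepsilon\wedge N$ on stopping times $\tau_N$ and let $N\to\infty$. Finally, for a genuine solution of (\ref{r1}) the same computations apply with $d_K(Y)\equiv0$ and $U\in\partial I_K(Y)$. This makes Step 1 trivial, and since $Y$ then takes values in $K$, its jumps are at most $D_K$.
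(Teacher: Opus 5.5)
The student has not seen the paper. The review below compares the proposal with the paper's proof of Proposition \ref{rot-apriori}.

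Your three computational steps are the paper's own: the convex inequality for $\tfrac12 d_K^2(Y^\varepsilon)$, the identity for $|Y^\varepsilon-y_0|^2$ in which $(C_0)$ gives $\langle \pi_K(Y^\varepsilon)-y_0,\Theta_r\mathbf{n}(\pi_K(Y^\varepsilon))\rangle\ge\delta_0$, and the final quadratic closure. You diverge in how you close the loop, and the argument is incomplete there. Because you close on the deterministic number $\Psi_\varepsilon$ (an essential supremum over all stopping times), you need $\Psi_\varepsilon<\infty$ in advance. Your remedy is a conditional $e^{\beta t}$-weighted energy estimate for the Lipschitz penalized generator $F-\Theta_r\nabla\varphi_\varepsilon$, using bounded data and the weighted form of Assumption \ref{H3'}. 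This does give an $\varepsilon$-dependent bound for the penalized equations. It has no counterpart for the last assertion of the proposition. For an arbitrary solution of (\ref{r1}) the term $\Theta_r U_r$ has no Lipschitz structure, and your inequality $\Psi\le c+c'\sqrt{\Psi}$ is vacuous when $\Psi=\infty$. You also cannot reach that case by approximation, because the uniqueness theorem needs the bound for two \emph{arbitrary} solutions, not only for the limit of the penalized ones.

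The missing observation is that no a priori bound is needed if you never take a supremum over $\tau$. Fix $\tau$ and express everything through the random variable $\Psi_\tau:=\mathbb{E}^{\mathcal{F}_\tau}\int_\tau^T|F_r^\varepsilon|^2dr$, where $F_r^\varepsilon:=F(r,Y_r^\varepsilon,\mathcal{R}_r(M^\varepsilon))$.
\begin{itemize}
\item In Step 3, do not use $\sup_t d_K(Y^\varepsilon_t)$, which brings in $\Psi_r$ for $r>\tau$. Use instead $|Y^\varepsilon_r|^2\le 2c_K^2+2\varepsilon^2|U^\varepsilon_r|^2$ together with $\mathbb{E}^{\mathcal{F}_\tau}\int_\tau^T|U^\varepsilon_r|^2dr\le\Psi_\tau/a_\theta^2$ from Step 1.
\item For the other term use $\mathbb{E}^{\mathcal{F}_\tau}\int_\tau^T|\mathcal{R}_r(M^\varepsilon)|^2dr\le C_{\mathcal{R}}^2X_\tau$, where $X_\tau:=\mathbb{E}^{\mathcal{F}_\tau}([M^\varepsilon]_T-[M^\varepsilon]_\tau)$ is bounded through $\Psi_\tau$ by Step 2.
\end{itemize}
Then $\Psi_\tau\le 2c_6+2c_7\sqrt{\Psi_\tau}$ holds $\omega$ by $\omega$. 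Moreover $\Psi_\tau<\infty$ a.s. simply because $\mathbb{E}\int_0^T|F^\varepsilon_r|^2dr<\infty$ by (\ref{r5}); for a genuine solution this follows from $Y\in\mathbb{D}_2^2$ and $M\in\mathcal{M}_2^2$. Hence $\Psi_\tau\le\bar\Psi$ for every $\tau$, for penalized and genuine solutions alike, and (b) follows by applying Step 1 at each $t$. This is the paper's argument.

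Fixed-$\tau$ closure also repairs your constants, which as written do not give the stated $\varepsilon_1'$ and $\Lambda^2$.
\begin{itemize}
\item The route through $\sup_t d_K$ produces the coefficient $6\Vert L\Vert_\infty^2T\varepsilon/a_\theta$, not the $6\Vert L\Vert_\infty^2\varepsilon^2/a_\theta^2$ that defines $\varepsilon_1'$.
\item In Step 2 you must not discard $2\varepsilon\cos\theta_r|U^\varepsilon_r|^2$ as merely nonnegative. It cancels the term $2\varepsilon a_\theta|U^\varepsilon_r|^2$ in $2\varepsilon|U^\varepsilon||F^\varepsilon|\le 2\varepsilon a_\theta|U^\varepsilon|^2+\frac{\varepsilon}{2a_\theta}|F^\varepsilon|^2$, and only this yields $\frac{\varepsilon}{2a_\theta}\Psi_\tau$. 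Invoking Step 1 instead gives at best $\frac{2\varepsilon}{a_\theta}\Psi_\tau$, which changes both $\Lambda^2$ and $\varepsilon_1'$.
\item After absorption the inequality reads $\Psi\le 2c_6+2c_7\sqrt{\Psi}$, not $c_6+2c_7\sqrt{\Psi}$. The root you state is the correct one for the former.
\end{itemize}
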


\begin{proof}
Denote $F_{r}^{\varepsilon}:=F\left(  r,Y_{r}^{\varepsilon},\mathcal{R}%
_{r}(M^{\varepsilon})\right)  $, $\bar{Y}_{r}^{\varepsilon}:=\pi_{K}\left(
Y_{r}^{\varepsilon}\right)  $ and, for a stopping time $\tau\leq T$,
\[
\Psi_{\tau}:=\mathbb{E}^{\mathcal{F}_{\tau}}{%
{\displaystyle\int_{\tau}^{T}}
}\left\vert F_{r}^{\varepsilon}\right\vert ^{2}dr,\quad\quad\text{and}%
\quad\quad X_{\tau}:=\mathbb{E}^{\mathcal{F}_{\tau}}\left(  \left[
M^{\varepsilon}\right]  _{T}-\left[  M^{\varepsilon}\right]  _{\tau}\right)
,
\]
both finite $\mathbb{P}$-a.s.\ by (\ref{r5}).

\smallskip\noindent\textbf{Step 1: the two It\^{o} identities.} By
(\ref{r5})$\left(  a\right)  $ and the Cauchy--Schwarz inequality,%
\[
\mathbb{E}\left(  {\int_{0+}^{T}}\left\vert Y_{r-}^{\varepsilon}%
-y_{0}\right\vert ^{2}d\left[  M^{\varepsilon}\right]  _{r}\right)  ^{1/2}%
\leq\left(  \mathbb{E}\sup\limits_{r\in\left[  0,T\right]  }\left\vert
Y_{r}^{\varepsilon}-y_{0}\right\vert ^{2}\right)  ^{1/2}\left(  \mathbb{E}%
\left[  M^{\varepsilon}\right]  _{T}\right)  ^{1/2}<\infty,
\]
since, by the Burkholder--Davis--Gundy inequality (\ref{BDG1}), written with
$p=1$, $\left(  \mathbb{E}\left[  M^{\varepsilon}\right]  _{T}\right)
^{1/2}<+\infty$. Consequently, $\int_{0+}^{\cdot}\left\langle Y_{r-}%
^{\varepsilon}-y_{0},dM_{r}^{\varepsilon}\right\rangle $ is a uniformly
integrable martingale. Since $\varepsilon\left\vert U_{r}^{\varepsilon
}\right\vert =d_{K}\left(  Y_{r}^{\varepsilon}\right)  \leq\left\vert
Y_{r}^{\varepsilon}\right\vert +\max_{z\in K}\left\vert z\right\vert $, then
$\int_{0+}^{\cdot}\left\langle \varepsilon U_{r-}^{\varepsilon},dM_{r}%
^{\varepsilon}\right\rangle $ is also a uniformly integrable martingale.

Firstly, apply now backward It\^{o}'s formula to $\left\vert \cdot
-y_{0}\right\vert ^{2}$, together with $Y_{T}^{\varepsilon}=\eta\in K.$ Take
the conditional expectation with respect to $\mathcal{F}_{\tau}$ and it gives,
due to the notation from the beginning of the proof and to the fact that
$\int_{0+}^{\cdot}\left\langle Y_{r-}^{\varepsilon}-y_{0},dM_{r}^{\varepsilon
}\right\rangle $ is a uniformly integrable martingale,%
\begin{equation}
\left\vert Y_{\tau}^{\varepsilon}-y_{0}\right\vert ^{2}+X_{\tau}%
=\mathbb{E}^{\mathcal{F}_{\tau}}\left\vert \eta-y_{0}\right\vert
^{2}+2\mathbb{E}^{\mathcal{F}_{\tau}}{%
{\displaystyle\int_{\tau}^{T}}
}\left\langle Y_{r}^{\varepsilon}-y_{0},F_{r}^{\varepsilon}\right\rangle
dr-2\mathbb{E}^{\mathcal{F}_{\tau}}{%
{\displaystyle\int_{\tau}^{T}}
}\left\langle Y_{r}^{\varepsilon}-y_{0},\Theta_{r}U_{r}^{\varepsilon
}\right\rangle dr. \label{r18b}%
\end{equation}

Secondly, it is well known that the distance function $f:=\frac{1}{2}d_{K}%
^{2}$ is a convex regular function, with $\nabla f\left(  y\right)  =y-\pi
_{K}\left(  y\right)  =\varepsilon U^{\varepsilon}$ at $y=Y^{\varepsilon}$.
From the forward It\^{o}-Meyer's formula for c\`{a}dl\`{a}g semimartingales
and sufficiently regular convex functions yields, if we apply it to $f\left(
Y^{\varepsilon}\right)  $, between $\tau$ and $T$,%
\begin{equation}%
\begin{array}
[c]{lll}%
f\left(  \eta\right)  & = & f\left(  Y_{\tau}^{\varepsilon}\right)  +{%
{\displaystyle\int_{\tau}^{T}}
}\left\langle \nabla f\left(  Y_{r-}^{\varepsilon}\right)  ,dY_{r}%
^{\varepsilon}\right\rangle +\dfrac{1}{2}{%
{\displaystyle\int_{\tau}^{T}}
}D^{2}f\left(  Y_{r-}^{\varepsilon}\right)  \,d\left[  Y^{\varepsilon
,c}\right]  _{r}\medskip\\
& + &
{\displaystyle\sum\limits_{\tau<r\leq T}}
\left[  f\left(  Y_{r}^{\varepsilon}\right)  -f\left(  Y_{r-}^{\varepsilon
}\right)  -\left\langle \nabla f\left(  Y_{r-}^{\varepsilon}\right)  ,\Delta
Y_{r}^{\varepsilon}\right\rangle \right]  ,
\end{array}
\label{second Ito int}%
\end{equation}
where the last two terms are nonnegative, by the convexity of $f$, and
$f\left(  \eta\right)  =0$. Indeed, for the last sum, the convexity of $f$
gives $f\left(  Y_{r}^{\varepsilon}\right)  \geq f\left(  Y_{r-}^{\varepsilon
}\right)  +\left\langle \nabla f\left(  Y_{r-}^{\varepsilon}\right)  ,\Delta
Y_{r}^{\varepsilon}\right\rangle $, with $\Delta Y_{r}^{\varepsilon}%
=Y_{r}^{\varepsilon}-Y_{r-}^{\varepsilon}$~. From (\ref{second Ito int}) we
obtain the following fundamental inequality:%
\[%
\begin{array}
[c]{l}%
0\geq f\left(  Y_{\tau}^{\varepsilon}\right)  +{%
{\displaystyle\int_{\tau}^{T}}
}\left\langle \nabla f\left(  Y_{r-}^{\varepsilon}\right)  ,dY_{r}%
^{\varepsilon}\right\rangle =f\left(  Y_{\tau}^{\varepsilon}\right)  +{%
{\displaystyle\int_{\tau}^{T}}
}\left\langle \varepsilon U_{r-}^{\varepsilon},-F_{r}^{\varepsilon}%
dr+\Theta_{r}U_{r}^{\varepsilon}dr+dM_{r}^{\varepsilon}\right\rangle
\medskip\\
\quad\quad\quad=f\left(  Y_{\tau}^{\varepsilon}\right)  -\varepsilon{%
{\displaystyle\int_{\tau}^{T}}
}\left\langle U_{r}^{\varepsilon},F_{r}^{\varepsilon}\right\rangle
dr+\varepsilon{%
{\displaystyle\int_{\tau}^{T}}
}\left\langle U_{r}^{\varepsilon},\Theta_{r}U_{r}^{\varepsilon}\right\rangle
dr+\varepsilon{%
{\displaystyle\int_{\tau}^{T}}
}\left\langle U_{r-}^{\varepsilon},dM_{r}^{\varepsilon}\right\rangle
\end{array}
\]
It follows%
\[
f\left(  Y_{\tau}^{\varepsilon}\right)  +\varepsilon{%
{\displaystyle\int_{\tau}^{T}}
}\left\langle U_{r}^{\varepsilon},\Theta_{r}U_{r}^{\varepsilon}\right\rangle
dr\leq\varepsilon{%
{\displaystyle\int_{\tau}^{T}}
}\left\langle U_{r}^{\varepsilon},F_{r}^{\varepsilon}\right\rangle
dr-\varepsilon{%
{\displaystyle\int_{\tau}^{T}}
}\left\langle U_{r-}^{\varepsilon},dM_{r}^{\varepsilon}\right\rangle .
\]
According to (\ref{r2}), we have $\left\langle U_{r}^{\varepsilon},\Theta
_{r}U_{r}^{\varepsilon}\right\rangle \geq a_{\theta}\left\vert U_{r}%
^{\varepsilon}\right\vert ^{2}$. Take $\mathbb{E}^{\mathcal{F}_{\tau}}$ and
recall the definition of $f.$ We get, since the last term is a uniformly
integrable martingale and it vanishes,%
\begin{equation}
\dfrac{1}{2}d_{K}^{2}\left(  Y_{\tau}^{\varepsilon}\right)  +\varepsilon
a_{\theta}\mathbb{E}^{\mathcal{F}_{\tau}}{%
{\displaystyle\int_{\tau}^{T}}
}\left\vert U_{r}^{\varepsilon}\right\vert ^{2}dr\leq\varepsilon
\mathbb{E}^{\mathcal{F}_{\tau}}{%
{\displaystyle\int_{\tau}^{T}}
}\left\vert U_{r}^{\varepsilon}\right\vert \left\vert F_{r}^{\varepsilon
}\right\vert dr\overset{Young}{\leq}\dfrac{\varepsilon a_{\theta}}%
{2}\mathbb{E}^{\mathcal{F}_{\tau}}{\int_{\tau}^{T}}\left\vert U_{r}%
^{\varepsilon}\right\vert ^{2}dr+\dfrac{\varepsilon}{2a_{\theta}}\Psi_{\tau},
\label{r18c}%
\end{equation}
where $\Psi_{\tau}=\mathbb{E}^{\mathcal{F}_{\tau}}{%
{\textstyle\int_{\tau}^{T}}
}\left\vert F_{r}^{\varepsilon}\right\vert ^{2}dr$ is exactly the one given at
the beginning of the proof. Summarizing, the first term from the RHS of
(\ref{r18c}) is absorbed in the LHS, the integral remains with the coefficient
$\varepsilon a_{\theta}/2$ and we can conclude that the following boundedness
take place:%
\begin{equation}
d_{K}^{2}\left(  Y_{\tau}^{\varepsilon}\right)  \leq\dfrac{\varepsilon
}{a_{\theta}}\Psi_{\tau}\quad\quad\text{and}\quad\quad\mathbb{E}%
^{\mathcal{F}_{\tau}}{%
{\displaystyle\int_{\tau}^{T}}
}\left\vert U_{r}^{\varepsilon}\right\vert ^{2}dr\leq\dfrac{\Psi_{\tau}%
}{a_{\theta}^{2}}. \label{r18d}%
\end{equation}

\smallskip\noindent\textbf{Step 2: the reflection term control, and a
cancellation.} If $U_{r}^{\varepsilon}\neq0$ then $\bar{Y}_{r}^{\varepsilon
}:=\pi_{K}\left(  Y_{r}^{\varepsilon}\right)  \in bd\left(  K\right)  $ and
$U_{r}^{\varepsilon}=\left\vert U_{r}^{\varepsilon}\right\vert \mathbf{n}%
\left(  \bar{Y}_{r}^{\varepsilon}\right)  $, while $Y_{r}^{\varepsilon}%
=\bar{Y}_{r}^{\varepsilon}+\varepsilon U_{r}^{\varepsilon}$. For $z\in
bd\left(  K\right)  $ the convexity of $K$ and the geometrical constraint
$\bar{B}\left(  y_{0},\rho\right)  \subset K$ give $\left\langle
z-y_{0},\mathbf{n}\left(  z\right)  \right\rangle \geq\rho$, while $\left\vert
\left\langle z-y_{0},J\mathbf{n}\left(  z\right)  \right\rangle \right\vert
\leq R_{K}$. As consequence, by the Assumption $\left(  C_{0}\right)  $,%
\[
\left\langle z-y_{0},\Theta_{r}\mathbf{n}\left(  z\right)  \right\rangle
=\cos\theta\left\langle z-y_{0},\mathbf{n}\left(  z\right)  \right\rangle
+\sin\theta\left\langle z-y_{0},J\mathbf{n}\left(  z\right)  \right\rangle
\geq\rho\cos\theta-R_{K}\sin\theta\geq\delta_{0}\geq0.
\]
Consequently
\[
\left\langle Y_{r}^{\varepsilon}-y_{0},\Theta_{r}U_{r}^{\varepsilon
}\right\rangle =\left\vert U_{r}^{\varepsilon}\right\vert \left\langle \bar
{Y}_{r}^{\varepsilon}-y_{0},\Theta_{r}\mathbf{n}\left(  \bar{Y}_{r}%
^{\varepsilon}\right)  \right\rangle +\varepsilon\left\vert U_{r}%
^{\varepsilon}\right\vert ^{2}\left\langle \mathbf{n}\left(  \bar{Y}%
_{r}^{\varepsilon}\right)  ,\Theta_{r}\mathbf{n}\left(  \bar{Y}_{r}%
^{\varepsilon}\right)  \right\rangle \geq\delta_{0}\left\vert U_{r}%
^{\varepsilon}\right\vert +\varepsilon a_{\theta}\left\vert U_{r}%
^{\varepsilon}\right\vert ^{2},
\]
so that the last term of (\ref{r18b}) is bounded from above as we see below:%
\[
-2\mathbb{E}^{\mathcal{F}_{\tau}}{%
{\displaystyle\int_{\tau}^{T}}
}\left\langle Y_{r}^{\varepsilon}-y_{0},\Theta_{r}U_{r}^{\varepsilon
}\right\rangle dr\leq-2\delta_{0}\mathbb{E}^{\mathcal{F}_{\tau}}\int_{\tau
}^{T}\left\vert U_{r}^{\varepsilon}\right\vert dr-2\varepsilon a_{\theta
}\mathbb{E}^{\mathcal{F}_{\tau}}\int_{\tau}^{T}\left\vert U_{r}^{\varepsilon
}\right\vert ^{2}dr.
\]
On the other hand $\left\vert Y_{r}^{\varepsilon}-y_{0}\right\vert
\leq\left\vert \bar{Y}_{r}^{\varepsilon}-y_{0}\right\vert +\left\vert
Y_{r}^{\varepsilon}-\bar{Y}_{r}^{\varepsilon}\right\vert \leq R_{K}%
+d_{K}\left(  Y_{r}^{\varepsilon}\right)  =R_{K}+\varepsilon\left\vert
U_{r}^{\varepsilon}\right\vert $ and, by Young's inequality, $2\varepsilon
\left\vert U_{r}^{\varepsilon}\right\vert \left\vert F_{r}^{\varepsilon
}\right\vert \leq2\varepsilon a_{\theta}\left\vert U_{r}^{\varepsilon
}\right\vert ^{2}+\frac{\varepsilon}{2a_{\theta}}\left\vert F_{r}%
^{\varepsilon}\right\vert ^{2}$. The two terms in $\varepsilon a_{\theta
}\left\vert U^{\varepsilon}\right\vert ^{2}$ cancel, $\left\vert \left\vert
\eta-y_{0}\right\vert \right\vert _{\infty}\leq R_{K},$ and (\ref{r18b})
becomes, after we ignore the positive term $\left\vert Y_{\tau}^{\varepsilon
}-y_{0}\right\vert ^{2}$ from the LHS:%
\begin{align*}
X_{\tau}  &  \leq\mathbb{E}^{\mathcal{F}_{\tau}}\left\vert \eta-y_{0}%
\right\vert ^{2}+2\mathbb{E}^{\mathcal{F}_{\tau}}{%
{\displaystyle\int_{\tau}^{T}}
}\left\langle Y_{r}^{\varepsilon}-y_{0},F_{r}^{\varepsilon}\right\rangle
dr-2\mathbb{E}^{\mathcal{F}_{\tau}}{%
{\displaystyle\int_{\tau}^{T}}
}\left\langle Y_{r}^{\varepsilon}-y_{0},\Theta_{r}U_{r}^{\varepsilon
}\right\rangle dr\\
&  \leq R_{K}^{2}+\mathbb{E}^{\mathcal{F}_{\tau}}{%
{\displaystyle\int_{\tau}^{T}}
}\left(  {2}R_{K}\left\vert F_{r}^{\varepsilon}\right\vert +2\varepsilon
\left\vert U_{r}^{\varepsilon}\right\vert \left\vert F_{r}^{\varepsilon
}\right\vert \right)  dr-2\delta_{0}\mathbb{E}^{\mathcal{F}_{\tau}}\int_{\tau
}^{T}\left\vert U_{r}^{\varepsilon}\right\vert dr-2\varepsilon a_{\theta
}\mathbb{E}^{\mathcal{F}_{\tau}}\int_{\tau}^{T}\left\vert U_{r}^{\varepsilon
}\right\vert ^{2}dr\\
&  \leq R_{K}^{2}+2R_{K}\mathbb{E}^{\mathcal{F}_{\tau}}{%
{\displaystyle\int_{\tau}^{T}}
}\left\vert F_{r}^{\varepsilon}\right\vert dr+\frac{\varepsilon}{2a_{\theta}%
}\mathbb{E}^{\mathcal{F}_{\tau}}{%
{\displaystyle\int_{\tau}^{T}}
}\left\vert F_{r}^{\varepsilon}\right\vert ^{2}dr-2\delta_{0}\mathbb{E}%
^{\mathcal{F}_{\tau}}\int_{\tau}^{T}\left\vert U_{r}^{\varepsilon}\right\vert
dr\\
&  =R_{K}^{2}+2R_{K}\mathbb{E}^{\mathcal{F}_{\tau}}{%
{\displaystyle\int_{\tau}^{T}}
}\left\vert F_{r}^{\varepsilon}\right\vert dr+\frac{\varepsilon}{2a_{\theta}%
}\Psi_{\tau}\leq R_{K}^{2}+2R_{K}\sqrt{T\Psi_{\tau}}+\dfrac{\varepsilon
}{2a_{\theta}}\Psi_{\tau}%
\end{align*}
We obtained
\begin{equation}
X_{\tau}\leq R_{K}^{2}+2R_{K}\sqrt{T\Psi_{\tau}}+\dfrac{\varepsilon
}{2a_{\theta}}\Psi_{\tau}. \label{r18e}%
\end{equation}
\textit{This cancellation is the key point of the proof:} it is what prevents
the penalization from entering the $BMO$ estimate at first order.

\smallskip\noindent\textbf{Step 3: closing the loop.} The objective of this
step is to close the estimate loop for $\Psi_{\tau}=\mathbb{E}^{\mathcal{F}%
_{\tau}}{%
{\textstyle\int_{\tau}^{T}}
}\left\vert F_{r}^{\varepsilon}\right\vert ^{2}dr$, by proving that
$\Psi_{\tau}\leq\bar{\Psi},$ $\mathbb{P}$-a.s., uniformly with respect to
$\varepsilon\in(0,\varepsilon_{1}],$ where these final elements are given by
(\ref{def of constants1}) and (\ref{def of constants2}). By Assumption
\ref{H2}, $\left\vert F_{r}^{\varepsilon}\right\vert ^{2}\leq3\left(
\left\vert F\left(  r,0,0\right)  \right\vert ^{2}+L_{r}^{2}\left\vert
Y_{r}^{\varepsilon}\right\vert ^{2}+\ell_{r}^{2}\left\vert \mathcal{R}%
_{r}(M^{\varepsilon})\right\vert ^{2}\right)  $. For the state term
$\left\vert Y_{r}^{\varepsilon}\right\vert ^{2}$, since $y_{0}\in int\left(
K\right)  $ and $\left\vert y_{0}\right\vert +R_{K}=c_{K},$ we have
$\left\vert Y_{r}^{\varepsilon}\right\vert ^{2}\leq2c_{K}^{2}+2d_{K}%
^{2}\left(  Y_{r}^{\varepsilon}\right)  =2c_{K}^{2}+2\varepsilon^{2}\left\vert
U_{r}^{\varepsilon}\right\vert ^{2}$. We integrate from $\tau$ to $T$ and take
$\mathbb{E}^{\mathcal{F}_{\tau}}$ and it yields%
\begin{equation}
\Psi_{\tau}=\mathbb{E}^{\mathcal{F}_{\tau}}{%
{\displaystyle\int_{\tau}^{T}}
}\left\vert F_{r}^{\varepsilon}\right\vert ^{2}dr\leq3f_{\infty}%
^{2}+6\left\vert \left\vert L\right\vert \right\vert _{\infty}^{2}c_{K}%
^{2}T+6\left\vert \left\vert L\right\vert \right\vert _{\infty}^{2}%
\varepsilon^{2}\mathbb{E}^{\mathcal{F}_{\tau}}{%
{\displaystyle\int_{\tau}^{T}}
}\left\vert U_{r}^{\varepsilon}\right\vert ^{2}dr+3\ell_{r}^{2}\mathbb{E}%
^{\mathcal{F}_{\tau}}{%
{\displaystyle\int_{\tau}^{T}}
}\left\vert \mathcal{R}_{r}(M^{\varepsilon})\right\vert ^{2}dr.
\label{estim for FI tau}%
\end{equation}
Formula (\ref{r18d}) gives $\mathbb{E}^{\mathcal{F}_{\tau}}{%
{\textstyle\int_{\tau}^{T}}
}\left\vert U_{r}^{\varepsilon}\right\vert ^{2}dr\leq\tfrac{\Psi_{\tau}%
}{a_{\theta}^{2}}$, while Lemma \ref{H3'-facts}-$\left(  b\right)  $, with
$N=0$, assures that we have $\mathbb{E}^{\mathcal{F}_{\tau}}{%
{\textstyle\int_{\tau}^{T}}
}\left\vert \mathcal{R}_{r}(M^{\varepsilon})\right\vert ^{2}dr\leq
C_{\mathcal{R}}^{2}\mathbb{E}^{\mathcal{F}_{\tau}}\left(  \left[
M^{\varepsilon}\right]  _{T}-\left[  M^{\varepsilon}\right]  _{\tau}\right)
=C_{\mathcal{R}}^{2}X_{\tau}.$ Insert these two estimates into
(\ref{estim for FI tau}) to get%
\[
\Psi_{\tau}\leq3f_{\infty}^{2}+6\left\Vert L\right\Vert _{\infty}^{2}c_{K}%
^{2}T+\dfrac{6\left\Vert L\right\Vert _{\infty}^{2}\varepsilon^{2}}{a_{\theta
}^{2}}\Psi_{\tau}+3\left\Vert \ell\right\Vert _{\infty}^{2}C_{\mathcal{R}}%
^{2}X_{\tau}.
\]
Finally, inserting (\ref{r18e}) into it, we obtain, for the positive constants
$c_{6}$ and $c_{7}$ defined in (\ref{def of constants1}),%
\[
\Psi_{\tau}\leq c_{6}+c_{7}\sqrt{\Psi_{\tau}}+\left(  \dfrac{6\left\Vert
L\right\Vert _{\infty}^{2}\varepsilon^{2}}{a_{\theta}^{2}}+\dfrac{3\left\Vert
\ell\right\Vert _{\infty}^{2}C_{\mathcal{R}}^{2}\varepsilon}{2a_{\theta}%
}\right)  \Psi_{\tau}.
\]
For $\varepsilon\leq\varepsilon_{1}^{\prime}$ the bracket is at most $\frac
{1}{2}$ (from (\ref{def of constants2})). Therefore, after the absorption in
the LHS, it remains that $\Psi_{\tau}\leq2c_{6}+2c_{7}\sqrt{\Psi_{\tau}}$.
Since $\Psi_{\tau}<\infty$, $\mathbb{P}$-a.s., we have a second degree
equation, which gives the fixed explicit representation of $\bar{\Psi}$ given
by (\ref{def of constants1}):%
\[
\Psi_{\tau}\leq\left(  c_{7}+\sqrt{c_{7}^{2}+2c_{6}}\right)  ^{2}=:\bar{\Psi
},\quad\mathbb{P}\text{-a.s.,}%
\]
i.e. (\ref{r18})-$\left(  a\right)  $ holds. Then (\ref{r18d}) gives
(\ref{r18})-$\left(  b\right)  $ --- the bound $\sqrt{\varepsilon\bar{\Psi
}/a_{\theta}}\leq1$ holding because $\varepsilon\leq a_{\theta}/\bar{\Psi}$
--- and the first half of $\left(  c\right)  $, while (\ref{r18e}) gives
$\left(  d\right)  $. The second half of $\left(  c\right)  $ follows from
Lemma \ref{H3'-facts}-$\left(  b\right)  ,\left(  d\right)  $. For the jumps,
the finite variation part of $Y^{\varepsilon}$ is continuous, so $\Delta
M_{r}^{\varepsilon}=\Delta Y_{r}^{\varepsilon}$, and both $Y_{r}^{\varepsilon
}$ and $Y_{r-}^{\varepsilon}$ lie within distance at most $1$ of $K$. As
consequence, $\left\vert \Delta M_{r}^{\varepsilon}\right\vert =\left\vert
\Delta Y_{r}^{\varepsilon}\right\vert \leq diam\left(  K\right)
+1+1=D_{K}+2=:j$.

From (\ref{r18d}) we have $\mathbb{E}^{\mathcal{F}_{\tau}}{%
{\textstyle\int_{\tau}^{T}}
}\left\vert U_{r}^{\varepsilon}\right\vert ^{2}dr\leq\tfrac{\Psi_{\tau}%
}{a_{\theta}^{2}}$. The estimate (\ref{r18e}) gives, since $\Psi_{\tau}\leq$
$\bar{\Psi}$ and $\varepsilon\leq1$,%
\[
X_{\tau}\leq R_{K}^{2}+2R_{K}\sqrt{T\Psi_{\tau}}+\dfrac{\varepsilon
}{2a_{\theta}}\Psi_{\tau}\leq R_{K}^{2}+2R_{K}\sqrt{T\bar{\Psi}}+\dfrac
{1}{2a_{\theta}}\bar{\Psi}=:\Lambda^{2}.
\]

\smallskip\noindent\textbf{Step 4: for the solutions of (\ref{r1}).} If
$\left(  Y,M,U\right)  $ solves (\ref{r1}) then $Y_{r}\in K$, $d_{K}\left(
Y\right)  \equiv0$, $\left\vert \Delta M_{r}\right\vert =\left\vert \Delta
Y_{r}\right\vert \leq D_{K}$, and $U_{r}\in\partial I_{K}\left(  Y_{r}\right)
=N_{K}\left(  Y_{r}\right)  $. Therefore, $U_{r}=\left\vert U_{r}\right\vert
\mathbf{n}\left(  Y_{r}\right)  $ whenever $U_{r}\neq0,$that is when we lie on
$bd\left(  K\right)  $. In this situation, all the three previous steps apply
verbatim with $\varepsilon=0$. Steps 1 and 3 then give (\ref{r18e}) and
$\Psi_{\tau}\leq c_{6}+c_{7}\sqrt{\Psi_{\tau}}$ without any restriction, and
the conclusion follows as before.$\smallskip$

The proof of Proposition \ref{rot-apriori} is now complete.\hfill
\end{proof}

\subsubsection*{Exponential moments}

The following classical inequality, Garsia's lemma (see Dellacherie and Meyer
\cite[Chapter\ VI, pp. 105--107]{Dellacherie/Meyer:80} or Kazamaki
\cite[Chapter\ 2]{Kazamaki:94}) converts a $BMO$ bound into exponential
integrability. It is stated for increasing processes, which is exactly what is
needed in our study. The weight $\Gamma$ is built on $\left[  M^{\varepsilon
}\right]  $, and no passage to the predictable bracket $\left\langle
M^{\varepsilon}\right\rangle $, nor any form of the John--Nirenberg inequality
for martingales, is required.

\begin{lemma}
[Garsia]\label{garsia}Let $A$ be an adapted, c\`{a}dl\`{a}g, nondecreasing
process with $A_{0}=0$ and $A_{T}\in L^{1}$, and let $c>0$ be such that
\[
\mathbb{E}^{\mathcal{F}_{\tau}}\left(  A_{T}-A_{\tau-}\right)  \leq
c,\quad\quad\mathbb{P}\text{-a.s., for every stopping time }\tau\leq T\text{.}%
\]
Then $\mathbb{E}A_{T}^{n}\leq n!c^{n}$ for every $n\geq1$, and
\[
\mathbb{E}e^{\lambda A_{T}}\leq\dfrac{1}{1-\lambda c},\quad\quad\text{for
every }\lambda\in\left(  0,1/c\right)  .
\]

\end{lemma}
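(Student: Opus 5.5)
The plan is to prove the moment bound $\mathbb{E}A_T^n\le n!c^n$ by induction on $n$, and then sum the exponential series. The basic tool is the pathwise identity for a c\`{a}dl\`{a}g nondecreasing process with $A_0=0$:
\[
A_T^n\le n\int_{(0,T]}\left(A_T-A_{r-}\right)^{n-1}dA_r .
\]
To see this, write $A_T^n=\sum$ of increments. In the continuous case it is an equality: $A_T^n=n\int_0^T(A_T-A_r)^{n-1}dA_r$, which follows by differentiating $(A_T-A_t)^n$ in $t$. In the presence of jumps, use the elementary inequality $b^n-a^n\le n(b-a)b^{n-1}$ for $0\le a\le b$. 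Apply it with $b=A_T-A_{r-}$ and $a=A_T-A_r$ over a partition, and pass to the limit. Choosing the left limit $A_{r-}$ in the integrand is exactly what makes the inequality go in the right direction, and it is why the hypothesis is stated with $A_{\tau-}$.

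Taking expectations, we will show $\mathbb{E}\int_{(0,T]}(A_T-A_{r-})^{n-1}dA_r\le (n-1)!\,c^{n}$ by induction. For $n=1$ the claim reads $\mathbb{E}A_T\le c$, which is the hypothesis with $\tau=0$ (with $A_{0-}=0$). For the inductive step we need a conditional version of the moment bound at stopping times:
\[
\mathbb{E}^{\mathcal{F}_\tau}\left(A_T-A_{\tau-}\right)^{n}\le n!\,c^{n}.
\]
We prove this for all $n$ simultaneously by induction, applying the identity above to the shifted process $A_t-A_{\tau-}$ on $[\tau,T]$. The key manipulation is to rewrite $\int_{[\tau,T]}(A_T-A_{r-})^{n-1}dA_r$ via the optional (dual-projection) property. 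Since $dA$ is adapted and $A$ is optional, one may replace $(A_T-A_{r-})^{n-1}$ by its conditional expectation given $\mathcal{F}_r$, that is, by the optional projection. Concretely, we will use
\[
\mathbb{E}\int_{[\tau,T]}H_r\,dA_r=\mathbb{E}\int_{[\tau,T]}{}^{o}H_r\,dA_r,
\]
valid for nonnegative measurable $H$ and adapted nondecreasing $A$. Here ${}^{o}H_r=\mathbb{E}^{\mathcal{F}_r}(A_T-A_{r-})^{n-1}$, and by the inductive hypothesis applied at the fixed time (or stopping time) $r$, this is bounded by $(n-1)!\,c^{n-1}$. It then remains to bound $\mathbb{E}^{\mathcal{F}_\tau}(A_T-A_{\tau-})$ by $c$, which gives
\[
\mathbb{E}^{\mathcal{F}_\tau}(A_T-A_{\tau-})^n\le n\,(n-1)!\,c^{n-1}\,c .
\]
Conditioning on $\mathcal{F}_\tau$ is handled as in Lemma \ref{H3'-facts}-$(b)$, by multiplying by $\mathbf{1}_B$ for $B\in\mathcal{F}_\tau$.

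The main obstacle is the projection step. It requires the inductive bound to hold uniformly in $r$ as an optional process, together with care about the jump of $A$ at the stopping time itself. For this reason the induction must be run with $A_{r-}$ throughout. To avoid integrability issues, we will first work with the truncation $A\wedge N$ (or stop at $\inf\{t:A_t\ge N\}$), for which all moments are finite, and then let $N\to\infty$ by monotone convergence. Finally,
\[
\mathbb{E}e^{\lambda A_T}=\sum_n\frac{\lambda^n\mathbb{E}A_T^n}{n!}\le\sum_n(\lambda c)^n=\frac{1}{1-\lambda c}
\]
for $\lambda c<1$, where the interchange of sum and expectation is justified by Tonelli.
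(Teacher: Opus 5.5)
Your proposal is correct and follows essentially the same route as the paper's sketch. Both use the pathwise inequality $A_T^n\le n\int_{(0,T]}(A_T-A_{r-})^{n-1}dA_r$, an induction on the conditional moments $\mathbb{E}^{\mathcal{F}_\tau}(A_T-A_{\tau-})^n\le n!\,c^n$ at all stopping times, and Tonelli to sum the exponential series. One point should be made explicit. Knowing the bound $\mathbb{E}^{\mathcal{F}_\sigma}(A_T-A_{\sigma-})^{n-1}\le (n-1)!\,c^{n-1}$ at every stopping time $\sigma$ gives the same bound on the optional projection up to evanescence, hence $dA$-a.e., by the optional section theorem, which is the tool the paper cites. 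Alternatively, you can use the time change $\sigma_s=\inf\{t:A_t\ge s\}$. Working with $A_{r-}$ only takes care of the jump at $\tau$; it does not supply this uniformity.
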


\begin{proof}
[Sketch]The pathwise inequality $A_{T}^{n}\leq n\int_{0}^{T}\left(
A_{T}-A_{s-}\right)  ^{n-1}dA_{s}$ and the section theorem give, by induction
on $n$, $\mathbb{E}^{\mathcal{F}_{\tau}}\left(  A_{T}-A_{\tau-}\right)
^{n}\leq n!c^{n},$ for every stopping time $\tau$. Summing the series yields
the second assertion. Indeed, $\mathbb{E}e^{\lambda A_{T}}=\sum_{n=0}^{\infty
}\frac{\lambda^{n}\mathbb{E}A_{T}^{n}}{n!}\leq\sum_{n=0}^{\infty}\left(
\lambda c\right)  ^{n}=\tfrac{1}{1-\lambda c},$ for $\lambda\in\left(
0,1/c\right)  .$\hfill
\end{proof}

\begin{corollary}
\label{rot-expmom}Consider that we situate under the assumptions of
Proposition \ref{rot-apriori}, keeping all the notations introduced there, and
all the statements taking place. Then, for every $\varepsilon\in
(0,\varepsilon_{1}]$,

\begin{itemize}
\item[$\left(  a\right)  $] $\mathbb{E}\exp\left(  \lambda\left[
M^{\varepsilon}\right]  _{T}\right)  \leq\left(  1-\lambda\Lambda_{\ast}%
^{2}\right)  ^{-1}$, for every $\lambda\in\left(  0,1/\Lambda_{\ast}%
^{2}\right)  $, where $\Lambda_{\ast}^{2}:=\Lambda^{2}+2j^{2}$;

\item[$\left(  b\right)  $] $\mathbb{E}\left(  {%
{\displaystyle\int_{0}^{T}}
}\left\vert U_{r}^{\varepsilon}\right\vert ^{2}dr\right)  ^{q}\leq
\Gamma\left(  q+1\right)  \left(  \dfrac{\bar{\Psi}}{a_{\theta}^{2}}\right)
^{q}$, for every $q\geq1$;

\item[$\left(  c\right)  $] $\mathbb{E}\exp\left(  \lambda{%
{\displaystyle\int_{0}^{T}}
}\left\vert F\left(  r,Y_{r}^{\varepsilon},\mathcal{R}_{r}(M^{\varepsilon
})\right)  \right\vert dr\right)  <+\infty$, for every $\lambda>0$.
\end{itemize}
\end{corollary}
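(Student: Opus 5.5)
All three assertions follow from Garsia's Lemma \ref{garsia}, applied to suitable nondecreasing adapted processes. The conditional bounds it requires are supplied by Proposition \ref{rot-apriori}.

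\textbf{Part (a).} Take $A:=[M^{\varepsilon}]$. For a stopping time $\tau\leq T$ we write
\[
A_T-A_{\tau-}=\left([M^{\varepsilon}]_T-[M^{\varepsilon}]_\tau\right)+\left|\Delta M^{\varepsilon}_\tau\right|^2 .
\]
The conditional expectation of the first piece is at most $\Lambda^2$ by (\ref{r18})-$(d)$. The jump term is bounded pathwise by $j^2$, by (\ref{r18})-$(b)$. This gives $\mathbb{E}^{\mathcal{F}_\tau}(A_T-A_{\tau-})\leq \Lambda^2+j^2\leq\Lambda_\ast^2$. The slack $2j^2$ in $\Lambda_\ast^2$ costs nothing and covers any bookkeeping of jumps at $\tau$. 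Integrability $A_T\in L^1$ comes from (\ref{r5})-$(a)$. Garsia's lemma then gives (a) for $\lambda<1/\Lambda_\ast^2$.

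\textbf{Part (b).} Take $A_t:=\int_0^t|U^{\varepsilon}_r|^2dr$. This process is continuous, so $A_{\tau-}=A_\tau$. By (\ref{r18})-$(c)$,
\[
\mathbb{E}^{\mathcal{F}_\tau}(A_T-A_\tau)\leq \bar\Psi/a_\theta^2=:c .
\]
Garsia's lemma gives $\mathbb{E}A_T^n\leq n!\,c^n$ and $\mathbb{E}e^{\lambda A_T}\leq (1-\lambda c)^{-1}$ for $\lambda<1/c$. For non-integer $q\geq1$, the moment bound with the Gamma function should follow by running the lemma's induction with real exponents, since the pathwise inequality $A_T^q\leq q\int_0^T(A_T-A_s)^{q-1}dA_s$ holds for real $q\geq1$. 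Alternatively one can interpolate (Hölder/log-convexity) between consecutive integers, which yields $\Gamma(q+1)c^q$ up to the sharp form. I would first try to write the induction directly for real $q$, iterating conditional bounds of the form $\mathbb{E}^{\mathcal{F}_\tau}(A_T-A_\tau)^{q}\leq\Gamma(q+1)c^{q}$ through the fractional steps $q-1,\,q-2,\dots$ down to an exponent in $[0,1]$, where Jensen gives the base case $c^{q_0}\leq \Gamma(q_0+1)c^{q_0}$-type bound.

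\textbf{Part (c).} Take $A_t:=\int_0^t|F^{\varepsilon}_r|\,dr$. By Cauchy--Schwarz applied conditionally and (\ref{r18})-$(a)$,
\[
\mathbb{E}^{\mathcal{F}_\tau}(A_T-A_\tau)\leq\sqrt{T}\left(\mathbb{E}^{\mathcal{F}_\tau}\int_\tau^T|F^\varepsilon_r|^2dr\right)^{1/2}\leq\sqrt{T\bar\Psi}.
\]
This gives exponential moments only for $\lambda<1/\sqrt{T\bar\Psi}$, so the main obstacle is reaching every $\lambda>0$. The fix is time-splitting. Partition $[0,T]$ into $N$ deterministic intervals $[t_{k},t_{k+1}]$ of length $h$, and apply the same Cauchy--Schwarz argument on each piece. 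This gives the conditional bound $\sqrt{h\bar\Psi}$ for the localized process $A^{(k)}_t:=A_{t\vee t_k\wedge t_{k+1}}-A_{t_k}$, valid at every stopping time, since (\ref{r18})-$(a)$ holds for arbitrary $\tau$ and the tail on $[\tau,T]$ dominates the tail on the piece. Choose $h$ so that $\lambda N\sqrt{h\bar\Psi}<1$. By Hölder with $N$ factors,
\[
\mathbb{E}e^{\lambda A_T}\leq\prod_k\left(\mathbb{E}e^{N\lambda A^{(k)}_T}\right)^{1/N},
\]
and each factor is finite by Garsia's lemma. Because $N\sqrt h=T/\sqrt h\to\infty$, plain Hölder fails as $h\to0$. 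I would therefore replace it by iterated conditioning backwards: applying Garsia's lemma conditionally on $\mathcal{F}_{t_k}$ gives
\[
\mathbb{E}^{\mathcal{F}_{t_k}}e^{\lambda(A_{t_{k+1}}-A_{t_k})}\leq(1-\lambda\sqrt{h\bar\Psi})^{-1}.
\]
Chaining over $k$ (tower property) then gives
\[
\mathbb{E}e^{\lambda A_T}\leq(1-\lambda\sqrt{h\bar\Psi})^{-N}<\infty,
\]
for any $h$ with $\lambda\sqrt{h\bar\Psi}<1$. This conditional version of Garsia's lemma is the step needing care; its proof is the same as in Lemma \ref{garsia}, conditioned on $\mathcal{F}_{t_k}$.
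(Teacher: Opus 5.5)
Your part (a) and the integer-moment case of (b) follow the paper's proof. In both you apply Garsia's Lemma \ref{garsia}: to $[M^{\varepsilon}]$ with $c=\Lambda^{2}+j^{2}\le\Lambda_{\ast}^{2}$, and to $\int_{0}^{\cdot}|U_{r}^{\varepsilon}|^{2}dr$ with $c=\bar{\Psi}/a_{\theta}^{2}$.

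\textbf{Non-integer $q$ in (b).} Your handling of this case contains a false step. The ``base case'' $c^{q_0}\le\Gamma(q_0+1)c^{q_0}$ fails for every $q_0\in(0,1)$, since $\Gamma(q_0+1)<1$ there (its minimum is about $0.886$). Worse, the conditional bound $\mathbb{E}^{\mathcal{F}_\tau}(A_T-A_\tau)^{q_0}\le\Gamma(q_0+1)c^{q_0}$ that you need to start the induction is itself false: take $A_t=t$ and $c=T$. So neither of your routes reaches the stated constant:
\begin{itemize}
\item the real-exponent induction yields only $\Gamma(q+1)c^{q}/\Gamma(q_0+1)$;
\item Hölder interpolation between $n$ and $n+1$ yields $n!\,(n+1)^{q-n}c^{q}$, which by strict log-convexity of $\Gamma$ is strictly larger than $\Gamma(q+1)c^{q}$.
\end{itemize}
The paper's one-line ``interpolation via Hölder'' has exactly the same shortfall. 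This is harmless downstream, because Step 7 of Theorem \ref{rot-thm-conv} only needs some finite constant $C(p_0)$.

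If you want the stated constant, compare with the exponential law. For $x>0$ let $\tau_x:=\inf\{t\le T:A_t\ge x\}$ and $\pi(x):=\mathbb{E}(A_T-x)^{+}$. Since $A$ is continuous,
\[
\pi(x)=\mathbb{E}\bigl[\mathbf{1}_{\{\tau_x\le T\}}\,\mathbb{E}^{\mathcal{F}_{\tau_x\wedge T}}(A_T-A_{\tau_x\wedge T})\bigr]\le c\,\mathbb{P}(A_T\ge x)=-c\,\pi'(x)\quad\text{for a.e. }x.
\]
Hence $\pi(x)\le c\,e^{-x/c}$, and for $q>1$,
\[
\mathbb{E}A_T^{q}=q(q-1)\int_0^\infty x^{q-2}\pi(x)\,dx\le\Gamma(q+1)c^{q}.
\]

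\textbf{Part (c).} Your argument is correct but takes a different route from the paper.

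The paper first bounds pathwise
\[
\int_0^T|F^\varepsilon_r|dr\le\sqrt{T}f_\infty+\|L\|_\infty T(c_K+1)+\|\ell\|_\infty\sqrt{T}\,\mathcal{Z}^{1/2},\qquad \mathcal{Z}:=\int_0^T|\mathcal{R}_r(M^\varepsilon)|^2dr.
\]
It then applies Garsia to $\mathcal{Z}$, whose conditional increments are at most $C_{\mathcal{R}}^2\Lambda^2$ by (\ref{r18})-$(c)$. This gives $\mathbb{E}e^{\lambda'\mathcal{Z}}<\infty$ for small $\lambda'$. Every $\lambda$ is then reached through $\lambda\sqrt{\mathcal{Z}}\le\lambda'\mathcal{Z}+\lambda^2/(4\lambda')$.

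You work directly with $A=\int_0^\cdot|F^\varepsilon_r|dr$ and use only (\ref{r18})-$(a)$:
\begin{itemize}
\item conditional Cauchy--Schwarz on windows of length $h$ gives conditional increments at most $\sqrt{h\bar\Psi}$;
\item the conditional form of Garsia, which the lemma's proof supplies at every stopping time, gives $\mathbb{E}^{\mathcal{F}_{t_k}}e^{\lambda(A_{t_{k+1}}-A_{t_k})}\le(1-\lambda\sqrt{h\bar\Psi})^{-1}$;
\item the tower property chains these into $\mathbb{E}e^{\lambda A_T}\le(1-\lambda\sqrt{h\bar\Psi})^{-N}$.
\end{itemize}
You were right to abandon the $N$-factor Hölder bound.

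Your argument is more general. It needs neither the Lipschitz structure of $F$, nor the pathwise bound on $Y^\varepsilon$, nor the bound on $\mathcal{R}(M^\varepsilon)$; only a uniform conditional $L^2$ bound on the tail of $|F^\varepsilon|$. The paper's argument is shorter, because the square-root trick turns one small-$\lambda'$ Garsia estimate into all $\lambda$ at once.
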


\begin{proof}
Consider $\varepsilon\in(0,\varepsilon_{1}]$.

\noindent$\left(  a\right)  $ We will apply Lemma \ref{garsia} to $A=\left[
M^{\varepsilon}\right]  $. For the exponential moments of the quadratic
variation $\left[  M^{\varepsilon}\right]  _{T}$, for a stopping time $\tau$,%
\[
\mathbb{E}^{\mathcal{F}_{\tau}}\left(  \left[  M^{\varepsilon}\right]
_{T}-\left[  M^{\varepsilon}\right]  _{\tau-}\right)  =\mathbb{E}%
^{\mathcal{F}_{\tau}}\left(  \left[  M^{\varepsilon}\right]  _{T}-\left[
M^{\varepsilon}\right]  _{\tau}\right)  +\left\vert \Delta M_{\tau
}^{\varepsilon}\right\vert ^{2}\leq\Lambda^{2}+j^{2}\leq\Lambda_{\ast}^{2}~,
\]
by Proposition \ref{rot-apriori}-$\left(  b\right)  ,\left(  d\right)  $.
Apply now Lemma \ref{garsia}, with $c:=\Lambda_{\ast}^{2}~$.

\noindent$\left(  b\right)  $ Apply Lemma \ref{garsia} to the continuous
increasing process $A=\int_{0}^{\cdot}\left\vert U_{r}^{\varepsilon
}\right\vert ^{2}dr$, for which $c=\frac{\bar{\Psi}}{a_{\theta}^{2}}$, using
the result provided by (\ref{r18})-$\left(  c\right)  $. Indeed, $A$ has no
jumps, so $A_{\tau-}=A_{\tau}$ and%
\[
\mathbb{E}^{\mathcal{F}_{\tau}}\left(  A_{T}-A_{\tau}\right)  =\mathbb{E}%
^{\mathcal{F}_{\tau}}\int_{\tau}^{T}\left\vert U_{r}^{\varepsilon}\right\vert
^{2}dr\leq\frac{\bar{\Psi}}{a_{\theta}^{2}}=:c.
\]
Moreover, for $n\in\mathbb{N}^{\ast}$, Lemma \ref{garsia} assures that
$\mathbb{E}A_{T}^{n}\leq n!c^{n}=\Gamma\left(  n+1\right)  c^{n}.$ For any
real $q\geq1$, by interpolation, via H\"{o}lder's inequality, the conclusion follows.

\noindent$\left(  c\right)  $ By Assumption \ref{H2'} and (\ref{r18})-$\left(
b\right)  $ (i.e. $\sup\nolimits_{t\in\left[  0,T\right]  }\left\vert
Y_{t}^{\varepsilon}\right\vert \leq c_{K}+1$), we obtain%
\[
{\int_{0}^{T}}\left\vert F_{r}^{\varepsilon}\right\vert dr\leq\sqrt
{T}f_{\infty}+\left\Vert L\right\Vert _{\infty}T\left(  c_{K}+1\right)
+\left\Vert \ell\right\Vert _{\infty}\sqrt{T}\,\mathcal{Z}^{1/2}%
,\quad\text{where}\quad\mathcal{Z}:={\int_{0}^{T}}\left\vert \mathcal{R}%
_{r}(M^{\varepsilon})\right\vert ^{2}dr.
\]
By (\ref{r18})-$\left(  c\right)  $, the continuous increasing process ${%
{\textstyle\int_{0}^{\cdot}}
}\left\vert \mathcal{R}_{r}(M^{\varepsilon})\right\vert ^{2}dr$ has the
conditional increment bounded from above by $C_{\mathcal{R}}^{2}\Lambda^{2}$.
One can apply now Lemma \ref{garsia}, and we have the boundedness
$\mathbb{E}e^{\lambda^{\prime}\mathcal{Z}}<\infty$, for $\lambda^{\prime
}<1/\left(  C_{\mathcal{R}}^{2}\Lambda^{2}\right)  $. Also, from the
elementary inequality $\lambda\sqrt{\mathcal{Z}}\leq\lambda^{\prime
}\mathcal{Z}+\frac{\lambda^{2}}{4\lambda^{\prime}}~$, it yields $\mathbb{E}%
e^{\lambda\sqrt{\mathcal{Z}}}<+\infty$, for every $\lambda>0$. The proof is
now complete.\hfill
\end{proof}

\subsubsection*{The smallness data condition}

We introduce another compatibility criterion, as one can see below. It
involves the constants used in the previous results, constants which permit
the control, by boundedness, of several terms. We impose:%
\begin{equation}
\left(  C_{3}\right)  \quad\quad64\kappa_{\theta}\Lambda_{\ast}^{2}%
<\alpha_{\theta},\quad\quad\text{where}\quad\quad\Lambda_{\ast}^{2}%
=\Lambda^{2}+2j^{2}, \label{C3}%
\end{equation}
with $\kappa_{\theta}:=2b_{\psi}^{2}\tan\theta_{\max}\left(  1+8\alpha
_{\theta}^{-1}\tan\theta_{\max}\right)  $ given by (\ref{r16a}) and
$\Lambda^{2},j$ given by Proposition \ref{rot-apriori}.

\begin{remark}
\label{rot-C3}$\left(  a\right)  $ Since $\kappa_{\theta}$ is directly
proportional to $\tan\theta_{\max}$ and $\Lambda_{\ast}^{2}$ does not depend
on $\theta$, except through $a_{\theta}\geq\cos\theta_{\max}$, condition
$\left(  C_{3}\right)  $ holds as soon as\emph{ }$\theta_{\max}$\emph{ }is
small enough, given the fixed elements $K,T$ and the data $\eta,F,\mathcal{R}%
$. It is a restriction of exactly the same nature as $\left(  C_{0}\right)
$-$\left(  C_{2}\right)  $, and, like them, it is checked on the given data.

$\left(  b\right)  $ It is the counterpart of the \textquotedblleft smallness
assumption\textquotedblright\ Chassagneux, Nadtochiy, and Richou
\cite[Assumption 2.1]{Chassagneux/Nadtochiy/Richou:22}, which makes their
$BMO$ norm of the martingale part small enough for the John-Nirenberg
inequality to produce the required exponential moment. In our study, the two
roles are distinguished: $\Lambda^{2}$ measures the noise, $\kappa_{\theta}$
the obliqueness, and it is their product that must be small enough.

$\left(  c\right)  $ For $\theta\equiv0$, one has $\kappa_{\theta}=0$, and
Assumption $\left(  C_{3}\right)  $ is trivial satisfied. The weight $\Gamma$
reduces to a deterministic exponential and the argument below is reduced to
the classical one, without nothing new involved.
\end{remark}

\subsubsection*{The weighted estimate for a Cauchy approach}

Theorem \ref{rot-thm-conv} is the main technical result of the paper. It
establishes the quantitative stability of the Moreau--Yosida approximations
and is the key step toward the existence result stated later in Theorem
\ref{rot-thm-exist}. For the convenience of the reader, we organized it to be
self contained, by splitting its proof into eight detailed steps. In
\textit{Step 1}, we apply the c\`{a}dl\`{a}g It\^{o} formula to the two-point
kernel $\Phi_{r}\left(  Y_{r}^{\varepsilon},Y_{r}^{\delta}\right)  $ and
estimate both the continuous and discontinuous second-order terms. In
\textit{Step 2}, the reflection terms are controlled by the boundary
compatibility condition $\left(  C_{2}\right)  $ and the Lipschitz continuity
of the gradient of $\Phi$, producing an error of order $\varepsilon+\delta$.
\textit{Step 3} estimates the generator terms and the time derivative of the
kernel. In \textit{Step 4}, all these bounds are combined with an exponential
weight constructed from the quadratic variations and the absolutely continuous
coefficients. This cancels the random-measure defect pathwise. \textit{Step 5}
establishes the integrability of the weight, by means of the uniform BMO
estimates and the exponential moment bounds of Corollary \ref{rot-expmom}. In
\textit{Step 6}, the weighted locality assumption on $\mathcal{R}$ is used to
control the term involving $\mathcal{R}_{r}^{\varepsilon,\delta}%
=\mathcal{R}_{r}(M^{\varepsilon})-\mathcal{R}_{r}(M^{\delta})$ and to absorb
it into the coercive martingale term. \textit{Step 7} yields the basic
estimates for $\sup_{t}\mathbb{E}\Phi_{t}$ and $\mathbb{E}\left[
M^{\varepsilon}-M^{\delta}\right]  _{T}$. Finally, \textit{Step 8} upgrades
these estimates, by "commuting" $\mathbb{E}$ with the supremum. This is done
by applying the Burkholder--Davis--Gundy inequality to the weighted local martingale.

\begin{theorem}
[Convergence of the penalized family]\label{rot-thm-conv}Let $d=2$ and let
Assumptions \ref{H1}, \ref{H2}, \ref{H2'}, \ref{H3}, \ref{H3'}, \ref{H4'} and
\ref{H5'} be satisfied, together with the compatibility conditions $\left(
C_{0}\right)  $-$\left(  C_{3}\right)  $. Then there exists a positive
constant $C>0$, such that, for all $\varepsilon,\delta\in(0,\varepsilon_{1}]$,%
\begin{equation}
\mathbb{E}\sup\limits_{t\in\left[  0,T\right]  }\left\vert Y_{t}^{\varepsilon
}-Y_{t}^{\delta}\right\vert ^{2}+\mathbb{E}\left[  M^{\varepsilon}-M^{\delta
}\right]  _{T}\leq C\left(  \varepsilon+\delta\right)  . \label{r25}%
\end{equation}
Consequently $\left(  Y^{\varepsilon}\right)  _{\varepsilon}$ is a Cauchy
sequence in ${\mathcal{{\mathbb{D}}}}_{2}^{2}$, $\left(  M^{\varepsilon
}\right)  _{\varepsilon}$ is Cauchy in $\mathcal{M}_{2}^{2}$, and $\left(
U^{\varepsilon}\right)  _{\varepsilon}$ is bounded in $\Lambda_{2}^{2}$.
\end{theorem}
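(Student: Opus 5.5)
The plan follows the eight-step scheme announced before the statement, with the two-point kernel $\Phi_r$ of (\ref{r10}) as Lyapunov function and a pathwise exponential weight. Write $Y^{\varepsilon,\delta}:=Y^{\varepsilon}-Y^{\delta}$, $M^{\varepsilon,\delta}:=M^{\varepsilon}-M^{\delta}$ and $\mathcal{R}^{\varepsilon,\delta}_r:=\mathcal{R}_r(M^\varepsilon)-\mathcal{R}_r(M^\delta)$. Introduce the adapted increasing process
\[
V_t:=\nu\left(\left[M^\varepsilon\right]_t+\left[M^\delta\right]_t\right)+\int_0^t\beta_r\,dr ,
\]
where $\nu:=4\kappa_\theta/\alpha_\theta$ (up to an absolute factor) and $\beta_r$ is a deterministic bounded function collecting $L_r$, $\ell_r^2$, $c_\Phi\|\theta'\|_\infty$ and $c_\Phi$.

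\textbf{Itô expansion (Steps 1 and 3).} Apply the càdlàg Itô formula to $e^{V_t}\Phi_t(Y^\varepsilon_t,Y^\delta_t)$ between $t$ and $T$; the terminal value vanishes since $Y^\varepsilon_T=Y^\delta_T=\eta$. The continuous second-order term and each jump term $\Phi(Y_r)-\Phi(Y_{r-})-\langle\nabla\Phi,\Delta Y_r\rangle$ are bounded below via Lemma \ref{rot-hess}. For jumps, use Taylor's formula with integral remainder and the fact that $\Delta Y^{\varepsilon}=\Delta M^{\varepsilon}$. This yields the coercive part $\frac{\alpha_\theta}{2}\,d[M^{\varepsilon,\delta}]_r$ minus the defect
\[
\kappa_\theta\,|Y^{\varepsilon,\delta}|^2\,d\left([M^\varepsilon]+[M^\delta]\right).
\]
For jumps, $|x-y|$ varies along the segment, but it stays within $|Y^{\varepsilon,\delta}_{r-}|+2j$; the resulting extra term $j^2$ can be absorbed using the uniform bound $|\Delta M|\le j$, and this is why $\Lambda_*^2$ contains $2j^2$.

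The weight's own jumps contribute $(e^{\Delta V}-1)\Phi\ge\nu\,\Delta[M]\,\Phi$. Combined with $\Phi\ge\alpha_\theta|x-y|^2$ from (\ref{r16}) and the choice of $\nu$, this cancels the defect pathwise.

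The generator terms are handled with $|\nabla\Phi|\le c_\Phi|x-y|$ and (\ref{lip_F}). Young's inequality gives $c\,\ell_r^2|Y^{\varepsilon,\delta}|^2+\gamma|\mathcal{R}^{\varepsilon,\delta}_r|^2$, while $\partial_r\Phi$ is controlled by (\ref{r11}). All $|Y^{\varepsilon,\delta}|^2dr$ terms are absorbed by $\beta$.

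\textbf{Reflection terms (Step 2).} The reflection contributes
\[
-\langle\nabla_x\Phi,\Theta U^\varepsilon\rangle-\langle\nabla_y\Phi,\Theta U^\delta\rangle .
\]
Write $U^\varepsilon=|U^\varepsilon|\mathbf{n}(\bar Y^\varepsilon)$ and replace $\nabla_x\Phi(Y^\varepsilon,Y^\delta)$ by $\nabla_x\Phi(\bar Y^\varepsilon,\bar Y^\delta)$, at a Lipschitz cost of $c_\Phi(\varepsilon|U^\varepsilon|+\delta|U^\delta|)|U^\varepsilon|$. Condition $(C_2)$ then makes the main term nonpositive. The error is bounded by $C(\varepsilon+\delta)(|U^\varepsilon|^2+|U^\delta|^2)$.

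\textbf{Weight integrability and closing (Steps 4--7).} Taking expectations requires $\mathbb{E}\left[e^{pV_T}\int_0^T(|U^\varepsilon|^2+|U^\delta|^2)\,dr\right]<\infty$ for some $p>1$. By Hölder's inequality together with Corollary \ref{rot-expmom}(a),(b), this holds whenever $2p\nu\Lambda_*^2<1$. With $\nu\sim4\kappa_\theta/\alpha_\theta$ and the slack factors used for Cauchy--Schwarz and for $p$, this is exactly $(C_3)$, namely $64\kappa_\theta\Lambda_*^2<\alpha_\theta$.

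The local martingale $\int e^{V_{r-}}\langle\nabla\Phi,dM^{\varepsilon,\delta}\rangle$ is a true martingale by BDG, the boundedness of $Y$ (Proposition \ref{rot-apriori}(b)), and the exponential moments. The $\mathcal{R}$-term is bounded using Assumption \ref{H3'} with the predictable weight $g=e^{V_{r-}}$, after truncation and monotone convergence: it is dominated by $\gamma C_{\mathcal{R}}^2\,\mathbb{E}\int e^{V_{r-}}d[M^{\varepsilon,\delta}]$, which is absorbed into the coercive term by taking $\gamma$ small. Since $e^{V}\ge1$, this gives
\[
\sup_t\mathbb{E}\,|Y^{\varepsilon,\delta}_t|^2+\mathbb{E}[M^{\varepsilon,\delta}]_T\le C(\varepsilon+\delta).
\]

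\textbf{Supremum inside the expectation (Step 8).} Return to the pathwise inequality and apply BDG to the martingale term, bounding it by $\frac12\mathbb{E}\sup_t e^{V_t}\Phi_t+C\,\mathbb{E}\int e^{V}d[M^{\varepsilon,\delta}]$, and absorb. The last quantity again requires a weighted estimate; it is obtained by rerunning the argument with a slightly larger exponent, which $(C_3)$ allows. Together with $\mathbb{E}\langle\cdot\rangle=\mathbb{E}[\cdot]$ and Doob's inequality (\ref{bdg adapted}), this gives (\ref{r25}) and hence the Cauchy properties. Boundedness of $U^\varepsilon$ in $\Lambda_2^2$ follows from (\ref{r5})(b).

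The main obstacle is the handling of the jump terms in Step 1, where the Hessian defect is evaluated along segments rather than at $Y_{r-}$, together with matching $\nu$ with the integrability exponent so that it fits exactly within $(C_3)$.
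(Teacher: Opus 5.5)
Your overall architecture matches the paper's: the kernel $\Phi_r$, an exponential weight built on $[M^\varepsilon]+[M^\delta]$, Step 2 via $(C_2)$ at the projected points, absorption of $\mathcal{R}^{\varepsilon,\delta}$ through Assumption \ref{H3'}, and Garsia under $(C_3)$. However, there is a genuine gap. You tacitly treat $\Phi_r$ as a function of $x-y$ alone, i.e.\ as if $\nabla_y\Phi_r=-\nabla_x\Phi_r$, and this breaks the generator step.

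Since $\Phi_r$ involves $\mathbf{n}(x)+\mathbf{n}(y)$, the drift contains
\[
\langle\nabla_x\Phi_r,F^\varepsilon_r\rangle+\langle\nabla_y\Phi_r,F^\delta_r\rangle=\langle\nabla_x\Phi_r,F^\varepsilon_r-F^\delta_r\rangle+\langle\nabla_x\Phi_r+\nabla_y\Phi_r,F^\delta_r\rangle .
\]
With only $|\nabla\Phi_r|\le c_\Phi|x-y|$ and the Lipschitz bounds of Assumption \ref{H2}, the last term is of \emph{first} order in $|Y^{\varepsilon,\delta}_r|$. It cannot be written as $\beta_r|Y^{\varepsilon,\delta}_r|^2$. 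You need the cancellation (\ref{r12}), $|\nabla_x\Phi_r+\nabla_y\Phi_r|\le c_\Phi|x-y|^2$, which gives $c_\Phi|Y^{\varepsilon,\delta}_r|^2|F^\delta_r|$. But $|F^\delta_r|$ is random and unbounded: it contains $\ell_r|\mathcal{R}_r(M^\delta)|$, which is controlled only in the conditional $L^2$ sense of (\ref{r18})$(c)$. So no deterministic bounded $\beta$ can absorb it. This is why (\ref{r19}) puts $\frac{2c_\Phi}{\alpha_\theta}\int_0^t|F^\delta_r|\,dr$ into $V$. It is also why Step 5 needs the exponential moments of $\int_0^T|F^\delta_r|\,dr$ from Corollary \ref{rot-expmom}$(c)$ (Garsia applied to $\int_0^\cdot|\mathcal{R}_r(M^\delta)|^2dr$), together with an extra H\"older split. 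Your condition $2p\nu\Lambda_*^2<1$ is computed for a weight that lacks this factor.

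The same oversight affects the martingale term. It is $\int\Gamma_{r-}\,dm_r$ with $dm_r=\langle\nabla_x\Phi_r,dM^{\varepsilon,\delta}_r\rangle+\langle\nabla_x\Phi_r+\nabla_y\Phi_r,dM^\delta_r\rangle$, not $\int\Gamma_{r-}\langle\nabla\Phi_r,dM^{\varepsilon,\delta}_r\rangle$. The bracket of the second piece charges $d[M^\delta]$, which is not small. So your Step 8 bound by $\frac12\mathbb{E}\sup_t\Gamma_t\varphi_t+C\,\mathbb{E}\int\Gamma_{r-}d[M^{\varepsilon,\delta}]_r$ does not cover it. Conversely, no larger exponent is needed for the $[M^{\varepsilon,\delta}]$ part, since (\ref{r27}) already controls it.

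By (\ref{r12}), the bracket of the second piece is at most $\frac{c_\Phi^2}{\alpha_\theta}\,Z\int_0^T\Gamma_{r-}|Y^{\varepsilon,\delta}_{r-}|^2d[M^\delta]_r$, with $Z:=\sup_t\Gamma_t\varphi_t$. The last integral has expectation $O(\varepsilon+\delta)$ only if a surplus of the weight survives on the left-hand side. The ingredients are:
\begin{itemize}
\item $d\Gamma_r\ge 2\nu_\theta\Gamma_{r-}d[M^\delta]_r$;
\item $\mathbb{E}\int_0^T\varphi_r\,d\Gamma_r\le C(\varepsilon+\delta)$;
\item at jumps, $|Y^{\varepsilon,\delta}_{r-}|^2\le 2|Y^{\varepsilon,\delta}_r|^2+2|\Delta M^{\varepsilon,\delta}_r|^2$ and $|\Delta M^\delta_r|\le j$.
\end{itemize}
So you must choose the rate in $V$ with slack rather than spending the whole weight on cancelling the defect. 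The paper doubles it, keeping $\frac12\int\varphi_r\,d\Gamma_r$ in (\ref{r27}).

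This split is genuinely needed for the rate in (\ref{r25}). The crude bound $d[m]_r\lesssim|Y^{\varepsilon,\delta}_{r-}|^2d([M^\varepsilon]+[M^\delta])_r$, combined only with $\mathbb{E}\int\Gamma_{r-}d([M^\varepsilon]+[M^\delta])_r\le C$, gives an $O(1)$ term, not $O(\varepsilon+\delta)$. Note that the paper's own Step 8 makes this mistake: it uses $\mathbb{E}R_1\le C$ with $R_1=\int_0^T\Gamma_{r-}d([M^\varepsilon]+[M^\delta])_r$ and then concludes $O(\varepsilon+\delta)$, so you cannot simply cite that step.

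Both points are repairable with tools already in the paper. As written, however, your generator estimate and your $\mathbb{E}\sup_t$ step do not hold.
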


\begin{proof}
Fix $\varepsilon,\delta\in(0,\varepsilon_{1}]$ and introduce the notations:%
\[
Y^{\varepsilon,\delta}:=Y^{\varepsilon}-Y^{\delta},\quad M^{\varepsilon
,\delta}:=M^{\varepsilon}-M^{\delta},\quad\mathcal{R}_{r}^{\varepsilon,\delta
}:=\mathcal{R}_{r}(M^{\varepsilon})-\mathcal{R}_{r}(M^{\delta}),\quad
F_{r}^{\varepsilon}:=F\left(  r,Y_{r}^{\varepsilon},\mathcal{R}_{r}%
(M^{\varepsilon})\right)  ,
\]%
\[
\bar{Y}_{r}^{\varepsilon}:=\pi_{K}\left(  Y_{r}^{\varepsilon}\right)  \in
K\quad\quad\text{and}\quad\quad\varphi_{t}:=\Phi_{t}(Y_{t}^{\varepsilon}%
,Y_{t}^{\delta}).
\]
More precisely, the real-valued process $\varphi$ is given by the
deterministic, class $C^{1,2}\left(  \left[  0,T\right]  \times\mathbb{R}%
^{2}\times\mathbb{R}^{2}\right)  $ test function introduced in (\ref{r10}),
i.e.,%
\[
\varphi_{t}=\Phi_{t}(Y_{t}^{\varepsilon},Y_{t}^{\delta})=\Phi(t,Y_{t}%
^{\varepsilon},Y_{t}^{\delta})=\left\vert Y_{t}^{\varepsilon}-Y_{t}^{\delta
}\right\vert ^{2}-\dfrac{1}{2}\tan\theta_{t}\cdot P(Y_{t}^{\varepsilon}%
,Y_{t}^{\delta})Q(Y_{t}^{\varepsilon},Y_{t}^{\delta}).
\]
Set also the constants and processes:%
\begin{equation}
\left\{
\begin{array}
[c]{l}%
\gamma:=\dfrac{\alpha_{\theta}}{8C_{\mathcal{R}}^{2}}\quad\quad\text{and}%
\quad\quad\nu_{\theta}:=\dfrac{4\kappa_{\theta}}{\alpha_{\theta}},\medskip\\
V_{t}:=2\nu_{\theta}\left(  \left[  M^{\varepsilon}\right]  _{t}+\left[
M^{\delta}\right]  _{t}\right)  +\dfrac{2c_{\Phi}}{\alpha_{\theta}}{%
{\displaystyle\int_{0}^{t}}
}\left(  L_{r}+\dfrac{c_{\Phi}\ell_{r}^{2}}{2\gamma}+\left\vert \left\vert
\theta^{\prime}\right\vert \right\vert _{\infty}+\left\vert F_{r}^{\delta
}\right\vert \right)  dr,\quad\Gamma_{t}:=e^{V_{t}}\geq1.
\end{array}
\right.  \label{r19}%
\end{equation}
The process $V_{\cdot}$ is adapted, c\`{a}dl\`{a}g, nondecreasing, $V_{0}=0$,
and $\Gamma_{\cdot-}$ is predictable.$\smallskip$

From (\ref{r4}), the dynamic of the $\varepsilon$-penalizing process is given
by $dY_{r}^{\varepsilon}=-F_{r}^{\varepsilon}dr+\Theta_{r}U_{r}^{\varepsilon
}dr+dM_{r}^{\varepsilon}$, and, similarly, for the index $\delta$. Remark that
$\varphi_{T}=\Phi_{T}\left(  \eta,\eta\right)  =0$. It\^{o}'s formula for a
c\`{a}dl\`{a}g semimartingale vector $\left(  Y^{\varepsilon},Y^{\delta
}\right)  $, applied to the $C^{1,2}$-class function $\Phi$ gives, under
integral form, from $t$ to $T$:%
\begin{equation}
\varphi_{t}=\int_{t}^{T}\left(  \left\langle \nabla_{x}\Phi_{r}\left(
Y_{r-}^{\varepsilon},Y_{r-}^{\delta}\right)  ,-dY_{r}^{\varepsilon
}\right\rangle +\left\langle \nabla_{y}\Phi_{r}\left(  Y_{r-}^{\varepsilon
},Y_{r-}^{\delta}\right)  ,-dY_{r}^{\delta}\right\rangle -\partial_{r}\Phi
_{r}\left(  Y_{r}^{\varepsilon},Y_{r}^{\delta}\right)  \right)  dr-\mathcal{I}%
_{t}^{T},\label{Ito for vector}%
\end{equation}
where $\mathcal{I}_{t}^{T}:=\mathcal{I}_{t}^{c,T}+\mathcal{I}_{t}^{d,T}$
represents the second order It\^{o}'s correction on $\left[  t,T\right]  $ for
c\`{a}dl\`{a}g semimartingales, obtained when inserting the dynamics of
$Y^{\varepsilon}$ and $Y^{\delta}$ into (\ref{Ito for vector}). It has two
parts: the contribution of the continuous part, $\mathcal{I}_{t}^{c,T}$, and
the contribution of the pure jumps, $\mathcal{I}_{t}^{d,T}$. The continuous
part is obtained from the quadratic variation of the continuous parts of the
martingale vector $\left(  M^{\varepsilon,c},M^{\delta,c}\right)  $, linked
with the Hessian matrix of the test function:%
\[
\mathcal{I}_{t}^{c,T}=\frac{1}{2}\int_{t}^{T}D_{\left(  x,y\right)  }^{2}%
\Phi_{r}\left(  Y_{r-}^{\varepsilon},Y_{r-}^{\delta}\right)  \left(
dM_{r}^{\varepsilon,c},dM_{r}^{\delta,c}\right)  ^{2}=\frac{1}{2}\int_{t}%
^{T}D_{\left(  x,y\right)  }^{2}\Phi_{r}\left(  Y_{r-}^{\varepsilon}%
,Y_{r-}^{\delta}\right)  d\left[  \left(  M^{\varepsilon,c},M^{\delta
,c}\right)  \right]  _{r}~.
\]
In details,%
\[
\left\{
\begin{array}
[c]{rcl}%
D_{\left(  x,y\right)  }^{2}\Phi_{r} & = & \left(
\begin{array}
[c]{cc}%
D_{xx}^{2}\Phi_{r} & D_{xy}^{2}\Phi_{r}\medskip\\
D_{yx}^{2}\Phi_{r} & D_{yy}^{2}\Phi_{r}%
\end{array}
\right)  \in\mathbb{R}^{4\times4}\quad\quad\text{and}\medskip\\
d\left[  \left(  M^{\varepsilon,c},M^{\delta,c}\right)  \right]  _{r} & = &
\left(
\begin{array}
[c]{cc}%
d\left[  M^{\varepsilon,c}\right]  _{r} & d\left[  M^{\varepsilon,c}%
,M^{\delta,c}\right]  _{r}\medskip\\
d\left[  M^{\delta,c},M^{\varepsilon,c}\right]  _{r} & d\left[  M^{\delta
,c}\right]  _{r}%
\end{array}
\right)  \in\mathbb{R}^{4\times4}%
\end{array}
\right.
\]
Since $Y^{\varepsilon}$ admits jumps produced only by the martingale process,
$\Delta Y_{r}^{\varepsilon}=\Delta M_{r}^{\varepsilon}~$, where we denoted
$\Delta Y_{r}^{\varepsilon}:=Y_{r}^{\varepsilon}-Y_{r-}^{\varepsilon}$ and,
similarly, for $\Delta M_{r}^{\varepsilon}=M_{r}^{\varepsilon}-M_{r-}%
^{\varepsilon}~.$ Therefore,%
\[%
\begin{array}
[c]{ccl}%
\mathcal{I}_{t}^{d,T} & = &
{\displaystyle\sum\limits_{t<r\leq T}}
\left[  \Phi_{r}\left(  Y_{r}^{\varepsilon},Y_{r}^{\delta}\right)  -\Phi
_{r}\left(  Y_{r-}^{\varepsilon},Y_{r-}^{\delta}\right)  \right.  \medskip\\
&  & -\left.  \left\langle \nabla_{x}\Phi_{r}\left(  Y_{r-}^{\varepsilon
},Y_{r-}^{\delta}\right)  ,\Delta M_{r}^{\varepsilon}\right\rangle
-\left\langle \nabla_{y}\Phi_{r}\left(  Y_{r-}^{\varepsilon},Y_{r-}^{\delta
}\right)  ,\Delta M_{r}^{\delta}\right\rangle \right]  .
\end{array}
\]
Insert now in (\ref{Ito for vector}) the dynamics of $Y^{\varepsilon}$ and
$Y^{\delta}$. We obtain%
\begin{equation}%
\begin{array}
[c]{lll}%
\varphi_{t} & = &
{\displaystyle\int_{t}^{T}}
\left\langle \nabla_{x}\Phi_{r}\left(  Y_{r}^{\varepsilon},Y_{r}^{\delta
}\right)  ,F_{r}^{\varepsilon}\right\rangle dr-%
{\displaystyle\int_{t}^{T}}
\left\langle \nabla_{x}\Phi_{r}\left(  Y_{r}^{\varepsilon},Y_{r}^{\delta
}\right)  ,\Theta_{r}U_{r}^{\varepsilon}\right\rangle dr\medskip\\
& + &
{\displaystyle\int_{t}^{T}}
\left\langle \nabla_{y}\Phi_{r}\left(  Y_{r}^{\varepsilon},Y_{r}^{\delta
}\right)  ,F_{r}^{\delta}\right\rangle dr-%
{\displaystyle\int_{t}^{T}}
\left\langle \nabla_{y}\Phi_{r}\left(  Y_{r}^{\varepsilon},Y_{r}^{\delta
}\right)  ,\Theta_{r}U_{r}^{\delta}\right\rangle dr\medskip\\
& - &
{\displaystyle\int_{t}^{T}}
\left\langle \nabla_{x}\Phi_{r}\left(  Y_{r-}^{\varepsilon},Y_{r-}^{\delta
}\right)  ,dM_{r}^{\varepsilon}\right\rangle -%
{\displaystyle\int_{t}^{T}}
\left\langle \nabla_{y}\Phi_{r}\left(  Y_{r-}^{\varepsilon},Y_{r-}^{\delta
}\right)  ,dM_{r}^{\delta}\right\rangle \medskip\\
& - &
{\displaystyle\int_{t}^{T}}
\partial_{r}\Phi_{r}\left(  Y_{r}^{\varepsilon},Y_{r}^{\delta}\right)
dr+\mathcal{I}_{t}^{T}.
\end{array}
\label{Ito expanded}%
\end{equation}
Written under differential form, it becomes (we omit, for the simplicity of
the presentation to mention the pair of arguments $\left(  Y_{r}^{\varepsilon
},Y_{r}^{\delta}\right)  $; the differentiation in (\ref{Ito expanded}) is
with respect to $t$, followed by the use of the running variable $r$):%
\begin{equation}
-d\varphi_{r}=\left[  \left\langle \nabla_{x}\Phi_{r},F_{r}^{\varepsilon
}\right\rangle +\left\langle \nabla_{y}\Phi_{r},F_{r}^{\delta}\right\rangle
-\partial_{r}\Phi_{r}\right]  dr-\left[  \left\langle \nabla_{x}\Phi
_{r},\Theta_{r}U_{r}^{\varepsilon}\right\rangle +\left\langle \nabla_{y}%
\Phi_{r},\Theta_{r}U_{r}^{\delta}\right\rangle \right]  dr-dm_{r}%
-d\mathcal{I}_{r},\label{r20}%
\end{equation}
where $m$ is a local martingale with $m_{0}=0$ and%
\[%
\begin{array}
[c]{ccl}%
dm_{r} & = & \left\langle \nabla_{x}\Phi_{r}\left(  Y_{r-}^{\varepsilon
},Y_{r-}^{\delta}\right)  ,dM_{r}^{\varepsilon}\right\rangle +\left\langle
\nabla_{y}\Phi_{r}\left(  Y_{r-}^{\varepsilon},Y_{r-}^{\delta}\right)
,dM_{r}^{\delta}\right\rangle \medskip\\
& = & \left\langle \nabla_{x}\Phi_{r},dM_{r}^{\varepsilon}\right\rangle
+\left\langle \nabla_{y}\Phi_{r},dM_{r}^{\delta}\right\rangle =\left\langle
\nabla_{x}\Phi_{r},dM_{r}^{\varepsilon}\pm dM_{r}^{\delta}\right\rangle
+\left\langle \nabla_{y}\Phi_{r},dM_{r}^{\delta}\right\rangle \medskip\\
& = & \left\langle \nabla_{x}\Phi_{r},d\left(  M^{\varepsilon,\delta}\right)
_{r}\right\rangle +\left\langle \nabla_{x}\Phi_{r}+\nabla_{y}\Phi_{r}%
,dM_{r}^{\delta}\right\rangle .
\end{array}
\]
We start below to deal with the controllable terms.

\smallskip\noindent\textbf{Step 1: the second order term control.} For its
continuous part, from Lemma \ref{rot-hess}, formula (\ref{r17}) applies for
the pairs $\left(  x,y\right)  =\left(  Y_{r-}^{\varepsilon},Y_{r-}^{\delta
}\right)  ,$ $\left(  h,h^{\prime}\right)  =(dM_{r}^{\varepsilon,c}%
,dM_{r}^{\delta,c})$. We have $x-y=Y_{r-}^{\varepsilon,\delta},$ $h-h^{\prime
}=d(M_{r}^{\varepsilon,\delta,c})$ and, as measures, we obtain $\left\vert
h\right\vert ^{2}+|h^{\prime}|^{2}\longleftrightarrow d\left[  M^{\varepsilon
,c}\right]  _{r}+d\left[  M^{\delta,c}\right]  _{r},$ $\left\vert h-h^{\prime
}\right\vert ^{2}\longleftrightarrow d\left[  M^{\varepsilon,\delta,c}\right]
_{r}.$ It yields the following lower bound for the integrand:%
\begin{equation}
\dfrac{1}{2}D_{\left(  x,y\right)  }^{2}\Phi_{r}\left(  Y_{r-}^{\varepsilon
},Y_{r-}^{\delta}\right)  \left(  dM_{r}^{\varepsilon,c},dM_{r}^{\delta
,c}\right)  ^{2}\geq\frac{\alpha_{\theta}}{2}d\left[  M^{\varepsilon,\delta
,c}\right]  _{r}-\kappa_{\theta}\left\vert Y_{r-}^{\varepsilon,\delta
}\right\vert ^{2}d\left(  \left[  M^{\varepsilon,c}\right]  +[M^{\delta
,c}]\right)  _{r}~.\label{cont part}%
\end{equation}

Similarly, for the jump at time $r$, Taylor's formula gives a point $\zeta
_{r}$ on the segment joining $\left(  Y_{r-}^{\varepsilon},Y_{r-}^{\delta
}\right)  $ to $\left(  Y_{r}^{\varepsilon},Y_{r}^{\delta}\right)  $, at which
the summand equals $\frac{1}{2}D^{2}\Phi_{r}\left(  \zeta_{r}\right)  [\left(
h_{r},h_{r}^{\prime}\right)  ^{2}]$, where $h_{r}:=\Delta M_{r}^{\varepsilon}$
and $h_{r}^{\prime}:=\Delta M_{r}^{\delta}$. Since the finite variation parts
of $Y^{\varepsilon}$ and $Y^{\delta}$ are continuous, at $\zeta_{r}=\left(
x,y\right)  $ one has%
\[
\left\vert x-y\right\vert \leq|Y_{r-}^{\varepsilon,\delta}|+\left\vert
h_{r}-h_{r}^{\prime}\right\vert ,
\]
and Lemma \ref{rot-hess}, together with $\left\vert h_{r}\right\vert
,\left\vert h_{r}^{\prime}\right\vert \leq j$, assured by Proposition
\ref{rot-apriori}-$\left(  b\right)  $) give%
\begin{equation}
\dfrac{1}{2}D^{2}\Phi_{r}\left(  \zeta_{r}\right)  \left[  \left(  h_{r}%
,h_{r}^{\prime}\right)  ^{2}\right]  \geq\left(  \dfrac{\alpha_{\theta}}%
{2}-4\kappa_{\theta}j^{2}\right)  \left\vert h_{r}-h_{r}^{\prime}\right\vert
^{2}-2\kappa_{\theta}|Y_{r-}^{\varepsilon,\delta}|^{2}\left(  \left\vert
h_{r}\right\vert ^{2}+\left\vert h_{r}^{\prime}\right\vert ^{2}\right)
,\label{jump part}%
\end{equation}
with $4\kappa_{\theta}j^{2}\leq\alpha_{\theta}/4$, according to the smallness
condition $\left(  C_{3}\right)  $. Adding the two estimates above,
(\ref{cont part}) and (\ref{jump part}), we get%
\begin{equation}
-d\mathcal{I}_{r}\leq-\dfrac{\alpha_{\theta}}{4}d[M^{\varepsilon,\delta}%
]_{r}+2\kappa_{\theta}|Y_{r-}^{\varepsilon,\delta}|^{2}d\left(  \left[
M^{\varepsilon}\right]  +[M^{\delta}]\right)  _{r}.\label{r21}%
\end{equation}
This is the term (\ref{r18a}), and it is the only one that is not compatible
with a deterministic Gronwall measure. At the first impression, the second
term from the RHS should have the coefficient $3$, instead of $2.$ However,
the involving coefficients from the two estimates, (\ref{cont part}) and
(\ref{jump part}), correspond to disjoints measures, one continuous, diffusive
and another one, discrete and with pure jumps. After the orthogonal
decomposition of the quadratic variation $d\left[  M\right]  _{r}$, one must
choose a uniform constant, suitable for both disjoint supports.

\smallskip\noindent\textbf{Step 2: the reflection terms control.} If
$U_{r}^{\varepsilon}\neq0$ then $\bar{Y}_{r}^{\varepsilon}=\pi_{K}\left(
Y_{r}^{\varepsilon}\right)  \in bd\left(  K\right)  $ and $U_{r}^{\varepsilon
}=\left\vert U_{r}^{\varepsilon}\right\vert \mathbf{n}\left(  \bar{Y}%
_{r}^{\varepsilon}\right)  $. Since $\bar{Y}_{r}^{\delta}\in K$, we revisit
the angle compatibility constraints section and the condition $\left(
C_{2}\right)  $ given by (\ref{r15}) can be applied with the point $\left(
\bar{Y}_{r}^{\varepsilon},\bar{Y}_{r}^{\delta}\right)  .$ It gives%
\[
\left\langle \nabla_{x}\Phi_{r}(\bar{Y}_{r}^{\varepsilon},\bar{Y}_{r}^{\delta
}),\Theta_{r}U_{r}^{\varepsilon}\right\rangle \geq0.
\]
Since $\nabla_{x}\Phi_{r}$ is globally Lipschitz with respect to both space
variables, with the positive constant $c_{\Phi}$ and $\left\vert
Y_{r}^{\varepsilon}-\bar{Y}_{r}^{\varepsilon}\right\vert =\varepsilon
\left\vert U_{r}^{\varepsilon}\right\vert $ and, similarly, $\left\vert
Y_{r}^{\delta}-\bar{Y}_{r}^{\delta}\right\vert =\delta\left\vert U_{r}%
^{\delta}\right\vert $, we obtain%
\begin{equation}
\left\vert \nabla_{x}\Phi_{r}(Y_{r}^{\varepsilon},Y_{r}^{\delta})-\nabla
_{x}\Phi_{r}(\bar{Y}_{r}^{\varepsilon},\bar{Y}_{r}^{\delta})\right\vert \leq
c_{\Phi}\left(  \varepsilon\left\vert U_{r}^{\varepsilon}\right\vert
+\delta|U_{r}^{\delta}|\right)  .\label{Lip estim}%
\end{equation}
Take the scalar product from (\ref{r20}) and it can be written as (recall that
$\Theta_{r}$ is a rotation matrix, so its isometry property gives $\left\vert
\Theta_{r}U_{r}^{\varepsilon}\right\vert =\left\vert U_{r}^{\varepsilon
}\right\vert $):%
\[%
\begin{array}
[c]{l}%
\left\langle \nabla_{x}\Phi_{r}\left(  Y_{r}^{\varepsilon},Y_{r}^{\delta
}\right)  ,\Theta_{r}U_{r}^{\varepsilon}\right\rangle =\left\langle \nabla
_{x}\Phi_{r}\left(  \bar{Y}_{r}^{\varepsilon},\bar{Y}_{r}^{\delta}\right)
,\Theta_{r}U_{r}^{\varepsilon}\right\rangle +\left\langle \nabla_{x}\Phi
_{r}\left(  Y_{r}^{\varepsilon},Y_{r}^{\delta}\right)  -\nabla_{x}\Phi
_{r}\left(  \bar{Y}_{r}^{\varepsilon},\bar{Y}_{r}^{\delta}\right)  ,\Theta
_{r}U_{r}^{\varepsilon}\right\rangle \medskip\\
\quad\quad\quad\quad\geq0-\left\vert \nabla_{x}\Phi_{r}\left(  Y_{r}%
^{\varepsilon},Y_{r}^{\delta}\right)  -\nabla_{x}\Phi_{r}\left(  \bar{Y}%
_{r}^{\varepsilon},\bar{Y}_{r}^{\delta}\right)  \right\vert \left\vert
U_{r}^{\varepsilon}\right\vert \geq-c_{\Phi}\left(  \varepsilon\left\vert
U_{r}^{\varepsilon}\right\vert +\delta\left\vert U_{r}^{\delta}\right\vert
\right)  \left\vert U_{r}^{\varepsilon}\right\vert ,\quad\text{from
(\ref{Lip estim}).}%
\end{array}
\]
Arguing symmetrically for the second, similar, term from (\ref{r20}), we
obtain%
\[
\left\{
\begin{array}
[c]{l}%
\left\langle \nabla_{x}\Phi_{r}\left(  Y_{r}^{\varepsilon},Y_{r}^{\delta
}\right)  ,\Theta_{r}U_{r}^{\varepsilon}\right\rangle \geq-c_{\Phi}\left(
\varepsilon\left\vert U_{r}^{\varepsilon}\right\vert +\delta\left\vert
U_{r}^{\delta}\right\vert \right)  \left\vert U_{r}^{\varepsilon}\right\vert
\medskip\\
\left\langle \nabla_{y}\Phi_{r}\left(  Y_{r}^{\varepsilon},Y_{r}^{\delta
}\right)  ,\Theta_{r}U_{r}^{\delta}\right\rangle \geq-c_{\Phi}\left(
\varepsilon\left\vert U_{r}^{\varepsilon}\right\vert +\delta\left\vert
U_{r}^{\delta}\right\vert \right)  \left\vert U_{r}^{\delta}\right\vert .
\end{array}
\right.
\]
Hence, due to the sign in front, the second bracket of (\ref{r20}) is bounded
from above by $d\mathcal{L}_{r}$, where%
\begin{equation}
\mathcal{L}_{t}:=2c_{\Phi}{%
{\displaystyle\int_{0}^{t}}
}\left(  \varepsilon\left\vert U_{r}^{\varepsilon}\right\vert ^{2}%
+\delta|U_{r}^{\delta}|^{2}+\left(  \varepsilon+\delta\right)  \left\vert
U_{r}^{\varepsilon}\right\vert |U_{r}^{\delta}|\right)  dr\leq3c_{\Phi}\left(
\varepsilon+\delta\right)  \mathcal{U}_{t}~,\label{r22}%
\end{equation}
where $\mathcal{U}_{t}:={\int_{0}^{t}}\left(  \left\vert U_{r}^{\varepsilon
}\right\vert ^{2}+\left\vert U_{r}^{\delta}\right\vert ^{2}\right)  dr.$

\smallskip\noindent\textbf{Step 3: the generator and the time derivative
control.} By an easy algebraic manipulation of the terms, we have%
\[
\left\langle \nabla_{x}\Phi_{r},F_{r}^{\varepsilon}\right\rangle +\left\langle
\nabla_{y}\Phi_{r},F_{r}^{\delta}\right\rangle =\left\langle \nabla_{x}%
\Phi_{r},F_{r}^{\varepsilon}-F_{r}^{\delta}\right\rangle +\left\langle
\nabla_{x}\Phi_{r}+\nabla_{y}\Phi_{r},F_{r}^{\delta}\right\rangle .
\]
Use now (\ref{r11}), (\ref{r12}), Assumption \ref{H2}-$\left(  i\right)  $ and
$|Y_{r}^{\varepsilon,\delta}|^{2}\leq\alpha_{\theta}^{-1}\varphi_{r}$, which
is (\ref{r16}). For the first bracket, we have%
\[%
\begin{array}
[c]{l}%
\left\langle \nabla_{x}\Phi_{r},F_{r}^{\varepsilon}-F_{r}^{\delta
}\right\rangle \leq\left\vert \nabla_{x}\Phi_{r}\right\vert \left\vert
F_{r}^{\varepsilon}-F_{r}^{\delta}\right\vert \overset{(\ref{r11})}{\leq
}c_{\Phi}|Y_{r}^{\varepsilon,\delta}|\left(  L_{r}|Y_{r}^{\varepsilon,\delta
}|+\beta_{r}|\mathcal{R}_{r}^{\varepsilon,\delta}|\right)  \medskip\\
\quad\quad\quad=c_{\Phi}L_{r}|Y_{r}^{\varepsilon,\delta}|^{2}+c_{\Phi}%
\beta_{r}|Y_{r}^{\varepsilon,\delta}||\mathcal{R}_{r}^{\varepsilon,\delta
}|\leq c_{\Phi}L_{r}|Y_{r}^{\varepsilon,\delta}|^{2}+\dfrac{c_{\Phi}^{2}%
\beta_{r}^{2}}{2\gamma}|Y_{r}^{\varepsilon,\delta}|^{2}+\dfrac{\gamma}%
{2}|\mathcal{R}_{r}^{\varepsilon,\delta}|^{2}\medskip\\
\quad\quad\quad=\left(  c_{\Phi}L_{r}+\dfrac{c_{\Phi}^{2}\beta_{r}^{2}%
}{2\gamma}\right)  |Y_{r}^{\varepsilon,\delta}|^{2}+\dfrac{\gamma}%
{2}|\mathcal{R}_{r}^{\varepsilon,\delta}|^{2}\overset{(C_{1})}{\leq}\dfrac
{1}{\alpha_{\theta}}\left(  c_{\Phi}L_{r}+\dfrac{c_{\Phi}^{2}\beta_{r}^{2}%
}{2\gamma}\right)  \varphi_{r}+\dfrac{\gamma}{2}|\mathcal{R}_{r}%
^{\varepsilon,\delta}|^{2}.
\end{array}
\]
For the second bracket, $\left\langle \nabla_{x}\Phi_{r}+\nabla_{y}\Phi
_{r},F_{r}^{\delta}\right\rangle $, the symmetry of $\Phi_{r}$ produces%
\[
\left\langle \nabla_{x}\Phi_{r}+\nabla_{y}\Phi_{r},F_{r}^{\delta}\right\rangle
\leq\left\vert \nabla_{x}\Phi_{r}+\nabla_{y}\Phi_{r}\right\vert \left\vert
F_{r}^{\delta}\right\vert \overset{(\ref{r12})}{\leq}c_{\Phi}\left\vert
Y_{r}^{\varepsilon,\delta}\right\vert ^{2}\left\vert F_{r}^{\delta}\right\vert
\leq\dfrac{c_{\Phi}}{\alpha_{\theta}}\left\vert F_{r}^{\delta}\right\vert
\varphi_{r}.
\]
For the temporal derivative term we use the definition of the test function
$\Phi_{r}\left(  x,y\right)  $ and (\ref{r11}). Hence,%
\[%
\begin{array}
[c]{ccl}%
-\partial_{r}\Phi_{r}\left(  Y_{r}^{\varepsilon},Y_{r}^{\delta}\right)   &
\leq & \left\vert \partial_{r}\Phi_{r}\left(  Y_{r}^{\varepsilon}%
,Y_{r}^{\delta}\right)  \right\vert =\dfrac{\left\vert \theta_{r}^{\prime
}\right\vert }{2\cos^{2}\theta_{r}}\left\vert P\left(  Y_{r}^{\varepsilon
},Y_{r}^{\delta}\right)  \right\vert \left\vert Q\left(  Y_{r}^{\varepsilon
},Y_{r}^{\delta}\right)  \right\vert \medskip\\
& \leq & c_{\Phi}\left\vert \left\vert \theta^{\prime}\right\vert \right\vert
_{\infty}|Y_{r}^{\varepsilon,\delta}|^{2}\overset{(\ref{r16})}{\leq}%
\dfrac{c_{\Phi}\left\vert \left\vert \theta^{\prime}\right\vert \right\vert
_{\infty}}{\alpha_{\theta}}\varphi_{r}.
\end{array}
\]
We now add the previous three estimates and it yields that the first bracket
from (\ref{r20}) is bounded from above by%
\begin{equation}
\dfrac{1}{2}\varphi_{r}dV_{r}^{\left(  1\right)  }+\dfrac{\gamma}%
{2}|\mathcal{R}_{r}^{\varepsilon,\delta}|^{2}dr,\quad\text{with}\quad
dV_{r}^{\left(  1\right)  }:=\dfrac{2c_{\Phi}}{\alpha_{\theta}}\left(
L_{r}+\dfrac{c_{\Phi}\ell_{r}^{2}}{2\gamma}+\left\vert \left\vert
\theta^{\prime}\right\vert \right\vert _{\infty}+\left\vert F_{r}^{\delta
}\right\vert \right)  dr,\label{r23}%
\end{equation}
$dV^{\left(  1\right)  }$ being the absolutely continuous part of $dV$ given
by (\ref{r19}).

\smallskip\noindent\textbf{Step 4: the weighted inequality.} Let us recall the
process $V$, introduced in (\ref{r19}):%
\[
V_{t}:=2\nu_{\theta}\left(  \left[  M^{\varepsilon}\right]  _{t}+[M^{\delta
}]_{t}\right)  +dV_{t}^{\left(  1\right)  },\quad\nu_{\theta}=\dfrac
{4\kappa_{\theta}}{\alpha_{\theta}}\quad\text{and}\quad\Gamma_{t-}:=e^{V_{t-}%
}\geq1.
\]
$V^{\left(  1\right)  }$ is the absolutely continuous part of $V$, which
controls the generator and the time derivative, while $\Gamma_{\cdot-}$ is the
left limit value, being a predictable process. Since $V_{t}$ is a process of
finite variation, $d\left(  \Gamma\varphi\right)  _{r}=\Gamma_{r-}d\varphi
_{r}+\varphi_{r}d\Gamma_{r}~$, while%
\begin{equation}
\Delta\Gamma_{r}=\Gamma_{r-}\left(  e^{\Delta V_{r}}-1\right)  \geq\Gamma
_{r-}\Delta V_{r}\quad\Longrightarrow\quad d\Gamma_{r}\geq\Gamma_{r-}%
dV_{r}~.\label{ineg dV dGamma}%
\end{equation}
The integration by parts formula for c\`{a}dl\`{a}g semimartingales, gives, if
applied to $-d\left(  \Gamma\varphi\right)  $, on $\left[  t,T\right]  $,%
\begin{equation}
\Gamma_{t}\varphi_{t}=\Gamma_{T}\varphi_{_{T}}+{\int_{t}^{T}}\Gamma
_{r-}\left(  -d\varphi_{r}\right)  -{\int_{t}^{T}}\varphi_{r}d\Gamma
_{r}\label{dyn of GammaFi}%
\end{equation}
Insert in the obtained formula the dynamic of $-d\varphi_{r}$, which is
provided by (\ref{r20}):%
\[%
\begin{array}
[c]{c}%
-d\varphi_{r}=\left[  \left\langle \nabla_{x}\Phi_{r},F_{r}^{\varepsilon
}\right\rangle +\left\langle \nabla_{y}\Phi_{r},F_{r}^{\delta}\right\rangle
-\partial_{r}\Phi_{r}\right]  dr-\left[  \left\langle \nabla_{x}\Phi
_{r},\Theta_{r}U_{r}^{\varepsilon}\right\rangle +\left\langle \nabla_{y}%
\Phi_{r},\Theta_{r}U_{r}^{\delta}\right\rangle \right]  dr-dm_{r}%
-d\mathcal{I}_{r}\medskip\\
=\left(  \text{\textsc{Generator}}\right)  dr-\left(
\text{\textsc{Reflection}}\right)  dr-dm_{r}-d\mathcal{I}_{r}%
\end{array}
\]
The dynamic (\ref{dyn of GammaFi}) becomes%
\[
\Gamma_{t}\varphi_{t}={%
{\displaystyle\int_{t}^{T}}
}\Gamma_{r-}\left(  \left(  \text{\textsc{Generator}}\right)  dr-\left(
\text{\textsc{Reflection}}\right)  dr-dm_{r}-d\mathcal{I}_{r}\right)
-{\int_{t}^{T}}\varphi_{r}d\Gamma_{r}.
\]
The bounds for the terms containing $\left(  \text{\textsc{Generator}}\right)
dr$ - (\ref{r22}, Step 2.) and $\left(  \text{\textsc{Reflection}}\right)  dr$
- (\ref{r23}, Step 3.) gives us%
\begin{equation}%
\begin{array}
[c]{ccl}%
\Gamma_{t}\varphi_{t} & \leq & \dfrac{\gamma}{2}%
{\displaystyle\int_{t}^{T}}
\Gamma_{r-}|\mathcal{R}_{r}^{\varepsilon,\delta}|^{2}dr+%
{\displaystyle\int_{t}^{T}}
\Gamma_{r-}d\mathcal{L}_{r}-%
{\displaystyle\int_{t}^{T}}
\Gamma_{r-}dm_{r}\medskip\\
& + & \dfrac{1}{2}%
{\displaystyle\int_{t}^{T}}
\Gamma_{r-}\varphi_{r}dV_{r}^{\left(  1\right)  }-%
{\displaystyle\int_{t}^{T}}
\Gamma_{r-}d\mathcal{I}_{r}-{\int_{t}^{T}}\varphi_{r}d\Gamma_{r}%
\end{array}
\label{eq to analyze}%
\end{equation}
Let us investigate the key elements, i.e. $-%
{\textstyle\int_{t}^{T}}
\Gamma_{r-}d\mathcal{I}_{r}-%
{\textstyle\int_{t}^{T}}
\varphi_{r}d\Gamma_{r}.$ Let's treat the first integral. According to Step 1,
formula (\ref{r21}), we have%
\[
-d\mathcal{I}_{r}\leq-\dfrac{\alpha_{\theta}}{4}d[M^{\varepsilon,\delta}%
]_{r}+2\kappa_{\theta}|Y_{r-}^{\varepsilon,\delta}|^{2}d\left(  \left[
M^{\varepsilon}\right]  +[M^{\delta}]\right)  _{r}.
\]
Therefore, using the elementary inequality $|Y_{r-}^{\varepsilon,\delta}%
|^{2}\leq2|Y_{r}^{\varepsilon,\delta}|^{2}+2|M_{r}^{\varepsilon,\delta}%
-M_{r-}^{\varepsilon,\delta}|^{2}$,%
\[%
\begin{array}
[c]{ccl}%
-%
{\displaystyle\int_{t}^{T}}
\Gamma_{r-}d\mathcal{I}_{r} & \leq & -\dfrac{\alpha_{\theta}}{4}%
{\displaystyle\int_{t}^{T}}
\Gamma_{r-}d\left[  M^{\varepsilon,\delta}\right]  _{r}+2\kappa_{\theta}%
{\displaystyle\int_{t}^{T}}
\Gamma_{r-}|Y_{r-}^{\varepsilon,\delta}|^{2}d\left(  \left[  M^{\varepsilon
}\right]  +\left[  M^{\delta}\right]  \right)  _{r}\medskip\\
& \leq & -\dfrac{\alpha_{\theta}}{4}%
{\displaystyle\int_{t}^{T}}
\Gamma_{r-}d\left[  M^{\varepsilon,\delta}\right]  _{r}+4\kappa_{\theta}%
{\displaystyle\int_{t}^{T}}
\Gamma_{r-}|Y_{r}^{\varepsilon,\delta}|^{2}d\left(  \left[  M^{\varepsilon
}\right]  +\left[  M^{\delta}\right]  \right)  _{r}\medskip\\
& + & 4\kappa_{\theta}%
{\displaystyle\sum\limits_{t<r\leq T}}
\Gamma_{r-}|M_{r}^{\varepsilon,\delta}-M_{r-}^{\varepsilon,\delta}|^{2}\left(
\left\vert \Delta M_{r}^{\varepsilon}\right\vert ^{2}+\left\vert \Delta
M_{r}^{\delta}\right\vert ^{2}\right)
\end{array}
\]
The last term contains
\[
\left\vert \Delta M_{r}-\Delta M_{r-}\right\vert ^{2}=\left\vert
\Delta(M^{\varepsilon,\delta})_{r}\right\vert ^{2}=\left\vert \Delta
(M_{r}^{\varepsilon}-M_{r}^{\delta})\right\vert ^{2}=\left\vert M_{r}%
^{\varepsilon}-M_{r-}^{\varepsilon}-(M_{r}^{\delta}-M_{r-}^{\delta
})\right\vert ^{2}.
\]
Since the jumps are uniformly bounded by Proposition \ref{rot-apriori}%
-$\left(  b\right)  $, formula (\ref{r18}), then, for $j=D_{K}+2$,%
\[%
\begin{array}
[c]{l}%
4\kappa_{\theta}%
{\displaystyle\sum\limits_{t<r\leq T}}
\Gamma_{r-}\left\vert \Delta\left(  M^{\varepsilon,\delta}\right)
_{r}\right\vert ^{2}\left(  \left\vert \Delta M_{r}^{\varepsilon}\right\vert
^{2}+\left\vert \Delta M_{r}^{\delta}\right\vert ^{2}\right)  \medskip\\
\quad\quad\leq4\kappa_{\theta}%
{\displaystyle\sum\limits_{t<r\leq T}}
\Gamma_{r-}\left\vert \Delta\left(  M^{\varepsilon,\delta}\right)
_{r}\right\vert ^{2}\left(  2j^{2}\right)  =8\kappa_{\theta}j^{2}%
{\displaystyle\int_{t}^{T}}
\Gamma_{r-}d\left[  \Delta M^{d}\right]  _{r}%
\end{array}
\]
From the smallness condition (\ref{C3}) $64\kappa_{\theta}\Lambda_{\ast}%
^{2}<\alpha_{\theta}$ we obtain $8\kappa_{\theta}j^{2}\leq\alpha_{\theta}/8,$
that is, this sum of jumps is partially absorbed by $-\frac{\alpha_{\theta}%
}{4}d\left[  M^{\varepsilon,\delta}\right]  _{r}$. It follows that%
\[
-\frac{\alpha_{\theta}}{4}%
{\displaystyle\int_{t}^{T}}
\Gamma_{r-}d[M^{\varepsilon,\delta}]_{r}+8\kappa_{\theta}j^{2}%
{\displaystyle\int_{t}^{T}}
\Gamma_{r-}d[\Delta M^{d}]_{r}\leq-\frac{\alpha_{\theta}}{8}%
{\displaystyle\int_{t}^{T}}
\Gamma_{r-}d[M^{\varepsilon,\delta}]_{r}~\,
\]
For the contribution in (\ref{eq to analyze}) of the integral$-%
{\textstyle\int_{t}^{T}}
\varphi_{r}d\Gamma_{r}$, we proceed as follows.

Since $\Gamma_{r}=e^{Vr}$ and $V_{r}:=2\nu_{\theta}\left(  \left[
M^{\varepsilon}\right]  _{r}+[M^{\delta}]_{r}\right)  +dV_{r}^{\left(
1\right)  }$, with $\nu_{\theta}=\dfrac{4\kappa_{\theta}}{\alpha_{\theta}}$,
we have the inequality of convex measure%
\[
d\Gamma_{r}\geq\Gamma_{r-}dV_{r}=2\nu_{\theta}\Gamma_{r-}d\left(  \left[
M^{\varepsilon}\right]  +[M^{\delta}]\right)  _{r}+\Gamma_{r-}dV_{r}^{\left(
1\right)  }%
\]
and our integral satisfies%
\[
-%
{\textstyle\int_{t}^{T}}
\varphi_{r}d\Gamma_{r}\leq-2\nu_{\theta}%
{\displaystyle\int_{t}^{T}}
\Gamma_{r-}\varphi_{r}d\left(  \left[  M^{\varepsilon}\right]  +[M^{\delta
}]\right)  _{r}-%
{\displaystyle\int_{t}^{T}}
\Gamma_{r-}\varphi_{r}dV_{r}^{\left(  1\right)  }.
\]
Also, from the coercivity of the test function $\varphi_{r}=\Phi_{r}\left(
Y_{r}^{\varepsilon},Y_{r}^{\delta}\right)  \geq\alpha_{\theta}|Y_{r}%
^{\varepsilon,\delta}|^{2}$ we find $|Y_{r}^{\varepsilon,\delta}|^{2}\leq
\frac{1}{\alpha_{_{\theta}}}\varphi_{r}$ and, due to the formula of
$\nu_{\theta}$,%
\begin{equation}
-2\nu_{\theta}\varphi_{r}+4\kappa_{\theta}|Y_{r}^{\varepsilon,\delta}|^{2}%
\leq-2\nu_{\theta}\varphi_{r}+4\kappa_{\theta}\dfrac{1}{\alpha_{\theta}%
}\varphi_{r}=-2\nu_{\theta}\varphi_{r}+\nu_{\theta}\varphi_{r}=-\nu_{\theta
}\varphi_{r}\label{comb terms}%
\end{equation}
Insert the estimates for these two integrals into (\ref{eq to analyze}) and it
follows, after we group the terms featuring $dV_{r}^{\left(  1\right)  }$ and,
respectively, $d\left(  \left[  M^{\varepsilon}\right]  +\left[  M^{\delta
}\right]  \right)  _{r}~$,
\[%
\begin{array}
[c]{ccl}%
\Gamma_{t}\varphi_{t} & \leq & \dfrac{\gamma}{2}%
{\displaystyle\int_{t}^{T}}
\Gamma_{r-}|\mathcal{R}_{r}^{\varepsilon,\delta}|^{2}dr+%
{\displaystyle\int_{t}^{T}}
\Gamma_{r-}d\mathcal{L}_{r}-%
{\displaystyle\int_{t}^{T}}
\Gamma_{r-}dm_{r}\medskip\\
& + & \dfrac{1}{2}%
{\displaystyle\int_{t}^{T}}
\Gamma_{r-}\varphi_{r}dV_{r}^{\left(  1\right)  }-2\nu_{\theta}%
{\displaystyle\int_{t}^{T}}
\Gamma_{r-}\varphi_{r}d\left(  \left[  M^{\varepsilon}\right]  +\left[
M^{\delta}\right]  \right)  _{r}-%
{\displaystyle\int_{t}^{T}}
\Gamma_{r-}\varphi_{r}dV_{r}^{\left(  1\right)  }\medskip\\
& + & 4\kappa_{\theta}%
{\displaystyle\int_{t}^{T}}
\Gamma_{r-}|Y_{r}^{\varepsilon,\delta}|^{2}d\left(  \left[  M^{\varepsilon
}\right]  +\left[  M^{\delta}\right]  \right)  _{r}-\dfrac{\alpha_{\theta}}{8}%
{\displaystyle\int_{t}^{T}}
\Gamma_{r-}d\left[  M^{\varepsilon,\delta}\right]  _{r}\medskip\\
& \leq & \dfrac{\gamma}{2}%
{\displaystyle\int_{t}^{T}}
\Gamma_{r-}|\mathcal{R}_{r}^{\varepsilon,\delta}|^{2}dr+%
{\displaystyle\int_{t}^{T}}
\Gamma_{r-}d\mathcal{L}_{r}-%
{\displaystyle\int_{t}^{T}}
\Gamma_{r-}dm_{r}-\dfrac{\alpha_{\theta}}{8}%
{\displaystyle\int_{t}^{T}}
\Gamma_{r-}d\left[  M^{\varepsilon,\delta}\right]  _{r}-\dfrac{1}{2}%
{\displaystyle\int_{t}^{T}}
\varphi_{r}d\Gamma_{r}~,\medskip
\end{array}
\]
since, using the inequality (\ref{comb terms}), and (\ref{ineg dV dGamma}) for
the last inequality sign,%
\[%
\begin{array}
[c]{l}%
-\dfrac{1}{2}%
{\displaystyle\int_{t}^{T}}
\Gamma_{r-}\varphi_{r}dV_{r}^{\left(  1\right)  }+4\kappa_{\theta}%
{\displaystyle\int_{t}^{T}}
\Gamma_{r-}|Y_{r}^{\varepsilon,\delta}|^{2}d\left(  \left[  M^{\varepsilon
}\right]  +\left[  M^{\delta}\right]  \right)  _{r}-2\nu_{\theta}%
{\displaystyle\int_{t}^{T}}
\Gamma_{r-}\varphi_{r}d\left(  \left[  M^{\varepsilon}\right]  +\left[
M^{\delta}\right]  \right)  _{r}\medskip\\
\quad\quad\leq-\dfrac{1}{2}%
{\displaystyle\int_{t}^{T}}
\Gamma_{r-}\varphi_{r}dV_{r}^{\left(  1\right)  }-\dfrac{1}{2}%
{\displaystyle\int_{t}^{T}}
2\nu_{\theta}\Gamma_{r-}\varphi_{r}d\left(  \left[  M^{\varepsilon}\right]
+\left[  M^{\delta}\right]  \right)  _{r}\medskip\\
\quad\quad=-\dfrac{1}{2}%
{\displaystyle\int_{t}^{T}}
\varphi_{r}\Gamma_{r-}\left(  dV_{r}^{\left(  1\right)  }+2\nu_{\theta
}d\left(  \left[  M^{\varepsilon}\right]  +\left[  M^{\delta}\right]  \right)
_{r}\right)  =-\dfrac{1}{2}%
{\displaystyle\int_{t}^{T}}
\varphi_{r}\left(  \Gamma_{r-}dV_{r}\right)  \leq-\dfrac{1}{2}%
{\displaystyle\int_{t}^{T}}
\varphi_{r}d\Gamma_{r}~.
\end{array}
\]
Finally, we deduced, for any $0\leq t\leq T$, $\mathbb{P}$-a.s., the
\textit{pathwise inequality}%
\begin{equation}
\Gamma_{t}\varphi_{t}+\dfrac{\alpha_{\theta}}{8}%
{\displaystyle\int_{t}^{T}}
\Gamma_{r-}d[M^{\varepsilon,\delta}]_{r}+\dfrac{1}{2}%
{\displaystyle\int_{t}^{T}}
\varphi_{r}d\Gamma_{r}\leq\dfrac{\gamma}{2}%
{\displaystyle\int_{t}^{T}}
\Gamma_{r-}|\mathcal{R}_{r}^{\varepsilon,\delta}|^{2}dr+%
{\displaystyle\int_{t}^{T}}
\Gamma_{r-}d\mathcal{L}_{r}-%
{\displaystyle\int_{t}^{T}}
\Gamma_{r-}dm_{r}~.\label{pathwise ineq}%
\end{equation}
\smallskip\noindent\textbf{Step 5: integrability of }$\Gamma_{T}$. Our
intention is to take $\mathbb{E}$ inside (\ref{pathwise ineq}), in order to
cancel the martingale term $%
{\textstyle\int_{t}^{T}}
\Gamma_{r-}dm_{r}$ and to derive $L^{2}$ estimates. However, $\Gamma_{T}$
contains an exponential of the martingales' quadratic variation. Therefore, to
reach our goal, we have to prove that $\mathbb{E}\left(  \Gamma_{T}%
^{p}\right)  <\infty$, for a $p>1$, uniformly with respect to $\varepsilon
,\delta\in(0,\varepsilon_{1}].$

According to Proposition \ref{rot-apriori}-$\left(  d\right)  $ we have
$\Lambda_{\ast}^{2}$ as being the BMO upper estimate for the martingale:%
\[
\mathbb{E}^{\mathcal{F}_{\tau}}\left(  \left[  M^{\varepsilon}\right]
_{T}-\left[  M^{\varepsilon}\right]  _{\tau}\right)  \leq\Lambda^{2}%
\leq\Lambda_{\ast}^{2}=\Lambda^{2}+2j^{2},\quad\mathbb{P}\text{-a.s.,}%
\quad\text{for any stopping time }\tau\text{.}%
\]
The smallness condition $\left(  C_{3}\right)  $ given in (\ref{C3}) states
that $64\kappa_{\theta}\Lambda_{\ast}^{2}<\alpha_{\theta}~$, which is
equivalent to $16\left(  \frac{4\kappa_{\theta}}{\alpha_{\theta}}\right)
\Lambda_{\ast}^{2}=16\nu_{\theta}\Lambda_{\ast}^{2}<1$ and, as consequence,
there exists $p>1$, closer to $1$ such that%
\[
16p\nu_{\theta}\Lambda_{\ast}^{2}<1\quad\Longrightarrow\quad8p\nu_{\theta
}<\frac{1}{2\Lambda_{\ast}^{2}}<\frac{1}{\Lambda_{\ast}^{2}}.
\]
We use twice the Cauchy--Schwarz inequality and obtain%
\begin{equation}
\mathbb{E}\left(  \Gamma_{T}^{p}\right)  \leq\left(  \mathbb{E}\exp\left(
8p\nu_{\theta}\left[  M^{\varepsilon}\right]  _{T}\right)  \right)
^{1/4}\left(  \mathbb{E}\exp\left(  8p\nu_{\theta}[M^{\delta}]_{T}\right)
\right)  ^{1/4}\left(  \mathbb{E}\exp\left(  2pV_{T}^{\left(  1\right)
}\right)  \right)  ^{1/2}.\label{E of Gamp}%
\end{equation}
Each of the three terms from the RHS of (\ref{E of Gamp}) are bounded. Indeed,
for the adapted increasing c\`{a}dl\`{a}g process $A_{t}:=\left[
M^{\varepsilon}\right]  _{t}$~, with $A_{0}=0$, one can apply Proposition
\ref{rot-apriori}-$\left(  d\right)  $, which gives $\mathbb{E}^{\mathcal{F}%
_{\tau}}\left(  A_{T}-A_{\tau}\right)  \leq\Lambda_{\ast}^{2}$. Now, according
to Lemma \ref{garsia} and Corollary \ref{rot-expmom}-$\left(  a\right)  $, for
every constant $\lambda<1/\Lambda_{\ast}^{2}$,%
\[
\mathbb{E}\exp\left(  \lambda\left[  M^{\varepsilon}\right]  _{T}\right)
\leq\frac{1}{1-\lambda\Lambda_{\ast}^{2}}<+\infty.
\]
But, we chosed $p>1$ such that $\lambda=8p\nu_{\theta}<1/\Lambda_{\ast}^{2}~$,
and this implies that there exists a positive constant $C_{1}>0$ such that%
\[
\mathbb{E}\exp\left(  8p\nu_{\theta}\left[  M^{\varepsilon}\right]
_{T}\right)  \leq C_{1}<+\infty\quad\text{and, identically,}\quad
\mathbb{E}\exp\left(  8p\nu_{\theta}[M^{\delta}]_{T}\right)  <C_{1}<+\infty
\]
For the third factor, let us remind the definition of $V_{T}^{\left(
1\right)  }$ given in (\ref{r19}):%
\[
V_{T}^{\left(  1\right)  }=\dfrac{2}{\alpha_{\theta}}{%
{\displaystyle\int_{0}^{T}}
}\left(  c_{\Phi}L_{r}+\dfrac{c_{\Phi}^{2}\ell_{r}^{2}}{2\gamma}+c_{\Phi
}\left\vert \left\vert \theta^{\prime}\right\vert \right\vert _{\infty
}+c_{\Phi}|F_{r}^{\delta}|\right)  dr,
\]
By standard estimates, we use Assumption $\left(  H_{2}^{\prime}\right)  $,
Proposition \ref{rot-apriori}-$\left(  b\right)  $ and the fact that ${%
{\textstyle\int_{0}^{T}}
}\left\vert \mathcal{R}_{r}\left(  M^{\delta}\right)  \right\vert ^{2}dr$ is
controlled by $\left[  M^{\delta}\right]  _{T}$, via Assumption $\left(
H_{3}^{\prime}\right)  $. Finally, according to Corollary \ref{rot-expmom}%
-$\left(  c\right)  $, $V_{T}^{\left(  1\right)  }$ admits exponential moments
of any order and, in particular, $\mathbb{E}\exp(2pV_{T}^{\left(  1\right)
})\leq C_{2}<+\infty$, the constant being independent of the penalization index.

As consequence, returning to (\ref{E of Gamp}), we obtain that $\mathbb{E}%
\left(  \,\Gamma_{T}^{p}\right)  <C<+\infty$. Define also the adjoint exponent
of $p$, $p_{0}:=p/\left(  p-1\right)  $, which will be used at Step 7.

This steps guaranties $\mathbb{E}\int_{t}^{T}\Gamma_{r-}dm_{r}=0$.

\smallskip\noindent\textbf{Step 6: applying expectation, and absorption of
}$\mathcal{R}^{\varepsilon,\delta}$. Apply now the expectation to the pathwise
inequality (\ref{pathwise ineq}). We just proved in the previous step that
$\mathbb{E}\int_{t}^{T}\Gamma_{r-}dm_{r}=0$. Therefore,%
\begin{equation}
\mathbb{E}\left(  \Gamma_{t}\varphi_{t}\right)  +\dfrac{\alpha_{\theta}}%
{8}\mathbb{E}%
{\displaystyle\int_{t}^{T}}
\Gamma_{r-}d\left[  M^{\varepsilon,\delta}\right]  _{r}+\dfrac{1}{2}\mathbb{E}%
{\displaystyle\int_{t}^{T}}
\varphi_{r}d\Gamma_{r}\leq\dfrac{\gamma}{2}\mathbb{E}%
{\displaystyle\int_{t}^{T}}
\Gamma_{r-}|\mathcal{R}_{r}^{\varepsilon,\delta}|^{2}dr+\mathbb{E}%
{\displaystyle\int_{t}^{T}}
\Gamma_{r-}d\mathcal{L}_{r}.\label{Epathwise ineq}%
\end{equation}
We invoke Assumption \ref{H3'}, with the bounded predictable nonnegative
weight $g_{r}^{\left(  n\right)  }=\left(  \Gamma_{r-}\wedge n\right)
\mathbf{1}_{(t,T]}\left(  r\right)  $. Letting $n\rightarrow\infty$, by the
monotone convergence theorem, we have, since $g_{r}^{\left(  n\right)
}\rightarrow g_{r}:=\Gamma_{r-}\mathbf{1}_{(t,T]}\left(  r\right)  =e^{V_{r-}%
}\mathbf{1}_{(t,T]}\left(  r\right)  $,%
\[
\mathbb{E}{%
{\displaystyle\int_{0}^{T}}
}g_{r}\left\vert \mathcal{R}_{r}\left(  M^{\varepsilon}\right)  -\mathcal{R}%
_{r}(M^{\delta})\right\vert ^{2}dr\leq C_{\mathcal{R}}^{2}\mathbb{E}{%
{\displaystyle\int_{t}^{T}}
}\Gamma_{r-}d[M^{\varepsilon}-M^{\delta}]_{r}~.
\]
It follows%
\[
\mathbb{E}{%
{\displaystyle\int_{t}^{T}}
}\Gamma_{r-}|\mathcal{R}_{r}^{\varepsilon,\delta}|^{2}dr\leq C_{\mathcal{R}%
}^{2}\mathbb{E}{%
{\displaystyle\int_{t}^{T}}
}\Gamma_{r-}d[M^{\varepsilon,\delta}]_{r}~.
\]
Remind, from (\ref{r19}), the value of $\gamma:=\tfrac{\alpha_{\theta}%
}{8C_{\mathcal{R}}^{2}}$, which implies $\frac{\gamma C_{\mathcal{R}}^{2}}%
{2}=\tfrac{\alpha_{\theta}}{16}.$ Relation (\ref{Epathwise ineq}) becomes,
after the absorption in the LHS of the term containing $\mathbb{E}%
{\textstyle\int_{t}^{T}}
\Gamma_{r-}|\mathcal{R}_{r}^{\varepsilon,\delta}|^{2}dr$,%
\begin{equation}
\mathbb{E}\left(  \Gamma_{t}\varphi_{t}\right)  +\dfrac{\alpha_{\theta}}%
{16}\mathbb{E}%
{\displaystyle\int_{t}^{T}}
\Gamma_{r-}d[M^{\varepsilon,\delta}]_{r}+\dfrac{1}{2}\mathbb{E}%
{\displaystyle\int_{t}^{T}}
\varphi_{r}d\Gamma_{r}\leq\mathbb{E}%
{\displaystyle\int_{t}^{T}}
\Gamma_{r-}d\mathcal{L}_{r}.\label{r27}%
\end{equation}

\smallskip\noindent\textbf{Step 7: the order of boundedness and convergence.}
Remind, from (\ref{r22}), the structure of
\[
d\mathcal{L}_{r}=2c_{\Phi}\left(  \varepsilon+\delta\right)  \left(
|U_{r}^{\varepsilon}|^{2}+|U_{r}^{\delta}|^{2}\right)  dr+c_{\Phi}\left(
\varepsilon\left\vert U_{r}^{\varepsilon}\right\vert +\delta|U_{r}^{\delta
}|\right)  \left(  \left\vert U_{r}^{\varepsilon}\right\vert +|U_{r}^{\delta
}|\right)  dr.
\]
By integration, since $\Gamma_{r-}\leq\Gamma_{T}=\exp\left(  V_{T}\right)  $,
for every $r\in(0,T]$, followed by the H\"{o}lder's inequality,%
\[%
\begin{array}
[c]{ccl}%
\mathbb{E}%
{\displaystyle\int_{t}^{T}}
\Gamma_{r-}d\mathcal{L}_{r} & \leq & \mathbb{E}%
{\displaystyle\int_{0}^{T}}
\Gamma_{r-}d\mathcal{L}_{r}\leq3c_{\Phi}\left(  \varepsilon+\delta\right)
\mathbb{E}\left(  \Gamma_{T}%
{\displaystyle\int_{0}^{T}}
\left(  \left\vert U_{r}^{\varepsilon}\right\vert ^{2}+\left\vert
U_{r}^{\delta}\right\vert ^{2}\right)  dr\right)  \medskip\\
& \leq & 3c_{\Phi}\left(  \varepsilon+\delta\right)  \left(  \mathbb{E}\left(
\Gamma_{T}^{p}\right)  \right)  ^{1/p}\left(  \mathbb{E}\left(
{\displaystyle\int_{0}^{T}}
\left(  \left\vert U_{r}^{\varepsilon}\right\vert ^{2}+\left\vert
U_{r}^{\delta}\right\vert ^{2}\right)  dr\right)  ^{p_{0}}\right)  ^{1/p_{0}%
}\medskip\\
& \leq & 3c_{\Phi}\left(  \varepsilon+\delta\right)  \left\vert \left\vert
\Gamma_{T}\right\vert \right\vert _{L^{p}}\cdot2\left(  \mathbb{E}\left(
{\displaystyle\int_{0}^{T}}
\left\vert U_{r}^{\varepsilon}\right\vert ^{2}dr\right)  ^{p_{0}}%
+\mathbb{E}\left(
{\displaystyle\int_{0}^{T}}
\left\vert U_{r}^{\delta}\right\vert ^{2}dr\right)  ^{p_{0}}\right)
^{1/p_{0}}\medskip\\
& \leq & 6c_{\Phi}\left(  \varepsilon+\delta\right)  \left\vert \left\vert
\Gamma_{T}\right\vert \right\vert _{L^{p}}C\left(  p_{0}\right)  \leq C\left(
\varepsilon+\delta\right)  ,\quad\quad\text{unif. w.r.t. }\varepsilon
,\delta\in(0,\varepsilon_{1}]\text{.}%
\end{array}
\]
The inequality from the last line is based on Corollary (\ref{rot-expmom}%
)-$\left(  b\right)  $, applied for the continuous increasing process $A_{t}:=%
{\textstyle\int_{0}^{t}}
\left\vert U_{r}^{\varepsilon}\right\vert ^{2}dr$, $U_{r}^{\varepsilon}$
having moments uniformly with respect to $\varepsilon$.$\smallskip$

Since:\vspace{-0.1in}

\begin{itemize}
\item $\Gamma_{t}\geq1$ and $\varphi_{t}\geq\alpha_{\theta}|Y_{t}%
^{\varepsilon,\delta}|^{2}=\alpha_{\theta}\left\vert Y_{t}^{\varepsilon}%
-Y_{t}^{\delta}\right\vert ^{2}$ (the coercivity condition); therefore,
$\alpha_{\theta}\left\vert Y_{t}^{\varepsilon}-Y_{t}^{\delta}\right\vert
^{2}\leq1\cdot\varphi_{t}\leq\Gamma_{t}\varphi_{t}~$;\vspace{-0.1in}

\item $\Gamma_{r-}\geq1$, so $\mathbb{E}%
{\displaystyle\int_{t}^{T}}
\Gamma_{r-}d\left[  M^{\varepsilon,\delta}\right]  _{r}\geq%
{\displaystyle\int_{t}^{T}}
d\left[  M^{\varepsilon,\delta}\right]  _{r}=\mathbb{E}\left(  \left[
M^{\varepsilon}-M^{\delta}\right]  _{T}-\left[  M^{\varepsilon}-M^{\delta
}\right]  _{t}\right)  ~;$\vspace{-0.1in}

\item $\mathbb{E}%
{\displaystyle\int_{0}^{T}}
\Gamma_{r-}d\left[  M^{\varepsilon,\delta}\right]  _{r}\geq\mathbb{E}\left[
M^{\varepsilon}-M^{\delta}\right]  _{T}~$, we have, for every $t\in\left[
0,T\right]  $,\vspace{-0.1in}%
\[
\alpha_{\theta}\mathbb{E}|Y_{t}^{\varepsilon}-Y_{t}^{\delta}|^{2}%
+\dfrac{\alpha_{\theta}}{16}\mathbb{E}[M^{\varepsilon}-M^{\delta}]_{T}%
+\frac{1}{2}\mathbb{E}%
{\displaystyle\int_{0}^{T}}
\varphi_{r}d\Gamma_{r}\leq C\left(  \varepsilon+\delta\right)  .
\]
Take $\sup_{t\in\left[  0,T\right]  }$, in order to obtain%
\begin{equation}
\sup_{t\in\left[  0,T\right]  }\mathbb{E}|Y_{t}^{\varepsilon}-Y_{t}^{\delta
}|^{2}+\mathbb{E}[M^{\varepsilon}-M^{\delta}]_{T}+\mathbb{E}%
{\displaystyle\int_{0}^{T}}
\varphi_{r}d\Gamma_{r}\leq\dfrac{\bar{C}}{\alpha_{\theta}}\left(
\varepsilon+\delta\right)  .\label{r29}%
\end{equation}
The inequality (\ref{r27}) also gives $\sup_{t\in\left[  0,T\right]  }\left(
\mathbb{E}\varphi_{t}\right)  \leq\bar{C}\left(  \varepsilon+\delta\right)  $.
\end{itemize}

\smallskip\noindent\textbf{Step 8: advancing from }$\sup_{t}\mathbb{E}%
$\textbf{ to }$\mathbb{E}\sup_{t}$.$\quad$Start again from the pathwise
inequality, where we isolate in the LHS only the first term. Denote the
martingale $N_{t}:=%
{\textstyle\int_{0}^{t}}
\Gamma_{r-}dm_{r}$ and extend the first two integrals on the entire time
interval. Pathwise, we have:%
\[
\Gamma_{t}\varphi_{t}\leq\dfrac{\gamma}{2}%
{\displaystyle\int_{0}^{T}}
\Gamma_{r-}|\mathcal{R}_{r}^{\varepsilon,\delta}|^{2}dr+%
{\displaystyle\int_{0}^{T}}
\Gamma_{r-}d\mathcal{L}_{r}+\left\vert N_{T}\right\vert +\left\vert
N_{t}\right\vert
\]
Define now the positive random variable $Z:=\sup_{t\in\left[  0,T\right]
}\left(  \Gamma_{t}\varphi_{t}\right)  $ and take the supremum over
$t\in\left[  0,T\right]  $ in the above inequality, followed by the
expectation. We obtain%
\[%
\begin{array}
[c]{ccl}%
\mathbb{E}Z & \leq & \dfrac{\gamma}{2}\mathbb{E}%
{\displaystyle\int_{0}^{T}}
\Gamma_{r-}|\mathcal{R}_{r}^{\varepsilon,\delta}|^{2}dr+\mathbb{E}%
{\displaystyle\int_{0}^{T}}
\Gamma_{r-}d\mathcal{L}_{r}+2\mathbb{E}\sup\limits_{t\in\left[  0,T\right]
}\left\vert N_{t}\right\vert \medskip\\
& \leq & C\left(  \varepsilon+\delta\right)  +2\mathbb{E}\sup\limits_{t\in
\left[  0,T\right]  }\left\vert N_{t}\right\vert \overset{BDG}{\leq}C\left(
\varepsilon+\delta\right)  +2C_{BDG}\mathbb{E}\left(  \left[  N\right]
_{T}^{1/2}\right)
\end{array}
\]
We evaluate now the quadratic variation $\left[  N\right]  _{T}.$ Starting
from the formula of $dm=\left\langle \nabla_{x}\Phi_{r},dM_{r}^{\varepsilon
}\right\rangle +\left\langle \nabla_{y}\Phi_{r},dM_{r}^{\delta}\right\rangle
$, one can obtain $d\left[  m\right]  _{r}\leq C\left(  c_{\Phi}\right)
\left\vert \Delta Y_{r-}\right\vert ^{2}d\left(  \left[  M^{\varepsilon
}\right]  _{r}+\left[  M^{\delta}\right]  _{r}\right)  $. Therefore,%
\[%
\begin{array}
[c]{ccl}%
\left[  N\right]  _{T} & = &
{\displaystyle\int_{0}^{T}}
\Gamma_{r-}^{2}d\left[  m\right]  _{r}\leq C\left(  c_{\Phi}\right)
{\displaystyle\int_{0}^{T}}
\Gamma_{r-}\left(  \Gamma_{r-}|Y_{r-}^{\varepsilon,\delta}|^{2}\right)
d\left(  \left[  M^{\varepsilon}\right]  _{r}+\left[  M^{\delta}\right]
_{r}\right)  \medskip\\
& \leq & C\left(  c_{\Phi}\right)
{\displaystyle\int_{0}^{T}}
\Gamma_{r-}\dfrac{1}{\alpha_{\theta}}\sup\nolimits_{t\in\left[  0,T\right]
}\left(  \Gamma_{t}\varphi_{t-}\right)  d\left(  \left[  M^{\varepsilon
}\right]  _{r}+\left[  M^{\delta}\right]  _{r}\right)  \medskip\\
& = & C\left(  c_{\Phi}\right)  \dfrac{1}{\alpha_{\theta}}%
{\displaystyle\int_{0}^{T}}
\Gamma_{r-}Zd\left(  \left[  M^{\varepsilon}\right]  _{r}+\left[  M^{\delta
}\right]  _{r}\right)  .
\end{array}
\]
Define not the control random variables:%
\[
R_{1}:=%
{\displaystyle\int_{0}^{T}}
\Gamma_{r-}d\left(  \left[  M^{\varepsilon}\right]  _{r}+[M^{\delta}%
]_{r}\right)  \quad\quad\text{and}\quad\quad R_{2}:=%
{\displaystyle\int_{0}^{T}}
\varphi_{r}d\Gamma_{r}~.
\]
The already obtained estimates (\ref{r27}), (\ref{r29}) give $\mathbb{E}%
R_{1}\leq C$ and $\mathbb{E}R_{2}\leq C\left(  \varepsilon+\delta\right)  $.
Consequently, $\left[  N\right]  _{T}^{1/2}\leq c\sqrt{Z}\sqrt{R_{1}+R_{2}}%
$.and, by Young inequality%
\[%
\begin{array}
[c]{ccl}%
2\mathbb{E}\sup\limits_{t\in\left[  0,T\right]  }\left\vert N_{t}\right\vert
& \leq & 2cC_{BDG}\mathbb{E}\left(  \left[  N\right]  _{T}^{1/2}\right)
\leq2cC_{BDG}\mathbb{E}\left(  \sqrt{Z}\sqrt{R_{1}+R_{2}}\right)  \medskip\\
& \leq & \dfrac{1}{2}\mathbb{E}Z+C^{\prime}\mathbb{E}\left(  R_{1}%
+R_{2}\right)  \leq\dfrac{1}{2}\mathbb{E}Z+C^{\prime}\left(  \varepsilon
+\delta\right)  .
\end{array}
\]
So, $\mathbb{E}Z\leq C\left(  \varepsilon+\delta\right)  +\dfrac{1}%
{2}\mathbb{E}Z+C^{\prime}\left(  \varepsilon+\delta\right)  $, which gives,
since the approximating sequences are in $\mathbb{D}_{2}^{2}$,%
\[
\mathbb{E}Z=\mathbb{E}\left(  \sup_{t\in\left[  0,T\right]  }\left(
\Gamma_{t}\varphi_{t}\right)  \right)  \leq C\left(  \varepsilon
+\delta\right)  .
\]
We are near to the closer of the proof of Theorem \ref{rot-thm-conv}. Since
$\Gamma_{t}\geq1$, for all $t\in\left[  0,T\right]  $ and $\varphi_{t}%
\geq\alpha_{\theta}\left\vert Y_{t}^{\varepsilon,\delta}\right\vert
^{2}=\alpha_{\theta}\left\vert Y_{t}^{\varepsilon}-Y_{t}^{\delta}\right\vert
^{2}$, we pass to the supremum and, after this, we apply the expectation:%
\[
\mathbb{E}\left(  \sup_{t\in\left[  0,T\right]  }|Y_{t}^{\varepsilon}%
-Y_{t}^{\delta}|^{2}\right)  \leq\frac{1}{\alpha_{\theta}}\mathbb{E}\left(
\sup_{t\in\left[  0,T\right]  }\varphi_{t}\right)  \leq\frac{1}{\alpha
_{\theta}}\mathbb{E}\left(  \sup_{t\in\left[  0,T\right]  }\left(  \Gamma
_{t}\varphi_{t}\right)  \right)  \leq C\left(  \varepsilon+\delta\right)  .
\]
Considering also the estimate (\ref{r29}), we conclude that there exists a
positive constant $\mathcal{C}$, such that%
\[
\mathbb{E}\left(  \sup_{t\in\left[  0,T\right]  }|Y_{t}^{\varepsilon}%
-Y_{t}^{\delta}|^{2}\right)  +\mathbb{E}[M^{\varepsilon}-M^{\delta}]_{T}%
\leq\mathcal{C}\left(  \varepsilon+\delta\right)  .
\]
Consequently $\left(  Y^{\varepsilon}\right)  _{\varepsilon}$ is a Cauchy
sequence in the Banach ${\mathcal{{\mathbb{D}}}}_{2}^{2}$, $\left(
M^{\varepsilon}\right)  _{\varepsilon}$ is Cauchy in the Hilbert
$\mathcal{M}_{2}^{2}$, and $\left(  U^{\varepsilon}\right)  _{\varepsilon}$ is
bounded in $\Lambda_{2}^{2}$. The proof of Theorem \ref{rot-thm-conv} is now
complete.\hfill
\end{proof}

\subsection{Existence and Uniqueness of the solution\label{rot-uniq}}

\begin{theorem}
[Uniqueness]\label{rot-thm-uniq}Under the assumptions of Theorem
\ref{rot-thm-conv}, problem (\ref{r1}) has at most one solution $\left(
Y,M,U\right)  \in{\mathcal{{\mathbb{D}}}}_{2}^{2}\times\mathcal{M}_{2}%
^{2}\times\Lambda_{2}^{2},$ with $M_{0}=0$.
\end{theorem}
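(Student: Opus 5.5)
We will mimic the proof of Theorem \ref{rot-thm-conv}, with two genuine solutions in place of the penalized ones. Let $(Y,M,U)$ and $(\tilde Y,\tilde M,\tilde U)$ be two solutions of (\ref{r1}) in ${\mathcal{{\mathbb{D}}}}_{2}^{2}\times\mathcal{M}_{2}^{2}\times\Lambda_{2}^{2}$. By the last assertion of Proposition \ref{rot-apriori}, both satisfy the bounds (\ref{r18}) with $d_K\equiv 0$. In particular $Y_t,\tilde Y_t\in K$, the jumps satisfy $|\Delta M|,|\Delta\tilde M|\le D_K\le j$, and we have the BMO bounds $\|M\|_{BMO}^2,\|\tilde M\|_{BMO}^2\le\Lambda^2$. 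Corollary \ref{rot-expmom} then yields, for the same $p>1$ as in Step 5 of the convergence proof, the uniform integrability $\mathbb{E}\Gamma_T^p<\infty$. Here the weight is
\[
V_t=2\nu_\theta([M]_t+[\tilde M]_t)+\frac{2c_\Phi}{\alpha_\theta}\int_0^t\Big(L_r+\frac{c_\Phi\ell_r^2}{2\gamma}+\|\theta'\|_\infty+|\tilde F_r|\Big)dr,\qquad \Gamma_t=e^{V_t}.
$$

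Next apply the c\`adl\`ag It\^o formula to $\varphi_t=\Phi_t(Y_t,\tilde Y_t)$, with $\varphi_T=\Phi_T(\eta,\eta)=0$. This gives exactly (\ref{r20}), with $U^\varepsilon,U^\delta$ replaced by $U,\tilde U$. Steps 1, 3, 4 and 6 carry over verbatim. The second-order bound (\ref{r21}) comes from Lemma \ref{rot-hess} together with the jump bound and $(C_3)$. The generator and time-derivative terms are handled by (\ref{r11}), (\ref{r12}) and (\ref{r16}). The pathwise weighted inequality is the same, and the $\mathcal{R}$-term is absorbed via Assumption \ref{H3'} with the weight $\Gamma_{r-}\wedge n$.

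The only step that changes, and in fact simplifies, is Step 2, the reflection term. Since $U_r\in\partial I_K(Y_r)$, on $\{U_r\neq0\}$ we have $Y_r\in bd(K)$ and $U_r=|U_r|\mathbf{n}(Y_r)$. Because $\tilde Y_r\in K$, condition $(C_2)$ gives directly $\langle\nabla_x\Phi_r(Y_r,\tilde Y_r),\Theta_rU_r\rangle\ge0$. Symmetrically, since $\Phi$ is symmetric, $\langle\nabla_y\Phi_r(Y_r,\tilde Y_r),\Theta_r\tilde U_r\rangle\ge0$. Hence $\mathcal{L}\equiv0$: no projection is needed, and no $(\varepsilon+\delta)$ error appears.

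The analogue of (\ref{r27}) then reads
\[
\mathbb{E}(\Gamma_t\varphi_t)+\frac{\alpha_\theta}{16}\mathbb{E}\int_t^T\Gamma_{r-}d[M-\tilde M]_r\le0 .
$$
Using $\Gamma\ge1$ and $\varphi_t\ge\alpha_\theta|Y_t-\tilde Y_t|^2$, we get $Y_t=\tilde Y_t$ a.s. for each $t$. By right-continuity, $Y$ and $\tilde Y$ are indistinguishable. Also $\mathbb{E}[M-\tilde M]_T=0$, so $M=\tilde M$ because both vanish at $0$. Finally, the equation gives $\int_t^T\Theta_r(U_r-\tilde U_r)dr=0$ for all $t$. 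Hence $\Theta_r(U_r-\tilde U_r)=0$, $d\mathbb{P}\otimes dr$-a.e., and since $\Theta_r$ is invertible, $U=\tilde U$.

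The main obstacle I expect is justifying that the stochastic integral $\int\Gamma_{r-}dm_r$ has zero expectation, which is the analogue of Step 5. This requires $\mathbb{E}\Gamma_T^p<\infty$, and hence the BMO and jump bounds for arbitrary solutions in ${\mathcal{{\mathbb{D}}}}_{2}^{2}\times\mathcal{M}_{2}^{2}\times\Lambda_{2}^{2}$. I will rely on the final assertion of Proposition \ref{rot-apriori} for these bounds, applied with $\varepsilon=0$, where the constraint $\varepsilon\le\varepsilon_1$ is vacuous.

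Then $\mathbb{E}\exp(8p\nu_\theta[M]_T)<\infty$ follows from Lemma \ref{garsia} with $c=\Lambda^2+D_K^2\le\Lambda_*^2$. The condition $(C_3)$ guarantees $8p\nu_\theta<1/\Lambda_*^2$. The $|\tilde F|$ part of $V$ has exponential moments as in Corollary \ref{rot-expmom}-(c).

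If needed, I will also localize with stopping times $\tau_n$ reducing the local martingale, and pass to the limit by Fatou's lemma on the left-hand side.
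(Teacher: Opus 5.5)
Your proposal is correct and is essentially the paper's own proof. You rerun the weighted argument of Theorem \ref{rot-thm-conv} for two genuine solutions, use $(C_2)$ and the symmetry of $\Phi$ to make both reflection terms nonnegative (so $\mathcal{L}\equiv0$), obtain $\mathbb{E}\Gamma_T^p<\infty$ from the last assertion of Proposition \ref{rot-apriori}, and conclude from the analogue of (\ref{r27}), exactly as the paper does; your additional details (Garsia's lemma with $c=\Lambda^2+D_K^2\le\Lambda_*^2$, indistinguishability of $Y,\tilde Y$ via right-continuity, and $M=\tilde M$ from $\mathbb{E}[M-\tilde M]_T=0$ with $M_0=\tilde M_0=0$) are sound refinements of the same route.
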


\begin{proof}
This proof follows classical steps. Let $\left(  Y^{1},M^{1},U^{1}\right)  $
and $\left(  Y^{2},M^{2},U^{2}\right)  $ be two solutions and keep the
notation of the proof of Theorem \ref{rot-thm-conv}, with the indexes
$\varepsilon,\delta$ replaced by $1,2$. Every step of that proof applies
verbatim, with a single simplification. More precisely, the two reflection
terms now vanish or have the right sign. Indeed, for $i=1,2$, $Y_{r}^{i}\in K$
and $U_{r}^{i}\in\partial I_{K}\left(  Y_{r}^{i}\right)  =N_{K}\left(
Y_{r}^{i}\right)  $. Now, if $U_{r}^{1}\neq0$, then $Y_{r}^{1}\in bd\left(
K\right)  $ and $U_{r}^{1}=\left\vert U_{r}^{1}\right\vert \mathbf{n}\left(
Y_{r}^{1}\right)  $, so that Assumption $\left(  C_{2}\right)  $, applied at
the pair $\left(  Y_{r}^{1},Y_{r}^{2}\right)  \in bd\left(  K\right)  \times
K$, gives $\left\langle \nabla_{x}\Phi_{r},\Theta_{r}U_{r}^{1}\right\rangle
\geq0$. A symmetrical relation can be obtain for $U^{2}$, also. No projection
and no Lipschitz correction is needed, and, easily, $\mathcal{L}_{1,2}\equiv
0$. Moreover, Proposition \ref{rot-apriori} applies to the triplet $\left(
Y^{i},M^{i},U^{i}\right)  $, for $i=1,2$, by its last assertion, so that the
weight $\Gamma$ given by (\ref{r19}) and which is constructed now on $\left[
M^{1}\right]  +\left[  M^{2}\right]  $, satisfies $\mathbb{E}\Gamma_{T}%
^{p}<\infty$, under the Assumption $\left(  C_{3}\right)  $. The key
inequality (\ref{r27}) becomes here:%
\[
\mathbb{E}\left(  \Gamma_{t}\varphi_{t}\right)  +\dfrac{\alpha_{\theta}}%
{16}\mathbb{E}{%
{\displaystyle\int_{t}^{T}}
}\Gamma_{r-}d\left[  M^{1,2}\right]  _{r}\leq0,
\]
whence $\mathbb{E}\varphi_{t}=0$, for all $t\in\left[  0,T\right]  $, and no
Gronwall argument being needed, at all. According to (\ref{r16}), it follows
$Y^{1}=Y^{2}$, $d\mathbb{P}\otimes dr$-a.e, and, after this, we obtain
$\mathbb{E}\left[  M^{1,2}\right]  _{T}=0$. Consequently, $M^{1}=M^{2}$,
$d\mathbb{P}\otimes dr$-a.e. and, finally, if we return to the original
equation, (\ref{r1}), it remains%
\[
{%
{\displaystyle\int_{t}^{T}}
}\Theta_{r}\left(  U_{r}^{1}-U_{r}^{2}\right)  dr=0,\quad\quad\forall
\,t\in\left[  0,T\right]  .
\]
Since $\Theta_{r}$ is an invertible rotation matrix, it follows $U^{1}=U^{2}$,
$d\mathbb{P}\otimes dr$-a.e. The uniqueness of the triplet $\left(
Y,M,U\right)  \in{\mathcal{{\mathbb{D}}}}_{2}^{2}\times\mathcal{M}_{2}%
^{2}\times\Lambda_{2}^{2}~$, as being the solution is proved.\hfill
\end{proof}

\begin{theorem}
[Existence]\label{rot-thm-exist}Under the assumptions of Theorem
\ref{rot-thm-conv}, problem (\ref{r1}) has a unique solution $\left(
Y,M,U\right)  \in{\mathcal{{\mathbb{D}}}}_{2}^{2}\times\mathcal{M}_{2}%
^{2}\times\Lambda_{2}^{2}$, and it satisfies,%
\[
\mathbb{E}\sup\limits_{t\in\left[  0,T\right]  }\left\vert Y_{t}^{\varepsilon
}-Y_{t}\right\vert ^{2}+\mathbb{E}\left[  M^{\varepsilon}-M\right]  _{T}\leq
C\varepsilon,
\]
where $C$ is independent of $\varepsilon\in(0,\varepsilon_{1}]$ and $\left(
Y_{t}^{\varepsilon},M_{t}^{\varepsilon}\right)  _{t}$ is the sequence of
penalized solution, provided by the Moreau-Yosida technique.
\end{theorem}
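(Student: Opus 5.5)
The plan is to pass to the limit in the penalized equation (\ref{r4}) using the Cauchy property from Theorem \ref{rot-thm-conv}, identify the limit as a solution, and take uniqueness from Theorem \ref{rot-thm-uniq}.

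\emph{Step 1 (limits).} By (\ref{r25}) there exist $Y\in\mathbb{D}_2^2$ and $M\in\mathcal{M}_2^2$ with $\mathbb{E}\sup_t|Y^\varepsilon_t-Y_t|^2+\mathbb{E}[M^\varepsilon-M]_T\to0$. Fixing $\varepsilon$ and letting $\delta\to0$ in (\ref{r25}) already gives the rate bound $C\varepsilon$ of the statement. Since $M^\varepsilon-M$ is a square integrable martingale, Doob's inequality (\ref{bdg adapted}) gives $\mathbb{E}\sup_t|M^\varepsilon_t-M_t|^2\le4\mathbb{E}[M^\varepsilon-M]_T$. By (\ref{r5})-$(b)$ (equivalently Proposition \ref{rot-apriori}-$(c)$), $(U^\varepsilon)$ is bounded in $\Lambda_2^2$, so along a subsequence $U^\varepsilon\rightharpoonup U$ weakly in $\Lambda_2^2$. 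Then $U$ is progressively measurable, and $\int_t^T\Theta_rU^\varepsilon_rdr\rightharpoonup\int_t^T\Theta_rU_rdr$ weakly in $L^2(\Omega)$ for each fixed $t$.

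\emph{Step 2 (the equation).} Lemma \ref{L2_F} gives $\int_t^TF(r,Y^\varepsilon_r,\mathcal{R}_r(M^\varepsilon))dr\to\int_t^TF(r,Y_r,\mathcal{R}_r(M))dr$ in $L^2$, uniformly in $t$. Passing to the limit in (\ref{r4}) therefore yields the identity of (\ref{r1}) a.s. for each $t$. Because both sides are càdlàg in $t$, the identity holds for all $t$ simultaneously. Moreover, the difference between the limit of $\int_t^T\Theta U^\varepsilon dr$ and the other terms converges strongly. Hence the weak limit is in fact strong uniformly in $t$, and the whole family converges without passing to a subsequence once uniqueness is established.

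\emph{Step 3 (identification of $U\in\partial I_K(Y)$).} This is the main point. First, (\ref{r5})-$(c)$ gives $\mathbb{E}\int_0^Td_K^2(Y^\varepsilon_r)dr\le C\varepsilon^2$, so $Y_r\in K$ $d\mathbb{P}\otimes dr$-a.e. Since $Y$ is càdlàg and $K$ is closed, $Y_t\in K$ for all $t$. Alternatively, Proposition \ref{rot-apriori}-$(b)$ gives $\sup_t d_K(Y^\varepsilon_t)\le\sqrt{\varepsilon}\to0$.

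Next, for any progressively measurable $v$ with values in $K$ and any $A\in\mathcal{F}$ and $s\le t$, monotonicity of $\partial\varphi_\varepsilon$ at points of $K$ gives $\langle U^\varepsilon_r,v_r-Y^\varepsilon_r\rangle\le\varphi_\varepsilon(v_r)-\varphi_\varepsilon(Y^\varepsilon_r)\le0$. We then write
\[
\mathbb{E}\int_s^t\mathbf 1_A\langle U^\varepsilon_r,v_r-Y^\varepsilon_r\rangle dr\le0 .
\]
In the limit, the pairing of the weakly convergent $U^\varepsilon$ with the strongly convergent $v-Y^\varepsilon\to v-Y$ in $\Lambda_2^2$ converges. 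This yields $\mathbb{E}\int_s^t\mathbf 1_A\langle U_r,v_r-Y_r\rangle dr\le0$, which is the variational inequality of Definition \ref{def of sol}, that is, $U_r\in\partial I_K(Y_r)$ a.e.

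The only delicate point is that we need the \emph{weak} convergence of $U^\varepsilon$ combined with the \emph{strong} convergence of $Y^\varepsilon$. Weak lower semicontinuity is not required, because the inequality is linear in $U$.

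\emph{Step 4 (conclusion).} The triple $(Y,M,U)\in\mathbb{D}_2^2\times\mathcal{M}_2^2\times\Lambda_2^2$ solves (\ref{r1}). Theorem \ref{rot-thm-uniq} gives uniqueness, and hence also convergence of the full family rather than only a subsequence. The estimate with constant $C\varepsilon$ was obtained in Step 1.
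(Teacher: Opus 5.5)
Your proposal is correct and follows essentially the same route as the paper. The paper takes the strong limits from (\ref{r25}), letting $\delta\to0$ for the rate; extracts a weakly convergent subsequence of $U^\varepsilon$ in $\Lambda_2^2$; passes to the limit in the affine equation; identifies $U\in\partial I_K(Y)$ by pairing the weak convergence of $U^\varepsilon$ with the strong convergence of $Y^\varepsilon$ in $\langle v-Y^\varepsilon_r,U^\varepsilon_r\rangle\le0$; and invokes Theorem \ref{rot-thm-uniq} to recover the whole family. The differences are cosmetic: you get that inequality from the gradient inequality for $\varphi_\varepsilon$ rather than from the decomposition $Y^\varepsilon=\bar Y^\varepsilon+\varepsilon U^\varepsilon$, and you test with $\mathbf{1}_A$ and progressively measurable $K$-valued $v$, which is slightly more careful than the paper's constant $v\in K$.
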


\begin{proof}
According to Theorem \ref{rot-thm-conv}, there exist $Y\in
{\mathcal{{\mathbb{D}}}}_{2}^{2}$ and $M\in\mathcal{M}_{2}^{2}$ with
$Y^{\varepsilon}\rightarrow Y$ in ${\mathcal{{\mathbb{D}}}}_{2}^{2}$ and
$M^{\varepsilon}\rightarrow M$ in $\mathcal{M}_{2}^{2}$, the convergences
being strong in the corresponding spaces. If we let $\delta\searrow0$ in
(\ref{r25}) we also obtain the convergence rate. From (\ref{r5})-$\left(
b\right)  $ we conclude that the family $\left(  U^{\varepsilon}\right)
_{\varepsilon}$ is bounded in the Hilbert space $\Lambda_{2}^{2}$, so along a
subsequence, denoted with $\varepsilon_{n}~$, $U^{\varepsilon_{n}%
}\rightharpoonup U$, weakly in $\Lambda_{2}^{2}$.

We consider the first equation from (\ref{r4}) and we pass to the limit, as
$\varepsilon_{n}\rightarrow0$. This can be done because the map
\[
\left(  Y,M,U\right)  \longmapsto Y_{t}+\int_{t}^{T}\Theta_{r}U_{r}dr-\int
_{t}^{T}F\left(  r,Y_{r},\mathcal{R}_{r}(M)\right)  dr+\left(  M_{T}%
-M_{t}\right)
\]
is affine and continuous for the weak topology from $\Lambda_{2}^{2}$ and the
strong ones from ${\mathcal{{\mathbb{D}}}}_{2}^{2}$ and $\mathcal{M}_{2}^{2}$
(please consult Assumption \ref{H3}, which makes $\mathcal{R}$ Lipschitz as an
application from $\mathcal{M}_{2}^{2}$ into $\Lambda_{2\times k}^{2}$). As
consequence, the limit triplet $\left(  Y,M,U\right)  $ obeys the equation we study.

The only item remaining is to identify $U$ as being an element from the
subdifferential of $Y$. By (\ref{r5})-$\left(  c\right)  $, we have%
\[
\mathbb{E}%
{\displaystyle\int_{0}^{T}}
d_{K}^{2}\left(  Y_{r}^{\varepsilon_{n}}\right)  dr\leq C\varepsilon_{n}%
^{2}~,
\]
and the distance operator $d_{K}$ being $1$-Lipschitz, it implies%
\[
\mathbb{E}\int_{0}^{T}d_{K}^{2}\left(  Y_{r}\right)  dr=\lim
\limits_{\varepsilon_{n}\searrow0}\mathbb{E}\int_{0}^{T}d_{K}^{2}\left(
Y_{r}^{\varepsilon_{n}}\right)  dr=0,
\]
so that $Y_{r}\in K$, $d\mathbb{P}\otimes dr$-a.e. Moreover, for every $v\in
K$,
\[
\left\langle v-Y_{r}^{\varepsilon_{n}},U_{r}^{\varepsilon_{n}}\right\rangle
=\left\langle v-\bar{Y}_{r}^{\varepsilon_{n}},U_{r}^{\varepsilon_{n}%
}\right\rangle -\varepsilon_{n}\left\vert U_{r}^{\varepsilon_{n}}\right\vert
^{2}\leq0,
\]
because $U_{r}^{\varepsilon_{n}}\in N_{K}\left(  \bar{Y}_{r}^{\varepsilon_{n}%
}\right)  $ and $\bar{Y}_{r}^{\varepsilon_{n}}=\pi_{K}\left(  Y_{r}%
^{\varepsilon_{n}}\right)  $. Passing to the limit, the strong convergence of
$Y^{\varepsilon_{n}}$, together with the weak convergence of $U^{\varepsilon
_{n}}$ will produce
\[
\mathbb{E}\int_{s}^{t}\left\langle v-Y_{r},U_{r}\right\rangle dr\leq
0,\quad\text{for all }0\leq s\leq t\leq T\text{,}\quad\text{and all }v\in K,
\]
that is $U_{r}\in\partial I_{K}\left(  Y_{r}\right)  $, $d\mathbb{P}\otimes
dr$-a.e. The uniqueness given by Theorem \ref{rot-thm-uniq} assures that the
entire family, and not merely a subsequence, converges to the obtained
limit.\hfill
\end{proof}

\section{Rotation-angle sensitivity and associated control problems}

In the spirit of the previous sections, let $K\subset\mathbb{R}^{2}$ be a
bounded, uniformly convex domain with $C^{3}$ boundary, described by a convex
defining function $\psi:\mathbb{R}^{2}\rightarrow\mathbb{R}$ with
$\operatorname{int}K=\{\psi<0\}$, $\partial K=\{\psi=0\}$, $|\nabla\psi|=1$ on
$\partial K$, and constants
\[
a_{\psi}>0,\quad\quad b_{\psi}\geq1,\quad\quad D_{K}:=\operatorname{diam}%
K,\quad\quad b_{\ast}:=b_{\psi}(1+b_{\psi}D_{K}),
\]
such that, on a neighborhood of $K$, $a_{\psi}I\preceq D^{2}\psi\preceq
b_{\psi}I$ and $\Vert D^{3}\psi\Vert\leq b_{\psi}$. The same constant
$b_{\psi}$ bounds both the Hessian and the third derivative. The outward unit
normal at $x\in\partial K$ is $\mathbf{n}(x)=\nabla\psi(x)$. Fix $y_{0}%
\in\operatorname{int}K$ and $\rho>0$ with $B(y_{0},\rho)\subset K$, and set
$R_{K}:=\max_{z\in K}|z-y_{0}|$ ($\rho\leq R_{K}$) and $c_{K}:=|y_{0}|+R_{K}$,
which is a constant of $K$, independent of $\theta$.

The reflected equation is, for $\theta\in C^{1}([0,T];(0,\pi/2))$, with
$a_{\theta}:=\min_{r}\cos\theta_{r}>0$,%
\begin{equation}
\mathcal{OP}\left(  \Theta^{\theta}\right)  :\left\{
\begin{array}
[c]{l}%
Y_{t}+%
{\displaystyle\int_{t}^{T}}
\Theta_{r}^{\theta}U_{r}dr=\eta+%
{\displaystyle\int_{t}^{T}}
F\left(  r,Y_{r},\mathcal{R}_{r}(M)\right)  dr-(M_{T}-M_{t}),\quad
\mathbb{P}\text{-a.s.,}\medskip\\
U_{r}\in\partial I_{K}(Y_{r}),\text{ \ }d\mathbb{P\otimes}dr\text{-a.e. on
}\Omega\times\left[  0,T\right]  ,
\end{array}
\right.  \label{original pb}%
\end{equation}
where $\Theta_{r}^{\theta}=\cos\theta_{r}I+\sin\theta_{r}J$ and $a_{\theta
}|u|^{2}\leq\langle\Theta_{r}u,u\rangle\leq|u|^{2}$ for every $u$. Recall the
compatibility conditions:\vspace{-0.06in}

\begin{itemize}
\item[$(C_{1})$:] $\alpha_{\theta}:=1-\max_{r}\tan\theta_{r}>0$;\vspace
{-0.06in}

\item[$(C_{2})$:] $\langle\nabla_{x}\Phi_{r}(x,y),\Theta_{r}\mathbf{n}%
(x)\rangle\geq0$, for all $r,x\in\partial K,\ y\in K$;\vspace{-0.06in}

\item[$(C_{0})$:] $\tan\theta_{\max}<\rho/R_{K}$;\vspace{-0.06in}

\item[$(C_{3})$:] $64\kappa_{\theta}\Lambda_{\ast}^{2}<\alpha_{\theta}$, where
$\Lambda_{\ast}^{2}:=\Lambda^{2}+2j^{2}$, $j:=D_{K}+2$, and%
\begin{equation}
\left\{
\begin{array}
[c]{l}%
\kappa_{\theta}:=2b_{\psi}^{2}\tan\theta_{\max}\left(  1+\dfrac{8}%
{\alpha_{\theta}}\tan\theta_{\max}\right)  ,\medskip\\
\Lambda^{2}:=R_{K}^{2}+2R_{K}\sqrt{T\bar{\Psi}}+\frac{\bar{\Psi}}{2a_{\theta}%
}\quad\quad\text{and}\quad\quad\bar{\Psi}:=(c_{7}+\sqrt{c_{7}^{2}+2c_{6}}%
)^{2},
\end{array}
\right.  \label{some constants}%
\end{equation}
with $c_{6},c_{7}$ some explicit constants depending only on $f_{\infty},\Vert
L\Vert_{\infty},\Vert\ell\Vert_{\infty},C_{\mathcal{R}},R_{K},c_{K},T,$ and
not on $\theta$.
\end{itemize}

\begin{remark}
[Why the reflection admits no meaningful amplitude control]Section
\ref{rot-uniq} just proved, in Theorem \ref{rot-thm-exist}, that there exists
a unique solution $\left(  Y^{\theta},M^{\theta},U^{\theta}\right)
\in{\mathcal{{\mathbb{D}}}}_{2}^{2}\times\mathcal{M}_{2}^{2}\times\Lambda
_{2}^{2}$ for the obstacle problem $\mathcal{OP}\left(  \Theta^{\theta
}\right)  $. Since $\partial I_{K}=N_{K}\left(  y\right)  $, then for any
predictable bounded $\rho$, $0<\rho_{\min}\leq\rho_{r}\leq\rho_{\max}$,%
\[
U_{r}\in\partial I_{K}\left(  Y_{r}\right)  =N_{K}\left(  y\right)
\quad\Longleftrightarrow\quad\rho_{r}U_{r}\in\partial I_{K}\left(
Y_{r}\right)  ,\quad d\mathbb{P}\otimes dr\text{-a.e.}%
\]
Consequently, if we consider $\mathcal{OP}\left(  \rho\Theta^{\theta}\right)
$, one can write $\rho_{r}\Theta_{r}^{\theta}U_{r}dr=\Theta_{r}^{\theta
}\left(  \rho_{r}U_{r}\right)  dr=\Theta_{r}^{\theta}\tilde{U}_{r}dr$, where
we denoted $\tilde{U}_{r}^{\theta,\rho}:=\rho_{r}U_{r}^{\theta}\in\partial
I_{K}\left(  Y_{r}\right)  $. Therefore, the existence and uniqueness results
do not change if we consider the perturbing rotation matrix replaced by
$\rho\Theta^{\theta}$, with the amplitude function $\rho$. Hence, we do not
have a consistent change of the problems' dynamic, the scaling being absorbed
by the exterior normal cone. More precisely, $(Y^{\theta,\rho},M^{\theta,\rho
})=(Y^{\theta},M^{\theta})$, for every $\rho$. Only the angle $\theta_{r}$,
which changes the direction $\Theta_{r}\mathbf{n}(x)$, not merely its length,
is a meaningful control. Only $\theta$ is retained below.
\end{remark}

The considered control is $\theta=(\theta_{t})_{t\in\lbrack0,T]}$,
$\mathcal{F}_{t}$-adapted and predictable, valued in $(0,\theta_{\max}]$, for
a fixed constant $\theta_{\max}\in(0,\pi/4)$ chosen and fixed.

\begin{definition}
[Admissible controls]\label{def:Uad}We say that $\theta_{\max}\in(0,\pi/4)$ is
compatible with $K$ and the data if it satisfies $(C_{0})$ to $(C_{3}),$ with
$\max_{r}\theta_{r}$ replaced by $\theta_{\max}$
(Section~\ref{sec:geometric-design} computes explicitly the largest
$\theta_{\max}$). The non-empty admissible class of controls is $\mathcal{U}%
_{\mathrm{ad}}:=\{\theta:[0,T]\times\Omega\rightarrow(0,\theta_{\max}],$
$\ \theta$ is predictable$\}.$
\end{definition}

\begin{proposition}
[Well-posedness, and uniform a priori bounds]\label{prop:wellposed} Let
$\theta_{\max}$ be compatible with $K$ and the data, and let all the previous
assumptions hold. Then:

\begin{enumerate}
\item[$(a)$] For every $\theta\in\mathcal{U}_{\mathrm{ad}}$, $\mathcal{OP}%
\left(  \Theta^{\theta}\right)  $ admits a unique solution $\left(  Y^{\theta
},M^{\theta},U^{\theta}\right)  \in\mathbb{D}_{2}^{2}\times\mathcal{M}_{2}%
^{2}\times\Lambda_{2}^{2}$.

\item[$(b)$] $\sup_{t\in\lbrack0,T]}|Y_{t}^{\theta}|\leq c_{K}+1$,
$\mathbb{P}$-a.s., for every $\theta\in\mathcal{U}_{\mathrm{ad}}$.

\item[$(c)$] $\mathbb{E}%
{\displaystyle\int_{0}^{T}}
|U_{r}^{\theta}|^{2}\,dr\leq\bar{\Psi}/a_{\star}^{2}~$, for every $\theta
\in\mathcal{U}_{\mathrm{ad}}$, where $a_{\star}:=\cos\theta_{\max}$ and
$\bar{\Psi}$ is given by (\ref{some constants}) and it is independent of
$\theta$, except through the single constant $a_{\star}$.
\end{enumerate}
\end{proposition}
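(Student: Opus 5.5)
The plan is to reduce everything to the results of Section \ref{main section}, run with the angle bound $\theta_{\max}$ in place of $\max_r\theta_r$. The point to check is that every constant entering there depends on $\theta$ only through $\theta_{\max}$. These constants are $a_\theta$, $\alpha_\theta$, $\kappa_\theta$, $\delta_0$, $\bar\Psi$ and $\Lambda^2$, which become $a_\star=\cos\theta_{\max}$, $1-\tan\theta_{\max}$, the $\kappa$ of (\ref{some constants}), $\rho\cos\theta_{\max}-R_K\sin\theta_{\max}$, and so on. All of these are monotone in $\theta_{\max}$: $\tan$ and $\sin$ increase, $\cos$ decreases. So the conditions $(C_0)$--$(C_3)$ evaluated at $\theta_{\max}$ imply the same conditions for any $\theta$ with values in $(0,\theta_{\max}]$. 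For $(C_2)$ I would use the sufficient criterion of Lemma \ref{rot-suff}(a). The function $a_\psi\cos2\theta-16b_\ast\sin\theta$ is decreasing on $(0,\pi/4)$, so if it is nonnegative at $\theta_{\max}$ it is nonnegative for every smaller angle. If compatibility is instead understood through $(C_2)$ directly, I would argue angle by angle with Lemma \ref{rot-cancel} and the same monotonicity.

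\textbf{Part (a).} The main obstacle is that Section \ref{main section} assumes $\theta\in C^1([0,T];(0,\pi/2))$ deterministic. The constant $\|\theta'\|_\infty$ enters only through $\partial_r\Phi_r$ in Step 3 of Theorem \ref{rot-thm-conv}, and hence through the weight $V^{(1)}$. An admissible $\theta$ is only predictable. I would handle this in two stages.

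\emph{Existence.} First obtain existence and the bounds for the penalized equations. Proposition \ref{Lemma with important bounds} and Proposition \ref{rot-apriori} never use $\theta'$: they only use $\langle\Theta_r u,u\rangle\ge a_\star|u|^2$ and $(C_0)$ pointwise in $(\omega,r)$. For the Cauchy estimate, I would regularize: approximate $\theta$ by adapted $C^1$ paths $\theta^n$ with values in $(0,\theta_{\max}]$, for instance by backward mollification in time. Alternatively, to avoid $\partial_r\Phi$, I would replace $\Phi_r$ by the time-frozen kernel with $\tan\theta_{\max}$ and check that Lemma \ref{rot-cancel} still gives $(C_2)$ for the direction $\Theta_r$. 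That check is the delicate point. If it fails, I would fall back on stability in $\theta$ together with a compactness/limit argument, invoking the closedness of the normal-cone graph as in the proof of Theorem \ref{rot-thm-exist}.

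\emph{Uniqueness.} This follows as in Theorem \ref{rot-thm-uniq}, using the same kernel argument.

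\textbf{Part (b).} By the last assertion of Proposition \ref{rot-apriori}, the solution satisfies $d_K(Y)\equiv0$, so $Y_t\in K$. Hence $|Y_t|\le|y_0|+R_K=c_K\le c_K+1$. Alternatively, pass to the limit in (\ref{r18})(b) using the uniform convergence from Theorem \ref{rot-thm-exist}.

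\textbf{Part (c).} Take $\tau=0$ in (\ref{r18})(c) for the solution itself, which is the $\varepsilon=0$ version of Step 1 of Proposition \ref{rot-apriori}. Since $a_\theta\ge a_\star$ and $\bar\Psi$ does not depend on $\theta$, this gives $\mathbb{E}\int_0^T|U_r^\theta|^2dr\le\bar\Psi/a_\theta^2\le\bar\Psi/a_\star^2$. Equivalently, use weak lower semicontinuity of the norm along $U^{\varepsilon_n}\rightharpoonup U^\theta$.
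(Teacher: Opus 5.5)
\textbf{The gap is part (a) for angles in $\mathcal{U}_{\mathrm{ad}}$ that are merely predictable, and none of your three remedies closes it.}

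You are right that Propositions \ref{Lemma with important bounds} and \ref{rot-apriori} never use $\theta'$. So the penalized problems and their uniform bounds survive. What fails is the Cauchy estimate of Theorem \ref{rot-thm-conv}.

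\emph{The frozen kernel violates $(C_2)$.} Put $\lambda=-\frac12\tan\theta_{\max}$ in the proof of Lemma \ref{rot-cancel}. The coefficient of $b=\langle x-y,J\mathbf{n}(x)\rangle$ becomes $2\cos\theta_r(\tan\theta_r-\tan\theta_{\max})$, which is nonzero whenever $\theta_r<\theta_{\max}$. Take $y\in bd(K)$ near $x$ on the appropriate side. Then $b$ is of order $|x-y|$ with the wrong sign, while $\langle x-y,\mathbf{n}(x)\rangle\le\frac{b_\psi}{2}|x-y|^2$ and the remaining error terms are $O(|x-y|^2)$. So $(C_2)$ fails. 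This is exactly the obstruction (\ref{r6}). The cancellation needs the kernel's angle to equal the reflection angle at every instant, which brings in $\partial_r\Phi_r$ and hence time regularity of $\theta$.

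\emph{Regularization plus stability gives no uniform estimate.}
\begin{itemize}
\item Theorem \ref{thm:stability} only compares paths in $\mathcal{A}_L$.
\item Its constant grows with $L$, through $\|\theta'\|_\infty$ in $V^{(1)}$ and hence in $\mathbb{E}\Gamma_T^p$.
\item It controls the difference by $\|\theta-\sigma\|_\infty$.
\end{itemize}
Mollifications of a predictable angle with jumps, or with no time regularity at all, neither converge uniformly nor keep $\|(\theta^n)'\|_\infty$ bounded.

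\emph{The compactness fallback cannot identify the limit.} The bounds of Proposition \ref{rot-apriori} give only weak limits. These cannot identify $F(r,Y,\mathcal{R}(M))$ or the limiting martingale; strong convergence is exactly what the kernel estimate was meant to supply.

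\emph{Uniqueness has the same defect.} Your step ``as in Theorem \ref{rot-thm-uniq}'' also uses $\partial_r\Phi_r$, so it fails for the same reason.

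The paper gives no separate proof of this proposition. It tacitly reads it off Theorems \ref{rot-thm-exist} and \ref{rot-thm-uniq} and Proposition \ref{rot-apriori}, which are proved only for deterministic $C^1$ angles. So your diagnosis of the obstacle is correct, and sharper than the paper. However, the claim for general predictable $\theta$ needs a genuinely new idea that neither you nor the paper supplies.

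What your argument does establish is the proposition for deterministic $C^1$ paths with values in $(0,\theta_{\max}]$:
\begin{itemize}
\item Monotonicity of $a_\theta,\alpha_\theta,\kappa_\theta,\delta_0,\Lambda^2$ in the angle, together with Lemma \ref{rot-suff}(a), transfers compatibility from $\theta_{\max}$ to such paths.
\item Part (a) is then Theorems \ref{rot-thm-exist} and \ref{rot-thm-uniq}.
\item Part (b) follows from $Y_t\in K$.
\end{itemize}
This class covers $\mathcal{A}_L$, the only one used in Theorem \ref{thm:stability} and Corollary \ref{cor:existence}.

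For (c), drop your first route. At $\varepsilon=0$, Step 1 of Proposition \ref{rot-apriori} is vacuous, because $\nabla(\frac12 d_K^2)=0$ on $K$, so it yields no $L^2$ bound on $U$. The valid argument is your second one: $\mathbb{E}\int_0^T|U^\varepsilon_r|^2dr\le\bar\Psi/a_\theta^2\le\bar\Psi/a_\star^2$ for $\varepsilon\le\varepsilon_1$, combined with weak lower semicontinuity along $U^{\varepsilon_n}\rightharpoonup U^\theta$.
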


\subsection{Geometric design: the largest compatible constant
angle\label{sec:geometric-design}}

\begin{proposition}
[Purely geometric threshold]\label{prop:geom-threshold} Define
$g:\mathbb{R\rightarrow R}$, $g(\theta):=a_{\psi}\cos(2\theta)-16b_{\ast}%
\sin\theta,$ on $[0,\pi/4]$. Then $g$ is continuous, strictly decreasing,
$g(0)=a_{\psi}>0$, $g(\pi/4)=-16b_{\ast}/\sqrt{2}<0$, and there exists a
unique angle $\theta_{c}\in(0,\pi/4)$ with $g(\theta_{c})=0$ and $g\geq0$, on
$[0,\theta_{c}]$. More precisely, $\theta_{C_{0}}:=\arctan(\rho/R_{K}%
)\in(0,\pi/4]$ is well defined. Setting
\[
\theta_{\max}^{\mathrm{geo}}(K):=\min\left(  \theta_{c},\theta_{C_{0}}\right)
,
\]
every $\theta_{\max}\in(0,\theta_{\max}^{\mathrm{geo}}(K)]$ satisfies the
Assumptions $(C_{1}),(C_{2}),(C_{0})$.
\end{proposition}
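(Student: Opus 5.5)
The plan is elementary calculus on $g$, followed by an appeal to Lemma \ref{rot-suff}$(a)$ for $(C_2)$, while $(C_1)$ and $(C_0)$ are read off directly from the definitions of the two thresholds. First, $g$ is a combination of $\cos$ and $\sin$, hence continuous (indeed $C^\infty$). Its derivative is
\[
g'(\theta)=-2a_{\psi}\sin(2\theta)-16b_{\ast}\cos\theta ,
\]
and both terms are nonpositive on $[0,\pi/4]$, the second being strictly negative since $\cos\theta\geq 1/\sqrt2>0$ there. So $g'<0$ on $[0,\pi/4]$ and $g$ is strictly decreasing. Direct evaluation gives $g(0)=a_\psi>0$ and $g(\pi/4)=a_\psi\cdot 0-16b_\ast\sin(\pi/4)=-16b_\ast/\sqrt2<0$. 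The intermediate value theorem then yields a zero $\theta_c\in(0,\pi/4)$. Strict monotonicity makes it unique and gives $g>0$ on $[0,\theta_c)$, hence $g\geq0$ on $[0,\theta_c]$.

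Next, $\theta_{C_0}$ is well defined. Since $\bar B(y_0,\rho)\subset K$, every point at distance $\rho$ from $y_0$ lies in $K$, so $R_K\geq\rho>0$. Thus $\rho/R_K\in(0,1]$ and $\theta_{C_0}=\arctan(\rho/R_K)\in(0,\pi/4]$. Consequently $\theta^{\mathrm{geo}}_{\max}(K)=\min(\theta_c,\theta_{C_0})\in(0,\pi/4)$, which is strict because $\theta_c<\pi/4$.

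Now fix $\theta_{\max}\in(0,\theta^{\mathrm{geo}}_{\max}(K)]$ and any path with $\theta_r\in(0,\theta_{\max}]$.

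For $(C_1)$: $\tan$ is increasing, so $\max_r\tan\theta_r\leq\tan\theta_{\max}\leq\tan\theta_c<1$. Hence $\alpha_\theta>0$.

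For $(C_2)$: since $\theta_r\leq\theta_c$, the monotonicity of $g$ gives $g(\theta_r)\geq g(\theta_c)=0$, that is, $a_\psi\cos2\theta_r\geq16b_\ast\sin\theta_r$ for all $r$. This is exactly the hypothesis of Lemma \ref{rot-suff}$(a)$, which delivers $(C_2)$.

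For $(C_0)$: the condition as written in (\ref{C0}) is strict, $\tan\theta_{\max}<\rho/R_K$. If $\theta_{\max}<\theta_{C_0}$, it holds directly. The only delicate point is the endpoint case $\theta_{\max}=\theta_{C_0}$, where one only gets $\delta_0=\min_r(\rho\cos\theta_r-R_K\sin\theta_r)\geq0$. The strict form fails there, so the claimed interval has to be read in one of two ways. Either $(C_0)$ is interpreted in its non-strict form $\delta_0\geq0$, which is all that Step 2 of Proposition \ref{rot-apriori} actually uses (it only needs $\langle z-y_0,\Theta_r\mathbf n(z)\rangle\geq\delta_0\geq0$), or the right endpoint is excluded when $\theta_{C_0}\leq\theta_c$. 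Settling this endpoint subtlety is the main obstacle: I would handle it by the first reading, remarking that $\delta_0\geq0$ suffices for all subsequent arguments. Everything else is routine.
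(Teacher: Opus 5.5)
Your proposal is correct and follows the paper's argument: strict monotonicity of $g$ together with the intermediate value theorem for $\theta_c$, Lemma~\ref{rot-suff}$(a)$ for $(C_2)$ (with $(C_1)$ coming from $\theta_c<\pi/4$), and the root $\arctan(\rho/R_K)$ of $\rho\cos\theta-R_K\sin\theta$ for $(C_0)$. The endpoint issue you raise is real. The paper's proof simply asserts $h>0$ on the closed range, which fails when $\theta_{\max}=\theta_{C_0}\leq\theta_c$. Your non-strict reading $\delta_0\geq 0$ fixes this, since that is all Step~2 of Proposition~\ref{rot-apriori} uses.
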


\begin{proof}
By easy calculus, $g^{\prime}(\theta)=-2a_{\psi}\sin(2\theta)-16b_{\ast}%
\cos\theta<0$ on $(0,\pi/4)$. The intermediate value theorem gives $\theta
_{c},$ as stated. For $\theta_{C_{0}}$, we start from the formula of
$\delta_{0}$, given by $\left(  C_{0}\right)  $ and observe that, for
$h(\theta):=\rho\cos\theta-R_{K}\sin\theta$ one has $h^{\prime}(\theta
)=-\rho\sin\theta-R_{K}\cos\theta<0$, $h(0)=\rho>0$, $h(\pi/2)=-R_{K}\leq0$,
giving the unique root $\arctan(\rho/R_{K})\in(0,\pi/4]$ (using $\rho\leq
R_{K}$). Monotonicity gives $g\geq0$ (resp.\ $h>0$, i.e.\ $(C_{0})$) on the
stated ranges, which completes the proof.\hfill
\end{proof}

\begin{proposition}
[Data refined threshold]\label{prop:data-threshold} On the interval
$(0,\theta_{\max}^{\mathrm{geo}}(K))$, the function
\[
h_{\star}(\theta):=64\kappa_{\theta}(\theta)\,\Lambda_{\ast}^{2}%
(\theta)-\alpha_{\theta}(\theta)
\]
is continuous and non-decreasing, with $h_{\star}(0^{+})=-1<0$. Consequently, either

\begin{itemize}
\item[$\left(  a\right)  $] $h_{\star}<0$ throughout $(0,\theta_{\max
}^{\mathrm{geo}}(K))$ and $(C_{3})$ imposes no further restriction, so
$\theta_{\max}^{\star}:=\theta_{\max}^{\mathrm{geo}}(K)$;

\item[$\left(  b\right)  $] or there is a unique $\theta_{\mathrm{data}}%
\in(0,\theta_{\max}^{\mathrm{geo}}(K))$ with $h_{\star}(\theta_{\mathrm{data}%
})=0$, and $\theta_{\max}^{\star}:=\theta_{\mathrm{data}}$.
\end{itemize}

\noindent\ In either case, $\theta_{\max}^{\star}$ is the largest constant
$\theta_{\max}~,$ for which every $\theta\in C^{1}([0,T];(0,\theta_{\max}])$
satisfies $(C_{0})$-$(C_{3})$, for the given $K,F,\mathcal{R},T$ of
$\mathcal{OP}\left(  \Theta^{\theta}\right)  $.
\end{proposition}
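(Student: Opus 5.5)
The plan is to treat $h_\star$ as an explicit composition of elementary functions of the single scalar $\theta$, where $\theta$ plays the role of $\theta_{\max}$ and is frozen as a constant path. Monotonicity then follows from the monotonicity of each factor, and the dichotomy follows from the intermediate value theorem together with the monotonicity just established.

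First I would write every ingredient as a function of $t:=\tan\theta$ on $(0,\theta_{\max}^{\mathrm{geo}}(K))\subset(0,\pi/4)$. Here $\alpha_\theta=1-t$ is continuous, positive by $(C_1)$ (Proposition \ref{prop:geom-threshold}), and decreasing. The quantity $\kappa_\theta=2b_\psi^2 t(1+8t/(1-t))$ is a product of positive nondecreasing continuous factors, hence continuous and nondecreasing, with $\kappa_\theta\to0$ as $\theta\to0^+$.

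Next I would handle $\Lambda_\ast^2=\Lambda^2+2j^2$. Recall from (\ref{some constants}) that $c_6,c_7$ and $\bar\Psi$ do not depend on $\theta$, and neither do $R_K$ and $j=D_K+2$. The only $\theta$-dependence enters through $a_\theta=\cos\theta$ in the term $\bar\Psi/(2a_\theta)$. Since $\cos$ is decreasing on $(0,\pi/4)$, $\Lambda_\ast^2(\theta)$ is continuous, positive and nondecreasing. It follows that $64\kappa_\theta\Lambda_\ast^2$ is a product of nonnegative nondecreasing continuous functions, so it is nondecreasing and continuous. Subtracting the decreasing function $\alpha_\theta$ yields that $h_\star$ is continuous and in fact strictly increasing. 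Letting $\theta\to0^+$ gives $\kappa_\theta\to0$, $\Lambda_\ast^2\to R_K^2+2R_K\sqrt{T\bar\Psi}+\bar\Psi/2+2j^2<\infty$ and $\alpha_\theta\to1$, so $h_\star(0^+)=-1$.

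The dichotomy then follows. If $h_\star<0$ on the whole interval, we are in case $(a)$. Otherwise some point has $h_\star\ge0$, and the intermediate value theorem with $h_\star(0^+)=-1$ produces a zero; strict increase makes this zero unique, which is $\theta_{\mathrm{data}}$ in case $(b)$. For the maximality claim, I would observe that for a $C^1$ path with values in $(0,\theta_{\max}]$, conditions $(C_0)$ to $(C_3)$ depend only on $\theta_{\max}$. Conditions $(C_0)$ to $(C_2)$ hold for every $\theta_{\max}\le\theta_{\max}^{\mathrm{geo}}$ by Proposition \ref{prop:geom-threshold}, and $(C_3)$ is exactly $h_\star(\theta_{\max})<0$. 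Taking the constant path $\theta\equiv\theta_{\max}$ shows that no larger constant works.

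The main obstacle is the boundary behaviour of the threshold. In case $(b)$, $h_\star(\theta_{\mathrm{data}})=0$ violates the strict inequality in $(C_3)$, so ``largest'' must be read as a supremum, that is, every $\theta_{\max}<\theta^\star_{\max}$ is compatible. The same care is needed at $\theta_{\max}^{\mathrm{geo}}$ itself. I would state this explicitly rather than claim that the endpoint is attained.
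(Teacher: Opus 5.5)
Your argument is the paper's: $\kappa_\theta$ is increasing, $\Lambda_\ast^2$ depends on $\theta$ only through $\bar\Psi/(2\cos\theta)$ and so is nondecreasing, $\alpha_\theta$ is strictly decreasing, and the dichotomy then follows from the intermediate value theorem. Your explicit use of strict increase to get uniqueness of $\theta_{\mathrm{data}}$, and your remark that $\theta_{\max}^{\star}$ is in general a supremum rather than an attained maximum (because $(C_3)$ and $(C_0)$ are strict inequalities), are correct sharpenings of what the paper writes. One caveat, equally implicit in the paper: maximality only holds within $(0,\theta_{\max}^{\mathrm{geo}}(K)]$, since $(C_2)$ is certified only through the sufficient criterion $g\geq 0$ of Lemma \ref{rot-suff}, so your constant-path argument rules out larger angles via $(C_3)$ in case $(b)$, or via $(C_0)$ when $\theta_{\max}^{\mathrm{geo}}(K)=\theta_{C_0}$, but not when $\theta_{\max}^{\mathrm{geo}}(K)=\theta_c<\theta_{C_0}$ in case $(a)$.
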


\begin{proof}
The functions $\tan\theta$ and $1/\alpha_{\theta}=1/(1-\tan\theta)$ are
strictly increasing on $(0,\pi/4)$, so $\kappa_{\theta}$ given by
(\ref{some constants}) is strictly increasing, while $\alpha_{\theta}%
=1-\tan\theta$ is strictly decreasing. As noted, the constants $c_{6},c_{7}$,
and as consequence $\bar{\Psi}$, do not depend on $\theta$ at all. The
function $\theta$ enters $\Lambda^{2}$ solely through $\bar{\Psi}/(2a_{\theta
})$, via $a_{\theta}=\cos\theta$ decreasing (so $1/a_{\theta}$ increasing), as
$\theta$ increases. Hence $\Lambda^{2}$, and also $\Lambda_{\ast}^{2}%
=\Lambda^{2}+2j^{2}$ and $64\kappa_{\theta}\Lambda_{\ast}^{2}$, are
non-decreasing in $\theta$, while $-\alpha_{\theta}$ is strictly increasing:
$h_{\star}$ strictly is non-decreasing. The rest follows by the intermediate
value theorem.\hfill
\end{proof}

\subsection{A finite tracking cost}

Fix $y^{\mathrm{target}}\in C([0,T];K)$ and $\mu>0$. Consider the cost
functional we want to optimize:%
\begin{equation}
J(\theta):=\mathbb{E}\left[  \int_{0}^{T}\chi(t,Y_{t}^{\theta},\theta
_{t})dt\right]  ,\quad\text{with}\quad\chi(t,y,\theta):=\frac{1}{2}\left\vert
y-y^{\mathrm{target}}(t)\right\vert ^{2}+\frac{\mu}{2}\theta^{2}%
.\label{eq:cost}%
\end{equation}
We do not consider a terminal cost $\Phi(Y_{T}^{\theta})$ since $Y_{T}%
^{\theta}=\eta$, for every admissible control $\theta\in\mathcal{U}%
_{\mathrm{ad}}$, and any such term would be a constant, contributing with
nothing to the optimization. The angle penalty term $\tfrac{\mu}{2}\theta^{2}$
costs a large tangential rotation of the push away from the outward normal. A
small value for $\mu$ can allow an agile, strongly tangential correction,
while large values of $\mu$ can force a near-normal reflection.

\begin{proposition}
\label{prop:finite-value} Under Proposition~\ref{prop:wellposed},%
\begin{equation}
0\leq\inf_{\theta\in\mathcal{U}_{\mathrm{ad}}}J(\theta)\leq J_{\max}%
(\theta_{\max}):=T\left[  \frac{1}{2}\left(  c_{K}+1+\Vert y^{\mathrm{target}%
}\Vert_{C([0,T])}\right)  ^{2}+\frac{\mu}{2}\,\theta_{\max}^{2}\right]
<\infty, \label{eq:value-bounds}%
\end{equation}
a fully explicit, $\theta$-independent bound.
\end{proposition}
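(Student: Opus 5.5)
The plan is to bound the integrand of $J$ pointwise and uniformly over all admissible controls, and then integrate. Fix $\theta\in\mathcal{U}_{\mathrm{ad}}$. By Proposition~\ref{prop:wellposed}-$(a)$, $\mathcal{OP}(\Theta^{\theta})$ has a unique solution $(Y^{\theta},M^{\theta},U^{\theta})$, so $J(\theta)$ is well defined. The integrand $\chi(t,Y^{\theta}_t,\theta_t)$ is a nonnegative, progressively measurable process, because $Y^{\theta}$ is c\`{a}dl\`{a}g adapted, $\theta$ is predictable and $\chi$ is continuous. Hence $J(\theta)\in[0,+\infty]$ makes sense, and $J(\theta)\geq0$, which gives the lower bound $0\leq\inf J$.

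For the upper bound, I will use Proposition~\ref{prop:wellposed}-$(b)$, which gives $\sup_t|Y^{\theta}_t|\leq c_K+1$, $\mathbb{P}$-a.s. In fact $Y^{\theta}_t\in K$, so $|Y^\theta_t|\le c_K$ would also suffice; the weaker bound is kept to match the stated constant. The triangle inequality then gives, for all $t$ and $\mathbb{P}$-a.s.,
\[
\left|Y^{\theta}_t-y^{\mathrm{target}}(t)\right|\leq c_K+1+\Vert y^{\mathrm{target}}\Vert_{C([0,T])}.
\]
Since $\theta_t\in(0,\theta_{\max}]$, we also have $\frac{\mu}{2}\theta_t^2\leq\frac{\mu}{2}\theta_{\max}^2$. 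So the integrand is bounded by the deterministic constant $J_{\max}(\theta_{\max})/T$. Integrating over $[0,T]$ and taking the expectation yields $J(\theta)\leq J_{\max}(\theta_{\max})$ for every admissible $\theta$.

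Finally, $\mathcal{U}_{\mathrm{ad}}$ is nonempty, since for instance the constant control $\theta\equiv\theta_{\max}$ belongs to it. Therefore the infimum is at most $J(\theta)\leq J_{\max}(\theta_{\max})<\infty$. The bound is $\theta$-independent because $c_K$ depends only on $K$ and $y_0$.

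I expect no serious obstacle here. The only delicate point is that Proposition~\ref{prop:wellposed}-$(b)$ must apply to predictable, not necessarily $C^1$, controls. This is exactly what that proposition asserts, and I will take it as given. The rest is measurability, which is routine.
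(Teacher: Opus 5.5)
Your proposal is correct and follows the paper's own argument: the lower bound comes from $\chi\geq0$, and the upper bound from the almost sure bound of Proposition~\ref{prop:wellposed}-(b) together with $\theta_t\leq\theta_{\max}$, which bounds the integrand by $J_{\max}(\theta_{\max})/T$ before integrating and taking expectations. Your extra remarks are valid refinements that leave the argument unchanged: nonemptiness of $\mathcal{U}_{\mathrm{ad}}$ via the constant control, and the observation that $Y^{\theta}_t\in K$ already gives the sharper constant $c_K$.
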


\begin{proof}
Obviously, $\chi\geq0$ gives the lower bound. According to
Proposition~\ref{prop:wellposed}-(b), $\left\vert Y_{t}^{\theta}\right\vert
\leq c_{K}+1$, $\mathbb{P}$-a.s.\ for every $t$ and every $\theta
\in\mathcal{U}_{\mathrm{ad}}$, so pointwise%
\[
|Y_{t}^{\theta}-y^{\mathrm{target}}(t)|^{2}\leq(c_{K}+1+\Vert
y^{\mathrm{target}}\Vert_{C([0,T])})^{2},\quad\text{and}\quad\theta_{t}%
^{2}\leq\theta_{\max}^{2}.
\]
Integrating $\chi$ over $[0,T]$ and taking expectation give the conclusion
directly, because the $\mathbb{P}$-a.s.\ boundedness is already pointwise
sharp, no Cauchy--Schwarz inequality or Jensen intermediary step are needed
anymore.\hfill\medskip
\end{proof}

\begin{remark}
The existing of the finite value$J_{\max}(\theta_{\max})$ certifies the
problem is non-degenerate, but cannot, alone, distinguish between admissible
controls because it is a robustness statement, not a synthesis. This is the
reason why, Section~\ref{sec:stability} gives genuine existence of a
minimizer, on the class $\mathcal{A}_{L}$ of angle paths with a uniformly
bounded rate of change.
\end{remark}

\subsubsection{Lipschitz stability of the control-to-state map, and existence
of an optimal control\label{sec:stability}}

Fix $L>0$ and let us define%
\[
\mathcal{A}_{L}:=\{\theta\in C^{1}([0,T];(0,\theta_{\max}^{\star}%
]):\Vert\theta^{\prime}\Vert_{C([0,T])}\leq L\},
\]
which is compact in $(C([0,T]),\Vert\cdot\Vert_{\infty})$, by
Arzel\`{a}--Ascoli. Here $\theta^{\prime}$ denotes $d\theta/dt$; the second
angle field compared against $\theta$ is denoted by $\sigma$.

\begin{theorem}
[Lipschitz stability]\label{thm:stability} Let $\theta,\sigma\in
\mathcal{A}_{L}$. Under the standing assumptions, there exists a positive
constant $C=C(\theta_{\max}^{\star},L,K,F,\mathcal{R},T)$, which is
independent of the control functions $\theta,\sigma\in\mathcal{A}_{L}$, such
that%
\begin{equation}
\mathbb{E}\sup_{t\in\lbrack0,T]}|Y_{t}^{\theta}-Y_{t}^{\sigma}|^{2}%
+\mathbb{E}[M^{\theta}-M^{\sigma}]_{T}\leq C\Vert\theta-\sigma\Vert
_{C([0,T])}^{2}.\label{eq:stability}%
\end{equation}

\end{theorem}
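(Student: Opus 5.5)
We would prove (\ref{eq:stability}) by re-running the weighted two-point argument of Theorem \ref{rot-thm-conv}, now comparing the two \emph{limit} solutions $(Y^{\theta},M^{\theta},U^{\theta})$ and $(Y^{\sigma},M^{\sigma},U^{\sigma})$ of $\mathcal{OP}(\Theta^{\theta})$ and $\mathcal{OP}(\Theta^{\sigma})$. These solutions exist by Proposition \ref{prop:wellposed}, and both satisfy the bounds of Proposition \ref{rot-apriori} with $\varepsilon=0$ (last assertion there). We use a single kernel, the one built on $\theta$: $\varphi_t:=\Phi^{\theta}_t(Y^{\theta}_t,Y^{\sigma}_t)$. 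It is symmetric, $C^{1,2}$, satisfies $\varphi_T=\Phi_T^\theta(\eta,\eta)=0$, and is equivalent to $|Y^\theta-Y^\sigma|^2$ by (\ref{r16}) with $\alpha_\theta\ge 1-\tan\theta^\star_{\max}$. Itô's formula gives exactly (\ref{r20}). Steps 1 and 3 of Theorem \ref{rot-thm-conv} apply unchanged: Lemma \ref{rot-hess}, the jump bound $|\Delta M|\le D_K$, (\ref{r11})--(\ref{r12}), and the bound $\|\theta'\|_\infty\le L$ on $\mathcal{A}_L$. These give the second-order bound (\ref{r21}) and the generator bound (\ref{r23}). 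All constants depend only on $\theta^\star_{\max}$, $L$, $K$ and the data, because $\kappa_\theta,\alpha_\theta,\Lambda_\ast$ are monotone in the angle (Proposition \ref{prop:data-threshold}).

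The only new point is Step 2, the reflection terms. For $Y^\theta$ the angle matches the kernel, so $(C_2)$ gives $\langle\nabla_x\Phi^\theta_r,\Theta^\theta_rU^\theta_r\rangle\ge0$ directly, since $Y^\theta_r\in bd(K)$ whenever $U^\theta_r\ne0$. No projection or Lipschitz correction is needed. For $Y^\sigma$ we split
\[
\langle\nabla_y\Phi^\theta_r,\Theta^\sigma_rU^\sigma_r\rangle=\langle\nabla_y\Phi^\theta_r,\Theta^\theta_rU^\sigma_r\rangle+\langle\nabla_y\Phi^\theta_r,(\Theta^\sigma_r-\Theta^\theta_r)U^\sigma_r\rangle .
\]
The first term is $\ge0$ by $(C_2)$ and the symmetry of $\Phi^\theta$. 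For the second we use $\|\Theta^\sigma_r-\Theta^\theta_r\|_{op}\le|\sigma_r-\theta_r|$ and (\ref{r11}), and then Young's inequality:
\[
c_\Phi|Y^{\theta,\sigma}_r|\,|\sigma_r-\theta_r|\,|U^\sigma_r|\le \frac{c_\Phi}{2\alpha_\theta}|U^\sigma_r|\varphi_r+\frac{c_\Phi}{2}|U^\sigma_r|\,\|\theta-\sigma\|_\infty^2 .
\]
The first piece is absorbed by adding $\frac{c_\Phi}{\alpha_\theta}\int_0^t|U^\sigma_r|dr$ to the absolutely continuous part $V^{(1)}$ of the weight (\ref{r19}). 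The second piece replaces $d\mathcal{L}_r$, and is a source of size $\|\theta-\sigma\|_\infty^2$. Step 4 then yields the pathwise inequality (\ref{pathwise ineq}) with $\int_t^T\Gamma_{r-}d\mathcal{L}_r$ replaced by $\frac{c_\Phi}{2}\|\theta-\sigma\|_\infty^2\int_t^T\Gamma_{r-}|U^\sigma_r|dr$.

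The main obstacle is Step 5: the integrability $\mathbb{E}\Gamma_T^p\le C$ for some $p>1$ now that $V$ contains the extra term $\int_0^T|U^\sigma_r|dr$. We handle it as in Corollary \ref{rot-expmom}-$(c)$. By Proposition \ref{rot-apriori}-$(c)$ applied to the solution, the continuous increasing process $\int_0^\cdot|U^\sigma_r|^2dr$ has conditional increments bounded by $\bar\Psi/a_\star^2$. Lemma \ref{garsia} then gives $\mathbb{E}e^{\lambda'\int_0^T|U^\sigma|^2dr}<\infty$ for small $\lambda'$. Since $\lambda\int_0^T|U^\sigma|dr\le\lambda\sqrt T(\int_0^T|U^\sigma|^2dr)^{1/2}\le \lambda'\int_0^T|U^\sigma|^2dr+\lambda^2T/(4\lambda')$, the term has exponential moments of every order. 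The bracket factors are controlled under $(C_3)$ exactly as in (\ref{E of Gamp}), using Hölder with one more factor. All bounds are uniform over $\mathcal{A}_L$.

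Steps 6--8 then go through verbatim. Assumption \ref{H3'} absorbs the $\mathcal{R}$-term into $\frac{\alpha_\theta}{8}\mathbb{E}\int\Gamma_{r-}d[M^\theta-M^\sigma]$. Hölder together with $\mathbb{E}(\int_0^T|U^\sigma|dr)^{p_0}\le C$ bounds the source by $C\|\theta-\sigma\|_\infty^2$. Finally, the BDG/Young bootstrap of Step 8 upgrades $\sup_t\mathbb{E}$ to $\mathbb{E}\sup_t$, which gives (\ref{eq:stability}).
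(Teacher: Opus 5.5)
Your proposal is correct and follows the paper's proof essentially step for step. As in the paper, you use the single kernel $\Phi^\theta$ for both states, split $\langle\nabla_y\Phi^\theta_r,\Theta^\sigma_rU^\sigma_r\rangle$ into a part with a sign from $(C_2)$ plus a remainder controlled by $\|\Theta^\sigma_r-\Theta^\theta_r\|_{op}\le|\sigma_r-\theta_r|$, and then rerun Steps 4--8 of Theorem \ref{rot-thm-conv}. The only real difference is the Young split of that remainder: the paper bounds it by $\frac{\alpha_\theta}{16}|Y^{\theta,\sigma}_r|^2+\frac{4c_\Phi^2}{\alpha_\theta}|\sigma_r-\theta_r|^2|U^\sigma_r|^2$ and only adds the deterministic rate $1/8$ to $\dot V^{(1)}$, so Step 5 is untouched. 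You instead put the random rate $\frac{c_\Phi}{\alpha_\theta}|U^\sigma_r|$ into the weight, which costs an extra exponential-moment bound for $\int_0^T|U^\sigma_r|dr$; you supply it correctly via Lemma \ref{garsia}, exactly as in Corollary \ref{rot-expmom}-$(c)$.
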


\begin{proof}
The proof follows the same lines as Theorem \ref{rot-thm-conv}, with some
specific changes.

Let us denote%
\[
\left\{
\begin{array}
[c]{l}%
Y:=Y^{\theta},\quad\tilde{Y}:=Y^{\sigma},\quad M:=M^{\theta},\quad\tilde
{M}:=M^{\sigma},\medskip\\
U:=U^{\theta},\quad\tilde{U}:=U^{\sigma},\quad Y_{r}^{\theta,\sigma}%
:=Y_{r}-\tilde{Y}_{r},\quad M_{r}^{\theta,\sigma}:=M_{r}-\tilde{M}_{r}.
\end{array}
\right.
\]
Highlight that the triplets $\left(  Y,M,U\right)  $, respectively $(\tilde
{Y},\tilde{M},\tilde{U})$ are exact the strong solutions of $\mathcal{OP}%
\left(  \Theta^{\theta}\right)  $ and, respectively, $\mathcal{OP}\left(
\Theta^{\sigma}\right)  $ (no Moreau--Yosida penalizations). Hence
$Y_{T}=\tilde{Y}_{T}=\eta$ and $\varphi_{T}:=\Phi_{T}(Y_{T},\tilde{Y}_{T})=0$.
Let $\Phi_{r}(x,y)$ be the two-point kernel, built using the first angle field
$\theta$ throughout, and which is asymmetric here, since the two states
genuinely reflect along different directions, caused by the controls $\theta$
and $\gamma$. Denote also $\varphi_{t}:=\Phi_{t}(Y_{t},\tilde{Y}_{t})$.

\smallskip\noindent\textit{Step 1.} Lemma \ref{rot-hess} bounds $\Phi_{r}$'s
Hessian, for arbitrary $x,y$ and directions $h,h^{\prime}$. It is a property
of $\Phi^{\theta}$ alone, independent of which $\mathcal{OP}$, $\tilde{Y}$
solves. Applied it to $(dM,d\tilde{M})$ exactly as in Step 1 of Theorem
\ref{rot-thm-conv}, and absorbing jumps via the same smallness margin as
$(C_{3})$, we obtain%
\[
-d\mathcal{I}_{r}\leq-\frac{\alpha_{\theta}}{4}d[M^{\theta,\sigma}%
]_{r}+2\kappa_{\theta}|Y_{r-}^{\theta,\sigma}|^{2}d\left(  [M]+[\tilde
{M}]\right)  _{r}.
\]

\smallskip\noindent\textit{Step 2.} Starting with the differential form of the
$\mathcal{OP}\left(  \Theta^{\theta}\right)  $, the reflection bracket is
\[
\langle\nabla_{x}\Phi_{r},\Theta_{r}^{\theta}U_{r}\rangle+\langle\nabla
_{y}\Phi_{r},\Theta_{r}^{\sigma}\tilde{U}_{r}\rangle,\quad\text{on}\quad
U_{r}\neq0\},Y_{r}\in\partial K,U_{r}=|U_{r}|\mathbf{n}(Y_{r}).
\]
Symmetrically, it can be deduced for $\tilde{U}_{r}$. Decompose, as we did
before,%
\[
\left\langle \nabla_{y}\Phi_{r},\Theta_{r}^{\sigma}\tilde{U}_{r}\right\rangle
=\underbrace{\left\langle \nabla_{y}\Phi_{r},\Theta_{r}^{\theta}\tilde{U}%
_{r}\right\rangle }_{(\mathrm{I})}+\underbrace{\left\langle \nabla_{y}\Phi
_{r},(\Theta_{r}^{\sigma}-\Theta_{r}^{\theta})\tilde{U}_{r}\right\rangle
}_{(\mathrm{II})}.
\]
Since $Y_{r},\tilde{Y}_{r}\in K$ exactly, and no as penalization defect, one
can apply Assumption $(C_{2})$ successively, by the symmetry of $\Phi$, to
both pairs $(Y_{r},\tilde{Y}_{r})$ and $(\tilde{Y}_{r},Y_{r})$. As
consequence, we obtain $\langle\nabla_{x}\Phi_{r},\Theta_{r}^{\theta}%
U_{r}\rangle\geq0$ and $(\mathrm{I})\geq0$. The mechanism of Theorem
\ref{rot-thm-uniq}'s proof does not need any correction since both states
already lie in $K$.

For the second term, $(\mathrm{II})$, $\Vert d\Theta_{r}/d\theta
\Vert_{\mathrm{op}}=\Vert J\Theta_{r}^{\theta}\Vert_{\mathrm{op}}=1$ so, we
have%
\[
\Vert\Theta_{r}^{\sigma}-\Theta_{r}^{\theta}\Vert_{\mathrm{op}}\leq|\sigma
_{r}-\theta_{r}|.
\]
Since we also have, from (\ref{r11}), $|\nabla_{y}\Phi_{r}|\leq c_{\Phi}%
|Y_{r}^{\theta,\sigma}|$, it yields, by Young's inequality,%
\[
\left\vert (\mathrm{II})\right\vert \leq c_{\Phi}|Y_{r}^{\theta,\sigma
}||\sigma_{r}-\theta_{r}||\tilde{U}_{r}|\leq\frac{\alpha_{\theta}}{16}%
|Y_{r}^{\theta,\sigma}|^{2}+\frac{4c_{\Phi}^{2}}{\alpha_{\theta}}|\sigma
_{r}-\theta_{r}|^{2}|\tilde{U}_{r}|^{2}.
\]
The first term is smaller than $\tfrac{1}{16}\varphi_{r}~$, by the coercivity
property given by (\ref{r16}): $|Y_{r}^{\theta,\sigma}|^{2}\leq\varphi
_{r}/\alpha_{\theta}$~.

\smallskip\noindent\textit{Step 3. }Using only the Lipschitz continuity of
$F,\mathcal{R}$ and the bounds $\left\vert \nabla_{x}\Phi_{r}\right\vert
,\left\vert \nabla_{y}\Phi_{r}\right\vert \leq c_{\Phi}|Y_{r}^{\theta,\sigma
}|$, $\left\vert \nabla_{x}\Phi_{r}+\nabla_{y}\Phi_{r}\right\vert \leq
c_{\Phi}|Y_{r}^{\theta,\sigma}|^{2}$, $\left\vert \partial_{r}\Phi
_{r}\right\vert \leq c_{\Phi}\Vert\theta^{\prime}\Vert_{\infty}|Y_{r}%
^{\theta,\sigma}|^{2}$ (the properties of $\Phi^{\theta},F,\mathcal{R}$ are
independent of $\varphi$) we have%
\[
\left\langle \nabla_{x}\Phi_{r},F_{r}\right\rangle +\left\langle \nabla
_{y}\Phi_{r},\tilde{F}_{r}\right\rangle -\partial_{r}\Phi_{r}\leq\frac{1}%
{2}\varphi_{r}\dot{V}_{r}^{(1)}+\frac{\gamma}{2}|\mathcal{R}_{r}%
^{\theta,\sigma}|^{2},\quad\text{with}\quad\gamma:=\frac{\alpha_{\theta}%
}{8C_{\mathcal{R}}^{2}},
\]
$\dot{V}_{r}^{(1)}:=\dfrac{2c_{\Phi}}{\alpha_{\theta}}\left(  L_{r}%
+\dfrac{c_{\Phi}\ell_{r}^{2}}{2\gamma}+\Vert\theta^{\prime}\Vert_{\infty
}+|\tilde{F}_{r}|\right)  $ and $\mathcal{R}_{r}^{\theta,\sigma}%
:=\mathcal{R}_{r}(M)-\mathcal{R}_{r}(\tilde{M})$.

\smallskip\noindent\textit{Step 4.} As verified directly, the pairing between
Step 3's bound $\tfrac{1}{2}\varphi_{r}\dot{V}_{r}^{(1)}$ and the absolutely
continuous part of $-\int\varphi_{r}d\Gamma_{r}$ (via $d\Gamma_{r}\geq
\Gamma_{r-}dV_{r}$, with $\Gamma_{t}=e^{V_{t}}$) cancels exactly, whatever the
rate $\dot{V}_{r}^{(1)}$ is. We incorporate Step 2's new term $\tfrac{1}%
{16}\varphi_{r}$ into the weight, by constructing%
\[
\dot{V}_{r}^{(1),\,\mathrm{new}}:=\dot{V}_{r}^{(1)}+\frac{1}{8}%
\]
(so $\tfrac{1}{2}\cdot\tfrac{1}{8}=\tfrac{1}{16}$, matching exactly). This
adds a bounded deterministic quantity $(\leq T/8$ over $[0,T])$ to $V_{T}$, so
it changes $\mathbb{E}\left[  \Gamma_{T}^{p}\right]  $ only with the finite
factor $e^{pT/8}$ (Step 5 remains unaffected, since the exponent
$8p\nu_{\theta}$ used there for the $[M]+[\tilde{M}]$ part of $V_{T}$ does not
involve $\dot{V}^{(1)}$). It does touch neither Step 1 (uses only $\nu
_{\theta}=4\kappa_{\theta}/\alpha_{\theta}$), nor Step 8 (uses only
$\Gamma_{T}$'s boundedness and $\Phi$'s coercivity).

With the enlarged weight, Step 2's $(\mathrm{I})$-term is absorbed exactly as
above, leaving the purely data-driven term:%
\[
d\mathcal{L}_{r}^{\mathrm{new}}:=\frac{4c_{\Phi}^{2}}{\alpha_{\theta}}%
|\sigma_{r}-\theta_{r}|^{2}|\tilde{U}_{r}|^{2}\,dr.
\]
It plays the exact role of the penalization defect: independent of $\Delta
Y_{r}$, it depends only on $\tilde{U}_{r}$ and on $\left\vert \sigma
_{r}-\theta_{r}\right\vert \leq\Vert\sigma-\theta\Vert_{\infty}$, so it is
absorbed at Step 7 by the identical H\"{o}lder argument used in Theorem
\ref{rot-thm-conv} for $d\mathcal{L}_{r}:$%
\[
\mathbb{E}\int_{0}^{T}\Gamma_{r-}d\mathcal{L}_{r}^{\mathrm{new}}\leq
\frac{4c_{\Phi}^{2}}{\alpha_{\theta}}\Vert\sigma-\theta\Vert_{\infty}^{2}%
\Vert\Gamma_{T}\Vert_{L^{p}}%
{\displaystyle\int_{0}^{T}}
|\tilde{U}_{r}|^{2}dr_{L^{p_{0}}}~,
\]
$1/p+1/p_{0}=1$, the last factor being finite, uniformly with respect to
$\sigma\in\mathcal{U}_{\mathrm{ad}}$ by Proposition \ref{prop:wellposed}-(c).
Carrying this through Steps 6-8 (Step 6 - absorption of $\mathcal{R}%
_{r}^{\theta,\sigma}$~, via $\gamma C_{\mathcal{R}}^{2}/2=\alpha_{\theta}/16$,
and Step 8 - BDG's inequality applied to the weighted local martingale)
yields
\[
\mathbb{E}\sup_{t\in\left[  0,T\right]  }|Y_{t}^{\theta,\sigma}|^{2}%
+\mathbb{E}[M^{\theta,\sigma}]_{T}\leq C\cdot\frac{4c_{\Phi}^{2}}%
{\alpha_{\theta}}\cdot\frac{\bar{\Psi}}{a_{\star}^{2}}\cdot\left\vert
\left\vert \sigma-\theta\right\vert \right\vert _{\infty}^{2}.
\]
The proof is now complete.\hfill
\end{proof}

\begin{corollary}
[Existence of an optimal control in $\mathcal{A}_{L}$]\label{cor:existence}%
Under the hypothesis of Theorem~\ref{thm:stability}, the cost functional $J$,
given by \ref{eq:cost}, is Lipschitz on $(\mathcal{A}_{L},\Vert\cdot
\Vert_{\infty})$. More precisely, there exists a positive constant $C^{\prime
}=C^{\prime}(\theta_{\max}^{\star},L,K,F,\mathcal{R},T,\mu,y^{\mathrm{target}%
})$ such that%
\[
\left\vert J(\theta)-J(\sigma)\right\vert \leq C^{\prime}\left\vert \left\vert
\theta-\sigma\right\vert \right\vert _{\infty},\qquad\theta,\sigma
\in\mathcal{A}_{L},
\]
Consequently $J$ attains its minimum on $\mathcal{A}_{L}$, i.e. an optimal
control $\theta^{\star}\in\mathcal{A}_{L}$ exists.
\end{corollary}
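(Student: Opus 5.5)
The plan is to split $J$ into its state part and its angle part, and to bound each difference separately. For $\theta,\sigma\in\mathcal{A}_{L}$ we have
\[
J(\theta)-J(\sigma)=\frac{1}{2}\mathbb{E}\int_{0}^{T}\left(|Y_{t}^{\theta}-y^{\mathrm{target}}(t)|^{2}-|Y_{t}^{\sigma}-y^{\mathrm{target}}(t)|^{2}\right)dt+\frac{\mu}{2}\int_{0}^{T}(\theta_{t}^{2}-\sigma_{t}^{2})\,dt .
\]

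For the angle part, the elementary identity $\theta^{2}-\sigma^{2}=(\theta-\sigma)(\theta+\sigma)$ together with $0<\theta,\sigma\leq\theta_{\max}^{\star}$ gives the bound $\mu T\theta_{\max}^{\star}\Vert\theta-\sigma\Vert_{\infty}$.

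For the state part, we factor $|a-c|^{2}-|b-c|^{2}=\langle a-b,a+b-2c\rangle$ with $c=y^{\mathrm{target}}(t)$. Proposition~\ref{prop:wellposed}-(b) gives $|Y^{\theta}_{t}|,|Y^{\sigma}_{t}|\leq c_{K}+1$ almost surely. Hence the second factor is bounded by $2(c_{K}+1+\Vert y^{\mathrm{target}}\Vert_{C([0,T])})$, so the state part is at most a constant times $T\,\mathbb{E}\sup_{t}|Y_{t}^{\theta}-Y_{t}^{\sigma}|$. By Cauchy--Schwarz this is bounded by $T\bigl(\mathbb{E}\sup_{t}|Y^{\theta}_{t}-Y^{\sigma}_{t}|^{2}\bigr)^{1/2}$. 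Theorem~\ref{thm:stability} then bounds that quantity by $\sqrt{C}\,\Vert\theta-\sigma\Vert_{\infty}$, where $C$ is uniform over $\mathcal{A}_{L}$.

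Adding the two bounds gives the Lipschitz estimate with
\[
C'=T(c_{K}+1+\Vert y^{\mathrm{target}}\Vert_{C([0,T])})\sqrt{C}+\mu T\theta^{\star}_{\max}.
\]
This constant depends only on the listed data. In particular, $J$ is continuous on $(\mathcal{A}_{L},\Vert\cdot\Vert_{\infty})$.

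For existence of a minimizer, we use compactness of $\mathcal{A}_{L}$ (as asserted via Arzel\`a--Ascoli) together with the Weierstrass theorem for continuous functions on compact sets. The main obstacle is here. Arzel\`a--Ascoli gives relative compactness, but closedness of $\mathcal{A}_{L}$ in the sup-norm fails on two counts:
\begin{itemize}
\item[(i)] uniform limits of $C^{1}$ paths with $\Vert\theta'\Vert_{\infty}\leq L$ are only $L$-Lipschitz, not necessarily $C^{1}$;
\item[(ii)] values can accumulate at $0$.
\end{itemize}

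I would handle this by passing to the closure $\overline{\mathcal{A}}_{L}$, consisting of $L$-Lipschitz paths with values in $[0,\theta^{\star}_{\max}]$, or by adopting the paper's convention that $\mathcal{A}_{L}$ is understood as such a closed class. I would then check that the preceding arguments only use $\theta\in W^{1,\infty}$ with $|\theta'|\leq L$ almost everywhere, so that the bound $|\partial_{r}\Phi_{r}|\leq c_{\Phi}L|x-y|^{2}$ holds almost everywhere in $r$. The value $\theta=0$ is harmless, since it is the symmetric case covered by Remark~\ref{rot-C3}(c).

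On this closed set, the Lipschitz estimate extends by density, and $J$ is Lipschitz on a compact metric space. A minimizing sequence has a uniformly convergent subsequence whose limit $\theta^{\star}$ satisfies $J(\theta^{\star})=\inf J$.
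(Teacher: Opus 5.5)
Your Lipschitz estimate is the paper's argument essentially verbatim: the same factorization $|a|^{2}-|b|^{2}=\langle a-b,a+b\rangle$, Proposition~\ref{prop:wellposed}-(b), Cauchy--Schwarz, Theorem~\ref{thm:stability}, and the same constant $C'$. You part ways with the paper only at the existence step.

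The paper simply asserts that $\mathcal{A}_{L}$ is compact in $(C([0,T]),\Vert\cdot\Vert_{\infty})$ and applies Weierstrass. You correctly observe that Arzel\`a--Ascoli only gives relative compactness. Your point (ii) is not a technicality: the conclusion as literally stated can fail. Take $F\equiv0$ and $\eta\in\operatorname{int}K$ deterministic. Then $(Y,M,U)=(\eta,0,0)$ solves $\mathcal{OP}(\Theta^{\theta})$ for every admissible $\theta$, so by uniqueness
\[
J(\theta)=\tfrac12\int_{0}^{T}|\eta-y^{\mathrm{target}}(t)|^{2}dt+\tfrac{\mu}{2}\int_{0}^{T}\theta_{t}^{2}dt .
\]
Constants $\theta\equiv c\downarrow0$ approach the infimum over $\mathcal{A}_{L}$, but it is never attained because $\mu>0$ and $\theta>0$.

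So your reformulation, with a minimizer in the closure $\overline{\mathcal{A}}_{L}$ of $L$-Lipschitz paths valued in $[0,\theta_{\max}^{\star}]$, is the statement that is actually true. Its cost is extending well-posedness and Theorem~\ref{thm:stability} to that class, and the checks you list suffice:
\begin{itemize}
\item $r\mapsto\tan\theta_{r}$ is Lipschitz. The It\^o expansion of $\Phi_{r}(Y_{r},\tilde{Y}_{r})$ therefore follows from the product rule with a continuous finite-variation factor, and $|\partial_{r}\Phi_{r}|\le c_{\Phi}L|x-y|^{2}$ holds for a.e.\ $r$.
\item Where $\theta_{r}=0$, the kernel is $|x-y|^{2}$, $(C_{0})$ reads $\rho>0$, and $(C_{2})$ reduces to Lemma~\ref{K-geometry}(d).
\item The compatibility conditions and constants are monotone in $\max_{r}\theta_{r}$, so they are controlled by $\theta_{\max}^{\star}$ and $L$.
\end{itemize}
Once stability is verified directly on $\overline{\mathcal{A}}_{L}$, your ``extends by density'' step is unnecessary.
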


\begin{proof}
According to Proposition \ref{prop:wellposed}-(b) and $|a|^{2}-|b|^{2}=\langle
a-b,a+b\rangle$,%
\[
\left\vert \chi(t,Y_{t}^{\theta},\theta_{t})-\chi(t,Y_{t}^{\sigma},\sigma
_{t})\right\vert \leq\left(  c_{K}+1+\left\vert \left\vert y^{\mathrm{target}%
}\right\vert \right\vert _{\infty}\right)  \,\left\vert Y_{t}^{\theta}%
-Y_{t}^{\sigma}\right\vert +\mu\theta_{\max}^{\star}\,|\theta_{t}-\sigma_{t}|.
\]
Integrating, taking expectation and applying Cauchy--Schwarz inequality in
$t$, we obtain, due to (\ref{eq:stability}),%
\[
\mathbb{E}\int_{0}^{T}\left\vert Y_{t}^{\theta}-Y_{t}^{\sigma}\right\vert
dt\leq T\left(  \sup\nolimits_{t\in\left[  0,T\right]  }\mathbb{E}%
|Y_{t}^{\theta}-Y_{t}^{\sigma}|^{2}\right)  ^{1/2}\leq T\sqrt{C}\left\vert
\left\vert \theta-\sigma\right\vert \right\vert _{\infty}~.
\]
Therefore, $\left\vert J(\theta)-J(\sigma)\right\vert \leq\left[
T(c_{K}+1+\Vert y^{\mathrm{target}}\Vert_{\infty})\sqrt{C}+\mu\theta_{\max
}^{\star}T\right]  \Vert\theta-\varphi\Vert_{\infty}=:C^{\prime}\Vert
\theta-\varphi\Vert_{\infty}$.

Finally, since $\mathcal{A}_{L}$ is compact in $(C([0,T]),\Vert\cdot
\Vert_{\infty})$ and $J$ is Lipschitz on $\mathcal{A}_{L}$, it infers that $J$
attains its minimum and the proof is now complete.\hfill
\end{proof}

\begin{problem}
[Characterization of an optimal control]Establish a Pontryagin-type necessary
condition or a verification-type sufficient condition characterizing a
minimizer $\theta^{\star}\in\mathcal{U}_{\mathrm{ad}}$ on the full class of
adapted, predictable angle processes (beyond the deterministic, bounded-rate
of change class $\mathcal{A}_{L}$ of Corollary~\ref{cor:existence}). To our
best knowledge, the closest established result in the literature is the one
provided by Huang, Wang and Wu \cite{Huang/Wang/Wu:18}. Their sufficient
stochastic maximum principle deals with recursive control problems with an
obstacle as constraint. It treats a scalar barrier reflection ($Y_{t}\geq
L_{t}$), rather than c\`{a}dl\`{a}g oblique reflection in a two-dimensional
convex domain, with a controlled rotation angle. A rigorous treatment of this
characterization would require a separate control theoretic analysis and is
beyond the scope of the present paper.
\end{problem}

\section{Annex}

\subsection{Tools for the c\`{a}dl\`{a}g calculus\label{cadlag tools}}

We recall here only the arguments and tools we shall use in our c\`{a}dl\`{a}g
study. For a process $X$, denote $X_{t-}:=\lim_{s\uparrow t}X_{s}.$ According
to Protter \cite[Chapter II, Th. 21, page 64]{Protter:05}, if $\left\{
X_{t}:t\geq0\right\}  $ is an $\mathbb{R}^{d}$-valued adapted c\`{a}dl\`{a}g
semimartingale, the stochastic integral with respect to $X$, $I_{X}%
:\mathbb{L}_{m\times d}^{0}\rightarrow\mathbb{D}_{d}^{0}$ is a linear
continuous mapping.

Let $X\in\mathbb{D}_{d}^{0}$ and $Y\in\mathbb{L}_{m\times d}^{0}.$ Each of the
following properties of $X$ is inherited by $I_{X}\left(  Y\right)  $: is a
semimartingale (Protter \cite[Chapter II, Th.19, page 62]{Protter:05}), is a
bounded variation stochastic process (Protter \cite[Chapter II, Th.17, page
61]{Protter:05}), is a locally square integrable local martingale (Protter
\cite[Chapter II, Th.20, page 63]{Protter:05}) and it is a local martingale
(Protter \cite[Chapter III, Th.29, page128]{Protter:05}).

Let us introduce the \textit{Burkholder--Davis--Gundy inequality }suited to
our working setup: for any $p\in\lbrack1,\infty)$ there exist two constants
$c_{p},C_{p}>0,$ depending only on $p,$ such that, for all local martingales
$X,$ with $X_{0}=0,$ and any stopping time $\tau$, the following inequality
holds%
\begin{equation}
c_{p}\mathbb{E}\left[  X\right]  _{\tau}^{p/2}\leq\mathbb{E}\sup_{0\leq
t\leq\tau}\left\vert X_{t}\right\vert ^{p}\leq C_{p}\mathbb{E}\left[
X\right]  _{\tau}^{p/2}. \label{BDG}%
\end{equation}
If $X$ is a continuous local martingale, then the inequality (\ref{BDG}) holds
for all $0<p<\infty$. In the case of the stochastic integral defined for
$X\in\mathbb{D}_{d}^{0}$ (being a local martingale) and $Y\in\mathbb{L}%
_{m\times d}^{0}~$, we have, for any $1\leq p<\infty$,%
\begin{equation}
\mathbb{E}\sup_{0\leq t\leq\tau}\left\vert \int_{0+}^{t}Y_{s}dX_{s}\right\vert
^{p}\leq C_{p}\mathbb{E}\left(  \int_{0+}^{\tau}\left\vert Y_{s}\right\vert
^{2}d\left[  X\right]  _{s}\right)  ^{p/2}. \label{BDG1}%
\end{equation}

It\^{o}'s formula for semimartingales becomes:

\begin{lemma}
If $Y$ is a $d$-dimensional c\`{a}dl\`{a}g semimartingale and $u\in
C^{1,2}(\left[  0,T\right]  \times\mathbb{R}^{d};\mathbb{R})$, then $u\left(
\cdot,Y_{\cdot}\right)  $ is a semimartingale and the following formula
holds:\vspace{-0.1in}%
\[%
\begin{array}
[c]{l}%
u\left(  t,Y_{t}\right) \\
\quad=u\left(  s,Y_{s}\right)  +%
{\displaystyle\int_{(s,t]}}
\dfrac{\partial u\left(  r,Y_{r}\right)  }{\partial t}dr+%
{\displaystyle\sum_{i=1}^{d}}
{\displaystyle\int_{(s,t]}}
\dfrac{\partial u\left(  r,Y_{r-}\right)  }{\partial x_{i}}dY_{r}^{i}%
+\dfrac{1}{2}%
{\displaystyle\sum_{1\leq i,j\leq d}}
{\displaystyle\int_{(s,t]}}
\dfrac{\partial^{2}u\left(  r,Y_{r-}\right)  }{\partial x_{i}\partial x_{j}%
}d[Y^{i},Y^{j}]_{r}\smallskip\\
\quad+%
{\displaystyle\sum_{s<r\leq t}}
\left\{  u(r,Y_{r})-u(r,Y_{r-})-%
{\displaystyle\sum_{i=1}^{d}}
\dfrac{\partial u\left(  r,Y_{r-}\right)  }{\partial x_{i}}\Delta Y_{r}%
^{i}-\dfrac{1}{2}%
{\displaystyle\sum_{1\leq i,j\leq d}}
\dfrac{\partial^{2}u\left(  r,Y_{r-}\right)  }{\partial x_{i}\partial x_{j}%
}\Delta Y_{r}^{i}\Delta Y_{r}^{j}\right\}  .
\end{array}
\]
for all $0\leq s\leq t\leq T,$ $\mathbb{P}$-$a.s.$ In particular, the energy
equality reads%
\begin{equation}
\left\vert Y_{t}\right\vert ^{2}=\left\vert Y_{s}\right\vert ^{2}+2%
{\displaystyle\int_{(s,t]}}
\left\langle Y_{r-},dY_{r}\right\rangle +[Y,Y]_{t}-[Y,Y]_{s}~. \label{ee}%
\end{equation}

\end{lemma}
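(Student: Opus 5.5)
This is the standard It\^{o} formula for c\`{a}dl\`{a}g semimartingales (Protter, Chapter II, Theorem 32, extended to time-dependent $u$). I would reduce it to the classical multidimensional case and then specialize to $u(x)=|x|^{2}$ for the energy equality.

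\emph{Reduction.} Consider the $(d+1)$-dimensional semimartingale $\tilde Y_{r}:=(r,Y_{r})$. Its first component $r$ is continuous and of finite variation. Hence all brackets involving it vanish, $[r,Y^{i}]=0$ and $[r,r]=0$, and it has no jumps. Since $u\in C^{1,2}$ rather than $C^{2}$ in all variables, I would not invoke the $C^{2}$ theorem blindly. Instead I would redo the proof on a time partition $s=t_{0}<\dots<t_{n}=t$ and telescope
\[
u(t_{k+1},Y_{t_{k+1}})-u(t_{k},Y_{t_{k}})=\bigl[u(t_{k+1},Y_{t_{k+1}})-u(t_{k},Y_{t_{k+1}})\bigr]+\bigl[u(t_{k},Y_{t_{k+1}})-u(t_{k},Y_{t_{k}})\bigr].
\]
The first bracket equals $\int_{t_{k}}^{t_{k+1}}\partial_{t}u(r,Y_{t_{k+1}})\,dr$ by the mean value theorem in time. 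Summed, it converges to $\int_{(s,t]}\partial_{t}u(r,Y_{r})\,dr$ by dominated convergence, because $Y$ is c\`{a}dl\`{a}g and $\partial_{t}u$ is continuous.

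\emph{Spatial increments.} For the second bracket I would follow Protter's argument with the time frozen at $t_{k}$. Localize first, so that $Y$ is bounded and $u$, together with its derivatives, is uniformly continuous on the relevant compact set. Then fix $\epsilon>0$ and split the jumps into a finite set $A$ of large jumps and the remaining small jumps, whose squared sizes sum to at most $\epsilon$. On intervals avoiding $A$, a second-order Taylor expansion yields the Riemann sums
\[
\sum_{k}\partial_{x}u(t_{k},Y_{t_{k}})(Y_{t_{k+1}}-Y_{t_{k}})\quad\text{and}\quad\tfrac12\sum_{k}\partial_{xx}u(t_{k},Y_{t_{k}})(\Delta_{k}Y)^{\otimes2}.
\]
These converge in probability to $\int\partial_{x}u(r,Y_{r-})\,dY_{r}$ and $\tfrac12\int\partial_{xx}u(r,Y_{r-})\,d[Y,Y]_{r}$ respectively. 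The convergence uses the joint continuity of $\partial_{x}u$ and $\partial_{xx}u$ in $(t,x)$, the convergence of Riemann sums to stochastic integrals for left-continuous integrands, and the definition (\ref{qv-def-m}) of the bracket as a limit of sums of squared increments. The large jumps in $A$ are treated exactly, and they produce the compensated jump sum in the statement. The Taylor remainder is bounded by $r(\|\Delta Y\|)\,|\Delta Y|^{2}$, with $r\to0$ as the mesh goes to $0$ and $\epsilon\to0$. Finally, $\sum|\Delta Y|^{2}\le[Y,Y]_{t}<\infty$ gives absolute convergence of the jump series.

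\emph{Main obstacle.} The delicate step is this remainder control under only $C^{1,2}$ regularity, that is, keeping the time-freezing uniform. I would handle it with the modulus of continuity of $\partial_{xx}u$ on $[0,T]\times$ a compact set, after localizing by $\tau_{n}=\inf\{r:|Y_{r}|\ge n\}$, and then let $n\to\infty$.

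\emph{Energy equality.} Take $u(x)=|x|^{2}$, so that $\partial_{t}u=0$, $\nabla u=2x$ and $D^{2}u=2I$. The second-order term becomes $[Y,Y]_{t}-[Y,Y]_{s}$ (including its jump part). Each summand of the jump series vanishes identically, because
\[
|Y_{r}|^{2}-|Y_{r-}|^{2}-2\langle Y_{r-},\Delta Y_{r}\rangle-|\Delta Y_{r}|^{2}=0.
\]
This gives (\ref{ee}).
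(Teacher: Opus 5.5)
Your proposal is correct and is essentially the standard argument of Protter (Chapter II, Theorems 32--33), which the paper simply cites without proof. Freezing time at $t_k$ in the spatial increment handles the $C^{1,2}$ regularity, because the modulus of continuity of $\partial_{xx}u$ on $[0,T]\times$ a compact set is uniform in $t$, and your jump computation for $u(x)=|x|^{2}$ gives the energy equality. One small imprecision: stopping at $\tau_n=\inf\{r:|Y_r|\ge n\}$ does not make $Y$ bounded, since the jump at $\tau_n$ can overshoot. No localization is needed, however: each c\`{a}dl\`{a}g path is bounded on $[0,T]$, and the remainder estimate is pathwise once you extract an a.s.\ convergent subsequence of partitions.
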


\noindent In the formula above, $Y_{r}=Y_{r-}$ for $dr$-a.e. $r$, so the
integrals with respect to $dr$ may equivalently be taken on $\left[
s,t\right]  $.

Arguing as in Pardoux and R\u{a}\c{s}canu \cite[Lemma 2.37]%
{Pardoux/Rascanu:14}, we obtain that, if $Y$ is a $d$-dimensional
c\`{a}dl\`{a}g semimartingale and $\psi\in C^{1}(\mathbb{R}^{d};\mathbb{R})$
is convex, then the following c\`{a}dl\`{a}g stochastic subdifferential
inequality holds, for every $0\leq t<s\leq T$:%
\begin{equation}%
{\displaystyle\int_{t+}^{s}}
\left\langle \nabla_{y}\psi\left(  Y_{r-}\right)  ,dY_{r}\right\rangle
\leq\psi\left(  Y_{s}\right)  -\psi\left(  Y_{t}\right)  .
\label{cadlag subdiff ineq}%
\end{equation}
A useful \textit{backward Gronwall's inequality} is stated below.

\begin{lemma}
\label{Gronwall}\textit{Let }$\Theta,K,V:\left[  0,T\right]  \rightarrow
\mathbb{R}$ \textit{be bounded c\`{a}dl\`{a}g functions such that }$K_{\cdot
}\in BV\left(  \left[  0,T\right]  ;\mathbb{R}\right)  $ and $V$ is
\emph{continuous} and nondecreasing. \textit{If, for all }$0\leq s\leq T,$%
\begin{equation}
\Theta_{s}\leq\Theta_{T}+%
{\displaystyle\int_{s+}^{T}}
\left[  dK_{r}+\Theta_{r}dV_{r}\right]  \label{g1}%
\end{equation}
\textit{then}%
\begin{equation}
\left\{
\begin{array}
[c]{ll}%
\left(  i\right)  & e^{V_{s}}\Theta_{s}\leq\Theta_{T}e^{V_{T}}+{%
{\displaystyle\int_{s+}^{T}}
}e^{V_{r}}dK_{r}\quad\text{and}\medskip\\
\left(  ii\right)  & \Theta_{s}\leq e^{V_{T}-V_{s}}\left[  \Theta
_{T}+\left\Vert K\right\Vert _{BV\left(  \left[  0,T\right]  ;\mathbb{R}%
\right)  }\right]  .
\end{array}
\right.  \label{g3}%
\end{equation}
\textit{If, moreover, }$a\geq0$\textit{ is a constant and, for all }$0\leq
s\leq T,$ $\Theta_{s}\leq a+%
{\textstyle\int_{s+}^{T}}
\Theta_{r}dV_{r}~$, \textit{then}%
\begin{equation}
\Theta_{s}\leq a\,e^{V_{T}-V_{s}}~. \label{g4}%
\end{equation}

\end{lemma}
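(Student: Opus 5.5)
The plan is the standard backward exponential-weight argument. Set $\Psi_s := e^{V_s}\Theta_s$ and show that $\Psi$ is dominated by its terminal value plus the weighted $dK$-integral. Because $V$ is continuous and nondecreasing, the functions $e^{V_s}$ and $e^{-V_s}$ are continuous and of bounded variation. This lets me use the ordinary Lebesgue--Stieltjes integration by parts with no jump cross-terms: for the c\`{a}dl\`{a}g BV function $G_s:=\int_{s+}^{T}\left[dK_r+\Theta_r dV_r\right]$ and the continuous $e^{V}$,
\[
d\left(e^{V_s}G_s\right)=e^{V_s}dG_s+G_s\,e^{V_s}dV_s .
\]
I will work with $\Xi_s:=\Theta_T+G_s$, which is the right-hand side of (\ref{g1}), so that $\Theta_s\le \Xi_s$ for all $s$.

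For (i), compute
\[
d\left(e^{V_s}\Xi_s\right)=-e^{V_s}\left(dK_s+\Theta_s dV_s\right)+e^{V_s}\Xi_s dV_s\geq -e^{V_s}dK_s .
\]
The inequality uses $\Xi_s-\Theta_s\ge 0$ together with the fact that $dV\ge 0$ is a positive measure. This is where the continuity of $V$ is needed: it prevents an atom of $dV$ from interacting with the jumps of $\Theta$ and $K$. If one is careful about left versus right values, the measures involved only see the continuous part, so no correction term appears. Integrating over $(s,T]$ gives
\[
e^{V_T}\Xi_T-e^{V_s}\Xi_s\ge -\int_{s+}^T e^{V_r}dK_r .
\]
Since $\Xi_T=\Theta_T$ and $\Theta_s\le\Xi_s$, this yields $e^{V_s}\Theta_s\le \Theta_T e^{V_T}+\int_{s+}^T e^{V_r}dK_r$, which is (i).

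For (ii), multiply (i) by $e^{-V_s}$ and bound $\int_{s+}^T e^{V_r-V_s}dK_r$ by $e^{V_T-V_s}\|K\|_{BV}$, using $0\le V_r-V_s\le V_T-V_s$. For the term $\Theta_Te^{V_T-V_s}$ I need $\Theta_T\ge0$, or else the bound is read with the convention that only the positive part matters. I will note this, since the displayed (ii) implicitly assumes the sign is favourable or absorbed.

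For (\ref{g4}), apply (i) with $K_r:=-a\,\mathbf{1}_{\{r=T\}}$ type bookkeeping. More simply, rerun the argument with $\Xi_s:=a+\int_{s+}^T\Theta_rdV_r$. Then $d(e^{V}\Xi)\ge0$, so $e^{V_s}\Theta_s\le e^{V_s}\Xi_s\le e^{V_T}\Xi_T=ae^{V_T}$.

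The only delicate step is justifying the product rule and the sign argument for c\`{a}dl\`{a}g $\Theta$ and $K$. I would handle it by noting that $e^{V}$ is continuous, so the integration by parts formula $e^{V_T}\Xi_T-e^{V_s}\Xi_s=\int_{s+}^T e^{V_r}d\Xi_r+\int_{s+}^T\Xi_r\,de^{V_r}$ holds without jump corrections. In the second integral, $\Xi_r$ versus $\Xi_{r-}$ is irrelevant because $de^{V}$ is atomless.
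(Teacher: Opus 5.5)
Your argument is correct. The paper gives no proof of this lemma; it is only recalled in the Annex as a standard tool, so there is no paper route to compare with. Your route is the standard one, and it is complete:
\begin{itemize}
\item the exponential weight $e^{V}$;
\item the Lebesgue--Stieltjes product rule for $e^{V}\Xi$, with no jump cross term because $de^{V}=e^{V}dV$ is atomless (your last paragraph states this precisely, whereas the earlier phrase about measures ``only seeing the continuous part'' is loose, since $dK$ and $d\Xi$ do have atoms);
\item the sign identity $d(e^{V}\Xi)+e^{V}dK=e^{V}(\Xi-\Theta)\,dV\ge 0$.
\end{itemize}

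Two small corrections.

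\textbf{The caveat in (ii) is unfounded.} Multiplying (i) by $e^{-V_s}$ gives exactly $\Theta_s\le e^{V_T-V_s}\Theta_T+\int_{s+}^{T}e^{V_r-V_s}\,dK_r$. The first term already equals the corresponding term of the stated bound. So only the $dK$-integral needs estimating, by $e^{V_T-V_s}\|K\|_{BV}$, and no sign condition on $\Theta_T$ is needed. Statement (ii) holds as written.

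\textbf{The bookkeeping for (\ref{g4}).} If you want to deduce (\ref{g4}) from (i), take $K:=(a-\Theta_T)\mathbf{1}_{\{T\}}$, not $-a\,\mathbf{1}_{\{T\}}$. The hypothesis at $s=T$ gives $\Theta_T\le a$. With this $K$, (i) yields $e^{V_s}\Theta_s\le e^{V_T}\Theta_T+e^{V_T}(a-\Theta_T)=a\,e^{V_T}$. Your direct rerun with $\Xi_s=a+\int_{s+}^{T}\Theta_r\,dV_r$ is also fine. Incidentally, it does not use $a\ge 0$.
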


\subsection{Regularization of convex functions}

We now present some classical instruments used when we need to approximate a
maximal monotone operator given by the subdifferential of a proper convex
lower semicontinuous function. For more details, the interested reader can
consult Pardoux and R\u{a}\c{s}canu \cite[Section 6.3.7]{Pardoux/Rascanu:14}.

\begin{lemma}
\label{conv}Let $\varphi:\mathbb{R}^{d}\rightarrow(-\infty,+\infty]$ be a
proper lower semicontinuous convex function and let $\varphi_{\varepsilon}$ be
its Moreau regularization,
\[
\varphi_{\varepsilon}(x):=\inf\left\{  \frac{1}{2\varepsilon}|z-x|^{2}%
+\varphi(z):z\in\mathbb{R}^{d}\right\}  ,
\]
where $\varepsilon>0.$ Then for all $x,y\in\mathbb{R}^{d}$ and every
$\varepsilon,\delta>0$, the following are true:

\begin{itemize}
\item[$\left(  a\right)  $] $\varphi_{\varepsilon}$ is a convex function of
class $C^{1}$ on $\mathbb{R}^{d}$, and $\nabla\varphi_{\varepsilon}$ is
Lipschitz on $\mathbb{R}^{d}$ with Lipschitz constant $\varepsilon^{-1};$

\item[$\left(  b\right)  $] $\nabla\varphi_{\varepsilon}\left(  x\right)
=\dfrac{1}{\varepsilon}(x-J_{\varepsilon}x)$, where $J_{\varepsilon
}x=(I+\varepsilon\partial\varphi)^{-1}(x);$

\item[$\left(  c\right)  $] $\varphi_{\varepsilon}(x)=\dfrac{1}{2\varepsilon
}|x-J_{\varepsilon}x|^{2}+\varphi(J_{\varepsilon}x);$

\item[$\left(  d\right)  $] $\nabla\varphi_{\varepsilon}(x)\in\partial
\varphi(J_{\varepsilon}x);$

\item[$\left(  e\right)  $] $\left\langle \nabla\varphi_{\varepsilon
}(x)-\nabla\varphi_{\delta}(y),x-y\right\rangle \geq-\left(  \varepsilon
+\delta\right)  |\nabla\varphi_{\varepsilon}(x)||\nabla\varphi_{\delta}(y)|;$

\item[$\left(  f\right)  $] if $\left(  u_{0},\hat{u}_{0}\right)  \in
\partial\varphi$ then

\begin{itemize}
\item[$\left(  f_{1}\right)  $] $\left\vert \nabla\varphi_{\varepsilon}\left(
u_{0}\right)  \right\vert \leq\left\vert \hat{u}_{0}\right\vert ,$

\item[$\left(  f_{2}\right)  $] $\left\vert J_{\varepsilon}\left(  y\right)
\right\vert \leq\left\vert y-u_{0}\right\vert +\varepsilon\left\vert \hat
{u}_{0}\right\vert +\left\vert u_{0}\right\vert $ for all $y\in\mathbb{R}%
^{d},$

\item[$\left(  f_{3}\right)  $] $\varphi_{\varepsilon}\left(  y\right)
\geq\varphi\left(  J_{\varepsilon}y\right)  \geq\varphi\left(  u_{0}\right)
-\left\vert \hat{u}_{0}\right\vert \left\vert y-u_{0}\right\vert
-\varepsilon\left\vert \hat{u}_{0}\right\vert ^{2}$ for all $y\in
\mathbb{R}^{d}$,

\item[$\left(  f_{4}\right)  $] $\left\vert \varphi\left(  J_{\varepsilon
}y\right)  -\varphi\left(  u_{0}\right)  \right\vert \leq\left\langle
\nabla\varphi_{\varepsilon}\left(  y\right)  ,J_{\varepsilon}y-u_{0}%
\right\rangle +2\left\vert \hat{u}_{0}\right\vert \left\vert y-u_{0}%
\right\vert +2\varepsilon\left\vert \hat{u}_{0}\right\vert ^{2}$ for all
$y\in\mathbb{R}^{d}$.
\end{itemize}
\end{itemize}
\end{lemma}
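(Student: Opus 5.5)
The plan is to build everything on the resolvent $J_\varepsilon=(I+\varepsilon\partial\varphi)^{-1}$. Since $\partial\varphi$ is maximal monotone (Rockafellar), Minty's theorem makes $J_\varepsilon$ single valued and defined on all of $\mathbb{R}^d$. Monotonicity gives
\[
|J_\varepsilon x-J_\varepsilon y|^{2}\leq\langle J_\varepsilon x-J_\varepsilon y,x-y\rangle ,
\]
so $J_\varepsilon$ is firmly nonexpansive.

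\textbf{Step 1: (b), (c), (d).} First identify the minimizer in the definition of $\varphi_\varepsilon(x)$. The map $z\mapsto\frac{1}{2\varepsilon}|z-x|^{2}+\varphi(z)$ is proper, l.s.c. and strongly convex, so it has a unique minimizer $z^{\ast}$. The optimality condition reads $0\in\frac{1}{\varepsilon}(z^{\ast}-x)+\partial\varphi(z^{\ast})$, which means $z^{\ast}=J_\varepsilon x$. This gives (c) directly. It also shows $\frac{1}{\varepsilon}(x-J_\varepsilon x)\in\partial\varphi(J_\varepsilon x)$, which is (d) once (b) is proved.

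\textbf{Step 2: (a) and (b).} Set $A_\varepsilon x:=\frac{1}{\varepsilon}(x-J_\varepsilon x)$. Testing the definition of $\varphi_\varepsilon(y)$ with $z=J_\varepsilon x$, and using (c) together with the subgradient inequality at $J_\varepsilon x$, gives two-sided bounds:
\[
\langle A_\varepsilon x,y-x\rangle\leq\varphi_\varepsilon(y)-\varphi_\varepsilon(x)\leq\langle A_\varepsilon y,y-x\rangle .
\]
It remains to see that $A_\varepsilon$ is $\varepsilon^{-1}$-Lipschitz. This follows because $I-J_\varepsilon$ is also firmly nonexpansive, by expanding $|(x-J_\varepsilon x)-(y-J_\varepsilon y)|^{2}$. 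The two bounds then squeeze $\varphi_\varepsilon(y)-\varphi_\varepsilon(x)-\langle A_\varepsilon x,y-x\rangle$ into $[0,\varepsilon^{-1}|y-x|^{2}]$. This yields differentiability with $\nabla\varphi_\varepsilon=A_\varepsilon$, and the left bound already gives convexity.

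\textbf{Step 3: (e).} Write $x=J_\varepsilon x+\varepsilon\nabla\varphi_\varepsilon(x)$ and $y=J_\delta y+\delta\nabla\varphi_\delta(y)$. By (d), monotonicity of $\partial\varphi$ at the points $J_\varepsilon x,J_\delta y$ gives $\langle\nabla\varphi_\varepsilon(x)-\nabla\varphi_\delta(y),J_\varepsilon x-J_\delta y\rangle\geq0$. Substituting the decompositions leaves
\[
\varepsilon|\nabla\varphi_\varepsilon(x)|^{2}+\delta|\nabla\varphi_\delta(y)|^{2}-(\varepsilon+\delta)\langle\nabla\varphi_\varepsilon(x),\nabla\varphi_\delta(y)\rangle .
\]
The two squares are nonnegative, so this is bounded below by $-(\varepsilon+\delta)|\nabla\varphi_\varepsilon(x)||\nabla\varphi_\delta(y)|$.

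\textbf{Step 4: (f).} Fix $(u_0,\hat u_0)\in\partial\varphi$.
\begin{itemize}
\item[$(f_1)$] Use monotonicity between $(J_\varepsilon u_0,\nabla\varphi_\varepsilon(u_0))$ and $(u_0,\hat u_0)$. With $J_\varepsilon u_0-u_0=-\varepsilon\nabla\varphi_\varepsilon(u_0)$ this gives $|\nabla\varphi_\varepsilon(u_0)|^{2}\leq\langle\hat u_0,\nabla\varphi_\varepsilon(u_0)\rangle$, and Cauchy--Schwarz finishes.
\item[$(f_2)$] Write $|J_\varepsilon y|\leq|J_\varepsilon y-J_\varepsilon u_0|+|J_\varepsilon u_0-u_0|+|u_0|$. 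The first term is at most $|y-u_0|$ by nonexpansiveness, and the second is at most $\varepsilon|\hat u_0|$ by $(f_1)$.
\item[$(f_3)$] The first inequality comes from (c). For the second, apply the subgradient inequality at $u_0$ to get $\varphi(J_\varepsilon y)\geq\varphi(u_0)-|\hat u_0||J_\varepsilon y-u_0|$. Then bound $|J_\varepsilon y-u_0|\leq|y-u_0|+\varepsilon|\hat u_0|$ as in $(f_2)$.
\item[$(f_4)$] The upper bound on $\varphi(J_\varepsilon y)-\varphi(u_0)$ is the subgradient inequality at $J_\varepsilon y$ with $\nabla\varphi_\varepsilon(y)$. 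The lower bound is $(f_3)$; the bracket $\langle\nabla\varphi_\varepsilon(y),J_\varepsilon y-u_0\rangle$ must be shown to dominate the missing sign. To do so, bound it below using monotonicity against $(u_0,\hat u_0)$: it is at least $\langle\hat u_0,J_\varepsilon y-u_0\rangle\geq-|\hat u_0|(|y-u_0|+\varepsilon|\hat u_0|)$. Adding this slack is what produces the constants $2$ and $2\varepsilon$.
\end{itemize}

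The main obstacle is the bookkeeping in $(f_4)$, where both one-sided estimates have to be combined with the correct constants. The differentiability argument in Step 2 is standard.
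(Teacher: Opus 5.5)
Your argument is correct and is the standard resolvent-based proof. The paper gives no proof of its own and only refers to Pardoux--R\u{a}\c{s}canu, Section 6.3.7, where essentially this argument appears; one cosmetic slip is that in Step 2, testing $\varphi_\varepsilon(y)$ with $z=J_\varepsilon x$ actually gives the upper bound $\langle A_\varepsilon x,y-x\rangle+\frac{1}{2\varepsilon}|y-x|^{2}$, whereas the bound $\langle A_\varepsilon y,y-x\rangle$ you state follows from your lower bound with $x$ and $y$ interchanged (either one suffices for the squeeze).
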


\noindent Funding: Not applicable.\bigskip

\begin{acknowledgement}
The authors would like to express their sincere gratitude to the anonymous
referees for their comments and suggestions, which have resulted in
considerable improvement of the results and presentation of this article.
\end{acknowledgement}


\bigskip

\begin{thebibliography}{99}                                                                                               %


\bibitem {Attal/Belton:07}Attal, S.; Belton, A.C.R., \textsl{The
chaotic-representation property for a class of normal martingales}, Probab.
Theory Related Fields, Volume 139, pp. 543--562, 2007.\vspace{-0.1in}

\bibitem {Attal/Emery:94}Attal, S.; \'{E}mery, M., \textsl{\'{E}quations de
structure pour des martingales vectorielles}, S\'{e}minaire de
Probabilit\'{e}s XXVIII, Lecture Notes in Math. 1583, pp. 256--278, Springer,
1994.\vspace{-0.1in}

\bibitem {Barles:99}Barles, G., \textsl{Nonlinear Neumann boundary conditions
for quasilinear degenerate elliptic equations and applications}, J.
Differential Equations \textbf{154} (1999), 191--224.\vspace{-0.1in}

\bibitem {Barles/DaLio:06}Barles, G.; Da Lio, F., \textsl{Local $C^{0,\alpha}$
estimates for viscosity solutions of Neumann-type boundary value problems}, J.
Differential Equations \textbf{225} (2006), 202--241.\vspace{-0.1in}

\bibitem {Bensoussan/Li/Yam:2018}Bensoussan, A.; Li, Y.; Yam, S.,
\textsl{Backward stochastic dynamics with a subdifferential operator and
non-local parabolic variational inequalities}, Stochastic Processes and their
Applications, Volume 128, Issue 2, pp. 644--688, 2018.\vspace{-0.1in}

\bibitem {Brezis:73}Br\'{e}zis, H., \textsl{Op\'{e}rateurs Maximaux Monotones
et Semi-Groupes de Contractions dans les Espaces de Hilbert}, North-Holland,
Amsterdam, 1973.\vspace{-0.1in}

\bibitem {Burdzy/Chen/Marshall/Ramanan:17}Burdzy, K.; Chen, Z.-Q.; Marshall,
D.; Ramanan, K., \textsl{Obliquely reflected Brownian motion in nonsmooth
planar domains}, Ann. Probab. \textbf{45} (2017), no.~5.\vspace{-0.1in}

\bibitem {Chassagneux/Nadtochiy/Richou:22}Chassagneux, J.-F.; Nadtochiy, S.;
Richou, A., \textsl{Reflected BSDEs in non-convex domains}, Probab. Theory
Relat. Fields \textbf{183} (2022), 1237--1284.\vspace{-0.1in}

\bibitem {Chassagneux/Richou:20}Chassagneux, J.-F.; Richou, A.,
\textsl{Obliquely reflected backward stochastic differential equations}, Ann.
Inst. Henri Poincar\'{e} Probab. Stat. \textbf{56} (2020), 2868--2896.\vspace
{-0.1in}

\bibitem {Dellacherie/Meyer:80}Dellacherie, C.; Meyer, P.-A.,
\textsl{Probabilit\'{e}s et potentiel, Chapitres V \`{a} VIII: Th\'{e}orie des
martingales}, Hermann, Paris, 1980.\vspace{-0.1in}

\bibitem {Dupuis/Ishii:93}Dupuis, P.; Ishii, H., \textsl{SDEs with oblique
reflection on nonsmooth domains}, Ann. Probab. \textbf{21} (1993), 554--580;
correction, Ann. Probab. \textbf{36} (2008), 1992--1997.\vspace{-0.1in}

\bibitem {Emery:89}\'{E}mery, M., \textsl{On the Az\'{e}ma martingales},
S\'{e}minaire de Probabilit\'{e}s XXIII, Lecture Notes in Math. 1372, pp.
66--87, Springer, 1989.\vspace{-0.1in}

\bibitem {Gassous/Rascanu/Rotenstein:12}Gassous, A.; R\u{a}\c{s}canu, A.;
Rotenstein, E., \textsl{Stochastic variational inequalities with oblique
subgradients}, Stochastic Process. Appl., Volume 122, Issue 7, pp. 2668--2700,
2012.\vspace{-0.1in}

\bibitem {Gassous/Rascanu/Rotenstein:15}Gassous, A.; R\u{a}\c{s}canu, A.;
Rotenstein, E., \textsl{Multivalued BSDEs with oblique subgradients}, Stoch.
Process. Appl., Volume 125, Issue 8 (August), pp. 3170--3195, 2015.\vspace
{-0.1in}

\bibitem {Hu/Tang:10}Hu, Y.; Tang, S., \textsl{Multi-dimensional BSDE with
oblique reflection and optimal switching}, Probab. Theory and Related Fields,
Volume 147, Issue 1-2, pp. 89-121, 2010.\vspace{-0.1in}

\bibitem {Huang/Wang/Wu:18}Huang, J.; Wang, H.; Wu, Z., \textsl{A sufficient
stochastic maximum principle for a kind of recursive optimal control problem
with obstacle constraint}, Systems \& Control Letters 114 (2018),
27--30.\vspace{-0.1in}

\bibitem {Kazamaki:94}Kazamaki, N., \textsl{Continuous Exponential Martingales
and BMO}, Lecture Notes in Mathematics \textbf{1579}, Springer, Berlin,
1994.\vspace{-0.1in}

\bibitem {Liang/Lyons/Qian:11}Liang, G; Lyons, T.; Qian, Z., \textsl{Backward
stochastic dynamics on a filtered probability space}, The Annals of
Probability, Vol. 39, No. 4, pp. 1422--1448, 2011.\vspace{-0.1in}

\bibitem {Lions/Sznitman:84}Lions, P.-L.; Sznitman, A.-S., \textsl{Stochastic
differential equations with reflecting boundary conditions}, Comm. Pure Appl.
Math. \textbf{37} (1984), 511--537.\vspace{-0.1in}

\bibitem {Maticiuc/Rotenstein:18}Maticiuc, L.; Rotenstein, E.,
\textsl{Anticipated backward stochastic variational inequalities with
generalized reflection}, Stoch. Dyn., Vol. 18, No. 2, article ID: 1850008,
pages: 1-21, 2018.\vspace{-0.1in}

\bibitem {Negrut/Rascanu/Rotenstein:26}Negru\c{t}, A.; R\u{a}\c{s}canu, A.;
Rotenstein, E., \textsl{C\`{a}dl\`{a}g Solutions to Backward Stochastic
Dynamics featuring Oblique Subgradients and driven by Martingale Noise},
preprint, 2026.\vspace{-0.1in}

\bibitem {Pardoux/Rascanu:98}Pardoux, E.; R\u{a}\c{s}canu, A.,
\textsl{Backward stochastic differential equations with subdifferential
operator and related variational inequalities}, Stochastic Processes and their
Applications, 76(2), pp. 191-215, 1998.\vspace{-0.1in}

\bibitem {Pardoux/Rascanu:99}Pardoux, E.; R\u{a}\c{s}canu, A.,\textit{\ }%
\textsl{Backward stochastic variational inequalities}, Stochastics Stochastics
Rep., 67(3-4), pp. 159-167, 1999.\vspace{-0.1in}

\bibitem {Pardoux/Rascanu:14}Pardoux, E.; R\u{a}\c{s}canu, A.,
\textsl{Stochastic differential equations, Backward SDEs, Partial differential
equations}, Stochastic Modelling and Applied Probability, Vol. 69, XVII,
Springer, 2014.\vspace{-0.1in}

\bibitem {Protter:05}Protter, P., \textsl{Stochastic Integration and
Differential Equations}, Stochastic Modelling and Applied Probability, Vol.
21, Springer-Verlag Berlin Heidelberg, 2005.\vspace{-0.1in}

\bibitem {Ramasubramanian:02}Ramasubramanian, S., \textsl{Reflected backward
stochastic differential equations in an orthant}, Proc. Indian Acad. Sci.
Math. Sci. \textbf{112} (2002), 347--360.\vspace{-0.1in}

\bibitem {Ren:08}Ren, Y.-F., \textsl{On the Burkholder Davis Gundy
inequalities for continuous martingales}, Statistics and Probability Letters
78 pp. 3034-3039, 2008.\vspace{-0.1in}
\end{thebibliography}
\end{document}